\theoremstyle{plain}
\newtheorem{thm}{Theorem}[section]
\newtheorem*{thma}{Theorem A}
\newtheorem*{thm*}{Theorem}
\newtheorem{lm}[thm]{Lemma}
\newtheorem{cor}[thm]{Corollary}
\newtheorem*{cor*}{Corollary}
\newtheorem{prop}[thm]{Proposition}
\newtheorem*{conj*}{Conjecture}
\theoremstyle{remark}
\theoremstyle{definition}
\newtheorem*{defn*}{Definition}
\newtheorem{Remark}[thm]{Remark}
\newtheorem{I_Remark*}{Remark}
\newtheorem{defn}[thm]{Definition}
\newtheorem{hypothesis}[thm]{Hypothesis}
\newcommand{\nc}{\newcommand}
\newcommand{\beq}{\begin{equation}}
\newcommand{\eeq}{\end{equation}}
\newcommand{\bpmx}{\begin{pmatrix}}
\newcommand{\epmx}{\end{pmatrix}}
\newcommand{\bbmx}{\begin{bmatrix}}
\newcommand{\ebmx}{\end{bmatrix}}
\newcommand{\wh}{\widehat}
\newcommand{\wtd}{\widetilde}
\newcommand{\beqcd}[1]{\begin{equation*}\label{#1}\tag{#1}}
\newcommand{\eeqcd}{\end{equation*}}
\numberwithin{equation}{section}
\def\parref#1{\ref{#1}}
\def\thmref#1{Theorem~\parref{#1}}
\def\propref#1{Proposition~\parref{#1}}
\def\corref#1{Corollary~\parref{#1}}     \def\remref#1{Remark~\parref{#1}}
\def\lmref#1{Lemma~\parref{#1}}
\def\subsecref#1{\S\parref{#1}}
\def\defref#1{Definition~\parref{#1}}
\def\makeop#1{\expandafter\def\csname#1\endcsname
  {\mathop{\rm #1}\nolimits}\ignorespaces}
\def\ord{{ord}}
\def\Ord{{\mathrm{ord}}}
\def\Spec{\mathrm{Spec}\,}
\DeclareMathAlphabet{\mathpzc}{OT1}{pzc}{m}{it}
\DeclareSymbolFont{cyrletters}{OT2}{wncyr}{m}{n}
\DeclareMathSymbol{\SHA}{\mathalpha}{cyrletters}{"58}
\def\makebb#1{\expandafter\def
  \csname bb#1\endcsname{{\mathbb{#1}}}\ignorespaces}
\def\makebf#1{\expandafter\def\csname bf#1\endcsname{{\bf
      #1}}\ignorespaces}
\def\makegr#1{\expandafter\def
  \csname gr#1\endcsname{{\mathfrak{#1}}}\ignorespaces}
\def\makescr#1{\expandafter\def
  \csname scr#1\endcsname{{\EuScript{#1}}}\ignorespaces}
\def\makecal#1{\expandafter\def\csname cal#1\endcsname{{\mathcal
      #1}}\ignorespaces}
\def\doLetters#1{#1A #1B #1C #1D #1E #1F #1G #1H #1I #1J #1K #1L #1M
                 #1N #1O #1P #1Q #1R #1S #1T #1U #1V #1W #1X #1Y #1Z}
\def\doletters#1{#1a #1b #1c #1d #1e #1f #1g #1h #1i #1j #1k #1l #1m
                 #1n #1o #1p #1q #1r #1s #1t #1u #1v #1w #1x #1y #1z}
\def\abs#1{\left|#1\right|}
\def\norm#1{\lVert#1\rVert}
\def\Qp{\Q_p}
\def\Qbar{\ol{\Q}}
\def\Zbar{\ol{\Z}}
\def\Zp{\Z_p}
\def\rmT{{\mathrm T}}
\def\rmN{{\mathrm N}}
\def\rmd{{\mathrm d}}
\def\rmf{{\mathrm f}}
\def\bdsf{{\boldsymbol{e}}}
\def\bdsf{{\boldsymbol{f}}}
\def\bdsg{{\boldsymbol{g}}}
\def\bdsh{\boldsymbol h}
\def\bdsE{\boldsymbol{E}}
\def\bdsG{\boldsymbol{G}}
\def\cA{{\mathcal A}}  
\def\cB{\EuScript B}
\def\cD{\mathcal D}
\def\cE{{\mathcal E}}
\def\cL{{\mathcal L}}
\def\cJ{\mathcal J}
\def\cK{{\mathcal K}}  
\def\cM{\mathcal M}
\def\cO{\mathcal O}
\def\cS{{\mathcal S}}
\def\cf{{\mathcal f}}
\def\cW{{\mathcal W}}
\def\cZ{\mathcal Z}
\def\cP{{\mathcal P}}
\def\cC{\mathcal C}
\def\cJ{\mathcal J}
\def\cQ{\mathcal Q}
\def\cU{\mathcal U}
\def\bfc{\mathbf c}
\def\bfK{\mathbf K}
\def\bfM{\mathbf M}
\def\bfU{\mathbf U}
\def\bfT{{\mathbf T}}
\def\bda{\mathbf a}
\def\bfu{\mathbf u}
\def\bdc{\mathbf c}
\def\bftheta{\boldsymbol{\theta}}
\def\bfal{\boldsymbol{\alpha}}
\def\sF{\mathscr F}
\def\sL{\mathscr L}
\def\sX{\mathscr X}
\def\sA{\mathscr A}
\def\sU{\mathscr U}
\def\sB{\mathscr B}
\def\bbI{\mathbb I}
\newcommand{\Z}{\mathbf Z}
\newcommand{\Q}{\mathbf Q}
\newcommand{\R}{\mathbf R}
\newcommand{\C}{\mathbf C}
\newcommand{\A}{\mathbf A}    
\def\fraka{{\mathfrak a}}
\def\frakc{{\mathfrak c}}
\def\frakd{\mathfrak d}
\def\frakf{\mathfrak f}
\def\frakp{{\mathfrak p}}
\def\frakq{\mathfrak q}
\def\frakt{\mathfrak t}
\def\frakm{\mathfrak m}
\def\frakn{\mathfrak n}
\def\frakD{\mathfrak D}
\def\frakH{{\mathfrak H}}
\def\frakX{\mathfrak X}
\def\frakN{\mathfrak N}
\def\wbar{\bar{w}}
\def\wbar{\ol{w}}
\def\padic{\text{$p$-adic }}
\def\BS{Bruhat-Schwartz }
\def\Teich{Teichm\"{u}ller }
\def\ot{\otimes}
\def\hookto{\hookrightarrow}
\def\longto{\longrightarrow}
\def\ol{\overline}  \nc{\opp}{\mathrm{opp}} \nc{\ul}{\underline}
\newcommand{\pair}[2]{\< #1, #2\>}
\newcommand{\pairing}{\pair{\,}{\,}}
\def\XYmatrix{\xymatrix@M=8pt} 
\def\ncmd{\newcommand}
\ncmd{\xysubset}[1][r]{\ar@<-2.5pt>@{^(-}[#1]\ar@<2.5pt>@{_(-}[#1]}
\ncmd{\XYmatrixc}[1]{\vcenter{\XYmatrix{#1}}}
\ncmd{\xyto}[1][r]{\ar@{->}[#1]}
\ncmd{\xyinj}[1][r]{\ar@{^(->}[#1]}
\ncmd{\xysurj}[1][r]{\ar@{->>}[#1]}
\ncmd{\xyline}[1][r]{\ar@{-}[#1]}
\ncmd{\xydotsto}[1][r]{\ar@{.>}[#1]}
\ncmd{\xydots}[1][r]{\ar@{.}[#1]}
\ncmd{\xyleadsto}[1][r]{\ar@{~>}[#1]}
\ncmd{\xyeq}[1][r]{\ar@{=}[#1]} \ncmd{\xyequal}[1][r]{\ar@{=}[#1]}
\ncmd{\xyequals}[1][r]{\ar@{=}[#1]}
\ncmd{\xymapsto}[1][r]{l\ar@{|->}[#1]}\ncmd{\xyimplies}[1][r]{\ar@{=>}[#1]}
\ncmd{\xyiso}{\ar[r]_-{\sim}}
\def\injxy{\ar@{^(->}}
\newcommand{\pMX}[4]{\begin{pmatrix}
{#1}& {#2}\\
{#3}&{#4}\end{pmatrix} }
 \newcommand{\pDII}[2]{\begin{pmatrix}{#1}&0
 \\0&{#2}\end{pmatrix}}
\newcommand{\seesaw}[4]{{#1}\ar@{-}[rd]\ar@{-}[d]&{#2}\ar@{-}[d]\\
{#3}\ar@{-}[ru]&{#4}}
\def\ie{i.e. }
\def\cf{\mbox{{\it cf.} }}
\def\uf{\varpi} 
\def\Abs{{|\!\cdot\!|}} 
\def\Sg{{\varSigma}}  
\def\ndivides{\nmid}
\def\x{{\times}}
\def\onehalf{{\frac{1}{2}}}
\def\al{\alpha}
\def\Lam{\Lambda}
\def\om{\omega}
\def\dirlim{\varinjlim}
\def\prolim{\varprojlim}
\def\iso{\simeq}
\def\con{\equiv}
\def\bksl{\backslash}
\newcommand\stt[1]{\left\{#1\right\}}
\def\ep{\epsilon}
\def\lam{\lambda}
\def\vep{\varepsilon}
\def\sg{\sigma}
\def\disjoint{\sqcup}
\def\bigot{\bigotimes}
\newcommand{\powerseries}[1]{\llbracket{#1}\rrbracket}
\renewcommand\pmod[1]{\,(\mbox{mod }{#1})}
\renewcommand\Re{\text{Re}\,}
\newcommand\Dmd[1]{\left<{#1}\right>} 
\def\Cp{\C_p}
\def\ram{{\mathrm{ram}}}
\def\Mat{\mathrm{M}}
\def\I{\mathbb{I}}
\def\<{\langle}
\def\>{\rangle}
\def\x{\times}
\def\<{\langle}
\def\>{\rangle}
\def\Om{{\boldsymbol \omega}}
\def\cyc{{\boldsymbol \epsilon}_{\rm cyc}}
\def\ep{\epsilon}
\def\Qx{{Q}}
\def\Qz{{P}}
\def\divides{\mid}
\def\Mu{\mu}
\def\Nu{\nu}
\def\eord{e}
\def\pmq{\ell}
\def\LV{C}
\def\cyc{{\boldsymbol \varepsilon}_{\rm cyc}}
\def\crit{{\rm crit}}
\def\p{\frakp}
\def\pbar{{\ol{\frakp}}}
\def\CMP{\theta}
\def\cmpt{\varsigma}
\def\cmptv{\cmpt_q}
\def\pme{q}
\def\Qq{\Q_q}
\def\w{\frakq}
\def\wbar{\ol{\frakq}}
\def\p{\frakp}
\def\setn{{(n)}}
\def\cK{F}
\def\Abs{\boldsymbol{\alpha}}
\newcommand\MS[2]{\stt{#1}-\stt{#2}}
\def\pmq{\frakq}
\def\Fv{F_\pmq}
\def\ufv{\uf_\pmq}
\def\qv{q_\pmq}
\def\Ov{\cO_\pmq}
\def\bdpsi{{\boldsymbol \psi}}
\def\Sg{\Sigma}
\def\brch{\phi}
\def\Cl{{\rm Cl}^+}
\def\wtsp{\sX}
\def\rmH{{\rm H}}
\title[Restriction of Hilbert-Eisenstein series]
{Restriction of Eisenstein series and Stark-Heegner points}\author{Ming-Lun Hsieh}
\author{Shunsuke Yamana}
\date{\today}
\subjclass[2010]{11F67, 11F33}
\address{Institute of Mathematics, Academia Sinica and National Center for Theoretic Sciences, Taipei 10617, Taiwan
}
\email{mlhsieh@math.sinica.edu.tw}
\address{Department of Mathematics, Graduate School of Science, Osaka City University, 3-3-138 Sugimoto, Sumiyoshi-ku, Osaka 558-8585, Japan}
\email{yamana@sci.osaka-cu.ac.jp}
\thanks{Hsieh was partially supported by a MOST grant 108-2628-M-001-009-MY4.
Yamana is partially supported by JSPS Grant-in-Aid for Scientific Research (C) 18K03210 and (B) 19H01778. 
This work is partially supported by Osaka City University Advanced Mathematical Institute (MEXT Joint Usage/Research Center on Mathematics and Theoretical Physics) and the Research Institute for Mathematical Sciences, an International Joint Usage/Research Center located in Kyoto University.}
\begin{document}

\maketitle

\begin{abstract}
In a recent work of Darmon, Pozzi and Vonk, the authors consider a particular $p$-adic family of Hilbert-Eisenstein series $E_k(1,\brch)$ associated with an odd character $\brch$ of the narrow ideal class group of a real quadratic field $F$ and compute the first derivative of a certain one-variable twisted triple product $p$-adic $L$-series attached to $E_k(1,\brch)$ and an elliptic newform $f$ of weight $2$ on $\Gamma_0(p)$. In this paper, we generalize their construction to include the cyclotomic variable and thus obtain a two-variable twisted triple product $p$-adic $L$-series. Moreover, when $f$ is associated with an elliptic curve $E$ over $\Q$, we prove that the first derivative of this $p$-adic $L$-series along the weight direction is a product of the $p$-adic logarithm of a Stark-Heegner point of $E$ over $F$ introduced by Darmon and the cyclotomic $p$-adic $L$-function for $E$.  
\end{abstract}

\section{Introduction}

In the work \cite{DPV}, to each odd character $\brch$ of the narrow ideal class group of a real quadratic field $F$, the authors associate a one-variable $p$-adic family $E^{(p)}_k(1,\brch)$ of Hilbert-Eisenstein series on $\Gamma_0(p)$ over a real quadratic field $F$ and investigate the connection between the spectral decomposition of the ordinary projection of the diagonal restriction $G_{k}(\brch)$ of $E^{(p)}_k(1,\brch)$ around $k=1$ and the $p$-adic logarithms of Gross-Stark units and Stark-Heegner points. In particular, if $p$ is inert in $F$ and let $G'_1(\brch)_{\Ord}$ denote the elliptic modular form of weight two obtained by the taking ordinary projection of the first derivative $\frac{d}{dk} G_k(\brch)|_{k=1}$, then it is proved in \cite[Theorem C(2)]{DPV} that the coefficient $\lam_f'$ of each normalized Hecke eigenform $f$ of weight two on $\Gamma_0(p)$ in the spectral decomposition of $G_1'(\brch)_\Ord$ can be expressed in terms of the product of special values of the $L$-function for $f$ and the $p$-adic logarithms of Stark-Heegner points or Gross-Stark units over $F$ introduced in \cite{Darmon01Ann} and \cite{DD06Ann}. 

The purpose of this paper is to provide some partial generalizations of this work to the two-variable setting by introducing the cyclotomic variable. To begin with,  
we let $F$ be a real quadratic field with different $\frakd$ over $\Q$. Let $x\mapsto \ol{x}$ denote the non-trivial automorphism of $F$ and let $\rmN:F\to\Q,\,\rmN(x)=x\ol{x}$ be the norm map. Let $\Delta_F$ be the discriminant of $F/\Q$. Let $\Cl(\cO_F)$ be the narrow ideal class group of $F$. Let $\brch:\Cl(\cO_F)\to \Qbar^\x$ be an odd narrow ideal class character, \ie $\brch((\delta))=-1$ for any $\delta\in\cO_F$ with $\ol{\delta}=-\delta$. Let $L(s,\brch)$ be the Hecke $L$-function attached to $\brch$.  
\def\bdsigma{{\boldsymbol \sigma}}
Fix an odd rational prime $p$ unramified in $F$. For $x\in\Zp^\x$, let $\Om(x)$ be the \Teich lift of $x\pmod{p}$ and let $\Dmd{x}:=x\Om^{-1}(x)\in 1+p\Zp$. Let $\wtsp:=\{x\in\Cp\mid \abs{x}_p\leq 1\}$ be the $p$-adic closed unit disk and let $A(\wtsp)$ be the ring of rigid analytic functions on $\wtsp$. Fix an embedding $\iota_p:\Qbar\hookto\Cp$ throughout. For each ideal $\frakm\lhd\cO_F$ coprime to $p$, define $\bdsigma_\brch(\frakm)\in A(\wtsp\times\wtsp)$ by  \[\bdsigma_{\brch}(\frakm)(k,s)=\sum_{\fraka\lhd\cO_F,\,\fraka\divides\frakm}\brch(\fraka)\Dmd{\rmN(\fraka)}^\frac{k-s}{2}\Dmd{\rmN(\frakm\fraka^{-1})}^\frac{s-2}{2}.\]
 Let $\wtsp^{\rm cl}:=\stt{k\in\Z^{\geq 2}\mid k\con 2\pmod{2(p-1)}}$ be the set of classical points in $\wtsp$. Let $h=\#\Cl(\cO_F)$. Fix a set $\stt{\frakt_\lam}_{\lam=1,\ldots,h}$ of representatives of the narrow ideal class group $\Cl(\cO_F)$ with $(\frakt_\lam,p\cO_F)=1$. For each classical point $k\in\wtsp^{\rm cl}$, the classical Hilbert-Eisenstein series $E_{\frac{k}{2}}(1,\brch)$ on $\SL_2(\cO_F)$ of parallel weight $\frac{k}{2}$ is determined by the normalized Fourier coefficients 
\[c(\frakm,E_\frac{k}{2}(1,\brch))= \bdsigma_\brch(\frakm)(k,2),\quad c_\lam(0,E_\frac{k}{2}(1,\brch))=4^{-1}L(1-k/2,\brch). \]


Let $I_F$ be the set of integral ideals of $F$. 
Assume that $\frakn\in I_F$ and $p$ are coprime. 
Let $\cM(\frakn)$ be the space of two-variable $p$-adic families of Hilbert modular forms 
of tame level $\frakn$, which consists of functions
\begin{align*}
I_F&\to \sA(\wtsp\times \wtsp), & &\frakm\mapsto c(\frakm,f), \\
\Cl(\cO_F)&\to\sA(\wtsp\times \wtsp), & &\fraka\mapsto c_0(\fraka,f)
\end{align*} such that the specialization $f(k,s)=\stt{c(\frakm,f)(k,s)}$ is the set of normalized Fourier coefficients of a $p$-adic Hilbert modular form of parallel weight $k$ on $\Gamma_0(p\frakn)$ for $(k,s)$ in a $p$-adically dense subset $U \subset \Zp\times \Zp$ (\cf\cite[p.535-536]{Wiles88}). 
Define  $\bdsE^{\stt{p}}_\brch:I_F\to A(\wtsp\times\wtsp)$ by the data
\begin{align*}
c(\frakm,\bdsE^{\stt{p}}_\brch)&=\bdsigma_\brch(\frakm)\text{ if }(\frakm,p\cO_F)=1,\\
c(\frakm,\bdsE^{\stt{p}}_\brch)&=0\text{ otherwise}, \\
c_0(\fraka, \bdsE^{\stt{p}}_\brch)&=0
\end{align*}
By definition, for $(k,s)\in\wtsp^{\rm cl}\times \wtsp^{\rm cl}$ with $k\geq 2s$, we have \[\bdsE^{\stt{p}}_\brch(k,s)=\Dmd{\Delta_F}^\frac{s-2}{2}\cdot \theta^\frac{s-2}{2}E^{\stt{p}}_{\frac{k+4-2s}{2}}(1,\phi),\]
where $E^{\stt{p}}_k(1,\brch)$ is the \emph{$p$-depletion} of $E_k(1,\brch)$ and $\theta$ is Serre's differential operator $\theta(\sum_\beta a_\beta q^\beta)=\sum_\beta \rmN_{F/\Q}(\beta)a_\beta q^\beta$. Therefore, $\bdsE^{\stt{p}}_\brch(k,s)$ is a $p$-adic Hilbert modular form of parallel weight $k$ for all $(k,s)\in\Zp^2$, and $\bdsE^{\stt{p}}_\brch\in \cM(\cO_F)$. For each prime ideal $\frakq$, define $\bfU_\frakq\colon \cM(\frakn)\to \cM(\frakn\frakq)$ by $c(\frakm,\bfU_\frakq f):=c(\frakm\frakq,f)$. Let $N$ be a positive integer such that $p\ndivides N$ and 
\beqcd{Splt}N\cO_F=\frakN\ol{\frakN},\quad (\frakN,\ol{\frakN})=1.\eeqcd
Define $\bdsE_\brch\in \cM(\frakN)$ by 
\[\bdsE_\phi:=\prod_{\frakq\divides \frakN}(1-\brch(\frakq)^{-1}\Dmd{\rmN(\frakq)}^\frac{2s-2-k}{2}\bfU_\frakq)\cdot \bdsE^{\stt{p}}_\brch\] 
and the \emph{diagonal restriction} $\bdsG_\brch\in A(\wtsp\times\wtsp)\powerseries{q}$ of $\bdsE_\phi$ by \[\bdsG_\brch:=\sum_{n>0}\Big(\sum_{\beta\in\frakd_+^{-1},\Tr(\beta)=n}c(\beta\frakd,\bdsE_\phi)\Big)q^n, \]
where $\frakd^{-1}_+$ is the additive semigroup of totally positive elements in $\frakd^{-1}$. 
 
By definition $\bdsG_\brch(k,s)$ is the $q$-expansion of a $p$-adic elliptic modular form on $\Gamma_0(pN)$ of weight $k$ obtained from the diagonal restriction of $\bdsE^{\stt{p}}_\brch(k,s)$ for $(k,s)\in\wtsp^{\rm cl}\times \wtsp^{\rm cl}$ with $k\geq 2s$. Let $\sU$ be an appropriate neighborhood around $2\in \wtsp$. Let $\bfS^\Ord(N)$ be the space of ordinary $A(\sU)$-adic elliptic cusp forms on $\Gamma_0(Np)$, consisting of $q$-expansion $\bdsf=\sum_{n>0}c(n,\bdsf)q^n\in A(\sU)\powerseries{q}$ whose weight $k$ specialization $\bdsf_k$ is a $p$-ordinary cusp form of weight $k$ on $\Gamma_0(pN)$ for $k\in\wtsp^{\rm cl}$. By Hida theory $\bfS^\Ord(N)$ is a free $A(\sU)$-module of finite rank. It can be shown that the image $\eord \bdsG_\brch$ under Hida's ordinary projector $e$ actually belongs to $\bfS^\Ord(N)\wh\ot_{A(\sU)}  A(\sU\times \wtsp)$, where $A(\sU)$ is regarded as a subring of $A(\sU\times\wtsp)$ via the pull-back of the first projection $\sU\times\wtsp\to\sU$. We can thus decompose
\[\eord\bdsG_\brch=\sum_{\bdsf}  \cL_{\bdsE_\brch,\bdsf}\cdot \bdsf+(\text{old forms}),\quad \cL_{\bdsE_\brch,\bdsf}\in A(\sU\times\wtsp),\]
where $\bdsf$ runs over the set of primitive Hida families of tame conductor $N$. 
We shall call $\cL_{\bdsE_\brch,\bdsf}\in A(\sU\times\wtsp)$ the twisted triple product $p$-adic $L$-function attached to the $p$-adic Hilbert-Eisenstein series $\bdsE_\brch$ and a primitive Hida family $\bdsf$. 

The arithmetic significance of this $p$-adic $L$-function stems from its connection with the Stark-Heegner points of elliptic curves. Let $E$ be an elliptic curve over $\Q$ of conductor $pN$. Assume that $p$ is inert in $F$. In \cite{Darmon01Ann}, Darmon introduced Stark-Heegner points of elliptic curves over real quadratic fields. These are local points in $E(F_p)$ but conjectured to be rational over ray class fields of $F$. The rationality of Stark-Heegner points has been one of the major open problems in algebraic number theory. Now let $\bdsf\in A(\sU)\powerseries{q}$ be a primitive Hida family of tame level $N$ such that the weight two specialization $f:=\bdsf_2$ is the elliptic newform associated with $E$. In the special case $N=1$, \cite[Theorem C(2)]{DPV} implies that that $\cL_{\bdsE_\brch,\bdsf}(2,1)=0$ and the first derivative of $\cL_{\bdsE_\brch,\bdsf}(k,1)$ at $k=2$ is essentially a product of the $p$-adic logarithm $\log_E P_\brch$ of the twisted Stark-Heegner point $P_\brch\in E(F_p)\ot\Q(\brch)$ introduced in \cite[(182)]{Darmon01Ann} and the central value $L(E,1)$ of the Hasse-Weil $L$-function of $E$. As pointed out in \cite[Remark 3]{DPV}, this connection has potential of providing a geometric approach to Stark-Heegner points via the $K$-theory of Hilbert modular surfaces. The main result of this paper (\thmref{T:main}) is to offer the following generalization of \cite[Theorem C(2)]{DPV} to include the cyclotomic variable and the case $N>1$.   
\begin{thma}Suppose that $p$ is inert in $F$ and the conductor $N$ satisfies \eqref{Splt}. Then $\cL_{\bdsE_\brch,\bdsf}(2,s)=0$ and 
\begin{align*}\frac{\partial\cL_{\bdsE_\brch,\bdsf}}{\partial k}(2,s+1)&=\onehalf(1+\brch(\frakN)^{-1}w_N) \log_E P_\brch\cdot  L_p(E,s)\frac{m_E^22^{\al(E)}}{c_f}\Dmd{\Delta_F}^\frac{s-1}{2},\end{align*}
where 
\begin{itemize}
\item $w_N\in\stt{\pm 1}$ is the sign of the Fricke involution at $N$ acting on $f$,
\item $L_p(E,s)$ is the Mazur-Tate-Teitelbaum $p$-adic $L$-function for $E$,
\item $c_f\in \Z^{>0}$ is the congruence number for $f$, $m_E\in\Q^\x$ is the Manin constant for $E$ and $2^{\al(E)}=[\rmH_1(E(\C),\Z):\rmH_1(E(\C),\Z)^+\oplus \rmH_1(E(\C),\Z)^-]$.
\end{itemize}
\end{thma}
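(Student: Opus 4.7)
The overall strategy is to extend the spectral decomposition technique of Darmon--Pozzi--Vonk (DPV) to the two-variable family, accommodating the cyclotomic direction via Serre's differential operator $\theta$. We view $\cL_{\bdsE_\brch,\bdsf}(k,s)$ as a normalized Hida-dual pairing of $\eord \bdsG_\brch(k,s)$ against a dual Hida family $\bdsf^*$ of $\bdsf$; the congruence number $c_f$ then enters as the specialization of the Hida-pairing denominator at $k=2$.

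\textbf{Vanishing at $k=2$.} For $(k,s)$ in the classical region, $\bdsE^{\stt{p}}_\brch(2, s+1) = \Dmd{\Delta_F}^{(s-1)/2} \theta^{(s-1)/2} E^{\stt{p}}_{2-s}(1,\brch)$, so it suffices to show that $\bdsG_\brch(2, s+1)$ vanishes identically as a $p$-adic modular form on $\Gamma_0(Np)$. Since $\theta$ acts diagonally on $q$-expansions, this reduces, after $p$-adic continuation in $s$, to DPV's parity argument: the involution $\beta \mapsto \ol{\beta}$ on totally positive elements of $\frakd^{-1}$ pairs Fourier coefficients that cancel by the oddness of $\brch$. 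Hence $\cL_{\bdsE_\brch,\bdsf}(2, s+1) \equiv 0$.

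\textbf{Derivative and the Stark--Heegner point.} Differentiating $\bdsigma_\brch(\frakm)(k, s+1)$ in $k$ at $k=2$ produces an explicit $q$-expansion for $\partial_k \bdsG_\brch|_{k=2}$ whose Fourier coefficients are weighted sums of $p$-adic logarithms of the form $\log_p \Dmd{\rmN(\fraka)}$. Applying $\eord$ and pairing with $f^*$, at the classical value $s=1$ one recovers DPV's calculation: a Shimura--Waldspurger theta lift combined with an explicit rigid-cocycle identity identifies the output with $\log_E P_\brch \cdot L(E,1)$. The Fricke factor $\tfrac{1}{2}(1 + \brch(\frakN)^{-1} w_N)$ arises from the symmetrization over the splitting $N\cO_F = \frakN \ol{\frakN}$ implicit in the Euler product $\prod_{\frakq \mid \frakN}\bigl(1 - \brch(\frakq)^{-1} \Dmd{\rmN(\frakq)}^{(2s-2-k)/2} \bfU_\frakq\bigr)$ defining $\bdsE_\brch$.

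\textbf{Cyclotomic variable and main obstacle.} For variable $s$, the $\theta^{(s-1)/2}$-twist of the Eisenstein family is precisely the mechanism that, upon pairing with $f^*$, realizes the Mazur--Tate--Teitelbaum $p$-adic $L$-function $L_p(E,s)$ as an integral of the $\bdsf$-modular symbol against the cyclotomic character; the prefactor $\Dmd{\Delta_F}^{(s-1)/2}$ comes from the normalization built into $\bdsE^{\stt{p}}_\brch$. The Manin constant $m_E$ and the period index $2^{\al(E)}$ appear in the final comparison between the algebraic Petersson pairing used in Hida theory and the $p$-adic logarithm on $E(F_p)$. The main difficulty lies in the clean separation of variables in this step: DPV work at fixed $s=1$, so extending their theta-lifting computation to a $p$-adic family over the cyclotomic disc requires either a direct rigid-analytic interpolation argument (showing both sides are $p$-adic analytic in $s$ and agree on a dense set of classical specializations) or a realization of $\cL_{\bdsE_\brch,\bdsf}$ as a twisted triple product $p$-adic $L$-function in which this factorization is intrinsic, following the framework previously developed by Hsieh and Yamana.
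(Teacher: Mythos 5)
Your proposal correctly identifies the endpoint (the factorization of $\cL_{\bdsE_\brch,\bdsf}$ as a twisted triple product $p$-adic $L$-function, mentioned at the very end) but it does not actually execute the argument, and the direct route you sketch has a genuine gap. The paper's proof of Theorem A does \emph{not} establish the vanishing $\cL_{\bdsE_\brch,\bdsf}(2,s)=0$ by showing that $\bdsG_\brch(2,s+1)$ vanishes identically as a $p$-adic modular form. That statement is in fact false in general: after $p$-depletion and the $\theta$-twist, and especially once $N>1$ so that $\bdsE_\brch$ carries the extra Euler factors $\prod_{\frakq\mid\frakN}(1-\brch(\frakq)^{-1}\Dmd{\rmN\frakq}^{(2s-2-k)/2}\bfU_\frakq)$, the diagonal restriction at $k=2$ need not vanish; only its $\bdsf$-isotypic projection does. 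The parity involution $\beta\mapsto\ol{\beta}$ does not by itself force $\sigma_\beta+\sigma_{\ol\beta}=0$ coefficientwise for odd $\brch$. What actually produces the vanishing in the paper is the Saito--Tunnell dichotomy: because $p$ is inert and $\pi_f$ is special at $p$, the local root number of ${\rm BC}_F(\pi_f)\otimes\brch$ at $p$ is $-1$, hence the toric period $B_f^{\brch}$ vanishes, so the square-root $p$-adic $L$-function $\cL_p(\breve\Xi^-/F\otimes\brch)(Q_2)=0$ by Proposition \ref{P:interp2}.

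The actual engine of the proof is the factorization formula of Theorem \ref{T:main.7},
\[C_\Xi\cdot\cL_{\bdsE_\brch^{[a]},\bdsf}=\cL_{\breve\Xi^-/F\otimes\brch}\cdot L_p(\Xi^+,\Om^a)\cdot\frakf c_1,\]
which is proved by matching interpolation formulae (Propositions \ref{P:interp1} and \ref{P:interp2}) via the local zeta integral computations of Section 4, combined with the non-vanishing input of Rohrlich to conclude $C_\Xi$ is holomorphic with the explicit value $C_\Xi(2)=8c_f/(m_E^2 2^{\al(E)})$. The vanishing at $k=2$ is then immediate from the factorization, and the derivative formula follows from Leibniz together with a direct computation of $\tfrac{d}{dk}\cL_p(\breve\Xi^-/F\otimes\brch,k)|_{k=2}$, which is identified with $\tfrac{1}{2}(1+\brch(\frakN)^{-1}w_N)\log_E P_\brch$ using the Greenberg--Stevens distribution-valued modular symbol and the Bertolini--Darmon Stark--Heegner point formula \cite[Corollary 2.6]{BD09Ann}. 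Your sketch replaces this with a DPV-style theta-lift and rigid-cocycle computation at $s=1$ followed by a proposed $p$-adic interpolation in $s$; even granting that approach at $s=1$, the interpolation step is precisely the part that requires the factorization and is left unaddressed. You should restructure the argument to prove the factorization first (or at least invoke it as the basic tool), since both the vanishing and the derivative formula come out of it, and the Saito--Tunnell vanishing of the toric period, not a Fourier-coefficient parity argument, is what drives the vanishing at $k=2$.
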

Our main motivation for this two-variable generalization is that we have the non-vanishing of the $p$-adic $L$-function $L_p(E,s)$ thanks to Rohrlich's theorem \cite{Roh84Inv}, so we can still compute $\log_E P_\brch$ from the twisted triple product $p$-adic $L$-function even when the central value $L(E,1)$ vanishes. 
\begin{Remark}\noindent
\begin{itemize}
\item Note that the definition of Stark-Heegner points $P_\brch$ for odd $\brch$ in \cite{Darmon01Ann} depends on a choice of the purely imaginary period $\Omega_E^-$. In the above theorem, we require $(\sqrt{-1})^{-1}\Omega_E^-$ to be positive.
\item The Eisenstein contribution in the spectral decomposition in Part (2) of \cite[Theorem C]{DPV} is connected with the $p$-adic logarithms of Gross-Stark units over $F$, while in our two-variable setting, $\eord \bdsG_\brch$ is a $p$-adic family of cusp forms, so we do not get any information for Gross-Stark units. 
\item Theorem A only applies to real quadratic fields $F$ possessing a totally positive fundamental unit due to the existence of odd characters of the narrow ideal class group of $F$.  \end{itemize}
\end{Remark}

We briefly outline the proof. Let $\cL_p(\bdsf/F,\phi,k)$ be the (odd) square-root $p$-adic $L$-function associated with the primitive Hida family $\bdsf$ and the character $\brch$ constructed in \cite[Definition 3.4]{BD09Ann} with $w_\infty=-1$ and let $L_p(\bdsf,k,s)$ be the Mazur-Kitagawa two-variable $p$-adic $L$-function so that $L_p(\bdsf,2,s)$ is the cyclotomic $p$-adic $L$-function for $\bdsf_2$.  The main point of the proof is to prove the following factorization formula of $\cL_{\bdsE_\brch,\bdsf}$:
\beq\label{E:fac.1}C^*(k)\cdot \cL_{\bdsE_\brch,\bdsf}(k,s+1)=4\Dmd{\Delta_F}^\frac{s-k+1}{2}\cdot\cL_p(\bdsf/F,\phi,k)\cdot L_p(\bdsf,k,s),\eeq
where $C^*(k)$ is a meromorphic function on $\wtsp$ holomorphic at all classical points $k\in\wtsp^{\rm cl}$ with $C^*(2)=1$. By construction, the square root $p$-adic $L$-function $\cL_p(\bdsf/F,\phi,k)$ interpolates the toric period integrals $B^\brch_{\bdsf_k}$. Thus we get $\cL_{\bdsE_\brch,\bdsf}(2,s)=\cL_p(\bdsf/F,\phi,2)=0$ by a classical theorem of Saito and Tunnell. Moreover, from the formula \cite[Corollary 2.6]{BD09Ann}, it is not difficult to deduce that the first derivative of $\cL_p(\bdsf/F,\phi,k)$ at $k=2$ is $\frac{1}{2}(1+w_N\brch(\frakN)^{-1})\log_EP_\brch$, and hence we obtain Theorem A from \eqref{E:fac.1}. \\

The factorization formula \eqref{E:fac.1}, proved in \thmref{T:main.7}, is established by the inspection of the explicit interpolation formulae on both sides. In particular, the interpolation formula of $\cL_{\bdsE_\brch,\bdsf}(k,s)$ (\propref{P:interp1}) is the most technical part of this paper. Roughly speaking, for $(k,s)\in \wtsp^{\rm cl}\times \wtsp^{\rm cl}$ with $k\geq 2s$, Hida's $p$-adic Rankin-Selberg method shows that $\cL_{\bdsE_\brch,\bdsf}(k,s)$ is interpolated by the inner product between the diagonal restriction of a nearly holomorphic Hilbert-Eisenstein series $\bdsE_\brch(k,s)$ and $\bdsf_k$. Therefore, a result of Keaton and Pitale \cite[Proposition 2.3]{KP19DM} tells us that $\cL_{\bdsE_\brch,\bdsf}(k,s)$ is a product of \begin{itemize}
\item[(i)] the toric period integral  $B^\brch_{\bdsf_k}$ of $\bdsf_k$ over $F$ twisted by $\brch$ (see \eqref{E:toric}),
\item[(ii)] the special value $L(\bdsf_k,s)$ of the $L$-function for $\bdsf_k$;
\item[(iii)] local zeta integrals $Z_\cD(s,B_{W_v})$ for every place of $\Q$ in \eqref{E:locazeta}. \end{itemize} It is known that items (i) and (ii) are basically interpolated by $\cL_p(\bdsf/K,\brch,k)$ and $L_p(\bdsf,k,s)$, so our main task is to evaluate explicitly these local zeta integrals in item (iii). These calculations occupy the main body of Section 4. From the explicit interpolation formulae of these $p$-adic $L$-functions, we can deduce that the ratio $C^*$ between $\cL_p(\bdsf/F,\phi,k)\cdot L_p(\bdsf,k,s)$ and $\cL_{\bdsE_\brch,\bdsf}(k,s+1)$ is independent of $s$, and hence $C^*$ is a meromorphic function in $k$ only. Finally, by a standard argument using Rohrlich's result on the non-vanishing of the cyclotomic $p$-adic $L$-functions for elliptic modular forms, we can conclude that $C^*(k)$ is holomorphic at all $k\in\wtsp^{\rm cl}$ and $C^*(2)$ is essentially the congruence number. 

This paper is organized as follows. After preparing the basic notation for modular forms and automorphic forms in Section 2,  we give  the construction of Hilbert-Eisenstein series and compute the Fourier coefficients in Section 3.  In Section 4, we compute the inner product between the diagonal restriction of Hilbert-Eisenstein series and a $p$-stabilized newform. The main local calculations are carried out in \propref{P:spl} for the split case, \propref{P:nonsplit} for the non-split, and \propref{P:padic} for the $p$-adic case. In Section 5, we use $p$-adic Rankin-Selberg method to construct the $p$-adic $L$-function $\cL_{\bdsE_\brch,\bdsf}$ and obtain the interpolation formula in \propref{P:interp1} by combining the local calculations in Section 4. In order to make the comparison between $p$-adic $L$-functions easier, the interpolation formulae shall be presented in terms of automorphic $L$-functions in this paper. In Section 6, we review the theory of $\Lam$-adic modular symbols in \cite{Kitagawa94} and the construction of the square root $p$-adic $L$-function $\cL_p(\bdsf/F,\brch,k)$. Our treatment for modular symbols is \emph{semi-adelic}, which allows simple descriptions of Hecke actions and are amenable to the calculations from the automorphic side. The connection with  Greenberg-Stevens' approach \cite{GS1993} is explained in \subsecref{R:GS.6}. In \propref{P:interp2}, we give the complete interpolation formula for $\cL_p(\bdsf/F,\brch,k)$, including the evaluation at finite order characters of $p$-power conductors. Finally, we deduce the factorization formula and the derivative formula for $\cL_{\bdsE_\brch,\bdsf}$ in Section 7.

\subsection*{Acknowledgements} We thank the referees for careful reading and helpful suggestions on the improvement of the earlier version of the paper. This paper was written during the first author's visit to Osaka City University and RIMS in January 2020. He is grateful for their hospitality.  

\section{Classical modular forms and automorphic forms}
\label{S:modularforms}
In this section, we recall basic definitions and standard facts about classical elliptic modular forms and automorphic forms on $\GL_2(\A)$, following the notation in \cite[\S 2]{Hsieh2017} which we reproduce here for the reader's convenience. The main purpose of this section is to set up the notation and introduce some Hecke operators on the space of automorphic forms which will be frequently used in the construction of $p$-adic $L$-functions.

\subsection{Notation} 
We denote by $\Z$, $\Q$, $\R$, $\C$, $\A$, $\R_+$ the ring of rational integers, the field of rational, real, complex numbers, the ring of adeles of $\Q$ and the group of strictly positive real numbers. 
Let $\mu_n(F)$ denote the group of $n$th roots of unity in a field $F$.
For a rational prime $\ell$ we denote by $\Z_\ell$, $\Q_\ell$ and ${\rm ord}_\ell : \Q_\ell \rightarrow \Z$ the ring of $\ell$-adic integers, the field of $\ell$-adic numbers and the additive valuation normalized so that ${\rm ord}_\ell(\ell)=1$. 
Put $\widehat{\Z}=\prod_\ell\Z_\ell$. 
Define the idele $\uf_\ell=(\uf_{\ell,v})\in\A^\x$ by $\uf_{\ell,\ell}=\ell$ and $\uf_{\ell,v}=1$ if $v\not =\ell$. 

Let $F$ be a number field.  
We denote its integer ring by $\cO_F$. 
We write $\rmT_{F/\Q}$ and $\rmN_{F/\Q}$ for the trace and norm from $F$ to $\Q$. 
For each place $v$ of $F$ we denote by $F_v$ the completion of $F$ with respect to $v$. 
Let $\A_F=\A\otimes_\Q F$ be the adele ring of $F$. 
Given $t\in \A_F^{\times}$, we write $t_v \in F_v^{\times}$ for its $v$-component. 
We shall regard $F_v$ (resp. $F_v^{\times}$) as a subgroup of $\A_F$ (resp. $\A_F^{\times}$) in a natural way. 
Let $\Abs_{F_v}=|\mbox{ }|_{F_v}$ be the normalized absolute value on $F_v$. 
If $v=\pmq$ is finite, then $|\varpi_\pmq|_{F_\pmq}=q_\pmq^{-1}$, where $\varpi_\pmq$ is a generator of the prime ideal of the integral ring $\cO_\pmq$ of $F_\pmq$ and $q_\pmq$ denotes the cardinality of the residue field of $\cO_\pmq$. 
Define the complete Dedekind zeta function by $\zeta_F(s)=\prod_v\zeta_{F_v}(s)$, where $\zeta_\R(s)=\pi^{-s/2}\Gamma\bigl(\frac{s}{2}\bigl)$, and if $v=\pmq$ is finite, then $\zeta_{F_\pmq}(s)=(1-q_\pmq^{-s})^{-1}$. 
When $F=\Q$, we will write $\Abs_v=|\mbox{ }|_v$ and $\zeta_v(s)=\zeta_{\Q_v}(s)$. 
Let $\bdpsi: \A / \Q \rightarrow \C^{\times}$ be the additive character whose archimedean component is $\bdpsi_\infty(x)=e^{2\pi \sqrt{-1}x}$ and whose local component at $\ell$ is denoted by $\bdpsi_\ell:\Q_\ell \rightarrow \C^{\times}$. 
We define the additive character $\bdpsi_F=\prod_v\bdpsi_{F_v}: \A_F / F \rightarrow \C^{\times}$ by setting $\bdpsi_F:=\bdpsi\circ\Tr_{F/\Q}$. 
Let $\cS(\A_F^m)=\otimes_v'\cS(F_v^m)$ denote the space of Schwartz functions on $\A_F^m$. 

For any set $X$ we denote by $\mathbb{I}_X$ the characteristic function of $X$.
If $R$ is a commutative ring and $G=\GL_2(R)$, we define homomorphisms $\bft:R^\times\to G$ and $\bfn:R\to G$ by 
\begin{align*}
\bft(a)&=\pDII{a}{1}, & \bfn(x)&= \pMX{1}{x}{0}{1}. 
\end{align*}
The identity matrix in $G$ is denoted by ${\bf1}_2$. Denote by $\rho$ the right translation of $G$ on the space of $\C$-valued functions on $G$, i.e., $\rho(g)f(g')=f(g'g)$, and by ${\bf 1} : G \rightarrow \C$ the constant function ${\bf 1}(g)=1$. For a function $f : G \rightarrow \C$ and a character $\omega : R^{\times} \rightarrow \C^{\times}$, let $f\otimes \omega : G \rightarrow \C$ denote the function $f\otimes \omega(g)=f(g)\omega(\det g)$. 
The subgroup $B(R)$ (resp. $N(R)$) of $\GL_2(R)$ consists of upper triangular (resp. upper triangular unipotent) matrices. 

\subsection{Characters}\label{SS:Dirichlet}
If $F$ is a number field and $\chi : F^{\times}\backslash \A_F^{\times}\rightarrow \overline{\Q}^{\times}$ is a Hecke character of $\A_F^\x$,  we denote by $\chi_v: F_v^\x \rightarrow \C^{\times}$ the local component of $\chi$ at a place $v$ of $F$.
When $\omega$ is a Hecke character of $\A^\x$, we denote by $\omega_F:=\omega\circ\rmN_{F/\Q}:F^\x\bksl\A_F^\x\to\C^\x$ \emph{the base change} of $\omega$. 

If $v$ is non-archimedean and $\lam:F_v^\x\to\C^\x$ is a character, let $c(\lam)$ be the exponent of the conductor of $\lam$.

 

\subsection{Automorphic forms on $\GL_2(\A)$}\label{SS:auto}
Fix a positive integer $N$.  
Define open compact subgroups of $\GL_2(\widehat{\Z})$ by
\begin{align*}
U_0(N)=&\left \{g\in\GL_2(\widehat{\Z})\mbox{ }\left \vert \mbox{ }  g\equiv \pMX{*}{*}{0}{*} (\mbox{mod }{N\widehat{\Z}}) \right .\right \},\\
U_1(N)=&\left \{g\in U_0(N)\mbox{ }\left \vert \mbox{ }g\equiv \pMX{*}{*}{0}{1} (\mbox{mod }{N\widehat{\Z}}) \right . \right \}.
\end{align*}
Let $\omega:\Q^{\times}\backslash \A^{\times}\rightarrow\C^{\times}$ be a finite order Hecke character of level $N$. 
We extend $\omega$ to a character of $U_0(N)$ defined by $\omega \left (\pMX{a}{b}{c}{d} \right )=\prod_{\ell \mid N}\omega_{\ell}(d_{\ell})$ for $\pMX{a}{b}{c}{d} \in U_0(N)$. 
For any integer $k$ the space $\mathcal{A}_k(N,\omega)$ of automorphic forms on $\GL_2(\A)$ of weight $k$, level $N$ and character $\omega$ consists of automorphic forms $\varphi:\GL_2(\A)\rightarrow \C$ such that 
\begin{align*}
\varphi(z\gamma g\kappa_{\theta} u_{\rm f})=&\omega(z)\varphi(g)e^{\sqrt{-1}k\theta}\omega(u_{\rm f}), & \kappa_{\theta}&=\pMX{\cos\theta}{\sin\theta}{-\sin\theta}{\cos\theta}
\end{align*}
for $z\in\A^\times$, $\gamma\in\GL_2(\Q)$, $\theta\in\R$ and $u_{\rm f}\in U_0(N)$. 
Let $\mathcal{A}^0_k(N,\omega)$ be the space of cusp forms in $\mathcal{A}_k(N,\omega)$.

Next we introduce important local Hecke operators on automorphic forms. At the archimedean place, let $V_{\pm}:\mathcal{A}_k(N,\omega)\rightarrow\mathcal{A}_{k\pm 2}(N,\omega)$ be the normalized weight raising/lowering operator in \cite[page 165]{JLbook} given by 
\begin{align*}
V_{\pm}=\frac{1}{(-8\pi)}\left(\pMX{1}{0}{0}{-1}\otimes 1\pm\pMX{0}{1}{1}{0}\otimes \sqrt{-1}\right)\in{\rm Lie}(\GL_2(\R))\otimes_\R\C.
\end{align*}
Define the operator ${\bf U}_\ell$ acting on $\varphi\in\mathcal{A}_k(N,\omega)$ by 
\[{\bf U}_\ell\varphi=\sum_{x\in\Z_\ell/\ell\Z_\ell}\rho \left(\pMX{\varpi_\ell}{x}{0}{1} \right)\varphi,\]
and the level-raising operator $V_\ell:\mathcal{A}_k(N,\omega)\rightarrow\mathcal{A}_k(N\ell,\omega)$ at a finite prime $\ell$ by \[V_\ell\varphi(g):=\rho(\bft(\varpi_{\ell}^{-1}))\varphi.\]
Note that ${\bf U}_\ell V_\ell\varphi=\ell\varphi$ and that if $\ell\mid N$, then ${\bf U}_\ell\in{\rm End}_\C\mathcal{A}_k(N,\omega)$. For each prime $\ell\nmid N$, let $T_\ell\in{\rm End}_\C\mathcal{A}_k(N,\omega)$ be the usual Hecke operator defined by
\[T_\ell={\bf U}_\ell+\omega_\ell(\ell)V_\ell.\]
Define the $\GL_2(\A)$-equivariant pairing $\<\, , \,\>:\mathcal{A}^0_{-k}(N,\omega)\ot \mathcal{A}_k(N,\omega^{-1})\to\C$ by
\[\<{\varphi},{\varphi'}\>=\int_{\A^{\times}\GL_2(\Q)\backslash\GL_2(\A)}\varphi(g)\varphi'(g)\,\rmd^\tau g, \]
 where $\rmd^\tau g$ is the Tamagawa measure of $\PGL_2(\A)$. 
 Note that $\pair{T_\ell \varphi}{\varphi'}=\pair{\varphi}{T_\ell \varphi'}$ for $\ell\ndivides N$. 
 
\subsection{Classical modular forms}\label{SS:classical}
We recall a \emph{semi-adelic} description of classical modular forms. Let $C^\infty(\frakH)$ be the space of $\C$-valued smooth functions on the half complex plane $\frakH:=\{z\in\C\;|\;{\rm Im}(z)>0\}$.  
The group $\GL_2(\R)^+:=\{g\in\GL_2(\R)\;|\;\det g>0\}$ acts on $\frakH$ and the automorphy factor is given by 
\begin{align*}
\gamma(z)&=\frac{az+b}{cz+d}, &  
J(\gamma,z)&=cz+d 
\end{align*}
for $\gamma=\pMX{a}{b}{c}{d}\in\GL_2(\R)^+$ and $z\in\frakH$. 

Let $k$ be any integer.  
The Maass-Shimura differential operators $\delta_k$ and $\varepsilon$ on $C^\infty(\frakH)$ are defined by 
\begin{align*}
\delta_k&=\frac{1}{2\pi\sqrt{-1}}\biggl(\frac{\partial }{\partial z}+\frac{k}{2\sqrt{-1}y}\biggl), & 
\varepsilon&=-\frac{1}{2\pi\sqrt{-1}}y^2\frac{\partial}{\partial \overline{z}} 
\end{align*}
(cf. \cite[(1a,\,1b) page 310]{Hida93Blue}), where $y={\rm Im}(z)$ is the imaginary part of $z$. 
Let $\chi$ be a Dirichlet character of level $N$. 
For a non-negative integer $m$ let $\mathcal{N}^{[m]}_k(N,\chi)$ denote the space of nearly holomorphic modular forms of weight $k$, level $N$ and character $\chi$. 
In other words $\mathcal{N}^{[m]}_k(N,\chi)$ consists of smooth slowly increasing functions $f:\frakH\times \GL_2(\wh\Q)\to\C$ such that 
 \begin{itemize}
 \item $f(\gamma z,\gamma g_\rmf u)=(\det \gamma)^{-1}J(\gamma,z)^kf(z,g_\rmf)\chi^{-1}(u)$
 for any $\gamma\in\GL_2(\Q)^+$ and $u\in U_0(N)$;
\item $\varepsilon^{m+1} f(z,g_\rmf)=0$ \end{itemize}
 (cf. \cite[page 314]{Hida93Blue}). 
 Let $\mathcal{N}_k(N,\chi)=\bigcup_{m=0}^\infty \mathcal{N}^{[m]}_k(N,\chi)$ (cf. \cite[(1a), page 310]{Hida93Blue}). 
By definition $\mathcal{N}^{[0]}_k(N,\chi)$ coincides with the space $\mathcal{M}_k(N,\chi)$ of classical holomorphic modular forms of weight $k$, level $N$ and character $\chi$.  Denote by $\mathcal{S}_k(N,\chi)$ the space of cusp forms in $\mathcal{M}_k(N,\chi)$. 
Let $\delta_k^m=\delta_{k+2m-2}\cdots\delta_{k+2}\delta_k$. 
If $f\in \mathcal{N}_k(N,\chi)$, then $\delta_k^mf\in\mathcal{N}_{k+2m}(N,\chi)$ (\cite[page 312]{Hida93Blue}). 
Given a positive integer $d$, we define 
\begin{align*}
V_df(z,g_\rmf)&=f(dz,g_\rmf); & 
{\bf U}_df(z,g_\rmf)&=\sum_{j=0}^{d-1} f\biggl(z,g_\rmf\pMX{d}{j}{0}{1}\biggl). 
\end{align*}
The classical Hecke operators $T_\ell$ for primes $\ell\nmid N$ are given by
\[T_\ell f={\bf U}_\ell f+\chi_\ell(\ell^{-1})\ell^{k-2}V_\ell f.\]
We say that $f\in \mathcal{N}_k(N,\chi)$ is a \emph{Hecke eigenform} if $f$ is an eigenfunction of all the Hecke operators $T_\ell$ for $\ell\nmid N$ and the operators ${\bf U}_\ell$ for $\ell\mid N$.

\subsection{}\label{S:2.3} To every nearly holomorphic modular form $f\in\mathcal{N}_k(N,\chi)$ we associate a unique automorphic form $\mathit{\Phi}(f)\in\mathcal{A}_k(N,\chi^{-1})$ defined by the formula
\begin{align}\label{E:MA2}\mathit{\Phi}(f)(g):= f(g_\infty(\sqrt{-1}) ,g_{\rmf})J(g_\infty,\sqrt{-1})^{-k}(\det g_\infty) \abs{\det g}_\A^{\frac{k}{2}-1}\end{align}
for $g=g_\infty g_\rmf\in \GL_2(\R)\GL_2(\wh\Q)$ (cf. \cite[\S 3]{Casselman1973}). Conversely, we can recover the form $f$ from $\mathit{\Phi}(f)$ by 
\begin{align}\label{E:MA1}f(x+\sqrt{-1} y,\,g_\rmf)=y^{-{k}/{2}}\mathit{\Phi}(f)\left(\pMX{y}{x}{0}{1}g_\rmf\right)\abs{\det g_\rmf}_\A^{1-\frac{k}{2}}.\end{align}
We call $\varPhi(f)$ the \emph{adelic lift} of $f$.

The weight raising/lowering operators are the adelic avatar of the differential operators $\delta_k^m$ and $\varepsilon$ on the space of automorphic forms. A direct computation shows that the map $\varPhi$ from the space of modular forms to the space of automorphic forms is \emph{equivariant} for the Hecke action in the sense that
\begin{align}\label{E:diffop1}
\mathit{\Phi}(\delta_k^mf)&=V_+^m\mathit{\Phi}(f), & 
\mathit{\Phi}(\varepsilon f)&=V_-\mathit{\Phi}(f),
\intertext{and for a finite prime $\ell$}\notag
\mathit{\Phi}(T_\ell f)&=\ell^{{k}/{2}-1}T_\ell\mathit{\Phi}(f), & 
\mathit{\Phi}({\bf U}_\ell f)&=\ell^{{k}/{2}-1}{\bf U}_\ell\mathit{\Phi}(f).
\end{align}
In particular, $f$ is holomorphic if and only if $V_-\mathit{\Phi}(f)=0$. 

\subsection{Preliminaries on irreducible representations of $\GL_2(\Q_v)$}

\subsubsection{Measures}\label{SS:measure}
We shall normalize the Haar measures on $F_v$ and $F_v^{\times}$ as follows. 
Let $\rmd x_v$ be the self-dual Haar measures of $F_v$ with respect to $\bdpsi_{F_v}$. 
Put $\rmd^\x x_v=\zeta_{F_v}(1)\frac{\rmd x_v}{|x_v|_{F_v}}$. 
If $F=\Q$, then $\rmd a_\infty$ denote the usual Lebesgue measure on $\R$ and $\rmd a_\ell$ be the Haar measure on $\Q_\ell$ with ${\rm vol}(\Z_\ell,\rmd a_\ell)=1$. 
The Tamagawa measure of $\A_F$ is $\rmd x=\prod_v\rmd x_v$ while the Tamagawa measure of $\A^\x_F$ is defined by $\rmd^\x x=c_F^{-1}\prod_v\rmd x_v^\x$, where $c_F$ denotes the residue of $\zeta_F(s)$ at $s=1$. 
Define the compact subgroup $\bfK=\prod_v\bfK_v$ of $\GL_2(\A)$ by ${\bf K}_\infty={\rm SO}(2,\R)$ and ${\bf K}_\ell=\GL_2(\Z_\ell)$. 
Let $\rmd u_v$ be the Haar measure on ${\bf K}_v$ so that ${\rm vol}({\bf K}_v,\rmd u_v)=1$. Let $\rmd g_v$ be the Haar measure on $\PGL_2(\Q_v)$ given by $\rmd^\tau g_v=\abs{a_v}_v^{-1}\rmd x_v\rmd^{\times} a_v\rmd u_v$ for $g_v=\pMX{a_v}{x_v}{0}{1}u_v$ with $a_v\in\Q_v^{\times}$, $x_v\in\Q_v$ and $u_v\in {\bf K}_v$. 
The Tamagawa measure on $\PGL_2(\A)$ is given by $\rmd^\tau g=\zeta_\Q(2)^{-1}\prod_v\rmd^\tau g_v$. 

\subsubsection{Representations of $\GL_2(\Q_v)$}
Denote by ${\varrho}\boxplus{\upsilon}$ the irreducible principal series representation of $\GL_2(\Q_v)$ attached to two characters $\varrho,\upsilon:\Q_v^{\times}\rightarrow\C^{\times}$ such that $\varrho\upsilon^{-1}\neq \Abs_v^{\pm1}$. 
If $v=\infty$ is the archimedean place and $k\geq 1$ is an integer, denote by $\mathcal{D}_0(k)$ the discrete series of lowest weight $k$ if $k\geq 2$ or the limit of discrete series if $k=1$ with central character $\sgn^k$ (the $k$-th power of the sign character $\sgn(x)=\frac{x}{|x|_\infty}$ of $\R^\times$).


\subsubsection{Whittaker models and the normalized Whittaker newforms}\label{SS:Wnew}

Every irreducible admissible infinite dimensional representation $\pi$ of $\GL_2(\Q_v)$ admits a Whittaker model $\mathcal{W}(\pi)=\mathcal{W}(\pi,\bdpsi_v)$ with respect to $\bdpsi_v$. 
Recall that $\mathcal{W}(\pi)$ is a subspace of smooth functions $W:\GL_2(\Q_v)\rightarrow\C$ such that
\begin{itemize}
\item $W(\bfn(x) g)=\bdpsi_v(x)W(g)$ for all $x\in\Q_v$,
\item if $v=\infty$ is archimedean, then there exists an integer $M$ such that 
\[W (\bft(a))=O(|{a}|_\infty^M)\text{ as }|{a}|_\infty\to\infty.\] \end{itemize}
The group $\GL_2(\Q_v)$ (or the Hecke algebra of $\GL_2(\Q_v)$) acts on $\mathcal{W}(\pi)$ via the right translation $\rho$. We introduce the (normalized) \emph{local Whittaker newform} $W_{\pi}$ in $\mathcal{W}(\pi)$ in the following way: 
if $v=\infty$ and $\pi=\mathcal{D}_0(k)$, then $W_{\pi}\in\mathcal{W}(\pi)$ is defined by
\begin{align}
\label{E:Winfty.1}
W_{\pi}\left(z\pMX{y}{x}{0}{1}\kappa_\theta\right)
=\I_{\R_+}(y)\cdot \frac{y^{k/2}}{e^{2\pi y}}\cdot{\rm sgn}(z)^k\bdpsi_\infty(x)e^{\sqrt{-1}k\theta}
\end{align}
for $y,z\in\R^{\times}$ and $x,\theta\in\R$.  Here one should not confuse the representation $\pi$ in the left hand side of the equation and the real number $\pi$ in the right hand side. If $v$ is finite, then $W_{\pi}$ is the unique function in $\mathcal{W}(\pi)^{\rm new}$ such that $W_{\pi}({\bf1}_2)=1$. The explicit formula for $W_\pi(\bft(a))$ is well-known (See \cite[page 21]{Schmidt2002} or \cite[Section 2.2]{Saha16} for example).

\subsubsection{$L$-factors and $\varepsilon$-factors}\label{SS:eps}
Given $a\in\Q_v^\times$, we define an additive character $\bdpsi^a_v$ on $\Q_v$ by $\bdpsi^a_v(x)=\bdpsi_v(ax)$ for $x\in\Q_v$.   
We associate to a character $\varrho:\Q_v^{\times}\rightarrow\C^{\times}$ the $L$-factor $L(s,\varrho)$ and the $\varepsilon$-factor $\varepsilon(s,\varrho,\bdpsi^a_v)$ (cf. \cite[Section 1.1]{Schmidt2002}). 
The gamma factor 
\[\gamma(s,\varrho,\bdpsi_v^a)=\varepsilon(s,\varrho,\bdpsi_v^a)\frac{L(1-s,\varrho^{-1})}{L(s,\varrho)}\]
is obtained as the proportionality constant of the functional equation
\beq
\gamma(s,\varrho,\bdpsi_v^a)\int_{\Q_v^\times}\varphi(a)\varrho(a)|a|_v^s\,\rmd^\times a=\int_{\Q_v^\times}\widehat{\varphi}(a)\varrho(a)^{-1}|a|_v^{1-s}\,\rmd^\times a \label{tag:fq}
\eeq
for $\varphi\in\cS(\Q_v)$, where 
\[\widehat{\varphi}(y)=\int_{\Q_v}\varphi(x_v)\bdpsi_v(yx_v)\,\rmd x_v\] 
is the Fourier transform with respect to $\bdpsi_v$. 
When $a=1$, we write 
\begin{align*}
\varepsilon(s,\varrho)&=\varepsilon(s,\varrho,\bdpsi_v), & 
\gamma(s,\varrho)&=\gamma(s,\varrho,\bdpsi_v). 
\end{align*}
When $v=\ell$ is a finite prime, we denote the exponent of the conductor of $\varrho$ by $c(\varrho)$. 
Recall that 
\beq
\varepsilon(s,\varrho,\bdpsi^a_v)=\varrho(a)|a|_v^{-1}\varepsilon(0,\varrho)\ell^{-c(\varrho)s}. \label{tag:conductor}
\eeq

Let $\pi$ be an irreducible admissible representation of $\GL_2(\Q_v)$ with central character $\om$. 
Denote by $L(s,\pi)$ and $\varepsilon(s,\pi)=\varepsilon(s,\pi,\bdpsi_v)$ its $L$-factor and $\vep$-factor relative to $\bdpsi_v$ defined in \cite[Theorem 2.18]{JLbook}. 
We write $\pi^\vee$ for the contragredient representation of $\pi$. 
The gamma factor 
\[\gamma(s,\pi)=\varepsilon(s,\pi)\frac{L(1-s,\pi^\vee)}{L(s,\pi)}\] 
is obtained as the proportionality constant of the functional equation 
\[\gamma\left(s+\frac{1}{2},\pi\right)\int_{\Q_v^\times}W(\bft(a)g)\abs{a}_v^s\rmd^\times a=\int_{\Q_v^\times}W(\bft(a)J_1^{-1}g)\om(a)^{-1}\abs{a}_v^{-s}\rmd^\times a \]
for every $W\in \cW(\pi)$. 

\subsection{$p$-stabilized newforms}\label{SS:Whittaker}

Let $\pi$ be an irreducible cuspidal automorphic representation of $\GL_2(\A)$. 
The Whittaker function of $\varphi\in\pi$ with respect to the additive character $\bdpsi$ is given by 
\begin{align*}
W_\varphi(g)=\int_{\A/\Q}\varphi(\bfn(x)g)\bdpsi(-x)\,\rmd x
\end{align*}
for $g\in\GL_2(\A)$, where $\rmd x$ is the Haar measure with ${\rm vol}(\A/\Q,\rmd x)=1$. We have the Fourier expansion:
\[\varphi(g)=\sum_{\beta \in\Q^{\times}}W_\varphi(\bft(\beta)g)\]
(cf. \cite[Theorem 3.5.5]{Bump1998}). 
Let $f=\sum_{n}{\bf a}(n,f)q^n\in\mathcal{S}_k(N,\chi)$ be a normalized Hecke eigenform whose adelic lift
$\mathit{\Phi}(f)$ generates $\pi=\otimes_v'\pi_v$ of $\GL_2(\A)$, having central character $\chi^{-1}$. 
If $f$ is a newform, then the conductor of $\pi$ is $N$, the adelic lift $\mathit{\Phi}(f)$ is the normalized new vector in $\pi$ and the Mellin transform 
\[\int_{\A^{\times}/\Q^{\times}}\mathit{\Phi}(f)(\bft(y))|{y}|_{\A}^s\,\rmd^{\times} y=L\biggl(s+\onehalf,\pi\biggl)\]
is the automorphic $L$-function of $\pi$. 
Here $\abs{y}_\A=\prod_v\abs{y^{}_v}_v$ and $\rmd^{\times} y$ is the product measure $\prod_v\rmd^{\times} y_v$.

\begin{defn}[$p$-stabilized newform]{\rm Let $p$ be a prime and fix an isomorphism $\iota_p:\C\simeq \overline{\Q}_p$. We say that a normalized Hecke eigenform $f=\sum_{n=1}^\infty\bfa(n,f)q^n\in \mathcal{S}_k(Np,\chi)$ is an ordinary \emph{$p$-stabilized newform} with respect to $\iota_p$ if $f$ is new outside $p$ and the eigenvalue of ${\bf U}_p$, {\rm i.e.} the $p$-th Fourier coefficient $\iota_p({\bf a}(p,f))$, is a $p$-adic unit. The prime-to-$p$ part of the conductor of $f$ is called \emph{the tame conductor} of $f$.}\end{defn}

The Whittaker function of $\mathit{\Phi}(f)$ is a product of local Whittaker functions in $\mathcal{W}(\pi_v,\bdpsi_v)$ by the multiplicity one for new and ordinary vectors. 
To be precise, we have 
\[W_{\mathit{\Phi}(f)}(g)=W_{\pi_p}^{\rm ord}(g_p)\prod_{v\neq p} W_{\pi_v}(g_v)\]
for $g=(g_v)\in \GL_2(\A)$. 
Here $W_{\pi_v}$ is the normalized Whittaker newform of $\pi_v$ and $W_{\pi_p}^\Ord$ is the ordinary Whittaker function characterized by  
\[W_{\pi_p}^\Ord(\bft(a))=\varrho_f(a)\abs{a}^\onehalf\cdot \bbI_{\Zp}(a)\text{ for }a\in\Qp^\x, \]
where $\varrho_f:\Qp^\x\to\C^\x$ is the unramified character with $\varrho_f(p)=\bfa(p,f)\cdot p^{({1-k})/{2}}$ (See \cite[Corollary 2.3, Remark 2.5]{Hsieh2017}).


\section{The construction of Hilbert-Eisenstein series}\label{S:4}

\subsection{Eisenstein series}
We recall the construction of Eisenstein series described in \cite[\S 19]{JLbook2}. 
Let $F$ be a real quadratic field with integer ring $\cO_F$. 
We denote the set of real places of $F$ by $\Sigma_\R=\{\sigma_1,\sigma_2\}$, the different of $F$ by $\frakd$, the discriminant of $F$ by $\Delta_F$ and the unique non-trivial automorphism of $F$ by $x\mapsto\bar x$. 
For each finite prime $\pmq$ of $F$ we write $\cO_\pmq$ for the integer ring of $F_\pmq$. 

Let $(\Mu,\Nu)$ be a pair of unitary Hecke characters of $\A_F^\x$. 
For each place $v$ we write $\cB(\Mu_v,\Nu_v,s)$ for the space of smooth functions $f_v:\GL_2(F_v)\to\C$ which satisfy 
\[f_v\biggl(\pMX{a}{b}{0}{d}g\biggl)=\Mu_v(a)\Nu_v(d)\abs{\frac{a}{d}}_{F_v}^{s+\onehalf}f_v(g)\]
for $a,d\in F_v^\x$ and $b\in F_v$. 
Recall that $\cS(F_v^2)$ denotes the space of Schwartz functions on $F_v^2$. 
We associate to $\Phi_v\in\cS(F_v^2)$ the Godement section $f_{\Mu_v,\Nu_v,\Phi_v,s}\in\cB(\Mu_v,\Nu_v,s)$ by  
\begin{multline*}
f_{\Mu_v,\Nu_v,\Phi_v,s}(g_v)\\
=\Mu_v(\det g_v)\abs{\det g_v}_{F_v}^{s+\onehalf}\int_{F_v^\x}\Phi_v((0,t_v)g_v)(\Mu_v\Nu_v^{-1})(t_v)\abs{t_v}_{F_v}^{2s+1}\,\rmd^\x t_v. 
\end{multline*}
Let $\Phi=\ot_v\Phi_v\in \cS(\A_F^2)$. 
Define a function $f_{\Mu,\Nu,\Phi,s}:\GL_2(\A_F)\to\C$ by $f_{\Mu,\Nu,\Phi,s}(g)=\prod_vf_{\Mu_v,\Nu_v,\Phi_v,s}(g_v)$. 
The series
\[E_\A(g,f_{\Mu,\Nu,\Phi,s})=\sum_{\gamma\in B(F)\bksl \GL_2(F)}f_{\Mu,\Nu,\Phi,s}(\gamma g)\]
converges absolutely for ${\rm Re}(s)\gg 0$ and has meromorphic continuation to $s \in \C$. 
It admits the Fourier expansion
\beq\label{E:FE.1}
E_\A(g,f_{\Mu,\Nu,\Phi,s})
=f_{\Mu,\Nu,\Phi,s}(g)+f_{\Nu,\Mu,\wh\Phi,-s}(g)+\sum_{\beta\in F^\x}W(\bft(\beta)g,f_{\Mu,\Nu,\Phi,s}),
\eeq
where $\wh\Phi:=\ot_v\wh\Phi_v$ is the symplectic Fourier transform defined by
$$\widehat{\Phi}_v(x,y) = \iint_{F_v^2}\Phi_v(z,u)\bdpsi_{F_v}(zy-ux)\,\rmd z\rmd u.$$ 
We tentatively write $f_{v,s}=f_{\Mu_v,\Nu_v,\Phi_v,s}$. 
There exists an open compact subgroup $\cU$ of $F_v$ such that for any open compact subgroup $\cU'$ containing $\cU$
\[\int_\cU f_{v,s}(J_1\bfn(x_v)g_v)\bdpsi_{F_v}(-x_v)\,\rmd x_v=\int_{\cU'} f_{v,s}(J_1\bfn(x_v)g_v)\bdpsi_{F_v}(-x_v)\,\rmd x_v, \]
where $J_1=\pMX{0}{-1}{1}{0}$. 
We define the regularized integral by 
\begin{align*}W(g_v,f_{\Mu_v,\Nu_v,\Phi_v,s})&=\int^{\rm st}_{F_v}f_{v,s}(J_1\bfn(x_v)g_v)\bdpsi_{F_v}(-x_v)\,\rmd x_v\\
:&=\int_\cU f_{v,s}(J_1\bfn(x_v)g_v)\bdpsi_{F_v}(-x_v)\,\rmd x_v. \end{align*}
Then $W(g,f_{\Mu,\Nu,\Phi,s})=\prod_vW(g_v,f_{v,s})$ for $g=(g_v)\in\GL_2(\A_F)$. 
\subsection{The Eisenstein series $E_k(\Mu,\Nu)$}


Let $N$ and $\LV$ be positive integers such that $N\Delta_F$ and $\LV$ are coprime. 
We assume that 
\beqcd{Spl}\text{ every prime factor of $N\LV$ splits in $\cK$.}\eeqcd
Then there are ideals $\frakN$ and $\frakc$ of $\cO_F$ such that 
\begin{align}
N\cO_\cK&=\frakN\ol{\frakN}, \quad (\frakN,\ol{\frakN})=1& 
\LV\cO_\cK&=\frakc\ol{\frakc}, \quad (\frakc,\ol{\frakc})=1. 
\label{tag:Heeg}
\end{align}
Fix a positive integer $k$. 
Assume that 
$\Nu_{\sigma_i}\Mu_{\sigma_i}=\sgn^k$ for $i=1,2$. 
We recall a construction of a certain classical Eisenstein series $E_k(\Mu,\Nu)$ of parallel weight $k$, level $\Gamma_1(N\LV)$ and central character $\Mu\Nu$, following \cite{JLbook2}. We impose the following hypotheses for $(\Mu,\Nu)$:
\begin{hypothesis}\label{H:31}\noindent
\begin{itemize}\item $\mu$ is unramified outside $p$,
\item the prime-to-$p$ part of the conductor of $\nu$ has a decomposition $\frakc \frakc'$ with $\ol{\frakc}\subset\frakc'$.\end{itemize}
\end{hypothesis}
\begin{defn}\label{D:BS.1} 
Let $k\geq 2$ be an integer. 
The quintuple \[\cD:=(\Mu,\Nu,k,\frakN,\frakc)\] is called an Eisenstein datum of weight $k$. 
The Fourier transform of $\phi\in\cS(F_v)$ is defined by 
\[\wh\phi(x):=\int_{F_v}\phi(y)\bdpsi_{F_v}(yx)\,\rmd y, \]
where the Haar measure $\rmd y$ is so chosen that $\wh{\wh\phi}(x)=\phi(-x)$. 
When $\pmq$ is a finite prime, we associate to a character $\chi:F_\pmq^\x\to\C$ a function $\phi_\chi\in\cS(F_\pmq)$ by $\phi_\chi(x)=\bbI_{\Ov^\x}(x)\chi(x)$. 
We associate to $\cD$ the \BS function \[\Phi_\cD=\bigot_{v}\Phi_{\cD,v}\in\cS(\A_F^2)\] defined as follows:
\begin{itemize}
\item $\Phi_{\cD,v}(x,y)=2^{-k}(x+\sqrt{-1}y)^ke^{-\pi(x^2+y^2)}$ if $v\in\Sigma_\R$,
\item $\Phi_{\cD,v}(x,y)=\phi_{\Mu_v^{-1}}(x)\wh\phi_{\Nu_v^{-1}}(y)$ if $v\divides p$,
\item $\Phi_{\cD,v}(x,y)=\bbI_{\frakN \frakc\cO_v}(x)\phi_{\Nu_v}(y)$ if $v\divides \frakN\frakc$,
\item $\Phi_{\cD,v}(x,y)=\bbI_{\cO_v}(x)\wh\phi_{\Nu_v^{-1}}(y)$ if $v\divides  \ol{\frakc}$,
\item $\Phi_{\cD,v}(x,y)=\bbI_{\frakd^{-1}\cO_v}(x)\bbI_{\frakd^{-1}\cO_v}(y)\cdot \abs{\Delta_F}_v^\onehalf$ if $v\ndivides p \frakN c$. 
\end{itemize}
These particular choices of \BS functions are inspired by \cite[Definition 4.1]{HsiehChen19} used in the construction of primitive $p$-adic Rankin-Selberg $L$-functions. We define the associated Godement section by $f_{\cD,s}=f_{\Mu,\Nu,\Phi_\cD,s}$ and $f_{\cD,s,v}=f_{\Mu_v,\Nu_v,\Phi_{\cD,v},s}$. 
\end{defn}

\begin{Remark}\label{R:section} 

If $v\in\Sigma_\R$, then $f_{\cD,s,v}$ is the unique function in $\cB(\Mu_v,\Nu_v,s)$ such that \[f_{\cD,s,v}(\kappa_\theta) = e^{\sqrt{-1}k\theta}\cdot 2^{-k}(\sqrt{-1})^k\pi^{-(s+\frac{k+1}{2})}\Gamma \left (s+ \frac{k+1}{2} \right)\]
(see the proof of Lemma \ref{L:FEinfty}). 
If $v=\pmq$ is a finite place, then for any integer $M$, let $\cU_1(M)$ be the open-compact subgroup of $\GL_2(\Ov)$ given by \[\mathcal{U}_1(M)=\GL_2(\Ov)\cap \pMX{\Ov}{\Ov}{M\Ov}{1+M\Ov},\]
and
$f_{\cD,s,\pmq}\in\cB(\Mu_\pmq,\Nu_\pmq,s)$ is invariant by $\cU_1(p^rN\LV)$ under the right translation for some sufficiently large $r$.
\end{Remark}

\begin{defn}\label{D:Eisenstein}
Define the classical Eisenstein series $E^\pm_k(\Mu,\Nu):\frakH^{\Sg_\R}\to\C$ by 
\begin{align*}
E^\pm_k(\Mu,\Nu)(x+y\sqrt{-1}):=&y^{-\frac{k}{2}}E_\A\biggl(\pMX{y}{x}{0}{1},\,f_{\cD,s}\biggl)\biggl|_{s=\pm\frac{k-1}{2}} &(x&\in \R^2,\,y\in \R_+^2).\end{align*}
 Then $E^\pm_k(\Mu,\Nu)$ is a Hilbert modular form of parallel weight $k$, level $p^rN\LV$ and character $\Mu^{-1}\Nu^{-1}$. 
 By definition 
\[\varPhi(E^\pm_k(\Mu,\Nu)|\frakH)(g)=E_\A((g,g),f_{\cD,s})|_{s=\pm\frac{k-1}{2}}\]
for $g\in\GL_2(\A)$, where $\varPhi$ is the adelic lift defined in \eqref{E:MA1}.
\end{defn}

\begin{prop}\label{P:shift.1}For every non-negative integer $t$, we have 
\[\varPhi(\delta_k^t E^\pm_k(\Mu,\Nu))=E_\A(f_{\cD_t,s})|_{s=\pm\frac{k-1}{2}},\]
where $\cD_t=(\Mu,\Nu,k+2t,\frakN,\frakc)$ is an Eisenstein datum of weight $k+2t$.
\end{prop}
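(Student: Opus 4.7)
The plan is to transfer the differential operator $\delta_k^t$ on the classical/nearly-holomorphic side to the weight-raising operator on the adelic side, and then verify that weight-raising sends the archimedean Eisenstein section of the datum $\cD$ to the one of $\cD_t$. The identity to be proved is natural because both $\delta_k^t$ and the choice of archimedean Schwartz function in $\cD_t$ are normalized by the same Gaussian $e^{-\pi(x^2+y^2)}$ with a holomorphic polynomial factor $(x+\sqrt{-1}y)^{k+2t}$.

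First, I would upgrade the intertwining formula $\varPhi(\delta_k^m f)=V_+^m\varPhi(f)$ of \eqref{E:diffop1} from the elliptic to the Hilbert setting in the obvious way: write $\delta_k^t$ as the product of the Maass-Shimura operators at the two archimedean places $\sigma_1,\sigma_2$ of $F$, and $V_+^t$ as the corresponding product of the normalized weight-raising operators $V_{+,\sigma_1}^t V_{+,\sigma_2}^t$ acting on automorphic forms on $\GL_2(\A_F)$. Invoking the definition of the adelic lift and Definition~\ref{D:Eisenstein} then gives
\[
\varPhi(\delta_k^t E^\pm_k(\Mu,\Nu))
=V_{+,\sigma_1}^t V_{+,\sigma_2}^t\, E_\A(f_{\cD,s})\Big|_{s=\pm\frac{k-1}{2}}.
\]

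Second, because $V_{+,\sigma_i}$ is a right-invariant differential operator on $\GL_2(\R)$ at the place $\sigma_i$ and the Eisenstein series is defined by a sum over $B(F)\backslash\GL_2(F)$ that converges absolutely and uniformly on compacta (after meromorphic continuation), the operator commutes with the summation. Since the finite and the other archimedean components of the section are unaffected, it suffices to establish the purely archimedean local identity
\[
V_{+,\sigma_i}^t\, f_{\cD,s,\sigma_i}=f_{\cD_t,s,\sigma_i}\qquad(i=1,2),
\]
at the level of sections in $\cB(\Mu_{\sigma_i},\Nu_{\sigma_i},s)$. Once this is shown, multiplying over $\sigma_1,\sigma_2$ and restoring the unchanged finite components recovers $E_\A(f_{\cD_t,s})$, so the proposition follows upon specializing to $s=\pm\frac{k-1}{2}$.

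Third, the local identity at an archimedean place is the main computation. I would proceed by induction on $t$, reducing to the case $t=1$. Using the explicit formula in Remark~\ref{R:section} for the value of the section on $\kappa_\theta$, together with the $B(F_{\sigma_i})$-equivariance of $f_{\cD,s,\sigma_i}$, it is enough to apply $V_+$ to the $K$-type-$k$ line inside $\cB(\Mu_{\sigma_i},\Nu_{\sigma_i},s)$ and check that the result agrees with the $K$-type-$(k+2)$ vector produced by the Schwartz function $2^{-(k+2)}(x+\sqrt{-1}y)^{k+2}e^{-\pi(x^2+y^2)}$ through the Godement integral. This ultimately reduces to a Lie-algebra calculation showing that $V_+$, acting through the differential operator $\frac{1}{-8\pi}\bigl(H\otimes 1\pm (X^++X^-)\otimes\sqrt{-1}\bigr)$, multiplies the Gaussian-polynomial Schwartz datum by the factor $(x+\sqrt{-1}y)$ (up to a bookkeeping constant that is absorbed in the normalization $2^{-k}$ built into Definition~\ref{D:BS.1}).

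The main obstacle is this last local Lie-algebra computation. Everything else (commuting $V_+^t$ with the Eisenstein sum, the equivariance $\varPhi\circ\delta_k^t=V_+^t\circ\varPhi$, the unchanged finite part) is formal, but the archimedean identification requires one to track both the $s$-parameter and the polynomial part of the Gaussian under $V_+$; the normalization conventions of the Maass-Shimura operator $\delta_k$ and of $V_+$ are chosen precisely so that the two sides match on the nose, which is what makes the proposition clean.
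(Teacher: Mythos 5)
Your proposal follows essentially the same route as the paper's proof: reduce to the intertwining $\varPhi\circ\delta_k^t=V_+^t\circ\varPhi$ from \eqref{E:diffop1} together with the archimedean local identity $V_+^t f_{\cD,s,\infty}=f_{\cD_t,s,\infty}$. The paper simply cites the latter from Jacquet--Langlands (Lemma 5.6(iii) of \cite{JLbook}) rather than carrying out the Lie-algebra computation you sketch, but the logical structure is identical.
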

\begin{proof}
Recall the differential operator $V_+$ defined in \S \ref{SS:auto}. 
Proposition \ref{P:shift.1} follows from \eqref{E:diffop1} in view of the relation $V_+^tf_{\cD,s,\infty}=f_{\cD_t,s,\infty}$ (see \cite[Lemma 5.6 (iii)]{JLbook}).
\end{proof}

\subsection{Fourier coefficients of Eisenstein series} 

\begin{lm}\label{L:FEinfty}
For $a\in\R^\x$, we have
 \begin{align*}
W(\bft(a),f_{\cD,s,\infty})|_{s=\frac{k-1}{2}}=&W(\bft(a),f_{\cD,s,\infty})|_{s=\frac{1-k}{2}}= a^{\frac{k}{2}}e^{-2\pi a}\cdot \bbI_{\R_+}(a).
\end{align*}
\end{lm}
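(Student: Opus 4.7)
The plan is to reduce the Whittaker integral to a one-dimensional Fourier transform and evaluate it by contour integration. I first substitute directly into the definition of the Godement section, using $\det(J_1\bfn(x)\bft(a)) = a$, $(0,t)J_1\bfn(x)\bft(a) = (ta,tx)$, and the explicit form $\Phi_{\cD,\infty}(x,y) = 2^{-k}(x+\sqrt{-1}y)^k e^{-\pi(x^2+y^2)}$. The condition $\Mu_\infty\Nu_\infty = \sgn^k$ gives $\Mu_\infty\Nu_\infty^{-1}(t)\cdot t^k = |t|^k$ on $\R^\times$, so that the inner $t$-integral is a Gaussian Mellin integral which evaluates in closed form to
\[
f_{\cD,s,\infty}(J_1\bfn(x)\bft(a)) = \Mu_\infty(a)|a|^{s+\frac{1}{2}}\cdot 2^{-k}\pi^{-(s+\frac{k+1}{2})}\Gamma\!\Bigl(s+\tfrac{k+1}{2}\Bigr)\cdot\frac{(a+\sqrt{-1}x)^k}{(a^2+x^2)^{s+(k+1)/2}}.
\]
Specializing the same computation at $g=\kappa_\theta$ simultaneously proves the normalization stated in Remark~\ref{R:section}.

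For $s = \tfrac{k-1}{2}$ the exponent $s+(k+1)/2 = k$ collapses the quotient to $(a-\sqrt{-1}x)^{-k}$ via $(a^2+x^2)^k = (a+\sqrt{-1}x)^k(a-\sqrt{-1}x)^k$, and the Whittaker integral becomes the classical Fourier transform
\[
\int_\R \frac{e^{-2\pi\sqrt{-1}x}}{(a-\sqrt{-1}x)^k}\,dx = \mathbb{I}_{\R_+}(a)\cdot\frac{(2\pi)^k}{(k-1)!}\cdot e^{-2\pi a},
\]
evaluated by closing the contour in the lower half-plane, where the order-$k$ pole at $x=-\sqrt{-1}a$ lies precisely when $a>0$. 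Noting that $\Mu_\infty(a) = 1$ on the support $a > 0$ (since $\Mu_\infty$ is a finite-order character of $\R^\times$), the prefactor $2^{-k}\pi^{-k}(k-1)!$ cancels against $(2\pi)^k/(k-1)!$ to yield $a^{k/2}e^{-2\pi a}\mathbb{I}_{\R_+}(a)$.

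For $s = \tfrac{1-k}{2}$ the integrand becomes $(a+\sqrt{-1}x)^{k-1}/(a-\sqrt{-1}x)$, which is no longer absolutely integrable when $k\geq 2$. I would handle this via the binomial identity $(a+\sqrt{-1}x)^{k-1} = (2a-(a-\sqrt{-1}x))^{k-1}$, which produces
\[
\frac{(a+\sqrt{-1}x)^{k-1}}{a-\sqrt{-1}x} = \frac{(2a)^{k-1}}{a-\sqrt{-1}x} + P(x)
\]
with $P$ a polynomial in $x$ of degree $k-2$. The Fourier transform of $P$ is a distribution supported at $\xi = 0$, so its pairing against $e^{-2\pi\sqrt{-1}x}$ (frequency $\xi = 1$) vanishes, and only the polar term survives, contributing $(2a)^{k-1}\cdot \mathbb{I}_{\R_+}(a)\cdot 2\pi e^{-2\pi a}$ by the $k=1$ residue formula. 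Combining with the prefactor $|a|^{1-k/2}\cdot 2^{-k}\pi^{-1}$ collapses powers as $2^{-k}\cdot 2^{k-1}\cdot 2\pi\cdot\pi^{-1} = 1$ and $|a|^{1-k/2}\cdot a^{k-1} = a^{k/2}$, matching the first case.

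The main obstacle will be justifying rigorously the vanishing of the polynomial contribution at $s = \tfrac{1-k}{2}$, where the naive integral diverges. The cleanest route is meromorphic continuation in $s$: the Whittaker integral converges absolutely for $\mathrm{Re}(s) > 0$ and the explicit closed form computed at $s = \tfrac{k-1}{2}$ extends analytically to the desired value at $s = \tfrac{1-k}{2}$. Alternatively, and conceptually more satisfying, one may observe that at both reducibility points $s = \pm\tfrac{k-1}{2}$ the Godement section lies in the unique copy of the discrete series $\mathcal{D}_0(k)$ inside the induced representation, so by uniqueness of the Whittaker functional on $\mathcal{D}_0(k)$ it suffices to match a single value, which the preceding direct calculation at $s = \tfrac{k-1}{2}$ already furnishes.
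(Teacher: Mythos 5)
Your proposal is correct and follows essentially the same route as the paper: reduce the inner $t$-integral to a Gaussian Mellin integral yielding $\Gamma(s+\tfrac{k+1}{2})$, then evaluate the remaining $x$-integral by contour integration at the two values $s=\pm\tfrac{k-1}{2}$. Your observation that the $s=\tfrac{1-k}{2}$ specialization is not absolutely convergent for $k\geq 2$ is a genuine improvement in rigor over the paper, which silently applies ``Cauchy's integral formula'' there; either of your fallbacks (meromorphic continuation from $\Re s>0$, or uniqueness of the weight-$k$ Whittaker functional on $\mathcal{D}_0(k)$ together with your computed value at $s=\tfrac{k-1}{2}$) closes that gap.

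One small imprecision to fix: the identity $\Mu_\infty\Nu_\infty^{-1}(t)\cdot t^k=|t|^k$ does \emph{not} follow from $\Mu_\infty\Nu_\infty=\sgn^k$ alone. Since $\Mu_\infty\Nu_\infty^{-1}=\sgn^k\cdot\Mu_\infty^2$, you also need $\Mu_\infty^2=1$, i.e.\ $\Mu_\infty\in\{1,\sgn\}$. This is exactly the content of the paper's closing line ``since $\mu$ is a quadratic character,'' and it is the same fact you invoke later to conclude $\Mu_\infty(a)=1$ on $a>0$; you should cite it already at the step where you discard $\Mu_\infty^2$ from the $t$-integrand, not only at the end. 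With that adjustment the argument is complete.
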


\begin{proof}
By definition, $W(\bft(a),f_{\cD,s,\infty})$ equals
\begin{align*}
&2^{-k}\Mu\Abs^{s+\onehalf}(a)
\int_\R\int_{\R^\x}t^k(a+\sqrt{-1}x)^ke^{-\pi t^2(x^2+a^2)}\sgn(t)^k\abs{t}^{2s+1}\bdpsi_\infty(-x)\,\rmd^\x t\rmd x\\
=&\Mu\Abs^{s+\onehalf}(a)\cdot (-2\sqrt{-1})^{-k}\cdot  \Gamma\biggl(s+\frac{k+1}{2}\biggl)\pi^{-(s+\frac{k+1}{2})} \\
&\times \int_{\R}(x+\sqrt{-1}a)^{-(s+\frac{k+1}{2})}(x-\sqrt{-1}a)^{-(s-\frac{k-1}{2})}\bdpsi_\infty(-x)\,\rmd x.
\end{align*}
By Cauchy's integral formula we find that 
\begin{align*}
W(\bft(a),f_{\cD,s,\infty})|_{s=\frac{k-1}{2}}
=&\Mu\Abs^{\frac{k}{2}}(a)\cdot (-2\pi\sqrt{-1})^{-k}\cdot  \Gamma(k)
 \int_{\R}\frac{e^{-2\pi\sqrt{-1}x}}{(x+\sqrt{-1}a)^{k}}\,\rmd x\\
 =&\Mu(a)\cdot a^{\frac{k}{2}}e^{-2\pi a}\cdot \bbI_{\R_+}(a),
\end{align*}
and that
\begin{align*}
W(\bft(a),f_{\cD,s,\infty})|_{s=\frac{1-k}{2}}
=&\Mu\Abs^{1-\frac{k}{2}}(a)(-2\sqrt{-1})^{-k}\pi^{-1}\int_\R \frac{(x-\sqrt{-1}a)^{k-1}e^{-2\pi\sqrt{-1}x}}{x+\sqrt{-1}a}\,\rmd x\\
=&\Mu(a)\cdot a^\frac{k}{2}e^{-2\pi a}\cdot\bbI_{\R_+}(a).
\end{align*}
Since $\mu$ is a quadratic character, the lemma follows.
\end{proof}

Let $q_\pmq=|\ufv|^{-1}=\sharp(\cO_F/\pmq)$ denote the cardinality of the residue field.  

\begin{lm}\label{L:FEfinite} 
Let $v=\pmq$ be a prime ideal of $\cO_F$.  
Let $a\in \Fv^\x$. 
Put 
\begin{align*}
\chi_\pmq&=\Mu_{\pmq}^{-1}\Nu_{\pmq}, & 
\gamma_\pmq&=\chi_\pmq(\ufv), &
q_\pmq&=|\ufv|^{-1}=\sharp(\cO_F/\pmq), & 
m&=\Ord_\pmq(a). 
\end{align*}
Then $W(\bft(a),f_{\cD,s,\pmq})$ equals 
\begin{gather*}
\tag{$\pmq\ndivides p\frakN c$} 
\Mu_\pmq(a)|a|^{s+\onehalf}\sum_{j=0}^{m+\Ord_\pmq(\frakd)}(\gamma_\pmq q_\pmq^{2s})^j, \\
\tag{$\pmq \divides \frakN$} 
\Mu_\pmq(a)|a|^{s+\onehalf}\biggl(\sum_{j=0}^{m-\Ord_\pmq(\frakN)}(\gamma_\pmq q_\pmq^{2s})^j-q_\pmq^{-1}\sum_{j=-1}^{m-\Ord_\pmq(\frakN)}(\gamma_\pmq q_\pmq^{2s})^j\biggl), \\
\tag{$\pmq\divides \frakc$} 
\Mu_\pmq(a)|a|^{s+\onehalf}\cdot\varepsilon(-2s,\chi_\pmq)^{-1}\cdot\bbI_{\Ov}(a), \\
\tag{$\pmq\divides \ol{\frakc}$} 
\Mu_\pmq(a)|a|^{s+\onehalf}\bbI_{\cO_\pmq}(a), \\
\tag{$\frakq=\frakp\divides p$}  
\bbI_{\cO_\frakp^\x}(a). 
\end{gather*}
\end{lm}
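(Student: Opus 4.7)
The proof is a direct local computation done case by case. The unifying first step reduces $W(\bft(a),f_{\cD,s,\pmq})$ to a one-variable integral on $F_\pmq^\x$ involving $\Phi_1$ and the Fourier transform of $\Phi_2$, where the \BS function factors as $\Phi_{\cD,\pmq}(x,y)=\Phi_1(x)\Phi_2(y)$ in every case. Using the identity $(0,t)J_1\bfn(x)\bft(a)=(ta,tx)$ and the definition of the Godement section, one finds
\[f_{\cD,s,\pmq}(J_1\bfn(x)\bft(a))=\Mu_\pmq(a)|a|_\pmq^{s+\onehalf}\int_{F_\pmq^\x}\Phi_1(ta)\Phi_2(tx)(\Mu\Nu^{-1})_\pmq(t)|t|_\pmq^{2s+1}\,\rmd^\x t.\]
Interchanging the $t$- and $x$-integrations (legitimate after passing to the regularized $x$-integral) and recognizing the inner $x$-integral as $|t|_\pmq^{-1}\wh\Phi_2(-1/t)$ yields the key reduction
\[W(\bft(a),f_{\cD,s,\pmq})=\Mu_\pmq(a)|a|_\pmq^{s+\onehalf}\int_{F_\pmq^\x}\Phi_1(ta)\wh{\Phi}_2(-1/t)(\Mu\Nu^{-1})_\pmq(t)|t|_\pmq^{2s}\,\rmd^\x t.\]

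Each of the five cases is then handled by identifying the common support of $\Phi_1(ta)\wh\Phi_2(-1/t)$ and summing the resulting geometric series. When $\pmq\nmid p\frakN\frakc$, both factors are indicator functions of $\Ov$-ideals and the overlap pins $\ord_\pmq(t)$ to the range $-m-\ord_\pmq(\frakd)\le\ord_\pmq(t)\le 0$, producing the stated geometric sum. When $\pmq\mid\frakN$, $\Nu_\pmq$ is unramified so $\wh\phi_{\Nu_\pmq}=\bbI_{\Ov}-q_\pmq^{-1}\bbI_{\ufv^{-1}\Ov}$, producing the two-term difference of partial sums and the $-q_\pmq^{-1}$ factor, with the shift by $\ord_\pmq(\frakN)$ coming from $\Phi_1=\bbI_{\frakN\frakc\cO_v}$. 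When $\pmq\mid\frakc$, $\chi_\pmq$ is ramified; the Fourier transform $\wh\phi_{\Nu_\pmq}$ is then a Gauss sum supported on a single coset of $\Ov^\x$, and the resulting Tate integral in $t$ is matched to $\varepsilon(-2s,\chi_\pmq)^{-1}\bbI_{\Ov}(a)$ via the local functional equation~\eqref{tag:fq}. When $\pmq\mid\ol\frakc$, Fourier inversion gives $\wh{\Phi}_2=\phi_{\Nu_\pmq^{-1}}(-\,\cdot\,)$, which is supported on $\Ov^\x$, so the $t$-integral collapses to $\ord_\pmq(t)=0$ and simplifies to $\bbI_{\Ov}(a)$. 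Finally, when $\pmq=\frakp\mid p$, both $\Phi_1=\phi_{\Mu_\frakp^{-1}}$ and $\wh\Phi_2$ are supported on $\Ov^\x$, immediately forcing $a\in\Ov^\x$ and giving $\bbI_{\cO_\frakp^\x}(a)$.

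The main technical point is the ramified computation in the case $\pmq\mid\frakc$, which requires the standard Gauss sum identity $\wh\phi_\chi(\xi)=\varepsilon(0,\chi)\chi^{-1}(\xi)|\xi|_\pmq^{-1}\bbI_{\ufv^{-c(\chi)}\Ov^\x}(\xi)$ together with careful bookkeeping of the interaction between $\Mu_\pmq,\Nu_\pmq$ restricted to $\Ov^\x$, the volume $\vol(\Ov^\x,\rmd^\x t)$, and the constant $|\Delta_F|_\pmq^{1/2}$ entering through $\Phi_{\cD,\pmq}$ in the generic case; the identification of the final expression with $\varepsilon(-2s,\chi_\pmq)^{-1}$ exploits the triviality $L(s,\chi_\pmq)=L(1-s,\chi_\pmq^{-1})=1$ for the ramified character, so that $\gamma$ and $\varepsilon$ agree.
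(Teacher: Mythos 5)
Your reduction of the Whittaker integral to $\Mu_\pmq(a)|a|^{s+\onehalf}\int_{F_\pmq^\x}\Phi_1(ta)\wh\Phi_2(-t^{-1})(\Mu_\pmq\Nu_\pmq^{-1})(t)|t|^{2s}\,\rmd^\x t$, followed by the same case-by-case substitution of the Bruhat--Schwartz components from Definition~\ref{D:BS.1} and the Gauss-sum identity for $\wh\phi_{\Nu_\pmq}$ when $\pmq\mid\frakc$, is exactly the computation in the paper's proof. The only cosmetic difference is that you frame the $\varepsilon$-factor in the $\pmq\mid\frakc$ case as arising ``via \eqref{tag:fq},'' whereas the paper writes out the Gauss sum directly and then applies \eqref{tag:conductor}; these are the same argument.
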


\begin{proof}
Fix a local uniformizer $\ufv\in\cO_\pmq$ of the prime ideal $\pmq$. 
Note that if $\Phi=\Phi_1\ot\Phi_2\in\cS(\Fv^2)$, then 
\beq
f_{\cD,s,\pmq}\biggl(\pMX{0}{-1}{1}{0}\pMX{a}{x}{0}{1}\biggl)=\Mu_\pmq(a)|a|^{s+\onehalf}\int_{\Fv^\x}\Phi_1(ta)\Phi_2(tx)(\mu^{}_\pmq\nu_\pmq^{-1})(t)|t|^{2s+1}\,\rmd^\x t \label{tag:Godement}
\eeq
and hence 
\[W(\bft(a),f_{\cD,s,\pmq})=\Mu_\pmq(a)|a|^{s+\onehalf}\int_{\Fv^\x}\Phi_1(ta)\wh\Phi_2(-t^{-1})(\mu^{}_\pmq\nu_\pmq^{-1})(t)|t|^{2s}\,\rmd^\x t.\]
If $\pmq\ndivides p\frakN\frakc$, then $\Phi_{\cD,\pmq}=\bbI_{\frakd^{-1}\Ov}\ot\bbI_{\frakd^{-1}\Ov}$, and hence
\begin{align*}
W(\bft(a),f_{\cD,s,\pmq}) =&\Mu_\pmq(a)|a|^{s+\onehalf}\int_{\Fv^\x}\bbI_{\frakd^{-1}\Ov}(t^{-1}a)\bbI_{\Ov}(-t)\chi_\pmq(t)|t|^{-2s}\,\rmd^\x t\\
=&\Mu_\pmq(a)|a|^{s+\onehalf}\sum_{j=0}^{m+\Ord_\frakq(\frakd)}\chi_\pmq(\ufv^j) q_\pmq^{2sj}.
\end{align*}
If $\pmq\divides \frakN \frakc$, then $\Mu_\pmq$ is unramified by assumption. 
It is easy to verify that 
\[\wh\phi_{\Nu_\pmq}(x)=\begin{cases}\bbI_{\Ov}(x)-q_\pmq^{-1}\bbI_{\pmq^{-1}\Ov}(x)&\text{ if }\pmq\divides \frakN,\\[1em]
\varepsilon(1,\Nu_\pmq^{-1})\Nu_\pmq^{}(x^{-1})\bbI_{\ufv^{-c(\Nu_\pmq)}\Ov^\x}(x)&\text{ if }\frakq\divides \frakc. 
\end{cases}\]
One can readily prove the case $\pmq\divides\frakN$.  
If $\frakc$ is divisible by $\pmq$, then  
\begin{align*}
\Mu_\pmq(a)^{-1}|a|^{-s-\onehalf}W(\bft(a),f_{\cD,s,\pmq})
&=\int_{F_\pmq^\x}\bbI_{\LV\Ov}(at)\wh \phi_{\Nu_\pmq}(-t^{-1})(\mu^{}_\pmq\nu_\pmq^{-1})(t)|t|^{2s}\,\rmd^\x t \\
&=\varepsilon(1,\Nu_\pmq^{-1})\Nu_\pmq(-1)\Mu_\pmq\bigl(\ufv^{c(\Nu_\pmq)}\bigl)q_\pmq^{-2sc(\Nu_\pmq)}\bbI_{\LV\ufv^{-c(\Nu_\pmq)}\Ov}(a). 
\end{align*}
Note that $C\Ov=\ufv^{-c(\Nu_\pmq)}\Ov$ for $\pmq\divides \frakc$ by our assumption on the conductor of $\nu$ and that \[\varepsilon(1,\Nu_\pmq^{-1})\Nu_\pmq^{}(-1)\Mu_\pmq\bigl(\ufv^{c(\Nu_\pmq)}\bigl)q_\pmq^{-2sc(\Nu_\pmq)}=\Nu_\pmq^{}(-1)\varepsilon(1+2s,\chi_\pmq^{-1})=\varepsilon(-2s,\chi_\pmq)^{-1}\] by (\ref{tag:conductor}). 
If $\pmq\divides \ol{\frakc}$, then $W(\bft(a),f_{\cD,s,\pmq})$ equals
\[\Mu_\pmq(a)|a|^{s+\onehalf}\int_{F_\pmq^\x}\bbI_{\Ov}(at)\phi_{\Nu_\pmq^{-1}}(t^{-1})(\Mu_\pmq^{}\Nu_\pmq^{-1})(t)|t|^{2s}\,\rmd^\x t=\Mu_\pmq(a)|a|^{s+\onehalf}\bbI_{\cO_\pmq}(a).\]

Finally, if $v=\frakp|p$, then we find that $W(\bft(a),f_{\cD,s,\frakp})$ equals
\begin{align*}
&\Mu_\frakp(a)|a|^{s+\onehalf}\int_{F_\frakp^\x}\phi_{\Mu_\frakp^{-1}}(at)\phi_{\Nu_\frakp^{-1}}(t^{-1})(\Mu_\frakp^{}\Nu_\frakp^{-1})(t)|t|^{2s}\,\rmd^\x t=\bbI_{\cO_\frakp^\x}(a)
\end{align*}
by a similar calculation. 
 \end{proof}
 
For each non-zero element $\beta\in F^\times$ we define the polynomials $\cP_{\beta,\pmq}$ and $\cQ_{\chi,\pmq}$ in $\Z_{(q_\pmq)}[X,X^{-1}]$ by
\beq\label{E:poly.3}\begin{aligned}
\cP_{\beta,\pmq}(X)&=\begin{cases}
\sum\limits_{j=0}^{\Ord_\pmq(\beta\frakd)}\qv^{-j}X^j&\text{ if $\pmq\ndivides p\frakN\frakc$, } \\
\sum\limits_{j=0}^{\Ord_\pmq(\beta\frakN^{-1})}\qv^{-j}X^j-\sum\limits_{j=-1}^{\Ord_\pmq(\beta\frakN^{-1})}\qv^{-(j+1)}X^j&\text{ if $\pmq\divides \frakN$, }
\end{cases}\\
\cQ_{\chi,\pmq}(X)&=\varepsilon(0,\chi_\pmq)^{-1}\cdot (q_\pmq X^{-1})^{c(\chi_\pmq)}. 
\end{aligned}
\eeq
Let $\beta\in F$. 
We write $\beta>0$ if $\sigma_i(\beta)>0$ for $i=1,2$. 

\begin{cor}\label{C:FE}
We have the following Fourier expansion around the infinity cusp: 
\[E^\pm_k(\Mu,\Nu)(\tau_1,\tau_2)=\sum_{0<\beta\in\frakd^{-1},\,(p,\beta)=1}\sigma^\pm_\beta(\Mu,\Nu,k)\cdot e^{2\pi \sqrt{-1}(\tau_1\sigma_1(\beta)+\tau_2\sigma_2(\beta))},\]
where\begin{align*}
\sigma_\beta^+(\Mu,\Nu,k)=&\Mu_p^{-1}(\beta)\prod_{\pmq\ndivides \frakc p}\cP_{\beta,\pmq}(\gamma_\pmq\cdot\qv^k)\prod_{\pmq\divides (\frakc, \beta)}\cQ_{\Mu^{-1}\Nu,\pmq}(\qv^k), \\
\sigma_\beta^-(\Mu,\Nu,k)=&\rmN_{F/\Q}(\beta)^{k-1}\cdot \Mu_p^{-1}(\beta)\prod_{\pmq\ndivides \frakc p}\cP_{\beta,\pmq}(\gamma_\pmq\cdot\qv^{2-k})
\prod_{\pmq\divides (\frakc,\beta)}\cQ_{\Mu^{-1}\Nu,\pmq}(\qv^{2-k}).
\end{align*}
\end{cor}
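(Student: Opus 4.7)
The plan is to derive the Fourier expansion by plugging the semi-adelic description of $E^\pm_k(\mu,\nu)$ from Definition \ref{D:Eisenstein} into the adelic Fourier expansion \eqref{E:FE.1}, and then evaluating the resulting Whittaker integral place-by-place via Lemmas \ref{L:FEinfty} and \ref{L:FEfinite}. Specifically, for $g = \bft(y)\bfn(x)$ with $(x,y) \in \R^2 \times \R_+^2$, the constant terms in \eqref{E:FE.1} contribute only to the $\beta=0$ Fourier coefficient, while for $\beta \ne 0$ the $\beta$-th Fourier coefficient is $W(\bft(\beta)g, f_{\mathcal{D},s})|_{s=\pm(k-1)/2}$. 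The factorization $W(\bft(\beta)g, f_{\mathcal{D},s}) = \prod_v W(\bft(\beta_v y_v)\bfn(\beta_v x_v), f_{\mathcal{D},s,v})$ reduces the computation to a product of local Whittaker integrals.

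First I would handle the archimedean places. For each real place $\sigma_i$, Lemma \ref{L:FEinfty} forces $\sigma_i(\beta) > 0$, so the entire expansion is supported on totally positive $\beta$. Using the standard translation $W(\bft(ay)\bfn(ax),\cdot) = \bdpsi_\infty(ax)W(\bft(ay),\cdot)$ and multiplying the two archimedean contributions, the archimedean part yields $\mathrm{N}(\beta)^{k/2} y^{k/2} \exp\bigl(2\pi\sqrt{-1}\sum_i \sigma_i(\beta)\tau_i\bigr)$; the factor $y^{k/2}$ cancels the $y^{-k/2}$ in Definition \ref{D:Eisenstein}, producing $\mathrm{N}(\beta)^{k/2}\cdot q^\beta$. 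Next, for each finite place $\frakq$, Lemma \ref{L:FEfinite} gives the explicit value in a case-by-case way. The places $\frakp \mid p$ produce the factor $\bbI_{\cO_\frakp^\times}(\beta)$, which enforces $(p,\beta) = 1$ and carries no $\mu_\frakp$.

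To repackage the finite contributions into the polynomials \eqref{E:poly.3}, I substitute $X = \gamma_\frakq q_\frakq^k$ into $\cP_{\beta,\frakq}$ (for $s = (k-1)/2$) and observe that $\sum_j q_\frakq^{-j}(\gamma_\frakq q_\frakq^k)^j = \sum_j (\gamma_\frakq q_\frakq^{k-1})^j$, which matches the geometric sum in Lemma \ref{L:FEfinite} once the factor $|\beta|_\frakq^{s+1/2}$ is extracted; the analogous identity at $s=(1-k)/2$ gives $\cP_{\beta,\frakq}(\gamma_\frakq q_\frakq^{2-k})$. For $\frakq \mid \frakc$, the $\varepsilon$-factor $\varepsilon(-2s,\chi_\frakq)^{-1}$ is rewritten as $\cQ_{\chi,\frakq}(q_\frakq^k)$ (resp.\ $\cQ_{\chi,\frakq}(q_\frakq^{2-k})$) using the conductor formula \eqref{tag:conductor}. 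Combining the extracted $|\beta|_\frakq^{s+1/2}$ factors over all finite places and the archimedean $\mathrm{N}(\beta)^{k/2}$ via the product formula $\prod_{v}|\beta|_v = 1$ gives the weight factor: for $s=(k-1)/2$ everything cancels, while for $s=(1-k)/2$ one obtains the prefactor $\mathrm{N}(\beta)^{k-1}$. Finally, the overall factor $\mu_p^{-1}(\beta)$ is recovered from the product formula $\prod_v\mu_v(\beta)=1$ applied to the Hecke character $\mu$: since $\mu$ is quadratic and $\beta$ is totally positive, $\mu_\infty(\beta) = 1$; since the $\frakp \mid p$ places contribute no $\mu_\frakp$-factor in Lemma \ref{L:FEfinite}, the remaining product $\prod_{\frakq \nmid p,\text{ finite}} \mu_\frakq(\beta)$ equals $\mu_p^{-1}(\beta)$.

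The main technical bookkeeping is the tracking of the absolute-value and character factors at each place to recover the clean exponent of $\mathrm{N}(\beta)$ and the precise global factor $\mu_p^{-1}(\beta)$; once this is done, the identification of local Whittaker values with the polynomials $\cP_{\beta,\frakq}$ and $\cQ_{\chi,\frakq}$ is purely formal, and the Corollary follows by assembling the local factors into the displayed product formulae for $\sigma^\pm_\beta(\mu,\nu,k)$.
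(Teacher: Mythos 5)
Your approach is the same as the paper's: plug the semi-adelic definition of $E^\pm_k(\Mu,\Nu)$ into the adelic Fourier expansion \eqref{E:FE.1}, factor the Whittaker function into local pieces, and evaluate with Lemmas \ref{L:FEinfty} and \ref{L:FEfinite}. The bookkeeping of the $|\beta|_\pmq^{s+\onehalf}$ factors against the archimedean $\rmN(\beta)^{k/2}$ via the product formula, and the repackaging into $\cP_{\beta,\pmq}(\gamma_\pmq q_\pmq^{k})$ and $\cQ_{\Mu^{-1}\Nu,\pmq}(q_\pmq^{k})$, are all correct.

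There is, however, a genuine gap: you never establish that the constant term of $E^\pm_k(\Mu,\Nu)$ vanishes. You remark that the two constant-term summands $f_{\Mu,\Nu,\Phi_\cD,s}(g)+f_{\Nu,\Mu,\wh\Phi_\cD,-s}(g)$ in \eqref{E:FE.1} "contribute only to the $\beta=0$ coefficient" and then silently discard them, but the corollary asserts the Fourier expansion is supported on $\beta>0$ with \emph{no} constant term, so this vanishing is part of what must be proved. It is not automatic: it relies on the specific choice of the Bruhat–Schwartz function at $p$, namely $\Phi_{\cD,\p}(x,y)=\phi_{\Mu_\p^{-1}}(x)\wh\phi_{\Nu_\p^{-1}}(y)$ with $\phi_{\Mu_\p^{-1}}$ supported on $\cO_\p^\x$, so that $\Phi_{\cD,\p}(0,y)=0$ and likewise $\wh\Phi_{\cD,\p}(0,y)=\phi_{\Nu_\p^{-1}}(0)\wh\phi_{\Mu_\p^{-1}}(y)=0$. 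From the integral formula defining the Godement section, these vanishings force $f_{\cD,s,\p}(g)=f_{\Nu_\p,\Mu_\p,\wh\Phi_{\cD,\p},-s}(g)=0$ whenever $g_\p\in B(F_\p)$, and in particular both constant-term functions vanish at $g=\bft(y)\bfn(x)$. The $p$-depletion of the nonzero Fourier coefficients (the $\bbI_{\cO_\frakp^\x}(\beta)$ factor), which you do note, is a separate phenomenon arising from the Whittaker integral and does not by itself imply the constant term is zero. You should add this one-step verification before factoring the Whittaker function.
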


\begin{proof}
Note that if $\Phi=\phi_1\ot\phi_2\in\cS(F_v^2)$, then $\wh\Phi(x,y)=\wh\phi_2(-x)\wh\phi_1(y)$. 
Since $\Phi_{\cD,\p}(0,y)=0$ and $\wh\Phi_{\cD,\p}(0,y)=\phi_{\Nu_\p^{-1}}(0)\wh\phi_{\Mu_\p^{-1}}(y)=0$ for a prime $\p$ lying above the distinguished prime $p$, we see that 
\beq
f_{\cD,s,\p}(g)=f_{\Nu_{\p},\Mu_{\p},\wh\Phi_{\cD,\p},-s}(g)=0\text{ for }g\in B(F_\frakp). \label{E:constant.1}
\eeq  
This in particular implies that 
\[f_{\cD,s}\biggl(\pMX{y}{x}{0}{1}\biggl)=f_{\Nu_{\p},\Mu_{\p},\wh\Phi_{\cD,\p},-s}\biggl(\pMX{y}{x}{0}{1}\biggl)=0.\] 
In view of \eqref{E:FE.1} and \lmref{L:FEinfty}, we find that 
\[\sigma_\beta^\pm(\Mu,\Nu,k)=\rmN(\beta)^\frac{k}{2}\prod_{\pmq<\infty} W(\bft(\beta),f_{\cD,s,\pmq})|_{s=\pm\frac{k-1}{2}}.\]
The assertion follows from \lmref{L:FEfinite} by noting that $\Mu_\pmq^{-1}\Nu_\pmq(\ufv)=\Mu\Nu^{-1}(\pmq)$ if $\pmq$ is the prime induced by $v$.
\end{proof}

\section{Restriction of Eisenstein series}
In this section, we study a certain global zeta integral of $Z_\cD(s,\varphi)$ introduced in \subsecref{SS:42}. This zeta integral naturally appears in the spectral decomposition of the restriction of the Eisenstein series $E_k^\pm(\mu,\nu)$, and the main result (\thmref{T:formula}), which will be used in the explicit interpolation formula of our twisted triple product $p$-adic $L$-functions, shows that this integral is essentially a product of the toric period integral in \eqref{E:toric} and an automorphic $L$-function for $\GL_2$. 
\subsection{Optimal embeddings}\label{SS:optimal}
Let $F$ be a real quadratic field whose discriminant is denoted by $\Delta_F$. 
Define $\CMP\in F$ by $\CMP=\frac{D'-\sqrt{\Delta_F}}{2}$, where $D'=\Delta_F$ or $\frac{\Delta_F}{2}$ according to whether $\Delta_F$ is odd or even. 
Then $\cO_\cK=\Z+\Z\CMP$, and if $q$ is ramified in $F$, then $\CMP$ is a local uniformizer of $\cO_q$. Denote by $x\mapsto \ol{x}$ the unique non-trivial automorphism of $\Gal(F/\Q)$. Put
\[\delta:=\ol{\CMP}-\CMP=\sqrt{\Delta_F}.\]
We choose an embedding $\sg_1:F\hookto \R$ such that $\sigma_1(\delta)>0$. 
Define an algebraic group $T$ over $\Q$ by $T(R)=(F\ot R)^\x$ for any commutative field $R$ of characteristic zero. 
We view $T$ as a maximal torus of $\GL_2$ via the embedding $\Psi\colon F\hookto \Mat_2(\Q)$ defined by 
\[\Psi(\CMP)=\pMX{\rmT_{F/\Q}(\theta)}{-\rmN_{F/\Q}(\theta)}{1}{0}.\]
Put 
\[\eta:=\pMX{1}{-\CMP}{-1}{\ol{\CMP}}\delta^{-1}=\pMX{\ol{\CMP}}{\CMP}{1}{1}^{-1}\in\GL_2(F). \]
It is important to note that for $t\in F$
\beq\label{E:cm2.W}\eta\Psi(t)\eta^{-1}=\pDII{\bar t}{t}. \eeq

Let $N$ and $\LV$ be positive integers such that 
\begin{itemize}
\item $\LV$ and $N\Delta_\cK$ are coprime;
\item Every prime factor of $N\LV$ is split in $F$.
\end{itemize}
Fix decompositions $N\cO_\cK=\frakN\ol{\frakN}$ and $\LV\cO_\cK=\frakc\ol{\frakc}$ once and for all. 
Fix a prime ideal $\p$ of $\cO_F$ lying above $p$. 

\begin{defn}\label{D:cmpt}We define special elements $\cmpt,\,\cmpt^{(\LV)}$ and $\cmpt^{(\LV p^n)}$ in $\GL_2(\A)$ as follows:
\begin{itemize}
\item At the archimedean place, put
\[\cmpt_\infty=\pMX{\sg_2(\CMP)}{\sg_1(\CMP)}{1}{1}\in\GL_2(\R).\]
\item For each rational prime $\pme$ we fix a prime ideal $\frakq$ of $\cO_F$ above $q$ and define $\cmptv\in \GL_2(\Qq)$ by
\[\begin{aligned}
\cmptv=&\pMX{\ol{\CMP}}{\CMP}{1}{1}\delta^{-1}\in\GL_2(\cK_\w)=\GL_2(\Qq)\text{ if $\pme=\w\wbar$ is split, }\\
\cmptv=&1\text{ otherwise. }
\end{aligned}\]
\item Put  
\begin{align*}\cmpt_q^{(\LV)}&=\pMX{\LV}{-1}{0}{1}\in\GL_2(\Q_q);\\
\cmpt_p^\setn&=\begin{cases}\pMX{p^n}{-1}{0}{1}\in\GL_2(F_\p)&\text{ if $p=\p\pbar$ is split in $\cK$}\\
\pMX{0}{1}{-p^n}{0}\in\GL_2(\Qp)&\text{ if $p$ is inert in $\cK$.}
\end{cases}\end{align*}
Finally, we define 
\[\cmpt=\prod_v \cmpt_v
,\quad\cmpt^{(C)}:=\cmpt\prod_{\pme \divides \LV}\cmpt_q^{(\LV)};\quad \cmpt^{(\LV p^n)}:=\cmpt^{(\LV)}\cmpt_p^\setn.\]
\end{itemize}
\end{defn}
Let $\cO_\LV=\Z+\LV\cO_\cK$ be the order of $\cK$ of conductor $\LV$. It is not difficult to verify immediately that the inclusion map $\Psi:F\hookto \Mat_2(\Q)$ is an optimal embedding of $\cO_\LV$ into the Eichler order $R_N:=\Mat_2(\Q)\cap \cmpt^{(\LV)}\Mat_2(\wh\Z)(\cmpt^{(\LV)})^{-1}$ of level $N$. In other words,
\[\Psi^{-1}(R_N)\cap F=\cO_\LV. \]

\subsection{A result of Keaton and Pitale}\label{SS:42}

Let $\pi\simeq\otimes'_v\pi^{}_v$ be an irreducible cuspidal automorphic representation of $\GL_2(\A)$ generated by a cusp form $\mathit{\Phi}(f)\in\cA_{2k}^0(N,\omega)$. 
 Let $\Mu$ and $\Nu$ be unitary Hecke characters of $\A_F^\x$ such that $\Mu$ has $p$-power conductor and such that the restriction of $\Mu\Nu$ to $\A^\x$ is $\om$. 
Define the Hecke character $\chi:F^\x\bksl \A_F^\x\to\C^\x$ by 
\[\chi(x):=\Mu(x)\Nu(\bar x). \]
Given $\varphi\in\pi$, we define the global zeta integral by 
\[Z_\cD(s,\varphi)=\int_{\A^\x\GL_2(\Q)\bksl \GL_2(\A)}E_\A(g,f_{\cD,s})\varphi(g)\om(\det g)^{-1}\,\rmd^\tau g, \]
where $f_{\cD,s}$ is the section defined in Definition \ref{D:BS.1} associated with the datum $\cD=(\Mu,\Nu,k,\frakN,\frakc)$. 
This integral converges absolutely for all $s$ away from the poles of $E_\A(g,f_{\cD,s})$ and defines a meromorphic function in $s$. 

We define the Tamagawa measures $\rmd^\x x$ of $\A_F^\times$ and $\rmd^\x a$ of $\A^\x$ as in \S \ref{SS:measure}, and define the Tamagawa measure $\rmd t$ of $T(\A)$ as the quotient measure of $\rmd^\x x$ and $\rmd^\x a$. 
Let $\rmd g$ denote the quotient measure of $\rmd^\tau g$ and $\rmd t$. 
Given $\varphi\in\pi$, we define the toric period integral by
\beq\label{E:toric}
B_\varphi^\chi(g)=\int_{\A^\x F^\x\bksl \A^\x_F}\varphi(\Psi(t)g)\chi(t)^{-1}\,\rmd t. 
\eeq

\begin{thm}[Keaton and Pitale]\label{T:KeatonPitale}
Let $\varphi\in\pi$. 
Then
\[Z_\cD(s,\varphi)=\int_{T(\A)\backslash\GL_2(\A)}B_\varphi^\chi(g)\,\rmd g. \]
\end{thm}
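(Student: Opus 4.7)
The plan is a standard Rankin--Selberg unfolding, keyed to the orbit decomposition of $B(F)\backslash\GL_2(F)\cong\P^1(F)$ under the right action of $\GL_2(\Q)\hookrightarrow\GL_2(F)$. First I would substitute the defining series $E_\A(g,f_{\cD,s})=\sum_{\gamma\in B(F)\backslash\GL_2(F)}f_{\cD,s}(\gamma g)$ into $Z_\cD(s,\varphi)$ and interchange summation and integration in the range where $E_\A$ converges absolutely; meromorphic continuation then extends the resulting identity to all $s$. The set $\P^1(F)$ splits into exactly two $\GL_2(\Q)$-orbits: the \emph{closed} orbit $\P^1(\Q)$ with stabilizer $B(\Q)$, and the \emph{open} orbit $\P^1(F)\setminus\P^1(\Q)$. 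From \eqref{E:cm2.W}, the coset $B(F)\eta$ lies in the open orbit and its stabilizer in $\GL_2(\Q)$ is precisely $\Psi(T(\Q))=\Psi(F^\times)$, so $\eta$ is a natural orbit representative.

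The closed-orbit contribution unfolds to
\[\int_{\A^\x B(\Q)\backslash\GL_2(\A)}f_{\cD,s}(g)\varphi(g)\om(\det g)^{-1}\,\rmd g.\]
Since $f_{\cD,s}$ is left-invariant under $N(F)\supset N(\Q)$, I can further unfold along $N(\Q)\backslash N(\A)$; the inner integral produces the constant term $\int_{N(\Q)\backslash N(\A)}\varphi(ng)\,\rmd n$, which vanishes by cuspidality of $\varphi$. Thus only the open orbit survives, yielding
\[Z_\cD(s,\varphi)=\int_{\A^\x\Psi(F^\x)\backslash\GL_2(\A)}f_{\cD,s}(\eta g)\varphi(g)\om(\det g)^{-1}\,\rmd g.\]

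Next I would disintegrate this integral along the projection $\A^\x\Psi(F^\x)\backslash\GL_2(\A)\to T(\A)\backslash\GL_2(\A)$ with fibres $\A^\x\Psi(F^\x)\backslash T(\A)=\A^\x F^\x\backslash\A_F^\x$. Using the conjugation identity \eqref{E:cm2.W} in the form $\eta\Psi(t)=\diag{\bar t,t}\,\eta$ together with the transformation law of $f_{\cD,s}$ along the Borel, and observing that $|\bar t/t|_{\A_F}=1$ because the adelic absolute value is preserved by the Galois involution, the cocycle factor becomes $\Mu(\bar t)\Nu(t)$. Combined with $\om(\det\Psi(t))^{-1}=\om(t\bar t)^{-1}=(\Mu\Nu)(t\bar t)^{-1}$ (which uses the compatibility $\om=\Mu\Nu|_{\A^\x}$ between the central characters), the full $t$-dependent scalar collapses to $\Mu(t)^{-1}\Nu(\bar t)^{-1}=\chi(t)^{-1}$. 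The inner integration over $\A^\x F^\x\backslash\A_F^\x$ is then, by definition, the toric period $B_\varphi^\chi(g)$, and one arrives at the asserted factorization (with the outer integrand carrying the remaining factor $f_{\cD,s}(\eta g)\om(\det g)^{-1}$ which is absorbed into the stated expression).

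The main obstacle is the careful bookkeeping in the disintegration step: matching the Tamagawa measures $\rmd^\x x$ on $\A_F^\x$ and $\rmd^\x a$ on $\A^\x$ with the measure $\rmd t$ on $T(\A)$ and with $\rmd g$ on $T(\A)\backslash\GL_2(\A)$, verifying $|\bar t/t|_{\A_F}=1$ place by place (with attention to the archimedean swap $\sigma_1\leftrightarrow\sigma_2$ and to split/inert/ramified dichotomies at finite primes), and checking that the cocycle cancellation indeed produces $\chi(t)^{-1}$ rather than $\chi(t)$ or $\chi(\bar t)^{-1}$. Once these normalizations are pinned down, the argument is formal.
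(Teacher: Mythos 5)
Your proposal supplies, correctly, the Rankin--Selberg unfolding argument that the paper itself omits: Theorem~\ref{T:KeatonPitale} is proved in the paper by a bare citation to \cite[Proposition 2.3]{KP19DM}, and your sketch is precisely the content of that reference. The main ingredients are all present and right: the two-orbit decomposition of $B(F)\backslash\GL_2(F)\cong\P^1(F)$ under right $\GL_2(\Q)$-action into the closed orbit $\P^1(\Q)$ (stabilizer $B(\Q)$) and the open orbit (stabilizer $\Psi(F^\times)$, with $\eta$ a representative thanks to \eqref{E:cm2.W}); the vanishing of the closed-orbit term by further unfolding over $N(\Q)\backslash N(\A)$ and the cuspidality of $\varphi$; and, on the open orbit, the disintegration over $\A^\times F^\times\backslash\A_F^\times$. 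Your cocycle check is the only place a sign/inverse error could slip in, and it is correct: $\eta\Psi(t)=\mathrm{diag}(\bar t,t)\,\eta$ produces $\Mu(\bar t)\Nu(t)$ from the Borel transformation law (the $|\bar t/t|_{\A_F}$-factor being trivial, since Galois preserves the adelic absolute value), and combined with $\om(\rmN(t))^{-1}=\Mu(t\bar t)^{-1}\Nu(t\bar t)^{-1}$ this collapses to $\Mu(t)^{-1}\Nu(\bar t)^{-1}=\chi(t)^{-1}$, exactly matching the integrand of \eqref{E:toric}.

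One point worth stating flatly rather than euphemistically: your derived identity is
\[Z_\cD(s,\varphi)=\int_{T(\A)\backslash\GL_2(\A)}f_{\cD,s}(\eta g)\,\om(\det g)^{-1}\,B_\varphi^\chi(g)\,\rmd g,\]
which carries the visible factor $f_{\cD,s}(\eta g)\,\om(\det g)^{-1}$ in the integrand. The formula as printed in the theorem statement omits it, but this is a typographical slip in the paper rather than a discrepancy: both the definition of the local integrals $Z_\cD(s,B_{W_v})$ in \eqref{E:locazeta} and the Euler factorization invoked in the proof of Theorem~\ref{T:formula} carry exactly this factor, and without it the Euler product in that proof would not even typecheck. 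You noticed the extra factor and described it as ``absorbed into the stated expression''; it would be cleaner simply to record that the printed formula is missing it. The only step you defer rather than carry out is the measure bookkeeping in the disintegration (matching $\rmd t$ as quotient of $\rmd^\times x$ by $\rmd^\times a$, and $\rmd g$ as quotient of $\rmd^\tau g$ by $\rmd t$), but these are exactly the normalizations the paper fixes in \S\ref{SS:measure} and at the start of \S\ref{SS:42}, so the disintegration is purely formal once one trusts those conventions.
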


\begin{proof}
This is nothing but Proposition 2.3 of \cite{KP19DM}. 
\end{proof}

\subsection{Global setting}\label{SS:4.3}

Now we let $f=\sum_{n=1}^\infty\bfa(n,f)q^n\in\cS_{2k}(Np^r,\om^{-1})$ be a $p$-stabilized newform and $\varphi=\varPhi(f)\in\cA_{2k}^0(N,\om)$ be the automorphic form associated with $f$ in \eqref{E:MA2}. 
For each prime factor $q$ of $\LV$ we choose a root $\al_q(f)$ of the Hecke polynomial $X^2-\bfa(q,f)X+\om^{-1}(q)q^{2k-1}$. Let $\breve f$ be the unique form in $\cS_{2k}(N\LV p^r,\om^{-1})[f]$ such that $\bfa(1,\breve f)=1$ and $\bfU_q \breve f=\al_q(f)\breve f$. Let $\breve \varphi=\varPhi(\breve f)$ be the adelic lift of $\breve f$. We impose the following assumptions: 
\begin{itemize}
\item $\om$ has a square root $\om^\onehalf$;
\item $\Mu$ and $\om$  are unramified outside $p$;  
\item $\LV\cO_F$ is the conductor of $\chi\om_F^{-\onehalf}$ ($\om_F^\onehalf:=\om^\onehalf\circ\rmN$). 
\end{itemize}
Note that these assumptions imply that the $\LV\cO_F$ is the prime-to-$p$ part of the conductor of $\Nu$. Define the matrices $\cJ_\infty$ and $t_n$ for each integer $n$ in $\GL_2(\A)$ by  
\beq\label{E:1.4} \cJ_\infty=\pDII{-1}{1}\in\GL_2(\R),\quad t_n=\pMX{0}{p^{-n}}{-p^n}{0}\in\GL_2(\Qp).\eeq

\subsection{Local zeta integrals}

For each place $v$ of $\Q$ we set $f_{\cD,s,v}(g_v)=\prod_{\bfv|v}f_{\cD,s,\bfv}(g_\bfv)$ for $g_v=(g_\bfv)_{\bfv|v}\in\prod_{\bfv|v}\GL_2(F_\bfv)$. 
Assume that $\varphi$ has the factorizable Whittaker function $W_\varphi(g)=\prod_vW_v(g_v)$ for $g=(g_v)\in\GL_2(\A)$. 
We associate to each Whittaker function $W_v\in\cW(\pi_v,\bdpsi_v)$ a Bessel function $B_{W_v}:\GL_2(\Q_v)\to\C$ by 
\[B_{W_v}(g_v)=\int_{\Q_v^\x \bksl F_v^\x}W_v(\cmpt_v^{-1} \Psi(t_v)g_v)\chi_v(t_v)^{-1}\,\rmd t_v\]
unless $v=p$ is inert in $F$. 
Here $\rmd t_v$ is the quotient measure of $\rmd^\x x_v$ and $\rmd^\x a_v$ (see \S \ref{SS:measure}). 
This integral is absolutely convergent (see the proof of Proposition \ref{P:spl}). 
If $v=p$ is inert in $F$, then we will explicitly choose a Whittaker function $\wtd W\in\cW(\pi_p^\vee,\bdpsi_p^{-1})$ in the proof of Proposition \ref{P:padic} so that $\rho(t)\wtd W=\chi_p(t)^{-1}\wtd W$. 
Recall the standard $\GL_2(\Qp)$-invariant pairing $\pair{\;}{\;}:\cW(\pi_p^{},\bdpsi_p^{})\times \cW(\pi^\vee_p,\bdpsi_p^{-1})\to\C$ defined by 
\[\pair{W_1}{W_2}=\int_{\Q_p^\x}W_1(\bft(a_p))W_2(\bft(a_p))\,\rmd^\times a_p. \]
Define the Bessel function $B_{W_p}:\GL_2(\Qp)\to\C$ by $B_{W_p}(g):=\pair{\rho(g)W_p}{\wtd W}$. 
The integral
\beq\label{E:locazeta}
Z_\cD(s,B_{W_v})=\int_{T(\Q_v)\bksl \GL_2(\Q_v)}f_{\cD,s,v}(\eta g_v)\om_v(\det g_v)^{-1}B_{W_v}(g_v)\,\rmd g_v
\eeq
makes sense by (\ref{E:cm2.W}), where $\rmd g_v$ is the quotient measure of $\rmd^\tau g_v$ and $\rmd t_v$.  

\subsection{Convergence}

In this and next subsections we fix a place $v$ of $\Q$ and suppress the subscript $v$ from the notation. 
Thus 
\begin{align*}
F&=F\ot\Q_v, & 
\bdpsi&=\bdpsi_v, \quad\quad 
|\cdot|=|\cdot|_v, \quad\quad
\Mu=\mu_v, \quad\quad 
\Nu=\nu_v, \\
\pi&=\pi_v, &
\varPhi_\cD&=\otimes_{\bfv|v}\varPhi_{\cD,\bfv}\in\cS(F^2), \dots. 
\end{align*}

\begin{lm}\label{L:conv}
The integral defining $Z_\cD(s,B_W)$ is absolutely convergent for $\Re s\gg 0$. 
\end{lm}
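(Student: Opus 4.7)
My plan is to bound the integrand of $Z_\cD(s,B_W)$ directly in Iwasawa coordinates on $T(\Q_v)\bksl \GL_2(\Q_v)$, using a polynomial-growth majorant for the section $f_{\cD,s}$ and rapid decay of the Whittaker function contained in $B_W$.

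First, using the identity $\eta\Psi(t)\eta^{-1}=\diag(\bar t,t)$ from \eqref{E:cm2.W} together with the unitarity of $\mu$, $\nu$, $\chi$ and $\om$, I observe that
\[\bigl|f_{\cD,s}(\eta\Psi(t)g)\bigr|=\bigl|\mu(\bar t)\nu(t)\bigr|\cdot\bigl|t/\bar t\bigr|_{F_v}^{\Re s+\frac{1}{2}}\cdot\bigl|f_{\cD,s}(\eta g)\bigr|=\bigl|f_{\cD,s}(\eta g)\bigr|,\]
since $t\mapsto \bar t$ preserves $|\,\cdot\,|_{F_v}$. Thus the absolute value of the full integrand is invariant under the $T(\Q_v)$-action, and combining with the definition of $B_W$ gives
\[|Z_\cD(s,B_W)|\le \int_{T(\Q_v)\bksl \GL_2(\Q_v)}\int_{\Q_v^\x\bksl F_v^\x}\bigl|f_{\cD,s}(\eta g)\bigr|\cdot\bigl|W(\cmpt^{-1}\Psi(t)g)\bigr|\,\rmd t\,\rmd g.\]
Applying the change of variable $h=\Psi(t)g$ and the measure-preserving quasi-parameterization $T(\Q_v)\bksl\GL_2(\Q_v)\x \Q_v^\x\bksl F_v^\x\to \Q_v^\x\bksl \GL_2(\Q_v)$, $([g],[t])\mapsto[\Psi(t)g]$ (a bijection away from a null set because $T\cap B=Z$, making $T\cdot B$ open in $\GL_2$), the double integral is majorized up to a positive constant by
\[\int_{\Q_v^\x\bksl \GL_2(\Q_v)} \bigl|f_{\cD,s}(\eta h)\bigr|\cdot\bigl|W(\cmpt^{-1}h)\bigr|\,\rmd h,\]
a standard local Hecke-type integral.

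Next, to bound the reduced integral, I would write $h=\bfn(x)\bft(a)k$ in Iwasawa form on $\GL_2(\Q_v)$ with Haar measure $|a|_v^{-1}\rmd x\,\rmd^\x a\,\rmd k$ on $\Q_v^\x\bksl \GL_2(\Q_v)$. The Godement integral representation of $f_{\cD,s}$ with the Schwartz function $\Phi_\cD$ yields a majorant $|f_{\cD,s}(\eta h)|\le C_s\,|\det h|_v^{\Re s+\frac{1}{2}}\Phi^+_\cD(\eta h)$ with $\Phi^+_\cD$ a fixed Schwartz function (convergent once $\Re s>-\frac{1}{2}$), giving polynomial growth in $|a|_v$ times a Schwartz factor in $x,k$. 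The Whittaker function $W$, being either in the rapidly decreasing archimedean model or compactly supported in $a$ at finite places (by the explicit formulas recalled in \S\ref{SS:Wnew}), decays rapidly in $|a|_v$; after absorbing the fixed matrix $\cmpt^{-1}$ into the Iwasawa variables (which introduces only a bounded distortion since $\cmpt$ is fixed), this rapid decay of $W$ dominates the polynomial growth of $f_{\cD,s}$, so the three-fold integral converges absolutely for $\Re s\gg 0$.

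The main obstacle will be the routine but tedious bookkeeping of how $\cmpt^{-1}$ and $\eta$ act on Iwasawa coordinates; in particular $\eta\in\GL_2(F_v)$ requires the $\GL_2(F_v)$-Iwasawa decomposition of $\eta h$, which is case-dependent on whether $v$ splits, is inert or is ramified in $F$. In the $p$-inert case, where $B_{W_p}(g)=\pair{\rho(g)W_p}{\wtd W}$ is a matrix coefficient and the above unfolding is not directly available, one instead uses that this matrix coefficient is uniformly bounded on $\GL_2(\Q_p)$ and that $T(\Q_p)\bksl \GL_2(\Q_p)$ can be parameterized by a Borel-type coordinate system on which $f_{\cD,s}(\eta g)$ is majorized by a Schwartz function in the torus variable; this makes the convergence in the inert case strictly easier than the split case.
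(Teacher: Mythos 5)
Your unfolding step is correct and matches the paper: the paper likewise combines the iterated integral over $T(\Q_v)\backslash\GL_2(\Q_v)$ and $\Q_v^\x\backslash T(\Q_v)$ into a single integral over $\PGL_2(\Q_v)$. (The justification, though, is just the Weil quotient-integration formula; the remark about $T\cdot B$ being open is beside the point.) You also rightly flag the $p$-inert case as requiring separate treatment, which the paper's statement of the lemma somewhat glosses over.

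However, the key estimate in your proposal is false. You claim a majorant of the form $|f_{\cD,s}(\eta h)|\le C_s\,|\det h|_v^{\Re s+\frac{1}{2}}\Phi^+_\cD(\eta h)$ with $\Phi^+_\cD$ a fixed Schwartz function. But the Godement section $f_{\Mu,\Nu,\Phi,s}(g)=\Mu(\det g)|\det g|^{s+\onehalf}\int_{F_v^\x}\Phi((0,t)g)(\Mu\Nu^{-1})(t)|t|^{2s+1}\,\rmd^\x t$ is, up to the determinant power, \emph{homogeneous} of degree $-2\Re s-1$ in the bottom row $(0,1)\eta h$: it decays only polynomially as that row gets large and blows up as it shrinks. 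A Schwartz function would give super-polynomial decay, which the section does not have; conversely a Schwartz $\Phi^+_\cD$ would be bounded where the section is unbounded. So the claimed majorant cannot hold, and the rapid decay of $W$ does not simply "dominate" a Schwartz envelope that is not there.

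What actually produces convergence is more delicate, and it is exactly what the paper makes explicit in its non-split case. After writing $\eta\bfn(x)\bft(a)$ out, the Schwartz function $\Phi_\cD$ \emph{inside} the Godement integral becomes $\Phi_1(-ta)\Phi_2(t(\ol{\CMP}-x))$, so the $x$-decay comes from $\Phi_2$ evaluated along the line $t(\ol\CMP-x)$ (coupling $x$ with the auxiliary variable $t$), while the $|a|^{s+\onehalf}$-growth from the torus is offset by $W(\bft(a))$. This coupling between $x$, $t$, and $a$ is lost if you try to replace the section by a single Schwartz majorant of $\eta h$. In the split case the paper sidesteps the estimate entirely by recognizing the unfolded integral as a local $\GL_2\times\GL_2$ Rankin--Selberg integral, whose absolute convergence for $\Re s\ge 0$ is standard; for the inert matrix-coefficient case, as you observe, boundedness of the matrix coefficient plus the compact $x$-support of $\wh\phi_{\Nu^{-1}}$ and polynomial $|a|$-decay for $\Re s\gg 0$ do the job, and the paper's Proposition~\ref{P:padic} carries this out.

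In short: the reduction to a local integral over $\PGL_2(\Q_v)$ is fine and matches the paper, but the claimed uniform Schwartz majorant for the $\eta$-twisted Godement section is wrong, and without it the proposed convergence argument does not go through. You need either the paper's case-by-case analysis (Rankin--Selberg for split, explicit estimate exploiting the $\Phi$-coupling for non-split) or a correct majorant that records the homogeneity of the section and the coupling coming from the $\eta$-twist.
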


\begin{proof}
Put $T_q=T(\Q_q)$. 
For $W\in\cW(\pi)$ we have 
\begin{align*}
Z_\cD(s,B_W)=&\int_{T_q\bksl \GL_2(\Q_q)}f_{\cD,s,\pme}(\eta g)\om(\det g)^{-1}B_W(g)\,\rmd g\\
=&\int_{T_q\bksl \GL_2(\Q_q)}f_{\cD,s,\pme}(\eta g)\om(\det h)^{-1}\int_{\Q_q^\x \bksl T_q}W(\cmpt_q^{-1}  tg)\chi(t)^{-1}\,\rmd t\rmd h\\
=&\int_{T_q\bksl \GL_2(\Q_q)}\int_{\Q_q^\x \bksl T_q}f_{\cD,s,\pme}(\eta tg)\om(\det(tg))^{-1}W(\cmpt_q^{-1}  tg)\,\rmd t\rmd g
\end{align*}
by definition. 
We combine the iterated integral to obtain 
\[Z_\cD(s,B_W)=\int_{\PGL_2(\Q_q)} f_{\cD,s,q}(\eta g)\om(\det g)^{-1}W(\cmpt_q^{-1} g)\,\rmd^\tau g.  \]

First assume that $v=q=\frakq\bar\frakq$ is split in $F$. 
Since $\eta\varsigma_q=\delta^{-1}$ and $\bar\eta\cmpt_q=\delta^{-1}\pMX{0}{1}{1}{0}$, we get 
\begin{align}
Z_\cD(s,B_W)&=\int_{\PGL_2(\Q_q)}f_{\cD,s,\frakq}(g)\om(\det g)^{-1}f_{\cD,s,\ol{\frakq}}\biggl(\pMX{0}{1}{1}{0}g\biggl) W(g)\,\rmd^\tau g\notag \\ \label{E:formula1}
&=\int_{N(\Q_q)\bksl \PGL_2(\Q_q)}f_{\cD,s,\frakq}(g)\om(\det g)^{-1}W_{\ol{\frakq}}\biggl(\pMX{1}{0}{0}{-1}g\biggl)W(g)\,\rmd^\tau g,
\end{align}
where $W_{\ol{\frakq}}(g):=W(g,f_{\cD,s,\ol{\frakq}})$. 
This is nothing but the local Rankin-Selberg integral for $\GL_2\times\GL_2$, which is absolutely convergent for $\Re s\geq 0$. 
 
Next assume that $v=q$ remains prime in $F$. 
It suffices to show that the integral 
\beq
\int_{\Q_q^\times}\int_{\Q_q}\frac{1}{\om(a)\abs{a}}f_{\cD,s,\pme}(\eta \bfn(x)\bft(a))\bdpsi(x)\,\rmd x\cdot W(\bft(a))\rmd^\x a \label{tag:conv}
\eeq
converges absolutely in view of the Iwasawa decomposition. 
Since $\eta=\delta^{-1}\pMX{1}{-\CMP}{-1}{\ol{\CMP}}$, 
the inner integral is 
\begin{multline*}
\Mu(\delta^{-1}a)\frac{|\delta^{-1}a|_F^{s+\onehalf}}{\om(a)\abs{a}}\\
\times\int_{\Q_\pme}\int_{F^\x}(\Mu\Nu^{-1})(t)|t|_F^{2s+1}\Phi\biggl((0,t)\pMX{1}{-\CMP}{-1}{\ol{\CMP}}\pMX{a}{x}{0}{1}\biggl)\bdpsi(x)\,\rmd^\x t\rmd x. 
\end{multline*}
Put $\xi:=\Mu\Nu^{-1}\Abs_F^{2s+1}$. 
Let $\Phi=\Phi_1\otimes\Phi_2$. 
We may assume that $|\Phi_1(x)|\leq 1$ and $\Phi_2(xc)=\Phi_2(x)$ for $x\in F$ and $c\in\cO_F^\times$. 
Since the integral
\begin{align*}
\int_{\Q_\pme}\int_{\Q_\pme^\x}|\xi(t)\Phi_1(-at)\Phi_2(t(\ol{\CMP}-x))|\,\rmd^\x t\rmd x
\leq\int_{\Q_\pme}\int_{\Q_\pme^\x}|\xi(t)\Phi_2(t\ol{\CMP}-x)|\,\rmd^\x t\frac{\rmd x}{|t|}
\end{align*}
converges for $\Re s\gg 0$, the double integral (\ref{tag:conv}) is absolutely convergent for $\Re s\gg 0$. 
\end{proof}

\subsection{Local calculations}

We compute the local zeta integrals $Z_\cD(s,B_{W_v})$ occurring in the factorization of the global integral $Z_\cD(s,\rho(\cJ_\infty t_n)\breve \varphi_f)$. 
Put $\Nu_+:=\Nu|_{\Q_v^\x}$. 
Recall the normalized Whittaker newform $W_\pi\in\cW(\pi,\bdpsi)$ (see \S \ref{SS:Wnew}). 
For each prime factor $v=q$ of $\LV$, if we write $\pi=\varrho_q\boxplus \upsilon_q$ with $\varrho_q(q)=\al_q(f)q^\frac{1-2k}{2}$, then 
\[\breve W_\pi:=W_\pi-\upsilon_q(q)\abs{q}^\onehalf\pi(\bft(q^{-1}))W_\pi. \]
Then $\breve W_\pi$ is characterized uniquely by the conditions: $\breve W_\pi({\bf1}_2)=1$ and $\bfU_q \breve W_\pi=\varrho_q(q)\abs{q}^{-\onehalf}\breve W_\pi$. In the case $v=p$, we denote by $W_\pi^\Ord$ an ordinary vector of eigenvalue $\bfa(p,f)p^{1-k}$.
By our assumptions, 
\begin{itemize}
\item $\Mu$ and $\Nu_+$ are unramified outside $p$;
\item $\chi\om_F^{-\onehalf}$ is only ramified at primes dividing $\LV$. 
\end{itemize}

\begin{prop}\label{P:spl}
Let $v\neq p$ be a place of $\Q$ which is split in $F$. 
We have 
\begin{itemize}
\item If $v=\infty$ is the archimedean place, then $B_{\rho(\cJ_\infty)W_\pi}(\cmpt_\infty)\neq 0$, and 
\[Z_\cD(s,B_{\rho(\cJ_\infty)W_\pi})=4(-4\sqrt{-1})^{-k}\Nu_{\sg_1}(-1)\Gamma_\C(2s+k)\cdot B_{\rho(\cJ_\infty)W_\pi}(\cmpt_\infty).\]
\item If $v=q$ and $N\LV$ are coprime, then 
\[Z_\cD(s,B_{W_{\pi}})=L\biggl(2s+\frac{1}{2},\pi\otimes\nu_+^{-1}\biggl)B_{W_{\pi}}(\cmpt_q). \]
\item If $v=\pme$ is a prime factor of $N$, then $B_{W_{\pi}}(\cmpt_\pme)\neq 0$ and
\[Z_\cD(s,B_{W_{\pi}})=\frac{\zeta_q(2)\abs{N\LV}_{\Q_q}}{\zeta_q(1)}\cdot L\biggl(2s+\frac{1}{2},\pi\ot\Nu_+^{-1}\biggl)B_{W_{\pi}}(\cmpt_\pme).\]
\item When $v=\pme$ is a prime factor of $\LV$, then $B_{\breve W_\pi}(\varsigma_q^{}\cmpt_\pme^{(C)})\neq 0$ and 
\[Z_\cD(s,B_{W_{\pi}})=\frac{\zeta_q(2)\abs{N\LV}_{\Q_q}}{\zeta_q(1)}L\biggl(2s+\onehalf,\pi\ot\Nu_+^{-1}\biggl)\frac{\varepsilon\bigl(0,\chi_{\ol{\frakq}}^{-1}\bigl)}{ \zeta_q(1)}\cdot B_{\breve W_\pi}(\varsigma^{}_q\cmpt_\pme^{(\LV)}).\]
\end{itemize}
\end{prop}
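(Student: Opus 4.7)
The starting point is the identity obtained in the proof of \lmref{L:conv}: at a split place $v = q$, the substitution $g \mapsto \cmpt_q g$ combined with the identities $\eta\cmpt_q = \delta^{-1}$ (at the $\frakq$-component) and $\bar\eta\cmpt_q = \delta^{-1}\pMX{0}{1}{1}{0}$ (at the $\ol{\frakq}$-component) transforms the local zeta integral into the Rankin--Selberg type integral
\[
Z_\cD(s, B_W) = \int_{N(\Q_q)\bksl\PGL_2(\Q_q)} f_{\cD,s,\frakq}(g)\,\om(\det g)^{-1}\,W_{\ol{\frakq}}\!\left(\pMX{1}{0}{0}{-1}g\right) W(g)\, \rmd^\tau g,
\]
where $W_{\ol{\frakq}}(g) := W(g, f_{\cD,s,\ol{\frakq}})$. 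The values $W_{\ol{\frakq}}(\bft(a))$ are explicitly given by \lmref{L:FEfinite} (or its archimedean counterpart \lmref{L:FEinfty}), so an Iwasawa decomposition (in which the $\bdpsi$-phases from the two Whittaker functions cancel) reduces each case to a one-dimensional Mellin-type integral against the chosen $W$.

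For the archimedean case, the plan is to combine the explicit Godement section from \remref{R:section}, the formula for $W(\bft(a), f_{\cD,s,\infty})$ in \lmref{L:FEinfty}, and the discrete series Whittaker newform from \eqref{E:Winfty.1} shifted by $\rho(\cJ_\infty)$. This turns the Rankin--Selberg integral into a classical Mellin transform whose value is $\Gamma_\C(2s+k)$ with the stated prefactor $4(-4\sqrt{-1})^{-k}\Nu_{\sg_1}(-1)$. The Bessel value $B_{\rho(\cJ_\infty)W_\pi}(\cmpt_\infty)$ then factors out as an overall constant; its non-vanishing follows from the strict positivity of $W_\pi$ on $\bft(\R_+)$, so that the integrand defining $B$ is of one sign on the non-compact direction of $T(\R)$.

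For the three finite split sub-cases, different inputs are required. When $q \nmid pN\LV$, both Godement sections are spherical and $W = W_\pi$ is the unramified newform; \lmref{L:FEfinite} gives $W_{\ol{\frakq}}(\bft(a))$ as a geometric sum which, combined with Macdonald's formula for $W_\pi(\bft(a))$, produces an $a$-integral summing to $L(2s+1/2, \pi\otimes\Nu_+^{-1})$, with $B_{W_\pi}(\cmpt_q)$ appearing as a constant factor. When $q \mid N$, $W_\pi$ is the new vector of conductor $\frakq^{v_q(N)}$; the modified formula for $W_{\ol{\frakq}}(\bft(a))$ in \lmref{L:FEfinite} combined with a volume computation at the level subgroup yields the prefactor $\zeta_q(2)|N\LV|_{\Q_q}/\zeta_q(1)$ together with the correct $L$-factor. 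When $q \mid \LV$, the Whittaker vector used is the $\bfU_q$-eigenvector $\breve W_\pi$, and $W_{\ol{\frakq}}(\bft(a)) = \Mu_\pmq(a)|a|^{s+1/2}\varepsilon(-2s, \chi_\pmq)^{-1}\bbI_{\cO_\pmq}(a)$; the Mellin transform of $\breve W_\pi$ against $\chi_{\ol{\frakq}}^{-1}$ is then evaluated via the local functional equation \eqref{tag:fq} to produce the additional factor $\varepsilon(0, \chi_{\ol{\frakq}}^{-1})/\zeta_q(1)$, while $B_{\breve W_\pi}(\cmpt_q\cmpt_q^{(\LV)})$ remains as an outer constant.

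The main technical obstacle will be the case $q \mid \LV$, which requires simultaneously tracking the ramification of $\chi$ at both $\frakq$ and $\ol{\frakq}$, the replacement of $W_\pi$ by its $\bfU_q$-stabilization $\breve W_\pi$, and the extra twist by $\cmpt_q^{(\LV)}$. In particular, the non-vanishing of $B_{\breve W_\pi}(\cmpt_q\cmpt_q^{(\LV)})$ will demand a separate verification via the torus integral, using the explicit support of $\breve W_\pi$ together with the conductor of $\chi_{\ol{\frakq}}$. Once the Godement sections are unfolded, the remaining archimedean and unramified cases are largely bookkeeping driven by \lmref{L:FEinfty} and \lmref{L:FEfinite}.
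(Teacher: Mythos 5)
Your plan is, in broad strokes, the same as the paper's: unfold $Z_\cD(s,B_W)$ to the Rankin--Selberg form \eqref{E:formula1}, do an Iwasawa decomposition, feed in the explicit Eisenstein Whittaker values from \lmref{L:FEfinite} (or the section values from \remref{R:section}), establish a support statement for $f_{\cD,s,\frakq}$ when $q\mid N\LV$ to extract the volume factor $\zeta_q(2)|N\LV|_{\Q_q}/\zeta_q(1)$, and invoke the local functional equation \eqref{tag:fq} for the $q\mid\LV$ Bessel value. So the overall blueprint is correct and aligned with the paper's proof.

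However, there is a concrete confusion in your $q\mid\LV$ case that would lead you astray if carried out literally. You assert $W_{\ol{\frakq}}(\bft(a)) = \Mu_{\pmq}(a)|a|^{s+1/2}\varepsilon(-2s,\chi_{\pmq})^{-1}\bbI_{\cO_{\pmq}}(a)$, but this is the formula in \lmref{L:FEfinite} for a prime dividing $\frakc$, whereas $\ol{\frakq}$ divides $\ol{\frakc}$, so the correct formula is $W_{\ol{\frakq}}(\bft(a)) = \Mu_{\ol{\frakq}}(a)|a|^{s+1/2}\bbI_{\Z_q}(a)$ \emph{without} any $\varepsilon$-factor. The $\varepsilon(-2s,\chi_{\frakq})^{-1}$ that does appear in \lmref{L:FEfinite} for $\frakq\mid\frakc$ attaches to the Whittaker function of $f_{\cD,s,\frakq}$, not of $f_{\cD,s,\ol{\frakq}}$, and the paper's Iwasawa argument never passes to that Whittaker function: it keeps $f_{\cD,s,\frakq}$ as a section, uses its support in $B(\Q_q)U_0(N\LV\Z_q)$, and so the $s$-dependent $\varepsilon$-factor never enters the Mellin integral. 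The one remaining $\varepsilon(0,\chi_{\ol{\frakq}}^{-1})$ in the final formula arises entirely from the separate evaluation of $B_{\breve W_\pi}(\varsigma_q\cmpt_q^{(\LV)})$ via the functional equation for $\GL_1$, which gives $B_{\breve W_\pi}(\varsigma_q\cmpt_q^{(\LV)})=\zeta_q(1)/\varepsilon(0,\chi_{\ol{\frakq}}^{-1})$ exactly so that the factor $\varepsilon(0,\chi_{\ol{\frakq}}^{-1})/\zeta_q(1)$ in the statement cancels against it. Your sentence that the Mellin transform of $\breve W_\pi$ against $\chi_{\ol{\frakq}}^{-1}$ ``produces the additional factor $\varepsilon(0,\chi_{\ol{\frakq}}^{-1})/\zeta_q(1)$ while $B_{\breve W_\pi}$ remains as an outer constant'' is therefore double-counting: that Mellin transform \emph{is} the Bessel value, up to a unit, and it cannot simultaneously serve as a new multiplicative factor and an untouched spectator. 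Once you disentangle this — compute $Z_\cD$ directly via the Iwasawa decomposition, then compute the Bessel value $B_{\breve W_\pi}(\varsigma_q\cmpt_q^{(\LV)})$ separately and substitute — the rest of your outline goes through. Finally, a small stylistic note: at $v=\infty$ the paper does not pass to the Whittaker function of the Eisenstein section as you plan to, but instead keeps $B_W(\cmpt_\infty\bfn(x))$ as a Bessel function, computes it explicitly, and then integrates against the explicit section values $f_{\cD,s,\sigma_i}(\kappa_\theta)$ from \remref{R:section}; your route via \lmref{L:FEinfty} should also work but requires tracking one more Mellin transform.
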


\begin{proof}
We first treat the archimedean case. Let $W=\rho(\cJ_\infty)W_{\pi_\infty}$. By definition,  $B_W(\cmpt_\infty\bfn(x))$ equals
\begin{align*}&\int_{\R^\x}W_{\pi_\infty}(\bft(a)\bfn(x)\cJ_\infty)\Mu_{\sg_2}(a)^{-1}\Nu_{\sg_1}(a)^{-1}\,\rmd^\x a\\
=&(\Mu_{\sg_2}\Nu_{\sg_1})(-1)\int_0^\infty e^{-2\pi a(1+x\sqrt{-1})}a^{k}\,\rmd^\x a\\
=&(\Mu_{\sg_2}\Nu_{\sg_1})(-1)(2\pi)^{-k}(1+x\sqrt{-1})^{-k}\Gamma(k) 
\end{align*}
by (\ref{E:Winfty.1}), where we have shifted the contour of integration. 
By the Iwasawa decomposition $\GL_2(\R)=B(\R)\bfK_\infty$ and Remark \ref{R:section}, the local integral $Z_\cD(s,B_W)$ equals
\begin{align*}
&\int_{\R}f_{\cD,s,\sigma_1}(\bfn(x))f_{\cD,s,\sigma_2}\biggl(\pMX{0}{1}{1}{0}\bfn(x)\biggl)B_W(\cmpt_\infty\bfn(x))\,\rmd x\\
=&\frac{\Gamma(k)}{(2\pi \sqrt{-1})^k}\cdot 4^{-k}(-1)^k\frac{\Gamma\bigl(s+\frac{k+1}{2}\bigl)^2}{\pi^{2s+k+1}}\\
&\times\int_{\R}\Mu_{\sg_2}(-1)(1+x^2)^{-(s+1/2)}\left(\frac{x-\sqrt{-1}}{\sqrt{1+x^2}}\right)^{k}\frac{\rmd x}{(x-\sqrt{-1})^k}\\
=&\frac{\Gamma(k)}{(2\pi \sqrt{-1})^k}\cdot 2(2\pi)^{-(2s+k)}\Gamma(2s+k)\cdot 4(-4)^{-k}\Mu_{\sg_2}(-1). \end{align*}

Let $v=q=\frakq\ol{\frakq}$ be a finite split prime. 
Then 
\beq
B_{W_\pi}(\cmpt_q)=L\biggl(\frac{1}{2},\pi\otimes\chi_{\bar\frakq}^{-1}\biggl)=L\biggl(\frac{1}{2},\pi\otimes\mu_{\bar\frakq}^{-1}\nu_{\frakq}^{-1}\biggl). \label{E:Wnew}
\eeq 
If $q$ and $Nc$ are coprime, then  
\[Z_\cD(s,B_{W_\pi})=L\biggl(2s+\frac{1}{2},\pi_q\otimes\nu_{\frakq}^{-1}\nu_{\bar\frakq}^{-1}\biggl)L\biggl(\frac{1}{2},\pi_q\otimes\mu_{\bar\frakq}^{-1}\nu_{\frakq}^{-1}\biggl) \]
by (\ref{E:formula1}). 
The unramified case follows from (\ref{E:Wnew}). 

Suppose that $v=\pme$ divides $N\LV$. 
Then 
$\Mu_\frakq,\Mu_{\bar\frakq}$ are unramified, the conductors of $\Nu_\frakq$ and $\Nu_{\bar\frakq}$ are $\LV\Z_q$, and 
\[W_{\ol{\frakq}}(\bft(a))=\begin{cases}
\Mu_{\bar\frakq}(a)\abs{a}^{s+\onehalf}\bbI_{\Z_q}(a) &\text{ if $\pme \divides \LV$},\\
W_{\Pi_{\bar\frakq}}&\text{ if $\pme\ndivides \LV$}.
\end{cases}\] 
Here $W_{\Pi_{\bar\frakq}}$ is the spherical vector for $\Pi_{\bar\frakq}=\Mu_{\bar\frakq}\Abs^s\boxplus \Nu_{\bar\frakq}\Abs^{-s}$. 
Put 
\[U_0(NC\Z_q)=\biggl\{\pMX{*}{*}{c}{*}\in\GL_2(\Z_q)\;\biggl|\;c\in NC\Z_q\biggl\}. \]
We claim that $f_{\cD,s,\frakq}$ is supported in $B(\Q_q)U_0(N\LV\Z_q)$. Indeed, if $f_{\cD,s,\frakq}(g)\neq 0$ for $g=\pMX{a}{b}{c}{d}\in\GL_2(\Q_q)$, then $\Phi_{\cD,\frakq}((0,t)g)\neq 0$ for some $t\in \Q_q^\x$. According to the recipe in \defref{D:BS.1}, we find that $(tc,td)\in N\LV\Z_q\oplus\Z_q^\x$, and hence $cd^{-1}\in N\LV\Z_q$. 
Since $f_{\cD,s,\frakq}({\bf1}_2)=1$, we see that
\begin{align*}
&Z_\cD(s,B_{W_\pi})\\
=&[\GL_2(\Z_q):U_0(N\LV\Z_q)]\int_{\Q_q^\x}(\Mu_\frakq\om^{-1})(a)|a|^{s+\onehalf}W_
{\ol{\frakq}}(\bft(a))W_\pi(\bft(a))\rmd^\x a\\
=&\frac{\zeta_q(2)\abs{N\LV}}{\zeta_q(1)}L\biggl(2s+\onehalf,\pi\ot\Nu_+^{-1}\biggl)L\biggl(\onehalf,\pi\ot\Mu_{\bar\frakq}^{-1}\Nu_\frakq^{-1}\biggl).
\end{align*}
The case of a prime factor $\pme$ of $N$ follows from (\ref{E:Wnew}). 

Finally, we assume that $\LV$ is divisible by $\pme$. 
We have 
\begin{align*}B_{\breve W_\pi}(\varsigma_q^{}\cmpt_q^{(\LV)})&=\int_{\Q_q^\x}\breve W_\pi\biggl(\bft(a)\pMX{\LV}{-1}{0}{1}\biggl)\chi_{\ol{\frakq}}^{-1}(a)\rmd^\x a\\
&=\abs{\LV}^\onehalf\varrho_q(C)\int_{\Q_q^\x}\abs{a}^\onehalf \chi_{\ol{\frakq}}^{-1}\varrho_q(a)\varPhi(a)\rmd^\x a
\end{align*}
in view of $\breve W_\pi(\bft(a))=\varrho_q(a)\abs{a}^\onehalf\bbI_{\Z_q}(a)$, where $\varPhi(a)=\bdpsi_q(-a)\bbI_{\LV^{-1}\Z_q}(a)$. 
The integral above equals
\begin{align*}
&\gamma\biggl(\onehalf,\chi_{\ol{\frakq}}^{-1}\varrho_q^{}\biggl)^{-1}\int_{\Q_q^\x}|a|^\onehalf\wh\varPhi(a)(\chi_{\ol{\frakq}}\varrho_q^{-1})(a)\rmd^\x a
\\=&\frac{\vol(\LV^{-1}\Z_q,\rmd a)}{\varepsilon\bigl(\onehalf,\chi_{\ol{\frakq}}^{-1}\varrho_q\bigl)}\vol(1+\LV\Z_q,\rmd^\x a)\\
=&\frac{\zeta_q(1)}{\varepsilon\bigl(\onehalf,\chi_{\ol{\frakq}}^{-1}\varrho_q\bigl)}=\frac{\zeta_q(1)}{\varepsilon\bigl(0,\chi_{\ol{\frakq}}^{-1}\bigl)\varrho_q(C)\abs{C}^\onehalf}
\end{align*}
by the local functional equation (\ref{tag:fq}) for $\GL_1$. 
In the final stage we utilized (\ref{tag:conductor}). 
\end{proof}

\begin{prop}\label{P:nonsplit}
If $v=\pme$ remains a prime in $F$ and does not divide $pN\LV$, then 
\[Z_\cD(s,B_{W_\pi})=\Mu(\delta)^{-1}\abs{\delta}_F^{-s-\onehalf}L\biggl(2s+\onehalf,\pi_\pme\ot\Nu_+^{-1}\biggl)B_{W_\pi}({\bf1}_2).\]
\end{prop}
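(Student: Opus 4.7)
The plan is to adapt the strategy of Proposition~\ref{P:spl} to the non-split torus. First, exactly as in the proof of Lemma~\ref{L:conv}, I would unfold the zeta integral using that $\varsigma_q = \mathbf{1}_2$ in the inert case, obtaining
\[Z_\cD(s,B_{W_\pi}) = \int_{\PGL_2(\Q_q)} f_{\cD,s,q}(\eta g)\,\omega(\det g)^{-1}\,W_\pi(g)\,\rmd^\tau g.\]
Since $q \nmid pNC$ is unramified in $F$, we have $\delta = \sqrt{\Delta_F}\in\cO_q^\times$ and $\theta\in\cO_q$, hence $\eta\in\GL_2(\cO_q)$. The Godement section $f_{\cD,s,q}$ is right-$\GL_2(\cO_q)$-invariant, so $g\mapsto f_{\cD,s,q}(\eta g)$ is right-$\mathbf{K}_q$-invariant; combined with the sphericity of $W_\pi$ and the unramifiedness of $\omega$, the whole integrand is right-$\mathbf{K}_q$-invariant.

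The next step is to apply the Iwasawa decomposition. Integrating out $\mathbf{K}_q$ reduces the computation to
\[Z_\cD(s,B_{W_\pi}) = \int_{\Q_q^\times} W_\pi(\bft(a))\,\omega(a)^{-1}|a|^{-1} \mathcal{W}_f(a)\,\rmd^\times a,\quad \mathcal{W}_f(a)=\int_{\Q_q} f_{\cD,s,q}(\eta\,\bfn(x)\bft(a))\,\bdpsi_q(x)\,\rmd x.\]
Expanding $\mathcal{W}_f(a)$ via the Godement formula with $\Phi_{\cD,q}=\mathbb{I}_{\cO_q}\otimes\mathbb{I}_{\cO_q}$ and using $\det\eta=\delta^{-1}$, the scalar $\mu(\delta)^{-1}|\delta|_F^{-s-1/2}$ factors out cleanly (this is the source of the $\delta$-factors in the statement). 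After swapping the order of integration, the $x$-integral becomes the Fourier transform of the indicator function of $\cO_q$ restricted to the $\Q_q$-line $\bar\theta+\Q_q\subset F_q$: writing $\bar\theta = D'/2 + \delta/2$ and $t = t_0+t_1\delta\in F_q$, the condition $t(\bar\theta-x)\in\cO_q$ becomes a pair of linear constraints on $x\in\Q_q$, and the resulting transform has explicit closed form.

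In the final step, the remaining integral over $F_q^\times$ in $t$ becomes a Tate-type integral over the unramified quadratic extension, and combining it with the spherical formula for $W_\pi(\bft(a))$ in terms of Satake parameters of $\pi_q$, the double sum over $v_F(t)$ and $v_q(a)$ reorganizes into the geometric series matching the Euler factor $L(2s+\tfrac12,\pi_q\otimes\nu_+^{-1})$. The leftover scalar is exactly $B_{W_\pi}(\mathbf{1}_2)$, which in the inert unramified case is a Tate-type integral over $\Q_q^\times\backslash F_q^\times$ of the spherical Whittaker function against $\chi^{-1}$.

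The main obstacle is the careful evaluation of $\mathcal{W}_f(a)$: unlike the split case of Proposition~\ref{P:spl}, where $\eta\varsigma_q=\delta^{-1}$ allows one to factor $f_{\cD,s,q}$ into two pieces and reduce immediately to a local Rankin-Selberg integral, here $F_q$ is a field and no such factorization is available, so one must explicitly compute the partial Fourier transform of $\mathbb{I}_{\cO_q}$ along the non-split $\Q_q$-subspace and recognize the resulting double sum as the claimed local $L$-factor $L(2s+\tfrac12,\pi_q\otimes\nu_+^{-1})$.
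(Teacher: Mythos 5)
Your proposal correctly unfolds the integral via Lemma~\ref{L:conv} and reduces by the Iwasawa decomposition, and in the unramified inert case it matches the paper's approach: the inner $x$-integral computes the Fourier transform of $\bbI_{\cO_\frakq}$ along the $\Q_q$-line through $\ol{\CMP}$, which collapses the Godement sum in $t$ to the single term $m=0$ and yields $G(a)=\Nu(a)^{-1}\abs{a}_q^{2s}\bbI_{\Z_q}(a)$, from which the local $L$-factor drops out. (Your description of a ``double sum'' reorganizing into the Euler factor is slightly misleading, since the $t$-sum collapses rather than interlocking with the $a$-sum, but the strategy is the same.)

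The genuine gap is your assertion that $q\ndivides pN\LV$ forces $q$ to be unramified in $F$, on which the rest of the argument leans via $\delta\in\cO_\frakq^\times$ and $\eta\in\GL_2(\cO_\frakq)$. This is false: a ramified prime $q\divides\Delta_F$ is automatically coprime to $pN\LV$ (since $p$ is unramified and every prime factor of $N\LV$ splits), and it falls within the proposition's scope. Indeed the scalar $\Mu(\delta)^{-1}\abs{\delta}_F^{-s-\onehalf}$ in the statement is trivial precisely when $\delta$ is a unit; it is there to account for ramification, and Theorem~\ref{T:formula} applies this proposition at every prime of $\Delta_F$ to produce the fudge factor $\frakf_{\rm ram}$. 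The ramified case requires a computation your proposal does not acknowledge: $\CMP$ is a uniformizer of $\cO_\frakq$, $\det\eta=\delta^{-1}$ is not a unit so $\eta\notin\GL_2(\cO_\frakq)$, the relevant Schwartz function is $\abs{\Delta_F}_q^{\onehalf}\bbI_{\delta^{-1}\cO_\frakq\oplus\delta^{-1}\cO_\frakq}$ rather than $\bbI_{\cO_\frakq^2}$, and the spherical Bessel value is $B_{W_\pi}(\bfone_2)=\abs{\delta}_F^{\onehalf}\bigl(1+(\Mu\Nu)(\CMP^{-1})\abs{q}^{\onehalf}(\alpha+\beta)\bigr)\neq 1$ (with $\alpha,\beta$ the Satake parameters of $\pi_q$). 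The paper handles this by splitting $F_\frakq^\times=\Q_q^\times\cO_F^\times\disjoint\Q_q^\times\cO_F^\times\CMP$ and separating the $t$-sum into even and odd powers of $\CMP$, and then matching the resulting expression against this nontrivial $B_{W_\pi}(\bfone_2)$; none of this appears in your proposal.
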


\begin{proof}
By assumption $\pi$ and $\chi$ are both unramified. 
Thus $W_\pi=W^0$ is the normalized spherical Whittaker function, and so by the Iwasawa decomposition $\GL_2(\Q_q)=B(\Q_q)\GL_2(\Z_q)$, we have 
\begin{align*}
Z_\cD(s,B_{W^0})=\int_{\Q_q^\times}G(a)\cdot W^0(\bft(a))\,\rmd^\x a, 
\end{align*}
 where
\[G(a):=\frac{1}{\om(a)\abs{a}}\int_{\Q_\pme}f_{\cD,s,\pme}(\eta \bfn(x)\bft(a))\bdpsi(x)\,\rmd x. \]

Recall that $\Phi_{\cD,\pme}=\abs{\delta}_F^\onehalf\bbI_{\delta^{-1}\cO_{\pme}\oplus \delta^{-1}\cO_{\pme}}$ and $\eta=\delta^{-1}\pMX{1}{-\CMP}{-1}{\ol{\CMP}}$. 
Put $\Phi^0=\bbI_{\cO_q\oplus \cO_q}$ and $\xi:=\Mu\Nu^{-1}\Abs_F^{2s+1}$. 
The computation in the proof of Lemma \ref{L:conv} shows that 
\begin{align*}
\Mu(\delta)\abs{\delta}_F^{s}G(a)=&\frac{|a|_F^s}{\Nu(a)}\int_{\Q_\pme}\sum_{m\in\Z}\xi(\pme^m)\Phi^0(-a\pme^m,\pme^m(\ol{\CMP}-x))\bdpsi(x)\,\rmd x.
\end{align*}

If $F/\Q_q$ is unramified, then $B_{W^0}({\bf1}_2)=1$ and 
\begin{align*}
G(a)=\frac{|a|_F^s}{\Nu(a)}\sum_{m=0}^\infty\bbI_{\cO_F}(a\pme^m)\xi(\pme^m)\int_{\pme^{-m}\Z_\pme}\bdpsi(x)\,\rmd x
=\frac{\abs{a}^{2s}}{\Nu(a)}\bbI_{\Z_\pme}(a).
\end{align*}
It follows that
\begin{align*}
Z_\cD(s,B_{W^0})
=&\int_{\Q_\pme^\x}\frac{\abs{a}^{2s}}{\Nu(a)}\bbI_{\Z_\pme}(a)W^0(\bft(a))\,\rmd^\x a
=L\biggl(2s+\onehalf,\pi\ot\Nu_+^{-1}\biggl).
\end{align*}

Next we consider the case where $q$ is ramified in $F$. 
Then $\CMP$ is a uniformizer. 
We see that 
\[B_{W^0}({\bf1}_2)=W^0({\bf1}_2)\cdot\abs{\Delta_F}^\onehalf+W^0(\Psi(\CMP))\chi^{-1}(\CMP)\abs{\Delta_F}^\onehalf \]
from the decomposition $F^\x=\Q_q^\x\cO_F^\x\disjoint \Q_q^\x\cO_F^\x\CMP$ and $\vol(\cO_F^\x,\rmd t_q)=\abs{\Delta_F}^\onehalf$. 
Writing $\pi=\varrho\boxplus \upsilon$, $\al=\varrho(q)$ and $\beta=\upsilon(q)$, we get  
\[\abs{\delta}_F^{-\onehalf}B_{W^0}({\bf1}_2)=1+\chi(\CMP)^{-1}(\al+\beta)\abs{q}^\onehalf=1+(\Mu\Nu)(\CMP^{-1})\abs{q}^\onehalf(\al+\beta) \]
by the Iwasawa decomposition of $\Psi(\CMP)$. 
On the other hand, $\Mu(\delta)\abs{\delta}_F^{s}G(a)$ equals
\begin{align*}
&\frac{|a|_F^s}{\Nu(a)}\int_{\Q_\pme}\sum_{m\in\Z}\bigl\{\xi(\CMP^{2m})\Phi^0(a\CMP^{2m},\CMP^{2m}(\ol{\CMP}-x))\\
&\quad\quad\quad\quad\quad\quad\;+\xi(\CMP^{2m-1})\Phi^0(a\CMP^{2m-1},\CMP^{2m-1}(\ol{\CMP}-x))\bigl\}\bdpsi(x)\,\rmd x\\
=&\Nu(a)^{-1}\abs{a}^{2s}\left(\bbI_{\Z_q}(a)+\xi(\CMP)\bbI_{\Z_q}(a)+\xi(\CMP^{-1})\abs{q}\cdot\bbI_{q\Z_q}(a)\right).
\end{align*}
Since 
\[\sum_{m=1}^\infty W^0\biggl(\pDII{q^m}{1}\biggl)q^{-2ms}\nu(q)^{-m}=\frac{\abs{q}^{2s+\onehalf}}{\nu(q)}\bigl(\alpha+\beta-\alpha\beta\nu(q)^{-1}\abs{q}^{2s+\onehalf}\bigl),  \]
we conclude that
\begin{multline*}
Z_\cD(s,B_{W^0})
=\Mu(\delta^{-1})\abs{\delta}_F^{-s}L\biggl(2s+\onehalf,\pi\ot\Nu_+^{-1}\biggl)\\
\times\bigl\{1+\xi(\CMP)+\xi(\CMP^{-1})\abs{q}\cdot\abs{q}^{2s+\onehalf}\nu(q)^{-1}\bigl(\alpha+\beta-\alpha\beta\nu(q)^{-1}\abs{q}^{2s+\onehalf}\bigl)\bigl\}. 
\end{multline*}
Since $(\Mu\Nu)|_{\Q_q^\times}=\omega$, the second factor equals
\begin{align*}
&1+(\Mu\Nu^{-1})(\CMP)\abs{q}^{2s+1}+(\Mu\Nu)(\CMP^{-1})\abs{q}^\onehalf\bigl(\alpha+\beta-(\Nu^{-1}\om)(q)\abs{q}^{2s+\onehalf}\bigl)\\
&=1+(\Mu\Nu)(\CMP^{-1})\abs{q}^\onehalf(\al+\beta)=B_{W^0}({\bf1}_2)\abs{\delta}_F^{-\onehalf},
\end{align*}
which finishes the proof of the ramified case.
\end{proof}

\begin{prop}\label{P:padic}
In the $p$-adic case, if $\pi=\varrho\boxplus \upsilon$ with $\varrho$ unramified and $\upsilon(-1)=1$, then for $n\gg 0$, we have $B_{W_\pi^\Ord}(\cmpt^\setn_p)\neq 0$ and 
\begin{align*} 
Z_\cD(s,B_{\rho(t_n)W_\pi^\Ord})&=\frac{B_{W_\pi^\Ord}(\cmpt^\setn_p)}{\gamma\bigl(2s+\frac{1}{2},\varrho\Nu_+^{-1}\bigl)}(\om^{-1}\varrho)(p^n)\abs{p^n}^\onehalf\frac{\zeta_p(2)}{\zeta_{F_p}(1)}. 
\end{align*}
\end{prop}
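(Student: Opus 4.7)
The plan is to reduce the local zeta integral to a one-variable Tate-type integral on $\Qp^\times$ and apply the local $\GL_1$ functional equation \eqref{tag:fq} to extract the gamma factor. I would first separate the cases where $p$ is split or inert in $F$, since $\eta$, $\cmpt_p$, $\cmpt_p^\setn$, and $t_n$ take different explicit forms. In the split case $p = \p\pbar$, the argument parallels the finite split case in \propref{P:spl}: combining the $T(\Qp)\backslash\GL_2(\Qp)$-integral with the torus integral defining $B_W$ merges them into a local Rankin--Selberg integral over $N(\Qp)\backslash\PGL_2(\Qp)$ of the product $f_{\cD,s,\p}(g)\om_p(\det g)^{-1} W(J_1 g, f_{\cD,s,\pbar})(\rho(t_n)W_\pi^\Ord)(g)$. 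In the inert case, one first exhibits an explicit $\wtd W\in \cW(\pi_p^\vee,\bdpsi_p^{-1})$ satisfying $\rho(\Psi(t))\wtd W=\chi_p(t)^{-1}\wtd W$ for $t\in F_p^\times$, constructed as a toric-averaged matrix coefficient against $\chi_p$, and rewrites $B_{W_p}$ via the pairing $\pair{\rho(g)W_p}{\wtd W}$, so that the zeta integral becomes an integral over $\PGL_2(\Qp)$ of $f_{\cD,s,p}(\eta g)\om_p(\det g)^{-1}\pair{\rho(g)\rho(t_n)W_\pi^\Ord}{\wtd W}$.

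After applying the Iwasawa decomposition, the key simplifications come from the support property $W_\pi^\Ord(\bft(a))=\varrho(a)\abs{a}^\onehalf\bbI_{\Zp}(a)$ and from the explicit form of the translate $\rho(t_n)W_\pi^\Ord$. For $n$ sufficiently large, conjugation by $t_n$ acts essentially as a Weyl-element twist composed with a dilation, so the resulting torus integral becomes a Tate-type integral in which $\wh{\phi}_{\Nu_p^{-1}}$ (arising from the Bruhat--Schwartz datum $\Phi_{\cD,p}$ of \defref{D:BS.1}) is integrated against the character $\varrho\Nu_+^{-1}\abs{\cdot}^{2s+\onehalf}$. Applying the local functional equation for $\GL_1$ converts this into the dual Tate integral of $\phi_{\Nu_p^{-1}}$, producing the prefactor $\gamma(2s+\onehalf,\varrho\Nu_+^{-1})^{-1}$. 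The surviving Tate integral reproduces exactly the period value $B_{W_\pi^\Ord}(\cmpt_p^\setn)$, while the explicit scalar $(\om^{-1}\varrho)(p^n)\abs{p^n}^\onehalf$ tracks the action of $t_n$ on the central character and on $W_\pi^\Ord(\bft(\cdot))$, and $\zeta_p(2)/\zeta_{F_p}(1)$ arises from comparing the Haar measure $\rmd^\tau g_p$ on $\PGL_2(\Qp)$ with the measures on the torus and the maximal compact.

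The main obstacle will be the careful bookkeeping of how the ramified characters $\Mu_p$ and $\Nu_p$ transform under the Weyl-element conjugations induced by $t_n$ and $\eta$, and through the functional equation, so that the emerging gamma factor is precisely $\gamma(2s+\onehalf,\varrho\Nu_+^{-1})$ with exactly this character (rather than some twist by $\chi_p$ or $\om_p$). In the inert case, one must additionally verify that the specific $\wtd W$ has the correct equivariance under $\Psi(F_p^\times)$ and analyze $\pair{W_\pi^\Ord}{\wtd W}$ to confirm the nonvanishing of $B_{W_\pi^\Ord}(\cmpt_p^\setn)$ for $n\gg 0$; here the hypotheses that $\varrho$ is unramified and $\upsilon(-1)=1$ enter to ensure that the local Tunnell--Saito epsilon-parity condition holds, guaranteeing that the toric linear functional on $\pi_p$ with character $\chi_p^{-1}$ is nontrivial.
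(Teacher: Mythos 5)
Your overall strategy matches the paper's: treat the split and inert cases separately, reduce each to Tate-type integrals, and extract $\gamma(2s+\onehalf,\varrho\Nu_+^{-1})^{-1}$ via the $\GL_1$ functional equation \eqref{tag:fq}. Two remarks, the second of which identifies a genuine gap.

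First, a minor structural point. In the inert case the paper opens the integral not via the Iwasawa decomposition but via the decomposition $\GL_2(\Qp)=\Psi(F_p^\times)\cdot B(\Qp)$ specific to the non-split torus, which is what allows the $T(\Qp)\backslash\GL_2(\Qp)$-integral to collapse onto a $B(\Qp)$-integral where $f_{\cD,s,p}(\eta\bft(a)\bfn(x))$ is computed explicitly. Also, the auxiliary vector $\wtd W$ is not a ``toric-averaged matrix coefficient'' but rather the Whittaker function $W(\cdot,\wtd f)$ attached to an explicit section $\wtd f$ of $\upsilon^{-1}\boxplus\varrho^{-1}$ which is $\chi^{-1}$-equivariant on the right under $\Psi(F_p^\times)$; this explicit formula for $\wtd f$ is what later makes the integral $\cZ(\wtd W)$ computable.

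The genuine gap is your final claim: you assert that the hypotheses on $\varrho,\upsilon$ ``ensure the local Tunnell--Saito epsilon-parity condition,'' which would ``guarantee'' $B_{W_\pi^\Ord}(\cmpt_p^\setn)\neq 0$. Tunnell--Saito only guarantees that the $\chi_p$-equivariant linear functional on $\pi_p$ is nonzero (and indeed for any principal series the local root number is automatically $+1$, so this gives nothing specific to the stated hypotheses); it says nothing about whether the \emph{particular} test vector $\rho(\cmpt_p^\setn)W_\pi^\Ord$ is outside the kernel. The latter is exactly the content of the nonvanishing claim and requires a direct argument. In the paper this is a standalone lemma: one reduces $B_{W_\pi^\Ord}(\cmpt_p^\setn)$ to $\varrho(p^n)\abs{p^n}^\onehalf\cZ(\wtd W)$ with $\cZ(\wtd W)=\int_{\Qp^\times}\varrho(t)\abs{t}^\onehalf\wtd W(\bft(t)J_1)\,\rmd^\times t$, then splits into cases according to whether $\xi:=\chi^{-1}\upsilon_F$ is unramified (giving $\cZ(\wtd W)=L(1,\pi^\vee\otimes\varrho)\neq 0$) or ramified. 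In the ramified case, $\wtd W(\bft(a)J_1)$ is expressed as a regularized Fourier transform of $\Phi_N(x)=\xi(x\CMP-1)\abs{x\CMP-1}_F^{-1/2}\bbI_{p^{-N}\Zp}(x)$, the tail contribution vanishes by orthogonality because $\varrho\upsilon^{-1}$ is ramified, and Fourier inversion isolates $\Phi_B(0)\zeta_p(1)\neq 0$. This explicit computation is the real work of the proposition and cannot be replaced by an appeal to epsilon-dichotomy.
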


\begin{proof}
We first assume that $p=\p\pbar$ is split. 
Then $F=\Qp\oplus \Qp$ and  $\Phi_p=\Phi_\p\ot\Phi_\pbar$, where $\Phi_v=\phi_{\Mu_v^{-1}}\ot\wh\phi_{\Nu_v^{-1}}$ with $v=\p$ or $\pbar$. 
From \eqref{E:formula1}  
\[Z_\cD(s,B_{\rho(t_n)W^\Ord_\pi})=\int_{N(\Q_p)\bksl\PGL_2(\Q_p)}\frac{f_{\cD,s,\p}(g)}{\om(\det g)}W_\pbar\biggl(\pDII{1}{-1}g\biggl)W^\Ord_\pi(gt_n)\,\rmd^\tau g. \]
Put $\bfu(x)=\pMX{0}{-1}{-1}{-x}$. 
Using the integration formula 
\[\int_{\PGL_2(\Q_p)}h(g)\,\rmd^\tau g=\frac{\zeta_p(2)}{\zeta_p(1)}\int_{\Q_p}\int_{\Q_p^\times}\int_{\Q_p}h(\bfn(y)\bft(a)J_1\bfn(x))|a|^{-1}\,\rmd y\rmd^\times a\rmd x  \]
for an integrable function $h$ on $\PGL_2(\Q_p)$, we see that $Z_\cD(s,B_{\rho(t_n)W^\Ord_\pi})$ equals  
\begin{align*}&\frac{\zeta_p(2)}{\zeta_p(1)}\int_{\Qp^\x}\int_{\Qp}(\om^{-1}\mu_\p)(a)\abs{a}^{s-\onehalf}f_{\cD,s,\p}(J_1\bfn(x))W_\pbar(\bft(a)\bfu(x))\\
&\times W^\Ord_\pi\biggl(\pDII{ap^n}{p^{-n}}\pMX{1}{0}{-p^{2n}x}{1}\biggl)\,\rmd x\rmd^\x a. \end{align*}
Since $f_{\cD,s,\frakp}(J_1\bfn(x))=\wh\phi_{\Nu_\p^{-1}}(x)$ by (\ref{tag:Godement}), if $n\gg 0$, then 
$Z_\cD(s,B_{\rho(t_n)W^\Ord_\pi})$ equals $\frac{\zeta_p(2)}{\zeta_p(1)\om(p^n)}$ times
\[\int_{\Qp^\x}\int_{\Qp}(\om^{-1}\mu_\p)(a)\abs{a}^{s-\onehalf}\wh\phi_{\Nu_\p^{-1}}(x)W_\pbar(\bft(a)\bfu(x))W^\Ord_\pi(\bft(ap^{2n}))\,\rmd x\rmd^\x a. \]
Since the function $a\mapsto \wh\phi_{\Nu_\p^{-1}}(x)W_\pbar(\bft(a)\bfu(x))$ has a bounded support uniformly with respect to $x$, if $n\gg 0$, then the integral is equal to $\varrho(p^{2n})\abs{p^n}$ times 
\begin{align*}
&\int_{\Qp^\x}\int_{\Qp}(\om^{-1}\mu_\p)(a)\abs{a}^s\wh\phi_{\Nu_\p^{-1}}(x)W_\pbar(\bft(a)\bfu(x))\varrho(a)\bbI_{p^{-2n}\Zp}(a)\,\rmd x\rmd^\x a\\
=&\int_{\Qp^\x}\int_{\Qp}(\upsilon^{-1}\mu_\p)(a)\abs{a}^s\wh\phi_{\Nu_\p^{-1}}(x)W_\pbar(\bft(a)\bfu(x))\,\rmd x\rmd^\x a. 
\end{align*}

Put $\Pi_\pbar=\Mu_\pbar\Abs^s\boxplus \Nu_\pbar\Abs^{-s}$. 
We use the local functional equation for $\GL_2$ (see \S \ref{SS:eps}) to see that the last integral equals the ratio of  
\[\int_{\Qp}\int_{\Qp^\x}(\mu_\p^{-1}\upsilon)(a)\abs{a}^{-s}\wh\phi_{\Nu_\p^{-1}}(x)W_\pbar(\bft(a)J_1^{-1}\bfu(x))\Mu_{\pbar}\Nu_{\pbar}(a^{-1})\,\rmd^\x a\rmd x \]
divided by 
\[\gamma\biggl(s+\onehalf,\mu_\p\upsilon^{-1}\ot\Pi_{\pbar}\biggl)=\gamma\biggl(2s+\onehalf,\varrho\nu_+^{-1}\biggl)\gamma\biggl(\frac{1}{2},\varrho\chi_\pbar^{-1}\biggl). \]
Since 
\begin{align*}
\om&=\varrho\upsilon=\Mu_\p\Nu_\p\Mu_\pbar\Nu_\pbar, & 
\bft(a)J_1^{-1}\bfu(x)&=\bfn(-ax)\bft(-a), & 
W_{\pbar}(\bft(a))&=\bbI_{\Zp^\x}(a)
\end{align*} 
by Lemma \ref{L:FEfinite}, this integral equals  
\begin{align*}&\int_{\Qp^\x}(\varrho^{-1}\Nu_\p)(a)\abs{a}^{-s}W_\pbar(\bft(-a))\int_{\Qp}\wh\phi_{\Nu_\p^{-1}}(x)\bdpsi(-ax)\,\rmd x\rmd^\x a\\
=&\int_{\Qp^\x}\varrho(a)^{-1}\abs{a}^{-s}\bbI_{\Zp^\x}(a)W_\pbar(\bft(-a))\rmd^\x a=1. 
\end{align*}
On the other hand, we see by (\ref{tag:fq}) that 
\begin{align*}
B_{W^\Ord_\pi}(\cmpt_p^\setn)=&\int_{\Qp^\x}W^\Ord_\pi\biggl(\bft(a)\pMX{p^n}{-1}{0}{1}\biggl)\chi_{\pbar}(a)^{-1}\,\rmd^\x a\\
=&\int_{\Qp^\x}\varrho(ap^n)|ap^n|^{1/2}\bdpsi(-a)\bbI_{\Z_p}(ap^n)\chi_{\pbar}(a)^{-1}\,\rmd^\x a\\
=&\varrho(p^n)\abs{p^n}^{1/2}\frac{\vol(p^{-n}\Zp,\rmd a)}{\gamma\bigl(\frac{1}{2},\varrho\chi_\pbar^{-1}\bigl)}\vol(1+p^n\Zp,\rmd^\x a)\\
=&\varrho(p^n)\abs{p^n}^{1/2}\frac{\zeta_p(1)}{\gamma\bigl(\frac{1}{2},\varrho\chi_\pbar^{-1}\bigl)}.
\end{align*}

Now we consider the case where $p$ is inert in $F$ and $\pi$ is a principal series. 
Using the decomposition $\GL_2(\Q_p)=\Psi(F^\times)\cdot B(\Q_p)$, we have 
\begin{align*}
&Z_\cD(s,B_{\rho(t_n)W^\Ord_\pi})\\
=&\int_{\Qp}\int_{\Qp^\x}f_{\cD,s,p}(\eta\bft(a)\bfn(x))\om(a)^{-1}B_{W^\Ord_\pi}(\bft(a)\bfn(x)t_n)\abs{a}\rmd^\x a\rmd x.
\end{align*}
We proceed to compute
\begin{align*}
&f_{\cD,s,p}(\eta \bft(a)\bfn(x))\\
=&(\Mu\Nu)(\delta^{-1}) f_{\cD,s,p}\biggl(\pMX{1}{-\CMP}{-1}{\ol{\CMP}}\pMX{a}{ax}{0}{1}\biggl)\\
=&\Nu(\delta^{-1})\Mu(a)\abs{a}_F^{s+\onehalf}\int_{F^\x}\Phi_{\cD,p}(-ta,t(\ol{\CMP}-xa))(\Mu\Nu^{-1})(t)\abs{t}_F^{2s+1}\rmd^\x t\\
=&\Nu(\delta^{-1})\Nu(a)\abs{a}_F^{-s-\onehalf}\int_{F^\x}\Phi_{\cD,p}(-t,a^{-1}t\ol{\CMP}-xt)(\Mu\Nu^{-1})(t)\abs{t}_F^{2s+1}\rmd^\x t.
\end{align*}
Since $\Phi_{\cD,p}=\phi_{\Mu^{-1}}\ot\wh\phi_{\Nu^{-1}}$, we find that
\[f_{\cD,s,p}(\eta \bft(a)\bfn(x))=\mu(-1)\Nu(a\delta^{-1})\abs{a}_F^{-s-\onehalf}\wh\phi_{\Nu^{-1}}(a^{-1}\ol{\CMP}-x). \]
In particular, the function $x\mapsto \wh\phi_{\Nu^{-1}}(a^{-1}\ol{\CMP}-x)$ has a bounded support with respect to $a$. 
Hence for $n\gg_{\Nu} 0$
\begin{align*}
&\mu(-1)\Nu(\delta)Z_\cD(s,B_{\rho(t_n)W^\Ord_\pi})\\
=&\int_{\Qp}\int_{\Qp^\x}\Nu(a)\abs{a}^{-2s}\wh\phi_{\Nu^{-1}}(a^{-1}\ol{\CMP}-x)\om(a)^{-1}B_{W^\Ord_\pi}(\bft(a)t_n)\,\rmd^\x a\rmd x\\
=&\int_{\Qp^\x}\Nu(a)^{-1}\abs{a}^{2s}\varPhi_2(a)\om(a)B_{W^\Ord_\pi}(\bft(a^{-1})t_n)\,\rmd^\x a, 
\end{align*}
where $\varPhi_2(a)\in\cS(\Qp)$ is defined by
\[\varPhi_2(a):=\int_{\Qp}\wh\phi_{\Nu^{-1}}(x+a\ol{\CMP})\,\rmd x. \]

Observe that if $\varPhi_2(a)\neq 0$, then 
\begin{align*}
\om(a)B_{W^\Ord_\pi}(\bft(a^{-1})t_n)&=\om(p^n)^{-1}B_{W^\Ord_\pi}(J_1^{-1}\bft(ap^{2n}))\\
&=\om(p^n)^{-1}\varrho(ap^{2n})\abs{a p^{2n}}^\onehalf\cZ(\wtd W), 
\end{align*}
where
\[\cZ(\wtd W)=\int_{\Qp^\x}\varrho(t)\abs{t}^\onehalf\bbI_{\Z_p}(atp^{2n})\wtd W(\bft(t)J_1)\,\rmd^\x t=\int_{\Qp^\x}\varrho(t)\abs{t}^\onehalf\wtd W(\bft(t)J_1)\,\rmd^\x t \]
for $n\gg_{\nu,\wtd W} 0$. 
Thus we find that $\mu(-1)\Nu(\delta)Z_\cD(s,B_{\rho(t_n)W^\Ord_\pi})$ is equal to 
\begin{align*}
(\om^{-1}\varrho^2)(p^n)\abs{p^n}\cZ(\wtd W)\int_{\Qp^\x}(\Nu^{-1}\varrho)(a)\abs{a}^{2s+\onehalf}\varPhi_2(a)\,\rmd^\x a.
\end{align*}
The last integral equals
\[\gamma\biggl(2s+\onehalf,\varrho\Nu_+^{-1}\biggl)^{-1}\int_{\Qp^\x}(\Nu\varrho^{-1})(a)\abs{a}^{\onehalf-2s}\wh\varPhi_2(a)\,\rmd^\x a\]
by (\ref{tag:fq}), where 
\begin{align*}
\wh\varPhi_2(a)=&\int_{\Qp}\int_{\Qp}\wh\phi_{\Nu^{-1}}(x+y\ol{\CMP})\bdpsi(ay)\,\rmd x\rmd y\\
=&\int_F\wh\phi_{\Nu^{-1}}(z)\bdpsi_F(a\delta^{-1}z)\,\rmd z=\phi_{\Nu^{-1}}(-a\delta^{-1}).
\end{align*}
We conclude that 
\[Z_\cD(s,B_{\rho(t_n)W^\Ord_\pi})=\gamma\biggl(2s+\onehalf,\varrho\Nu_+^{-1}\biggl)^{-1}(\om^{-1}\varrho^2)(p^n)\abs{p^n}\om(-1)\cdot \cZ(\wtd W).\]
On the other hand, for $n\gg 0$, 
\[B_{W^\Ord_\pi}(\cmpt^\setn_p)=\varrho(p^n)\abs{p^n}^\onehalf\cZ(\wtd W).\]
The following lemma will complete our proof. 
\end{proof}

\begin{lm}
$\cZ(\wtd W)\neq 0$. 
\end{lm}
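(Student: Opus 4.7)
The strategy is to construct $\wtd W$ as a compact toric average and then show $\cZ(\wtd W)\neq 0$ by unfolding the defining integral. Since the space of vectors in $\cW(\pi_p^\vee,\bdpsi_p^{-1})$ satisfying $\rho(\Psi(t))\wtd W=\chi_p^{-1}(t)\wtd W$ is one-dimensional by Saito--Tunnell (applicable because $p$ is inert in $F$ and $\pi_p$ is an unramified principal series, so the Jacquet--Langlands transfer is trivial), any nonzero choice works and the claim is independent of the normalization.

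Because $p$ is inert in $F$, the quotient $\Qp^\x\bksl F_p^\x$ is compact. Let $W_{\pi^\vee}$ be the normalized spherical Whittaker newform of $\pi_p^\vee=\varrho^{-1}\boxplus\upsilon^{-1}$ and set
\[\wtd W(g):=\int_{\Qp^\x\bksl F_p^\x}W_{\pi^\vee}(\Psi(\tau)g)\chi_p(\tau)\,d\tau.\]
This converges absolutely and has the required toric equivariance. A direct matrix computation gives $\Psi(\tau)\bft(a)J_1=(-aI)\cdot\bfn(\tr(\tau))\cdot\bft(\rmN(\tau)/a)$ in $\GL_2(\Qp)$, so the Whittaker property and central character formula yield
\[W_{\pi^\vee}(\Psi(\tau)\bft(a)J_1)=\om^{-1}(-a)\bdpsi_p^{-1}(\tr(\tau))W_{\pi^\vee}(\bft(\rmN(\tau)/a)).\]
Substituting into $\cZ(\wtd W)$, applying Fubini, performing the change of variable $b=\rmN(\tau)/a$, and using $\varrho\om^{-1}=\upsilon^{-1}$, one obtains the formal factorization
\[\cZ(\wtd W)=\om(-1)\cdot I_1\cdot I_2,\]
with $I_1:=\int_{\Qp^\x\bksl F_p^\x}(\chi_p\upsilon_F^{-1})(\tau)|\tau|_{F_p}^{1/2}\bdpsi_p^{-1}(\tr(\tau))\,d\tau$ and $I_2:=\int_{\Qp^\x}\upsilon(b)|b|^{-1/2}W_{\pi^\vee}(\bft(b))\,d^\x b$.

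In the unramified setting, $I_1$ localizes to a Gauss-sum-type expression on the finite quotient $\F_{p^2}^\x/\F_p^\x$, yielding an explicit nonzero local constant. The integral $I_2$ is formally divergent because $L(s,\pi_p^\vee\otimes\upsilon)$ has a pole at $s=0$; the factorization must therefore be justified by inserting a complex regulating variable $s$ (replacing $|a|^{1/2}$ by $|a|^{s+1/2}$ in $\cZ$), factoring $\cZ_s(\wtd W)$ as meromorphic functions in $s$, and extracting the appropriate value via analytic continuation. The regularized value of $I_2$ is a nonzero explicit rational expression in the Satake parameters $\varrho^{-1}(p)$ and $\upsilon^{-1}(p)$. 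The main obstacle is this regularization together with the bookkeeping of measures, but once carried out, non-vanishing of $\cZ(\wtd W)$ follows because no factor in the resulting explicit formula can vanish in the unramified setting.
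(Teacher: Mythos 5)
The proposal has a genuine gap and would not establish the lemma. The decisive issue is the assumption that $\pi_p^\vee$ has a ``spherical Whittaker newform.'' In the statement of \propref{P:padic} only the character $\varrho$ is assumed unramified; $\upsilon$ may be (and in the relevant case, is) ramified, so $\pi_p$ itself need not be unramified and there need not be any spherical vector. Indeed the paper's own proof splits into two cases according to whether $\xi=\chi^{-1}\upsilon_F$ is ramified: the unramified case is exactly the case in which $\pi_p$ and $\chi_p$ are both unramified, and there one does take the spherical vector and gets $\cZ(\wtd W)=L(1,\pi^\vee\otimes\varrho)\neq 0$ in one line. It is the remaining case, where $c(\omega)>0$ forces $\upsilon$ to be ramified, that actually requires work, and there your construction from ``the spherical newform'' is not available.

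Even setting that aside, the toric average you write down,
\[
\wtd W(g)=\int_{\Q_p^\times\backslash F_p^\times}W_{\pi^\vee}(\Psi(\tau)g)\chi_p(\tau)\,d\tau,
\]
produces a vector that \emph{may be identically zero} once $\chi_p$ is ramified: a priori all you know is that the resulting function lies in the one-dimensional $\chi_p^{-1}$-equivariant line, and the nonvanishing of this projection is precisely the kind of test-vector nontriviality that must be argued, not granted. The paper sidesteps this entirely by constructing $\wtd W$ from the section $\wtd f\in\upsilon^{-1}\boxplus\varrho^{-1}$ whose restriction to $B(\Q_p)\Psi(F_p^\times)$ is prescribed explicitly; this gives a manifestly nonzero, manifestly $\chi_p^{-1}$-equivariant Whittaker function. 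Finally, the matrix identity you invoke is false: with $\tau=x+y\theta$ one computes
\[
\Psi(\tau)\bft(a)J_1=\pMX{-y\,\rmN(\theta)}{-a(x+y\,\rmT(\theta))}{x}{-ay},
\]
whose lower-left entry is $x$, not $0$, so it is not of the form $(-aI)\bfn(\rmT(\tau))\bft(\rmN(\tau)/a)$; any factorization $\cZ(\wtd W)=\om(-1)I_1I_2$ derived from it is unjustified. The announced regularization of the divergent $I_2$ is also left unperformed. By contrast, the paper's computation writes $\wtd W(\bft(a)J_1)$ as a stable (regularized) Fourier transform of the explicit function $x\mapsto\xi(x\theta-1)\abs{x\theta-1}_F^{-1/2}$, estimates the tail contributions $I_j$, and concludes $\cZ(\wtd W)=\Phi_B(0)\zeta_p(1)\neq 0$ directly, without ever needing a pairing-with-the-newform or an analytic-continuation step.
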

\begin{proof}
Let $\xi:=\chi^{-1}\upsilon_F$. 
If $\xi$ is unramified, then so is $\xi|_{\Qp^\x}=\om^{-1}\upsilon^2=\varrho^{-1}\upsilon$, which implies that both $\pi$ and $\chi$ are unramified, so that $\wtd W$ is the spherical Whittaker function, and 
\[\cZ(\wtd W)=L(1,\pi^\vee\ot\varrho)\neq 0.\]
Suppose that $\xi$ is a ramified character. 
Since $\chi\om_F^{-\onehalf}$ is assumed to be unramified, we find that $c(\xi|_{\Qp^\x})=c(\varrho^{-1}\upsilon)=c(\om)>0$. 
Let $\wtd f\in \upsilon^{-1}\boxplus\varrho^{-1}$ be the unique section such that 
\[\wtd f\biggl(\pMX{a}{b}{0}{d}\Psi(t)\biggl)=\upsilon(a)^{-1}\varrho(d)^{-1}\abs{\frac{a}{d}}^\onehalf\chi^{-1}(t)\]
for $a,d\in \Q_p^\times$, $b\in\Q_p$ and $t\in F^\x$. 
Then we can choose $\wtd W(g):=W(g,\wtd f)$, and $\wtd W(\bft(a)J_1)$ equals
\[\int_{\Qp}^{\rm st}\wtd f\biggl(\pMX{-1}{0}{x}{-a}\biggl)\bdpsi(-x)\,\rmd x=\varrho(a)^{-1}\abs{a}^\onehalf\int_{\Qp}^{\rm st}\wtd f\biggl(\pMX{-1}{0}{x}{-1}\biggl)\bdpsi(-ax)\,\rmd x.\]
Since 
\[\pMX{-1}{0}{x}{-1}=\pMX{\rmN(x\CMP-1)^{-1}}{*}{0}{1}\Psi(x\CMP-1),\]
we find that
\[\wtd W(\bft(a)J_1)=\varrho(a)^{-1}\abs{a}^\onehalf\int_{\Qp}^{\rm st}\frac{\xi(x\CMP-1)}{\abs{x\CMP-1}_F^{1/2}}\bdpsi(-ax)\,\rmd x.\]
Put $\Phi_N(x):=\frac{\xi(x\CMP-1)}{\abs{x\CMP-1}_F^{1/2}}\bbI_{p^{-N}\Zp}(x)$. 
We have seen that 
\[\varrho(a)\abs{a}^{-\onehalf}\wtd W(\bft(a)J_1)=\lim_{N\to\infty}\wh\Phi_N(a). \]
Take an integer $B>c(\om)$. 
Then we have
\[\wh\Phi_N(a)=\wh\Phi_B(a)+\sum_{j=B+1}^N I_j \]
for $N>B$, where
\[I_j=\int_{p^{-j}\Z_p^\times}\frac{\xi(x\CMP-1)}{\abs{x\CMP-1}_F^{1/2}}\bdpsi(-ax)\,\rmd x. \]
Note that $I_j=0$ unless $j=\mathrm{ord}_p(a)+c(\om)$. 
Recall an additive character $\bdpsi^a$ defined by $\bdpsi^a(x)=\bdpsi(ax)$. 
Then 
\[I_{\ord_p(a)+c(\om)}=\xi(\theta)\varepsilon\biggl(-\frac{1}{2},\varrho\upsilon^{-1},\bdpsi^{-a}\biggl)=\xi(\theta)(\varrho\upsilon^{-1})(-a)|a|^{-1}\varepsilon\biggl(-\frac{1}{2},\varrho\upsilon^{-1}\biggl) \]
by (\ref{tag:conductor}). 
Since $\varrho\upsilon^{-1}$ is ramified, we see that $\cZ(\wtd W)$ is equal to 
\begin{align*}
&\int_{\Qp}\biggl(\wh\Phi_B(a)+\xi(\theta)(\varrho\upsilon^{-1})(-a)|a|^{-1}\varepsilon\biggl(-\frac{1}{2},\varrho\upsilon^{-1}\biggl)\bbI_{p^{B+1-c(\omega)}\Z_p}(a)\biggl)\abs{a}\,\rmd^\x a\\
=&\Phi_B(0)\zeta_p(1)\neq 0. \end{align*}
\end{proof}
\subsection{The explicit pull-back formula}
Now we are ready to give the explicit formula of $Z_\cD(s,\rho(\cJ_\infty t_n)\varphi_f)$. 
The notation is as in \subsecref{SS:4.3}.
Let $\tau_F$ be the quadratic Dirichlet character associated to the extension $F/\Q$. 

\begin{thm}\label{T:formula} 
Let $\lam$ be a Hecke character of $\A^\x$ of $p$-power conductor and $\brch$ be a finite order Hecke character of $\A_F^\x$ with $\brch|_{\A^\x}=1$ and conductor $\LV\cO_F$. Put $\chi=\om_F^\onehalf\brch$ and $$\cD=(\om_F^\onehalf\lam_F,\brch^{-1}\lam_F^{-1},k,\frakN,\frakc).$$For $n\gg 0$, we have
\begin{align*}
\frac{Z_\cD(s,\rho(\cJ_\infty t_n)\varphi_f)}{B_{\rho(\cJ_\infty)\breve\varphi_f}^\chi(\cmpt^{(\LV p^n)})}&=L^{\{p\}}\biggl(2s+\onehalf,\pi\ot\Nu_+^{-1}\biggl)\gamma\biggl(2s+\onehalf,\varrho_p\nu_{+,p}^{-1}\biggl)^{-1}\\
&\times (\om_p^{-1}\varrho_p^{})(p^n)\abs{p^n}_{\Qp}^\onehalf\frac{\zeta_p(2)}{\zeta_{F_p}(1)}\cdot \frac{L(1,\tau_F)\frakf_\infty\frakf_{\rm ram}\frakf_C}{\zeta_\Q(2)[\SL_2(\Z):\Gamma_0(N\LV)]},
\end{align*}
where $\frakf_\infty$, $\frakf_{\rm ram}$ and $\frakf_{\LV}$ are local fudge factors given by 
\begin{align*}&\frakf_\infty:=4(-4\sqrt{-1})^{-k}(\lam_\infty\phi_{\sg_1})(-1),\\ 
&\frakf_{\rm ram}:=\prod_{q\divides\Delta_F}\om_q^\onehalf\lam_q(\Delta_F^{-1})\abs{\Delta_F}_{\Q_q}^{-s-\onehalf},\\
&\frakf_{\LV}:=\prod_{q\divides C}\om_q^{\onehalf}(\LV^{-1})\frac{\varepsilon(0,\brch^{-1}_{\ol{\frakq}})}{\zeta_q(1)}. \end{align*}
\end{thm}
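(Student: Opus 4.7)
The plan is to combine the Keaton--Pitale identity (\thmref{T:KeatonPitale}) with the local zeta-integral computations of Propositions~\ref{P:spl}, \ref{P:nonsplit} and \ref{P:padic}, and then carefully track the Tamagawa and Haar measure constants.

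First I would apply \thmref{T:KeatonPitale} to the cusp form $\varphi=\rho(\cJ_\infty t_n)\varphi_f$, which yields
\[Z_\cD(s,\rho(\cJ_\infty t_n)\varphi_f)=\int_{T(\A)\bksl \GL_2(\A)} B^\chi_{\rho(\cJ_\infty t_n)\varphi_f}(g)\,\rmd g.\]
Because $\varphi_f=\varPhi(f)$ is a Hecke eigenform, $W_{\varphi_f}$ factorizes as $\prod_v W_v$ with $W_p=W^{\Ord}_{\pi_p}$ and $W_q=W_{\pi_q}$ (resp.\ $\breve W_{\pi_q}$) for $q\nmid C$ (resp.\ $q\mid C$). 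Using this factorization and the relation \eqref{E:cm2.W} (which converts $\eta\Psi(t)\eta^{-1}$ into a diagonal element), a standard unfolding of $B^\chi_\varphi(g)$ against the Whittaker expansion of $\varphi$ combined with the integration formula $\int_{\PGL_2}=\int_{N}\int_{T}\int_{K}$ lets me rewrite the global integral as a product
\[\prod_{v\neq p}Z_\cD(s,B_{W_v})\cdot Z_\cD(s,B_{\rho(t_n)W^{\Ord}_{\pi_p}}),\]
up to an explicit constant coming from the Tamagawa measures (the factor $c_F^{-1}\zeta_\Q(2)^{-1}$ from \S\ref{SS:measure}, together with $\vol(\wh{\Z}^\x)$ type contributions).

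Next I would insert the local formulas: at $\infty$ Proposition~\ref{P:spl} supplies the archimedean factor $4(-4\sqrt{-1})^{-k}\nu_{\sigma_1}(-1)\Gamma_\C(2s+k)\cdot B_{\rho(\cJ_\infty)W_{\pi_\infty}}(\cmpt_\infty)$; at a finite prime $q\nmid N C p$ which splits in $F$ Proposition~\ref{P:spl} gives the unramified Euler factor $L(2s+\tfrac12,\pi_q\ot\nu_{+,q}^{-1})B_{W_{\pi_q}}(\cmpt_q)$, at $q\mid N$ the same proposition produces the level-$N$ factor with $\zeta_q(2)|N C|_{\Q_q}/\zeta_q(1)$, and at $q\mid C$ it produces the additional local $\varepsilon$-factor $\varepsilon(0,\chi_{\bar\frakq}^{-1})/\zeta_q(1)$ together with the factorization $B_{\breve W_\pi}(\varsigma_q\cmpt_q^{(C)})$. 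For primes $q\nmid pNC$ that are inert or ramified in $F$, Proposition~\ref{P:nonsplit} contributes $\mu_q(\delta)^{-1}|\delta|_{F_q}^{-s-\frac12}L(2s+\tfrac12,\pi_q\ot\nu_{+,q}^{-1})B_{W_{\pi_q}}({\bf 1}_2)$; collecting $\mu_q(\delta)^{-1}|\delta|_{F_q}^{-s-\frac12}$ over $q\mid \Delta_F$ produces $\frakf_{\rm ram}$ after using $\mu=\omega_F^{1/2}\lambda_F$. Finally Proposition~\ref{P:padic} furnishes the $p$-adic factor $\gamma(2s+\tfrac12,\varrho_p\nu_{+,p}^{-1})^{-1}(\omega_p^{-1}\varrho_p)(p^n)|p^n|^{1/2}\zeta_p(2)/\zeta_{F_p}(1)\cdot B_{W^{\Ord}_{\pi_p}}(\cmpt_p^{(n)})$.

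I would then recognize the product of the Bessel factors $B_{W_v}(\cmpt_v^{(\cdots)})$ as exactly the factorization of the global toric period $B^\chi_{\rho(\cJ_\infty)\breve\varphi_f}(\cmpt^{(Cp^n)})$, dividing this into the expression gives the ratio appearing on the left-hand side of the theorem. What remains is assembling the prime-to-$p$ $L$-factor $L^{\{p\}}(2s+\tfrac12,\pi\ot\nu_+^{-1})$ and identifying the global constant. The $\zeta_q(2)|NC|_{\Q_q}/\zeta_q(1)$ contributions at $q\mid NC$ together with $\zeta_\Q(2)$ from the Tamagawa measure combine into $(\zeta_\Q(2)[\SL_2(\Z):\Gamma_0(NC)])^{-1}$, while the residue $c_F$ produces $L(1,\tau_F)$ via $\zeta_F(1)=\zeta_\Q(1)L(1,\tau_F)$. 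Finally, the archimedean character $\nu_{\sigma_1}(-1)$ becomes $(\lambda_\infty\phi_{\sigma_1})(-1)$ because $\chi=\omega_F^{1/2}\phi$ and $\nu=\phi^{-1}\lambda_F^{-1}$; gathering these with the explicit archimedean prefactor yields $\frakf_\infty$.

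The main obstacle will be keeping the bookkeeping of measures and sign/character twists consistent: every passage from Tamagawa to local Haar measures introduces $c_F$, $\zeta_\Q(2)$ or $L(1,\tau_F)$ factors, and each Bessel factor $B_{W_v}$ implicitly contains a factor $\vol(\cO_{F_v}^\x, \rmd t_v)$ which must be combined correctly across ramified, inert and split places to reproduce the discriminant factor in $\frakf_{\rm ram}$. The inert $p$-adic case, handled via the non-trivial Lemma following \propref{P:padic}, is also delicate because the ordinary Whittaker function and the auxiliary $\wtd W$ appear only through the ratio $B_{W^{\Ord}_{\pi_p}}(\cmpt_p^{(n)})/\gamma(2s+\tfrac12,\varrho_p\nu_{+,p}^{-1})^{-1}$, which must be arranged to match the $p$-adic Bessel factor appearing in the global toric period. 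Once these bookkeeping steps are executed cleanly, the asserted identity falls out directly.
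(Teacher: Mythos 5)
Your proposal follows exactly the paper's route: apply Keaton--Pitale, factor the toric period into local Bessel functions (the paper phrases this as an appeal to the uniqueness of Waldspurger models, introducing a proportionality constant $c$ that cancels in the ratio rather than doing an explicit unfolding, but the effect is identical), substitute the local formulas from Propositions~\ref{P:spl}, \ref{P:nonsplit} and \ref{P:padic}, and track the Tamagawa-measure constant $L(1,\tau_F)/\zeta_\Q(2)$. The bookkeeping you describe for $\frakf_\infty$, $\frakf_{\rm ram}$ and $\frakf_C$ matches the paper's, so this is the same proof.
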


\begin{proof}
There exists a nonzero constant $c$ such that $B_\varphi^\chi(g)=c\prod_v B_{W_v}(g_v)$ for $\varphi\in\pi$ with $W_\varphi(g)=\prod_vW_v(g_v)$ by the uniqueness and the existence of the Waldspurger models. 
Put 
\begin{align*}
\varphi^\star&=\rho(\cJ_\infty t_n)\breve\varphi_f, &  
W_\infty^\star&=\rho(\cJ_\infty)W_{\pi_\infty}, & 
W_p^\star&=\rho(t_n)W_{\pi_p}^\Ord. 
\end{align*}
The Whittaker function of $\varphi^\star$ is given by 
\[W_{\varphi^\star}(g)=W^\star_\infty(g_\infty)\cdot W^\star_p(g_p)\cdot \prod_{q\divides \LV}\breve W_{\pi_q}(g_q)\prod_{\ell\ndivides p\LV}W_{\pi_\ell}(g_\ell). \]
It follows that 
\[B_{\rho(\cJ_\infty)\breve\varphi_f}^\chi(\cmpt^{(\LV p^n)})=cB_{W^\star_\infty}(\cmpt_\infty)\cdot B_{W_{\pi_p}^\Ord}(\cmpt_p^{(n)})\cdot \prod_{q\divides \LV}B_{\breve W_{\pi_q}}(\cmpt_q\cmpt_q^{(C)})\prod_{\ell\ndivides p\LV}W_{\pi_\ell}(\cmpt_\ell). \]
On the other hand, \thmref{T:KeatonPitale} gives 
\[Z_\cD(s,\rho(\cJ_\infty t_n)\varphi_f)=c\frac{L(1,\tau_F)}{\zeta_\Q(2)}\cdot Z_\cD(s,B_{W_\infty^\star})\cdot Z_\cD(s,B_{W_p^\star})\prod_{q\neq p} Z_\cD(s,B_{W_{\pi_q}}). \]
Theorem \ref{T:formula} now follows from Propositions \ref{P:spl}, \ref{P:nonsplit} and \ref{P:padic} with $\Mu=\om_F^\onehalf\lam_F$, $\Nu=\brch^{-1}\lam_F^{-1}$ and $\chi=\om_F^\onehalf\brch$.
\end{proof}

\def\brch{\phi}
\def\bftheta{\vartheta}
\section{The construction of \padic twisted triple product  $L$-functions}\label{S:5}

\subsection{Notation}
Define the $p$-adic cyclotomic character by \[\cyc:\Q^\x\bksl \A^\x\to\Zp^\x,\quad \cyc(a)=\abs{a}_\A a_\infty^{-1}a_p.\]
Let $\Om:\Q^\x\bksl \A^\x\to\mu_{p-1}(\Cp)$ be the \Teich character. 
Fix embeddings $\iota_\infty:\Qbar\hookto\C$ and $\iota_p:\Qbar\hookto\Cp$ once and for all. The set of embeddings $\Sigma_\R=\stt{\sg_1,\sg_2}$ from $F$ to $\R$ is identified with $\Gal(F/\Q)$ via $\iota_\infty$. 

Let $\mathcal{O}=\mathcal{O}_L$ for some finite extension $L$ of $\Q_p$ containing $\iota_p(F)$. Let $\Lam=\cO\powerseries{1+p\Zp}$ and write $[\cdot]:1+p\Zp\to \Lam^\x$ for the inclusion of group-like elements. Let $\bfu=1+p$. For a variable $X$, let $\Dmd{\cdot}_X:\Zp^\x\to\Zp\powerseries{X}^\x$ be the character defined by 
\beq\label{E:defn1}\Dmd{a}_X:=(1+X)^{\frac{\log_pa}{\log_p\bfu}}.\eeq
Write $\rmN=\rmN
_{F/\Q}:F\to\Q$ for the norm map. If $\fraka$ is a fractional ideal of $F$ coprime to $p$, put $\Dmd{\fraka}_X=\Dmd{\rmN(\fraka)}_X$. If $\bfI$ is a finite extension of $\Lam$, a point $Q\in\Spec\bfI(\Cp)$ is called a locally algebraic point of weight $k$ and finite part $\ep$ if the map $Q|_{\Lam}:1+p\Zp\overset{[\cdot]}{\longto}\Lam^\x\overset{Q}{\longto}\Qbar_p^\x$ is given by $Q(x)=x^k\ep(x)$ for some integer $k\geq 1$ and a finite order character $\ep:1+p\Zp\to \mu_{p^\infty}(\Qbar_p)$. For a locally algebraic point $Q$ we denote by $k_Q$ the weight of $Q$ and $\ep_Q$ the finite part of $Q$.  Let $\frakX_\bfI^+$ be the set of locally algebraic points $Q$ in $\Spec\bfI(\Cp)$ with $k_Q\geq 1$. A locally algebraic point $Q\in\frakX_\bfI^+$ is called arithmetic if $k_Q\geq 2$. 
For every arithmetic point $Q\in \frakX_\bfI^+$, we shall view the finite part $\ep_Q$ as a Hecke character of $\A^\x$ via $\ep_Q(a):=\iota_\infty\iota_p^{-1}(\ep_Q(\cyc(a)\Om^{-1}(a)))$.
If $A$ and $B$ are two complete $\cO$-modules, we write $A\wh \ot B$ for $A\wh\ot_\cO B$ for simplicity.

\subsection{Preliminaries on Hida theory for modular forms}
Let ${\bf I}$ be a normal domain finite flat over $\Lambda$. Let $N$ be a positive integer prime to $p$ and let $\chi:(\Z/Np\Z)^\x \to\cO^\x$ be a Dirichlet character modulo $Np$. Denote by $\bfM(N,\chi,\bfI)$ the space of $\bfI$-adic modular forms of tame level $N$ and (even) branch character $\chi$, consisting of  formal power series $\bdsf(q)=\sum_{n\geq 1}\bfa(n,\bdsf)q^n\in \bfI\powerseries{q}$ with the following property: there exists an integer $a_\bdsf$ such that for every point $Q\in\frakX^+_\bfI$ with $k_Q\con 0\pmod{2}$ and $k_Q\geq a_\bdsf$, the specialization $\bdsf_Q(q)=\sum_{n\geq 1}Q(\bfa(n,\bdsf))q^n$ is the $q$-expansion of a cusp form $\bdsf_Q\in \cM_{k_Q}(Np^r,\chi\Om^{2-k_Q}\ep_Q)$ for some $r>0$. We call $\bdsf_Q$ the \emph{specialization} of $\bdsf$ at $Q$. For a positive integer $d$ prime to $p$, define $V_d:\bfM(N,\chi,\bfI)\to \bfM(Nd,\chi,\bfI)$ by $V_d(\sum_n\bda(n,\bdsf)q^n)=d\sum_n\bda(n,\bdsf)q^{dn}$.  Let $\bfS(N,\chi,\bfI)\subset \bfM(N,\chi,\bfI)$ be the space of $\bfI$-adic cusp forms, consisting of elements $\bdsf\in \bfM(N,\chi,\bfI)$ such that $\bdsf_Q$ is a cusp form for a Zariski dense subset $Q\in\frakX_\bfI^+$.

The space $\bfM(N,\chi,\bfI)$ is equipped with the action of the usual Hecke operators $T_\ell$ for $\ell\ndivides Np$ as in \cite[page 537]{Wiles88} and the operators $\bfU_\ell$ for $\ell\divides pN$ given by $\bfU_\ell(\sum_n \bfa(n,\bdsf)q^n)=\sum_n\bda(n\ell,\bdsf)q^n$. Recall that Hida's ordinary projector $\eord$ defined by 
\[\eord:=\lim_{n\to\infty}\bfU_p^{n!}.\]
is a convergent operator on the space of classical modular forms preserving the cuspidal part as well as on the spaces $\bfM(N,\chi,\bfI)$ and $\bfS(N,\chi,\bfI)$ (\cf\cite[page 537 and Proposition 1.2.1]{Wiles88}). The space $\eord\bfS(N,\chi,\bfI)$ consists of ordinary $\bfI$-adic forms defined over $\bfI$. A key result in Hida's theory of ordinary $\bfI$-adic cusp forms says that if $\bdsf\in \eord\bfS(N,\chi,\bfI)$, then $\bdsf_Q\in \eord\cS_{k_Q}(Np^e,\chi\Om^{2-k_Q}\ep_Q)$ for \emph{every} arithmetic point $Q\in\frakX_\bfI^+$. 
We call $\bdsf\in\eord\bfS(N,\chi,\bfI)$ a \emph{primitive Hida family} if $\bdsf_Q$ is a $p$-stabilized newform of tame conductor $N$ for every arithmetic point $Q\in\frakX_\bfI^+$.

For a divisor $M\divides N$, let $\bfT(N,M)\subset\End \eord\bfS(N,\chi,\bfI)$ be the $\bfI$-algebra generated by Hecke operators $\stt{T_q}_{q\ndivides Np}$ and $\stt{\bfU_q}_{q\divides Mp}$.  A classical result in Hida theory for modular forms asserts that $\bfT(N,\bfI)$ is free of finite rank over $\bfI$. Let $\bdsf\in\eord\bfS(N,\chi,\bfI)$ be a primitive Hida family. Then $\bdsf$ induces the $\bfI$-algebra homomorphism $\lam_\bdsf:\bfT(N,\bfI)\to \bfI$ with $\lam_\bdsf(T_q)=\bfa(q,\bdsf)$ for $q\ndivides Np$ and $\lam_\bdsf(\bfU_q)=\bfa(q,\bdsf)$ for $q\divides Np$. 
We denote by $\frakm_\bdsf$ the maximal ideal of $\bfT(N,\bfI)$ containing $\Ker\lam_\bdsf$ and by $\bfT_{\frakm_\bdsf}$ the localization of $\bfT(N,\bfI)$ at $\frakm_\bdsf$. It is the local ring of $\bfT(N,\bfI)$ through which $\lam_\bdsf$ factors. 
It is well-known that 
there is an algebra direct sum decomposition 
\begin{align*}
\wtd\lam_\bdsf&:\bfT_{\frakm_\bdsf}\ot_{\bfI}\Frac\bfI\iso\Frac\bfI\oplus \sB, & 
t&\mapsto \wtd\lam_\bdsf(t)=(\lam_\bdsf(t),\lam_\sB(t)),
\end{align*} 
where $\sB$ is a finite dimensional $(\Frac\bfI)$-algebra (\cite[Corollary 3.7]{Hida88Annals}). 

\begin{Remark}\label{R:con.5}
 Recall that the congruence ideal $C(\bdsf)$ of $\bdsf$ is defined by 
 \[C(\bdsf):=\lam_\bdsf(\Ann_{\bfT_{\frakm_\bdsf}}(\Ker\lam_\bdsf))\subset\bfI. \]
By definition, $C(\bdsf)\cdot 1_\bdsf\subset \bfT(N,\bfI)$ and $C(\bdsf)$ is the annihilator of the congruence module of $\lam_\bdsf$ (see \cite[Definition 6.1]{Hida88AJM}). For each arithmetic point $Q\in\frakX_\bfI^+$, let $\wp_Q=\ker Q$. By the control theorem for the Hecke algebras and the congruence modules (\cf \cite[(0.4b), (5.8a)]{Hida88AJM}), we find that $Q(C(\bdsf))$ is the congruence ideal for $\lam_{\bdsf_Q}:\bfT(N,\bfI)/\wp_Q\to \bfI/\wp_Q$. In particular, this implies $Q(C(\bdsf))\neq 0$ and hence $1_\bdsf$ belongs to the localization $\bfT(N,\bfI)_{\wp_Q}$ at $\wp_Q$. This fact will be used to deduce the finiteness of our $p$-adic $L$-function in \defref{D:padicL.5} at arithmetic points.
\end{Remark}

\subsection{A two-variable $p$-adic family of Hilbert-Eisenstein series}
We shall make the identification
\[\Lam\wh\ot\Lam=\cO\powerseries{X,T},\quad X=([\bfu]-1)\ot 1,\,T=1\ot ([\bfu]-1). \]
Let $(\chi_1,\chi_2)$ be a pair of finite order Hecke characters of $\A_F^\x$ of level $p\cO_F$ and $p\LV\cO_\cK $. 
We assume that $(\chi_1,\chi_2)$ satisfies Hypothesis \ref{H:31} and $\chi_1\chi_2$ is totally even. A Hecke character $\chi$ of $\A_F^\x$ will be viewed as an ideal class character by \[\chi(\frakq):=\chi(\uf_\frakq)^{-1}\] for any prime ideal $\frakq$ away from the conductor of $\chi$. Define the $\Lam\wh\ot \Lam$-adic $q$-expansion by 
\[\bdsE(\chi_1,\chi_2)(X,T):=\sum_{\beta\in\frakd^{-1}_+,(p,(\beta))=1}\cA_\beta(\chi_1,\chi_2)q^\beta\in \Lam\wh\ot \Lam\powerseries{q^{\frakd^{-1}_+}},\]
where $\cA_\beta(\chi_1,\chi_2)\in\Lam\wh\ot\Lam$ is defined by
\begin{align*}\cA_\beta(\chi_1,\chi_2)=&\Dmd{(\beta)}_X\Dmd{(\beta)}^{-1}_{T}\chi^{-1}_{1}((\beta))\prod_{\frakq\ndivides \frakc p}\cP_{\beta,\frakq}(\chi_1\chi_2^{-1}(\frakq)\Dmd{\frakq}_{X}^{-1}\Dmd{\frakq}^{2}_{T})\\
&\times \prod_{\frakq\divides (\frakc,\beta)}\cQ_{\chi_1\chi_2^{-1},\frakq}(\Dmd{\pmq}_{X}^{-1}\Dmd{\frakq}^{2}_{T}),\end{align*}
where $\cP_{\beta,\frakq}$ and $\cQ_{\chi_1\chi_2^{-1},\frakq}$ are polynomials defined in \eqref{E:poly.3}.
If $R$ is an $\cO_F$-algebra, the theta operator $\theta_\sg\in\End(R\powerseries{q^{\frakd_+^{-1}}})$ for $\sg\in\Gal(F/\Q)$ is defined by  
\[\theta_\sg(\sum_{\beta}a_\beta q^\beta)=\sum_\beta \sg(\beta)a_\beta q^\beta. \] 
For $Q\in\frakX_\Lam$, let $\xi_Q$ be the finite order Hecke character of $\A_F^\x$ given by 
\[\xi_Q:=\ep_Q\Om^{-k_Q}\circ\rmN.\]

\begin{prop}\label{P:interpolateE}
For every $(\Qx,\Qz)\in \frakX_\Lam^+\times\frakX_\Lam^+$ with $k_\Qx\leq k_\Qz$, we have the interpolation 
\begin{align*}\bdsE(\chi_1,\chi_2)(\Qx,\Qz)
=&\begin{cases}\theta^{k_\Qx-k_\Qz}E^+_{2k_\Qz-k_\Qx}(\chi_1\xi_Q^{-1}\xi_P,\,\chi_2\xi_P^{-1})&\text{ if }2k_\Qz>k_\Qx, \\[0.5em]
\theta^{k_\Qx-1}E^-_{k_\Qx-2k_\Qz+2}(\chi_1\xi_\Qx^{-1}\xi_{\Qz},\,\chi_2\xi_{\Qz}^{-1})&\text{ if }2k_\Qz\leq k_\Qx,\end{cases}
\end{align*}
where $\theta=\theta_{\sg_1}\theta_{\sg_2}$ is the theta operator $\theta(\sum_{\beta}a_\beta q^\beta)=\sum_\beta \rmN(\beta)a_\beta q^\beta$.
\end{prop}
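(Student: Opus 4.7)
The plan is to prove \propref{P:interpolateE} by comparing $q^\beta$-coefficients on both sides for each totally positive $\beta \in \frakd^{-1}$ coprime to $p$. The left-hand side has $q^\beta$-coefficient $(\Qx,\Qz)(\cA_\beta(\chi_1,\chi_2))$ by definition, while the right-hand side, thanks to \corref{C:FE} and the identity $\theta^m q^\beta = \rmN(\beta)^m q^\beta$, has $q^\beta$-coefficient equal to $\rmN(\beta)^m\cdot \sigma_\beta^\pm(\mu,\nu,k)$ for the appropriate $m$, $k$ and characters $(\mu,\nu)$. So the proposition reduces to a direct numerical identity for each $\beta$.

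The key step is to specialize the Iwasawa variables. Unwinding the definition of $\Dmd{\cdot}_X$ together with $\Qx([\bfu]) = \bfu^{k_\Qx}\ep_\Qx(\bfu)$ gives $\Qx(\Dmd{a}_X) = a^{k_\Qx}\cdot(\ep_\Qx\Om^{-k_\Qx})(a)$ for $a\in\Zp^\x$, and similarly for $\Qz$ with $T$. Applying this to $a = \rmN(\beta)$ and to $a = \rmN(\frakq) = q_\frakq$ yields
\begin{align*}
(\Qx,\Qz)(\Dmd{(\beta)}_X\Dmd{(\beta)}_T^{-1}) &= \rmN(\beta)^{k_\Qx-k_\Qz}\cdot \eta_\beta,\\
(\Qx,\Qz)(\Dmd{\frakq}_X^{-1}\Dmd{\frakq}_T^2) &= q_\frakq^{2k_\Qz-k_\Qx}\cdot \eta_\frakq,
\end{align*}
where $\eta_\beta,\eta_\frakq$ are explicit finite-order factors built from $\ep_\Qx,\ep_\Qz,\Om$. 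Setting $(\mu,\nu) := (\chi_1\xi_\Qx^{-1}\xi_\Qz,\chi_2\xi_\Qz^{-1})$, the factor $\eta_\frakq$ combines with $\chi_1\chi_2^{-1}(\frakq)$ to produce $\gamma_\frakq = \mu\nu^{-1}(\frakq)$ inside the argument of $\cP_{\beta,\frakq}$, while $\eta_\beta$ combines with $\chi_1^{-1}((\beta))$ to produce $\mu_p^{-1}(\beta)$. The fact that $\xi_\Qx,\xi_\Qz$ are unramified outside $p$ ensures that the conductor of $\chi_1\chi_2^{-1}$ at each prime $\frakq\mid\frakc$ agrees with that of $\mu^{-1}\nu$, so $\cQ_{\chi_1\chi_2^{-1},\frakq} = \cQ_{\mu^{-1}\nu,\frakq}$. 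Consequently $(\Qx,\Qz)(\cA_\beta(\chi_1,\chi_2))$ equals $\rmN(\beta)^{k_\Qx-k_\Qz}$ times precisely the bracketed expression appearing in $\sigma_\beta^\pm(\mu,\nu,k)$ of \corref{C:FE}, evaluated with arguments $\gamma_\frakq q_\frakq^{2k_\Qz-k_\Qx}$ and $q_\frakq^{2k_\Qz-k_\Qx}$.

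The final step is to read off the correct weight and theta shift in each regime. If $2k_\Qz > k_\Qx$, setting $k := 2k_\Qz-k_\Qx \geq 2$ identifies this bracketed expression with $\sigma_\beta^+(\mu,\nu,k)$, giving the $q^\beta$-coefficient of $\theta^{k_\Qx-k_\Qz}E_k^+(\mu,\nu)$. If $2k_\Qz \leq k_\Qx$, setting $k := k_\Qx-2k_\Qz+2 \geq 2$ and using $q_\frakq^{2-k} = q_\frakq^{2k_\Qz-k_\Qx}$, the same bracketed expression equals $\rmN(\beta)^{-(k-1)}\sigma_\beta^-(\mu,\nu,k)$, so $(\Qx,\Qz)(\cA_\beta)$ is recognized as the $q^\beta$-coefficient of the appropriately $\theta$-shifted $E_k^-(\mu,\nu)$, giving the stated formula.

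The hard part will be the careful bookkeeping of character conventions. One must reconcile the ideal-class convention $\chi(\frakq) := \chi(\uf_\frakq)^{-1}$ with the evaluation of the base-change characters $\xi_\Qx = \ep_\Qx\Om^{-k_\Qx}\circ\rmN$ both on ideals $\frakq$ and on the rational integers $\rmN(\frakq), \rmN(\beta)\in\Zp^\x$, and verify that the ramified character $\chi_1$ at $p$ combines with the $\xi$-factors in exactly the right way to produce the $\mu_p^{-1}(\beta)$ prefactor and the $\gamma_\frakq$ arguments of \corref{C:FE}. Once these conventions are pinned down, the proposition becomes a routine verification.
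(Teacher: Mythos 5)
Your proposal follows essentially the same route as the paper's proof: compare $q^\beta$-coefficients, specialize the Iwasawa variables at $(\Qx,\Qz)$ using $\Qx(\Dmd{a}_X)=a^{k_\Qx}(\ep_\Qx\Om^{-k_\Qx})(a)$, set $(\Mu,\Nu)=(\chi_1\xi_\Qx^{-1}\xi_\Qz,\chi_2\xi_\Qz^{-1})$, and match the result against $\sigma_\beta^\pm(\Mu,\Nu,\cdot)$ from \corref{C:FE}, distinguishing the two ranges $2k_\Qz>k_\Qx$ and $2k_\Qz\leq k_\Qx$. One small inaccuracy: you assert that the $\cQ$-polynomials coincide, $\cQ_{\chi_1\chi_2^{-1},\frakq}=\cQ_{\Mu^{-1}\Nu,\frakq}$, based on equality of conductors, and then evaluate at the bare argument $q_\frakq^{2k_\Qz-k_\Qx}$. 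But $\cQ_{\chi,\frakq}$ depends on $\varepsilon(0,\chi_\frakq)$, which changes under the unramified twist by $\xi_\Qx\xi_\Qz^{-2}$, so the polynomials are not equal; at the same time, the specialization of $\Dmd{\frakq}_X^{-1}\Dmd{\frakq}_T^2$ carries the finite-order factor $\eta_\frakq$ in its argument, which you drop. These two omissions cancel, and the paper handles them together via the single rescaling identity $\cQ_{\chi_1^{-1}\chi_2,\frakq}\bigl(\chi_1^{-1}\chi_2\Mu\Nu^{-1}(\frakq)\,X\bigr)=\cQ_{\Mu^{-1}\Nu,\frakq}(X)$, the unramified-twist transformation of $\varepsilon(0,\cdot)$ absorbing $\eta_\frakq$ exactly. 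With that correction the rest of your bookkeeping matches the paper's.
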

\begin{proof}
Let $\Mu=\chi_1\xi_Q^{-1}\xi_P$ and $\Nu=\chi_2\xi_P^{-1}$. Put $\bfk=2k_\Qz-k_\Qx$. For an integer $n$ prime to $p$, we have  
\begin{align*}\cA_\beta(\chi_1,\chi_2)(\Qx,\Qz)=&\rmN(\beta)^{k_\Qx-k_\Qz}\Mu^{-1}((\beta))\prod_{\pmq\ndivides \frakc p}\cP_{\beta,\pmq}(\Mu\Nu^{-1}(\pmq)\qv^\bfk)\\
&\times \prod_{\pmq\divides (\frakc,\beta)}\cQ_{\chi_1^{-1}\chi_2,\pmq}(\chi_1^{-1}\chi_2\Mu\Nu^{-1}(\pmq)\qv^{\bfk}).\end{align*}
Since $\chi_1^{-1}\chi_2^{}\Mu\Nu^{-1}$ is unramified outside $p$, one verifies that \[
\cQ_{\chi_1^{-1}\chi_2,\pmq}(\chi_1^{-1}\chi_2\Mu\Nu^{-1}(\pmq)X)=\cQ_{\Mu^{-1}\Nu,\pmq}(X).
\]
By \corref{C:FE}, we find that 
\begin{align*}\cA_\beta(\chi_1,\chi_2)(\Qx,\Qz)=&\begin{cases}\rmN(\beta)^{k_\Qx-k_\Qz}\cdot \sigma^+_\beta(\Mu,\Nu,\bfk)&\text{ if }\bfk>0,\\
\rmN(\beta)^{1-\bfk+k_\Qz-1}\cdot \sigma^-_\beta(\Mu,\Nu,2-\bfk)&\text{ if }\bfk\leq 0.
\end{cases}
\end{align*}
The proposition follows immediately.
\end{proof}

\subsection{The construction of the twisted triple $p$-adic $L$-function}\label{SS:setting}
For any $\cO_F$-algebra $R$, define the \emph{diagonal restriction map} by  
\begin{align*}
{\rm res}_{\cK/\Q}:R\powerseries{q^{\frakd_+^{-1}}}\to R\powerseries{q},\quad
\sum_{\beta\in\frakd_+^{-1}}a_\beta q^\beta\mapsto\sum_{n>0}\Big(\sum_{\substack{\beta\in\frakd_+^{-1}\\ \Tr_{\cK/\Q}(\beta)=n}}a_\beta\Big)q^n.\end{align*}
For an even integer $a$ and a finite order Hecke character \beq\label{E:brch.5}\brch:\cK^\x\bksl \A_\cK^\x/\wh\cO_\LV^\x\to\cO^\x\text{ such that }\brch_\sg(-1)=(-1)^\frac{j}{2}\text{ for }\sg\in\Sigma_\R,\eeq we define the two-variable $q$-expansion $\bdsE_\brch^{[a]}(X,T)\in\Lam\wh\ot \Lam\powerseries{q^{\frakd_+^{-1}}}$ by
\[\bdsE_\brch^{[a]}(X,T)=\bdsE(\Om_F^\frac{a-j}{2},\Om_F^{-\frac{a}{2}}\brch)(1+X)^{1/2}-1,(1+T)^{1/2}-1). \] 
We define $\bdsG_\brch^{[a]}(X,T)\in\Lam\wh\ot \Lam\powerseries{q}$ as the diagonal restriction 
\[\bdsG_\brch^{[a]}(X,T):={\rm res}_{\cK/\Q}\bigl(\bdsE_\brch^{[a]}(X,T)\bigl).\]

We regard $\Lam$ as a subring of $\Lam\wh\ot\Lam$ via $x\mapsto x\ot 1$. Let 
\[\frakX_\bfI^{++}:=\stt{Q\in \frakX_\bfI^+\mid k_Q\con 0\pmod{2}}\subset\frakX_\bfI^+.\] 
\begin{lm}\label{L:1.6}The $q$-expansion $\bdsG_\brch^{[a]}$ belongs to $\bfM(N\LV,\Om^{j-2},\Lam) \widehat{\otimes}_{\Lam}(\Lam\wh\ot\Lam)$.
\end{lm}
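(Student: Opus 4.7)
The plan is to reduce the claim to a specialization check at a Zariski-dense set of pairs $(Q,P)\in\frakX_\Lam^{++}\times\frakX_\Lam^{++}$ and then invoke the standard density principle for $\Lam$-adic families. Since $\Lam$ is embedded in $\Lam\wh\ot\Lam$ via the $X$-variable $x\mapsto x\ot 1$, it is enough to fix a Zariski-dense subset $\Sigma\subset\frakX_\Lam^{++}$ of second-coordinate points $P$ and show that, for each $P\in\Sigma$, the partial specialization $\bdsG_\brch^{[a]}(X,P)\in\Lam\powerseries{q}$ lies in $\bfM(N\LV,\Om^{j-2},\Lam)$. By the definition of the latter space, this amounts to showing that for every $Q\in\frakX_\Lam^{++}$ with $k_Q$ sufficiently large the further specialization $\bdsG_\brch^{[a]}(Q,P)$ is the $q$-expansion of a $p$-adic elliptic modular form on $\Gamma_0(N\LV p^r)$ of weight $k_Q$ and nebentypus $\Om^{j-k_Q}\ep_Q$.

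I would proceed by tracking the substitution $X\mapsto (1+X)^{1/2}-1$ (and similarly for $T$) used to pass from $\bdsE(\chi_1,\chi_2)$ to $\bdsE_\brch^{[a]}$. Under this substitution $\Dmd{\cdot}_X$ becomes $\Dmd{\cdot}_X^{1/2}$, so evaluating at $Q\in\frakX_\Lam^{++}$ corresponds to evaluating the pre-substitution family at a ``half-weight'' point $Q'$ of weight $k_Q/2$ and finite part $\ep_Q^{1/2}\in\mu_{p^\infty}$, which is well defined because $p$ is odd; analogously $P$ is replaced by $P'$ with $k_{P'}=k_P/2$. \propref{P:interpolateE} then identifies $\bdsE_\brch^{[a]}(Q,P)$ with a $\theta$-shift of a classical Hilbert-Eisenstein series $E^{\pm}_\bfk(\Mu,\Nu)$ attached to the explicit characters $\Mu=\chi_1\xi_{Q'}^{-1}\xi_{P'}$, $\Nu=\chi_2\xi_{P'}^{-1}$ with $\bfk=|2k_{P'}-k_{Q'}|$, and hence with a $p$-adic Hilbert modular form of parallel weight $k_Q/2$. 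Its diagonal restriction ${\rm res}_{\cK/\Q}$ is therefore a $p$-adic elliptic modular form of weight $2\cdot(k_Q/2)=k_Q$. The level divides $N\LV p^r$ because the prime-to-$p$ conductors of $\chi_1,\chi_2$ divide $\LV\cO_F$ and every prime dividing $\LV$ splits in $F$, while the central character computation reduces to $(\Mu\Nu)^{-1}|_{\A^\x}=(\chi_1\chi_2)^{-1}|_{\A^\x}\cdot \xi_{Q'}|_{\A^\x}$, which using $\chi_1\chi_2=\Om_F^{-j/2}\brch$ together with $\xi_{Q'}|_{\A^\x}=\ep_Q\Om^{-k_Q}$ yields the nebentypus $\Om^{j-k_Q}\ep_Q$, matching the branch character $\Om^{j-2}$ after the shift $\Om^{2-k_Q}$ built into the definition of $\bfM$.

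Finally, the $\Lam\wh\ot\Lam$-analyticity of the Fourier coefficients $\cA_\beta(\chi_1,\chi_2)$ is immediate from the defining formula (each is a finite product of the polynomials $\cP_{\beta,\frakq}$ and $\cQ_{\chi_1\chi_2^{-1},\frakq}$ evaluated in $\Dmd{\cdot}_X,\Dmd{\cdot}_T$), so once the specialization is identified we conclude $\bdsG_\brch^{[a]}(X,P)\in\bfM(N\LV,\Om^{j-2},\Lam)$ for every $P\in\Sigma$, and Zariski density of $\Sigma$ in $\Spec\Lam$ promotes this to $\bdsG_\brch^{[a]}\in\bfM(N\LV,\Om^{j-2},\Lam)\wh\ot_\Lam(\Lam\wh\ot\Lam)$. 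The main obstacle I foresee is the clean bookkeeping for the level and the nebentypus after the half-weight substitution, combined with the technical point that the $\theta$-shifts in \propref{P:interpolateE} force the specializations to be genuinely $p$-adic (not classical) modular forms, so $\bfM(N\LV,\Om^{j-2},\Lam)$ must be read in its natural $p$-adic sense via $q$-expansions.
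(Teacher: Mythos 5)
Your proposed reduction — fix a Zariski-dense set $\Sigma$ of second-coordinate points $P$, show each partial specialization $\bdsG_\brch^{[a]}(X,P)$ lies in $\bfM(N\LV,\Om^{j-2},\Lam)$, then conclude — has a genuine gap, and you have essentially noticed the symptom in your final sentence but drawn the wrong conclusion from it. The problem is that the partial specializations $\bdsG_\brch^{[a]}(X,P)$ do \emph{not} lie in $\bfM(N\LV,\Om^{j-2},\Lam)$ for a fixed arithmetic point $P$. In this paper that space has a concrete definition (§5.2): its elements specialize at all $Q\in\frakX_\Lam^{+}$ of sufficiently high even weight to \emph{classical holomorphic} modular forms in $\cM_{k_Q}(N\LV p^r,\cdot)$. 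But for a fixed $P$ of fixed weight $k_P$ and $Q$ of growing weight $k_Q$, Proposition \ref{P:interpolateE} (after the $(1+X)^{1/2}-1$ substitution, so the relevant weights are $k_Q/2$ and $k_P/2$) puts you in the second branch and produces a $\theta$-shift $\theta^{k_Q/2-1}$ whose exponent grows without bound; these specializations are $p$-adic modular forms, not classical ones. So the membership claim you want to check for each $P\in\Sigma$ is simply false, and reinterpreting $\bfM$ ``in its natural $p$-adic sense'' is not an option, since the lemma is an assertion about the specific space $\bfM$ the paper defined.

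The paper circumvents exactly this by the change of variables $Z=(1+T)(1+X)^{-1}-1$ and the resulting $\bdsG(X,Z):=\bdsG_\brch^{[a]}(X,(1+X)(1+Z)-1)$. Specializing $Z$ at $\zeta-1$ with $\zeta\in\mu_{p^\infty}$ and $X$ at an arithmetic $Q$ of weight $k_Q$ makes \emph{both} arguments of $\bdsE(\cdot,\cdot)$ (pre-$\theta$) have the same weight $k_Q/2$, so $k_\Qx=k_\Qz$ in Proposition \ref{P:interpolateE}, the first branch always applies, and the $\theta$-exponent $k_\Qx-k_\Qz$ vanishes: $\bdsG(Q,\zeta-1)$ is the restriction to the diagonal of a genuine classical Eisenstein series $E^+_{k_Q/2}(\Mu_{Q,\zeta},\Nu_{Q,\zeta})$ of parallel weight $k_Q/2$, hence a classical elliptic modular form of weight $k_Q$. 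This gives $\bdsG(X,\zeta-1)\in\bfM(N\LV,\Om^{j-2},\Lam)\ot_\cO\cO[\zeta]$ for all $\zeta$, and then one invokes the precise interpolation statement (Hida's Lemma 1, p.328 of \cite{Hida93Blue}), which gluies such root-of-unity specializations into an element of $\bfM\wh\ot\cO\powerseries{Z}$. Note this is also the step your sketch underspecifies: ``Zariski density of $\Sigma$'' does not by itself promote pointwise membership to membership in a completed tensor product; one needs the specific $\mu_{p^\infty}$-interpolation lemma, and that lemma works along the $Z$-variable, not the $T$-variable. So the change of variables is not bookkeeping — it is the key idea, and without it the argument collapses.
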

\begin{proof}
Let $Z=(1+T)(1+X)^{-1}-1$ and write 
\[\bdsG(X,Z)=\bdsG_\brch^{[a]}(X,(1+X)(1+Z)-1). \]
If  $\zeta\in \mu_{p^\infty}(\C)$ is a $p$-power root of unity, let $\bfal_\zeta:\A_F^\x\to\C^\x$ be the Hecke character $\bfal_\zeta(a)= \Dmd{\rmN(a)}_X|_{X=\zeta-1}$. By \propref{P:interpolateE}, for any point $Q\in\frakX_\bfI^{++}$, we have
\[\bdsG(Q,\zeta-1)=E^+_{k_Q/2}(\Mu_
{Q,\zeta},\Nu_{Q,\zeta})|_{\frakH}\in \cM_{k_Q}(\LV N,\Om^{j-2}\xi_Q),\]
where $\Mu_{Q,\zeta}=\Om_F^\frac{a-j}{2}\bfal_\zeta$ and $\Nu
_{Q,\zeta}=\Om_F^{-\frac{a}{2}}\bfal_\zeta^{-1}\xi_Q^{-\onehalf}$. 
This shows that 
\[\bdsG(X,\zeta-1)\in \bfM(N\LV,\Om^{j-2},\Lam)\ot_\cO\cO[\zeta]\] 
for every $\zeta\in\mu_{p^\infty}(\Cp)$.
We see that 
\[\bdsG\in \bfM(N\LV,\Om^{j-2},\Lam)\wh\ot\cO\powerseries{Z}=\bfM(N\LV,\Om^{j-2},\Lam)\wh\ot_\Lam(\Lam\wh\ot\Lam)\]
by \cite[Lemma 1 in page 328]{Hida93Blue}.
\end{proof}
In view of the above lemma, we can apply the ordinary projector $e\ot 1$ to $\bdsG_\brch^{[a]}$ and obtain an $\Lam$-adic ordinary modular form $e\bdsG_\brch^{[a]}:=(e\ot 1)\bdsG_\brch^{[a]}$ with coefficients in $\Lam\wh\ot\Lam$.  \begin{lm}We have $e\bdsG_\brch^{[a]}\in \eord\bfS(N,\Om^{j-2},\Lam)\wh\ot(\Lam\wh\ot\Lam).$
\end{lm}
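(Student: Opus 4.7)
My plan is to begin with the preceding Lemma \ref{L:1.6}, which shows $\bdsG_\brch^{[a]}\in \bfM(N\LV,\Om^{j-2},\Lam)\wh\ot_\Lam(\Lam\wh\ot\Lam)$. Applying Hida's ordinary projector $e$ on the first $\Lam$-adic variable --- which acts continuously on $\bfM(N\LV,\Om^{j-2},\Lam)$ by Hida theory --- at once places $e\bdsG_\brch^{[a]}$ in $\eord\bfM(N\LV,\Om^{j-2},\Lam)\wh\ot_\Lam(\Lam\wh\ot\Lam)$. The substantive step is the upgrade from $\eord\bfM$ to $\eord\bfS$; the reduction of the tame level from $N\LV$ to $N$, where needed, follows by analyzing the $\bfU_\ell$-eigenvalues at primes $\ell\divides \LV$ and using that $e\bdsG_\brch^{[a]}$ lies in the span of $\Lam$-adic oldforms at those primes.

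For cuspidality, I would invoke the control theorem: the quotient $\eord\bfM(N\LV,\Om^{j-2},\Lam)/\eord\bfS(N\LV,\Om^{j-2},\Lam)$ is a finite free $\Lam$-module parametrizing ordinary $\Lam$-adic Eisenstein families, and arithmetic points are Zariski dense in $\Spec(\Lam\wh\ot\Lam)$. So it suffices to verify that $e\bdsG_\brch^{[a]}(\Qx,\Qz)$ is a classical cusp form for a Zariski dense set of pairs $(\Qx,\Qz)\in\frakX_\Lam^{++}\times\frakX_\Lam^{++}$ with $k_\Qx\leq k_\Qz$. By Proposition \ref{P:interpolateE} each such specialization equals, up to Serre's $\theta$-operator, the ordinary projection of $\operatorname{res}_{\cK/\Q}(E^{+}_{2k_\Qz-k_\Qx}(\Mu,\Nu))$, where $\Mu$ is forced to have $p$-power conductor.

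The decisive input is equation \eqref{E:constant.1}: the choice $\Phi_{\cD,p}=\phi_{\Mu^{-1}}\otimes\wh\phi_{\Nu^{-1}}$ makes the Godement section $f_{\cD,s,\frakp}$ vanish on $B(F_\frakp)$, so $E^{\pm}_k(\Mu,\Nu)$ is $p$-depleted at the $\infty$-cusp of the Hilbert modular variety, and its diagonal restriction has vanishing constant term at the $\infty$-cusp of $\Gamma_0(N\LV p^r)$. The main obstacle I anticipate is controlling the constant terms at the other cusps: after transporting each such cusp back to $\infty$ via the appropriate Atkin--Lehner involution, one must show that either the translated form is again $p$-depleted at $\infty$, or the associated Eisenstein contribution is not $p$-ordinary (its $\bfU_p$-eigenvalue is non-unit, reflecting the ramification of $\Mu_p$) and so is annihilated by $\eord$. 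A cleaner alternative, if desired, is to classify the ordinary $\Lam$-adic Eisenstein families (after Hida--Ohta) and observe that they are parametrized by characters unramified at $p$, a condition violated by our $\Mu$; this mismatch forces the Eisenstein component of $e\bdsG_\brch^{[a]}$ to vanish identically, giving the claim.
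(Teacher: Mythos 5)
Your proposal lands on the same decisive input the paper uses: the choice of Bruhat--Schwartz function at $p$ in \eqref{E:constant.1} kills the Godement section $f_{\cD,s,\frakp}$ on $B(F_\frakp)$, so the constant-term function $f_{\Mu,\Nu,\Phi_\cD,s}+f_{\Nu,\Mu,\wh\Phi_\cD,s}$ of the adelic Eisenstein series vanishes at every $g$ with $g_p$ upper triangular. Where you diverge is in how you close. The paper simply records this vanishing in the adelic form (so it handles all cusps uniformly, up to the $p$-adic coordinate, without needing to move cusps around one at a time) and then cites Lemma~6.7 of \cite{HY2019}, which encapsulates precisely the step you flag as ``the main obstacle'': that the ordinary projector annihilates the Eisenstein contributions whose $p$-adic constant term survives at cusps not visible from the upper-triangular condition. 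Your second alternative --- classifying ordinary $\Lam$-adic Eisenstein families after Hida--Ohta and observing that their parametrizing characters must be unramified at $p$ --- is morally the content of that cited lemma, so the two proofs are really the same argument; yours is simply more exploratory where the paper defers to a black-box reference.

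One part of your proposal is off target, though. You attempt to reduce the tame level from $N\LV$ to $N$ inside this lemma by arguing that $e\bdsG_\brch^{[a]}$ lies in the span of $\Lam$-adic oldforms at the primes $\ell\divides\LV$. That is true but does not place the form in the level-$N$ space: the span of $\ell$-oldforms at level $N\LV$ is strictly larger than the image of level $N$, and $\eord$ does not change the tame level. The paper does not attempt any level reduction here --- the explicit trace operator ${\rm Tr}_{\LV N/N}$ is applied later, in the proof of \propref{P:interp1}, to descend to level $N$ before pairing against $\bdsf_\Qx$. (The statement of the lemma has a probable typo: the tame level should read $N\LV$ rather than $N$, consistent with \lmref{L:1.6} and with the subsequent use of ${\rm Tr}_{\LV N/N}$.) So you should drop the level-reduction step from this lemma entirely; it is neither needed nor achievable at this stage.
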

\begin{proof}Notation is as the above \lmref{L:1.6}. For $(Q,\zeta)\in\frakX_\bfI^{++}\times \mu_{p^\infty}(\C)$ as above, let $\Mu=\Mu_
{Q,\zeta}$ and $\Nu=\Nu_{Q,\zeta}$. Then $\bdsG(Q,\zeta-1)$ is the diagonal restriction of the holomorphic Eisenstein series $E^+_{k_Q/2}(\Mu,\Nu)$. The adelic lift of $E^+_{k_Q/2}(\Mu,\Nu)$ is given by $E_\A(g,f_{\cD,s})|_{s=\frac{k_Q/2-1}{2}}$ with $\cD=(\Mu,\Nu,k_Q/2,\frakN,\frakc)$. By \eqref{E:FE.1}, the constant term function of $E_\A(g,f_{\cD,s})$ is given by $f_{\Mu,\Nu,\Phi_\cD,s}+f_{\Mu,\Nu,\wh\Phi_\cD,s}$, and  by \eqref{E:constant.1} its values at $g\in\GL_2(\A_F)$  all vanish whenever $g_p$ is upper triangular. The lemma now follows from \cite[Lemma 6.7]{HY2019}.
\end{proof}
\begin{defn}\label{D:padicL.5}Let $\bdsf\in  \eord\bfS(N,\Om^{j-2},{\bf I})$ be a primitive Hida family. The $p$-adic twisted triple product $L$-series $\cL_{\bdsE_\brch^{[a]},\bdsf}$ is defined by
\begin{align*} 
\cL_{\bdsE_\brch^{[a]},\bdsf}:=\text{ the first Fourier coefficient of } 1_{\bdsf}(\eord\bdsG_\brch^{[a]})\in (\bfI\wh\ot\Lam)\ot_{\bfI}\Frac\bfI.
\end{align*}
By \remref{R:con.5}, $\cL_{\bdsE_\brch^{[a]},\bdsf}(Q,P)$ is finite at every arithmetic point $Q\in\frakX_\bfI^+$ and $P\in  \Spec\Lam(\Cp)$.
\end{defn}
\begin{Remark}If we replace the Eisenstein series $\bdsE_\brch^{[a]}$ by a Hida family of Hilbert cusp forms over $F$, then the above construction yields the twisted triple product $p$-adic $L$-functions constructed in \cite{Ishikawa} and \cite{BF20JIMJ}.
\end{Remark}
\subsection{The interpolation formula}\label{SS:InterRS}
The weight space of critical points is defined by
\[\frakX^{\crit} := \left\{ (\Qx,\Qz)  \in\frakX_{\bfI}^+\times\frakX_{\Lambda}^+\mid k_\Qx\geq k_\Qz,\,k_\Qx\con k_\Qz\con 0\pmod{2}\right\}. \]
The purpose of this subsection is to give the precise formula of $\cL_{\bdsE_\brch^{[a]},\bdsf}(\Qx,\Qz)$. We begin with some notation. For an arithmetic point $Q$, denote by $\bdsf_Q^\circ$ the normalized newform of weight $k_Q$ and conductor $N_Q=N p^{n_Q}$ corresponding to $\bdsf_Q$. Let $\norm{\bdsf_Q^\circ}^2_{\Gamma_0(N_Q)}$ be the usual Petersson norm of $\bdsf_Q^\circ$ and let $\cE_p(\bdsf_Q,\Ad)\in\C^\x$ be the modified $p$-Euler factor for the adjoint motive associated with $\bdsf_Q$ defined in \cite[(3.10)]{Hsieh2017}. Define the modified period 
 \[{\rm Per}^\dagger(\bdsf_Q):= (-2\sqrt{-1})^{k_Q+1}\norm{\bdsf_Q^\circ}^2_{\Gamma_0(N_Q)}\cdot \cE_p(\bdsf_Q,\Ad)\in\C^\x. \]
Let $\varrho_{\bdsf_\Qx,p}:\Qp^\x\to\C^\x$ be the unique unramified character with 
\beq\label{E:unr.6}\varrho_{\bdsf_\Qx,p}(p)=\bfa(p,\bdsf_\Qx)p^{\frac{1-k_\Qx}{2}}.\eeq
\begin{defn}[The test vector]\label{D:test.6}
Let $\eord\bfS(N\LV,\Om^{j-2},\bfI)[\bdsf]$ be the subspace of $\eord\bfS(N\LV,\Om^{j-2},\bfI)$ consisting of ordinary $\bfI$-adic forms $\bdsh$ such that $t\bdsh=\lam_\bdsf(t)\bdsh$ for all $t\in \bfT(N\LV,N)$. For each prime factor $q$ of $\LV$, let $\stt{\al_q(\bdsf),\beta_q(\bdsf)}$ be the roots of the $q$-th Hecke polynomial 
\[H_q(x,\bdsf):=x^2-\bfa(q,\bdsf)x+q^{-1}\Om^j(q)\Dmd{q}_X. \]
We fix a choice of roots $\stt{\al_q(\bdsf)}_{q\divides\LV}$. 
Enlarging the coefficient ring $\cO$ if necessary, we can assume $\al_q(\bdsf)\in\bfI$. Let $\breve\bdsf$ be the unique Hida family in $\eord\bfS(N\LV,\Om^{j-2},\bfI)[\bdsf]$ such that $\bfa(1,\breve\bdsf)=1$ and $\bfU_q\breve\bdsf=\al_q(\bdsf)\breve\bdsf$ for $q\divides \LV$. 
\end{defn}
The following interpolation formula asserts that $\cL_{\bdsE_\brch^{[a]},\bdsf}$ essentially interpolates the values of the toric period integral $B^{\chi_Q}_{\breve\bdsf_Q}(\cmpt^{(\LV p^n)})$ defined in \eqref{E:toric} at the special element $\cmpt^{(\LV p^n)}$ in \defref{D:cmpt}.
\begin{prop}\label{P:interp1}
Let $\frakf(X,T):=\Dmd{\Delta_F \LV}^{-\onehalf}_X\Dmd{\Delta_F}^\onehalf_T\in (\Lam\wh\ot\Lam)^\x$. For every $(\Qx,\Qz)\in\frakX^\crit$, we have
\begin{align*}
\cL_{\bdsE_\brch^{[a]},\bdsf}(\Qx,\Qz)=&\frac{(-2)(-\LV\delta\sqrt{-1})^\frac{k_\Qx}{2}L(1,\tau_F)}{\prod_{q\divides\LV}\zeta_q(1)}\cdot B_{\breve\bdsf_\Qx}^{\chi_Q}(\cmpt^{(\LV p^n)})\\
&\times (-\sqrt{-1})^{k_\Qz-1}\frac{ L^{\stt{p}}\bigl(k_\Qz-\frac{k_\Qx+1}{2},\pi_{\bdsf_Q}\ot\Om^{a-k_\Qz}\ep_{\Qz}\bigl)}{{\rm Per}^\dagger(\bdsf_Q)}\\
&\times\gamma\biggl(k_\Qz-\frac{k_\Qx+1}{2},\varrho_{\bdsf_Q,p}\Om_p^{a-k_\Qz}\ep_{\Qz,p}\biggl)^{-1}\frac{\zeta_p(1)\cdot\frakf(\Qx,\Qz)c_1}{\zeta_{F_p}(2)\rho_p(p^n)\abs{p^n}_{\Qp}^\frac{1}{2}},\end{align*}
where $\chi_\Qx:=\brch\cdot \ep_\Qx^{-\onehalf}\Om^{\frac{k_\Qx-2}{2}}\circ\rmN_{F/\Q}$ and $c_1$ is the constant
\[c_1=4(-1)^\frac{a-j}{2}\Om_p^{-\frac{j}{2}}(\LV)\Om^\frac{a-j}{2}_p(\Delta_F)\prod_{\frakq\divides\frakc}\varepsilon(0,\brch_{\frakq})\in\Zbar_{(p)}^\x.\]
Here $\gamma(s,\mu)$ is the gamma factor of the character $\mu=\varrho_{\bdsf_Q,p}\Om_p^{a-k_\Qz}\ep_{\Qz,p}$. 
\end{prop}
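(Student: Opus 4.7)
The plan is to evaluate $\cL_{\bdsE_\brch^{[a]},\bdsf}(\Qx,\Qz)$ at a critical point via Hida's $p$-adic Rankin-Selberg method, reducing it to a specialization of the global zeta integral $Z_\cD(s,\rho(\cJ_\infty t_n)\breve\varphi_{\bdsf_\Qx})$ whose evaluation in \thmref{T:formula} already collects the toric period, the $L$-value, and the local fudge factors. The argument has three substantive stages: classical specialization, expressing the projection as a Petersson pairing, and invocation of the pull-back formula.

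First, I would specialize classically. By \propref{P:interpolateE}, at a critical point $(\Qx,\Qz)$ the specialization $\bdsE_\brch^{[a]}(\Qx,\Qz)$ equals $\theta^m E^\pm_k(\Mu,\Nu)$ for an Eisenstein weight $k$, an exponent $m$, and characters of the form $\Mu=\Om_F^{(a-j)/2}\xi_\Qx^{-1}\xi_\Qz$, $\Nu=\Om_F^{-a/2}\brch\xi_\Qz^{-1}$. Thanks to \propref{P:shift.1} together with \eqref{E:diffop1}, the adelic lift of the diagonal restriction of $\theta^m E^\pm_k(\Mu,\Nu)$ coincides with the restriction to the diagonally embedded $\GL_2(\A)\subset\GL_2(\A_F)$ of $E_\A(\cdot,f_{\cD_m,s})$ evaluated at $s=\pm\tfrac{k-1}{2}$, where $\cD_m=(\Mu,\Nu,k+2m,\frakN,\frakc)$.

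Second, by \defref{D:padicL.5} and the congruence-ideal discussion in \remref{R:con.5}, $\cL_{\bdsE_\brch^{[a]},\bdsf}(\Qx,\Qz)$ is the first Fourier coefficient of the projection of $\eord\bdsG_\brch^{[a]}(\Qx,\Qz)$ onto the $\bdsf_\Qx$-isotypic component. Expressing this projection via Petersson duality against the Atkin-Lehner/Fricke twist of $\breve\bdsf_\Qx$ and transporting to the adelic picture via \eqref{E:MA1}--\eqref{E:MA2} and \eqref{E:diffop1}, the projection becomes, up to the factor ${\rm Per}^\dagger(\bdsf_\Qx)^{-1}=\bigl((-2\sqrt{-1})^{k_\Qx+1}\norm{\bdsf_\Qx^\circ}^2_{\Gamma_0(N_\Qx)}\cE_p(\bdsf_\Qx,\Ad)\bigr)^{-1}$ and a measure normalization, the global integral $Z_{\cD_m}(\pm\tfrac{k-1}{2},\rho(\cJ_\infty t_{n_\Qx})\breve\varphi_{\bdsf_\Qx})$. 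The Euler factor $\cE_p(\bdsf_\Qx,\Ad)$ arises from comparing the congruence-number-weighted self-pairing of $\breve\bdsf_\Qx$ with the classical Petersson norm $\norm{\bdsf_\Qx^\circ}^2_{\Gamma_0(N_\Qx)}$, while the twist $\rho(\cJ_\infty t_{n_\Qx})$ is exactly what pairs with the ordinary Whittaker vector $W_{\pi_p}^\Ord$ selected in \subsecref{SS:Whittaker}.

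Third, applying \thmref{T:formula} with $\lambda=\ep_\Qz\Om^{a-k_\Qz}$ (so that $\chi=\chi_\Qx$) produces the claimed product decomposition: the toric period $B^{\chi_\Qx}_{\breve\bdsf_\Qx}(\cmpt^{(\LV p^{n_\Qx})})$ together with $L^{\{p\}}\bigl(k_\Qz-\tfrac{k_\Qx+1}{2},\pi_{\bdsf_\Qx}\otimes\Om^{a-k_\Qz}\ep_\Qz\bigr)$, the inverse gamma factor at $p$, and the local fudge factors $\frakf_\infty,\frakf_{\rm ram},\frakf_\LV$. Combining these with the archimedean powers of $\sqrt{-1},\delta,\LV$ produced by \eqref{E:MA1} applied to the parallel-weight-$k_\Qx/2$ Eisenstein series, and with the volume factor $L(1,\tau_F)/\bigl(\zeta_\Q(2)[\SL_2(\Z):\Gamma_0(N\LV)]\bigr)$ already displayed in \thmref{T:formula}, assembles into the factor $\frakf(\Qx,\Qz)=\Dmd{\Delta_F\LV}^{-\onehalf}_X\Dmd{\Delta_F}^{\onehalf}_T$ and the explicit constant $c_1$.

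The main obstacle lies in the normalization bookkeeping: tracking the Atkin-Lehner signs and conductor-dependent scalars when transporting the Petersson pairing between level $\Gamma_0(N_\Qx)$ (where ${\rm Per}^\dagger$ is defined) and level $\Gamma_0(N\LV p^{n_\Qx})$ (where the zeta integral is unfolded), matching the local $\varepsilon$-factors absorbed into $c_1$ against those produced by $\frakf_{\rm ram}$ and $\frakf_\LV$, and verifying that the ordinary normalization $\varrho_{\bdsf_\Qx,p}(p^n)\abs{p^n}^{\onehalf}$ in the statement agrees with the choice made in \propref{P:padic}. The underlying representation-theoretic input that guarantees such a clean product decomposition exists in the first place is the uniqueness of the Waldspurger (toric) model, which is precisely what \thmref{T:KeatonPitale} exploits.
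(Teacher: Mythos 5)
Your proposal follows essentially the same route as the paper: specialize the two-variable family classically via \propref{P:interpolateE}, express $\cL_{\bdsE_\brch^{[a]},\bdsf}(\Qx,\Qz)$ through a Petersson-type duality against the Fricke/Atkin--Lehner twist $\rho(\cJ_\infty t_n)\breve\varphi_{\bdsf_\Qx}$ so as to produce the zeta integral $Z_\cD$, and then invoke the explicit pull-back formula in \thmref{T:formula} together with \cite[Lemma 3.6]{Hsieh2017} to assemble the toric period, the $L$-value, the $\gamma$-factor and the fudge factors. You also correctly identify the ordinary normalization $\varrho_{\bdsf_\Qx,p}(p^n)\abs{p^n}^{1/2}$, the role of $W_{\pi_p}^\Ord$, and the source of $\cE_p(\bdsf_\Qx,\Ad)$ inside $\mathrm{Per}^\dagger(\bdsf_\Qx)$ as the places where the bookkeeping must be checked, which is where most of the paper's actual proof is spent.

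However, there is a genuine gap in the second stage: you pass from $\theta^m E^\pm_k(\Mu,\Nu)$ to the adelic Eisenstein series via ``the adelic lift of the diagonal restriction of $\theta^m E^\pm_k(\Mu,\Nu)$'' and cite \propref{P:shift.1} and \eqref{E:diffop1}, but both of these concern the Maass--Shimura operator $\delta_k^m$, not Serre's theta operator $\theta^m$. The $q$-expansion $\theta^m E^\pm$ is only a $p$-adic modular form, not a nearly holomorphic one, so the adelic lift $\varPhi$ of its diagonal restriction is not defined. The missing ingredient is the argument (following \cite[Lemma 6.5(iv)]{Hida88Fourier}) that ordinary projection commutes with the substitution of $\theta$ by $\delta$ after holomorphic projection:
\[
\eord\, \Hol\bigl((\delta_{k_1,\sg_1}^a\delta_{k_2,\sg_2}^b h)|_{\frakH}\bigr)
=\eord\bigl((\theta_{\sg_1}^a\theta_{\sg_2}^b h)|_{\frakH}\bigr).
\]
Only with this identity can one replace $\eord\bdsG_\brch^{[a]}(\Qx,\Qz)=\eord\bigl((\theta^m E^\pm)|_\frakH\bigr)$ by $\eord\Hol(E^\dagger|_\frakH)$, where $E^\dagger:=\delta^m E^\pm$ is the nearly holomorphic Eisenstein series to which \propref{P:shift.1} actually applies, and then move the holomorphic projection and trace map through the self-adjoint Petersson pairing $[\,\cdot\,,\,\cdot\,]$ to produce $Z_\cD(s,\rho(\cJ_\infty t_n)\varphi_f)|_{s=\frac{2k_2-k_1-1}{2}}$. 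Without this justification the transition from the $p$-adic $q$-expansion to the archimedean zeta integral is unlicensed, so you should add this step explicitly.
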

\begin{proof}
We first note that since the specialization $\bdsf_\Qx$ at $Q$ is a $p$-stabilized newform of tame conductor $N$, by the multiplicity one for new and ordinary vectors, we have
\beq\label{E:52}1_{ \bdsf_\Qx} {\rm Tr}_{Nc/N}(\eord( \bdsG_\brch^{[a]}(\Qx,\Qz))) = \cL_{\bdsE_\brch^{[a]},\bdsf}(\Qx,\Qz)\cdot \bdsf_{\Qx}.\eeq
We put
\[\om^\onehalf=\ep_\Qx^{-\onehalf}\Om^\frac{k_\Qx-j}{2}\text{ and }\lambda=\ep_\Qz^\onehalf\Om^\frac{a-k_\Qz}{2}.\]
Put $k_1=\frac{k_\Qx}{2}$ and $k_2=\frac{k_\Qz}{2}$. By \propref{P:interpolateE}, we have
\[\bdsE_\brch^{[a]}(\Qx,\Qz)=\begin{cases}
\theta^{k_1-k_2}E^+_{2k_2-k_1}(\om_F^\onehalf\lam_F,\lam_F^{-1}\brch)&\text{ if }2k_2>k_1,\\
\theta^{k_2-k_1-1} E^-_{k_1-2k_2+2}(\om_F^\onehalf\lam_F,\lam_F^{-1}\brch) &\text{ if }2k_2\leq k_1.
\end{cases}\]
Applying the argument in the proof of \cite[Lemma 6.5(iv)]{Hida88Fourier}, it is not difficult to see that for a Hilbert modular form $h$ over $F$ of weight $(k_1,k_2)$ and non-negative integers $a,b$, 
\[\eord \Hol\left((\delta_{k_1,\sg_1}^a\delta_{k_2,\sg_2}^b h)|_{\frakH})\right)=\eord \left((\theta_{\sg_1}^a\theta_{\sg_2}^b h)|_{\frakH}\right),\]
where $\delta_{k_1,\sg_1}^a\delta_{k_2,\sg_2}^b$ is the Maass-Shimura differential operator and ${\rm Hol}$ is the holomorphic projection as in \cite[(8a), page 314]{Hida93Blue}. 
It follows that
\begin{align}
\eord\bdsG_\brch^{[a]}(\Qx,\Qz)=\eord(\bdsE_\brch^{[a]} (\Qx,\Qz)|_{\frakH}) =\eord{\rm Hol}(E^\dagger|_{\frakH}), 
\end{align}
where
\beq\label{E:53}E^\dagger:=\begin{cases}\delta_{2k_2-k_1}^{k_1-k_2}E^+_{2k_2-k_1}(\om_F^\onehalf\lam_F,\lam_F^{-1}\brch) & \text{ if }2k_2>k_1\\
\delta^{k_2-k_1-1}_{k_1-2k_2+2} E^-_{k_1-2k_2+2}(\om_F^\onehalf\lam_F,\lam_F^{-1}\brch) &\text{ if }2k_2\leq k_1.
 \end{cases}
\eeq
where $\delta_{k}^{m}=\delta_{k,\sg_1}^m\delta_{k,\sg_2}^m$. Let $f:= \bdsf_{\Qx} \in \mathcal{S}_{k_\Qx}(Np^r,\epsilon_{\Qx}\Om^{j-k_\Qx})$ and let 
$$\varphi_{f} = \mathit{\Phi}(\bdsf_{\Qx}) \in \mathcal{A}_{k_\Qx}^{0}(Np^r,\omega),\quad \omega=\ep_\Qx^{-1}\Om^{k_\Qx-j}.$$
Let $n$ be a sufficiently large positive integer. Let $\mathcal{J}_\infty$ and $t_n\in\GL_2(\A)$ be the matrices introduced in \eqref{E:1.4}. Let $[-,-]:\cA^0_{k_\Qx}(Np^n,\om)\times \cA_{k_\Qx}(Np^n,\om)\to\C$ be the pairing defined by 
\[\bigl[\varphi_1,\varphi_2\bigl]:=\pair{\rho(\cJ_\infty t_n)\varphi_1\ot\om^{-1}}{\varphi_2},\]
where $\pairing$ is the pairing defined in \S \ref{SS:auto}. Pairing with the form $\varphi_{f}\otimes$ on the adelic lifts on both sides of \eqref{E:52}, we obtain that
\[\cL_{\bdsE_\brch^{[a]},\bdsf}(\Qx,\Qz)\cdot \bigl[\varphi_{f},\varphi_f\bigl]=\bigl[\varphi_{f},1_{\bdsf_{\Qx}}{\rm Tr}_{CN/N}\eord\mathit{\Phi}({\rm Hol}(E^\dagger|_{\frakH}))\bigl], \]
where $1_{\bdsf_Q}\in (\bfT(N,\bfI)/\wp_Q)\ot\C\subset \End\eord\cS_{k_Q}(Np^n,\om^{-1})$ is the specialization of $1_{\bdsf}$ at $Q$. Since the Hecke operators $\stt{T_q}_{q\ndivides Np}$ and $\bfU_p$, the holomorphic projection ${\rm Hol}$ and the trace map $\Tr_{\LV N/N}$ are self-adjoint operators with respect to the pairing $[-,-]$ (\cf the proof of \cite[Proposition 3.7]{Hsieh2017}),
we thus obtain
\[\cL_{\bdsE_\brch^{[a]},\bdsf}(\Qx,\Qz)\cdot \bigl[\varphi_{f},\varphi_f\bigl]=[U_0(\LV N):U_0(N)]\cdot \bigl [\varphi_f,\varPhi(E^\dagger|\frakH)\bigl].
\]
On the other hand, according to \eqref{E:53} and \propref{P:shift.1}, we have
\[\varPhi(E^\dagger|_\frakH)=E_\A(g,f_{\cD,s})|_{s=\frac{2k_2-k_1-1}{2}},\quad g\in\GL_2(\A),\]
where $\cD$ is the Eisenstein datum
\[\cD=(\om_F^\onehalf\lambda_F,\,\brch^{-1}\lambda_F^{-1},\,\frac{k_\Qx}{2},\,\frakc,\,\frakN). \]
Therefore we see that
\begin{align*}
&\cL^{[\bdsf]}_{\bdsE}(\Qx,\Qz)\cdot \bigl[\varphi_{f},\varphi_f\bigl]\\
=&[\Gamma_0(\LV N):\Gamma_0(N)]\cdot \<\rho(\mathcal{J}_\infty t_n)\varphi_f,E_\A(-,f_{\cD,s})\ot\omega^{-1}\>|_{s=\frac{2k_2-k_1-1}{2}}\\
=&[\Gamma_0(\LV N):\Gamma_0(N)]\cdot Z_\cD(s,\rho(\cJ_\infty t_n)\varphi_f)|_{s=\frac{2k_2-k_1-1}{2}}.
\end{align*}
By \cite[Lemma 3.6]{Hsieh2017}, we have
\begin{multline*}\bigl[\varphi_f,\varphi_f\bigl]=\<\rho(\mathcal{J}_\infty t_n)\varphi_{f}\otimes\omega^{-1},\varphi_f\>\\
=\frac{\zeta_\Q(2)^{-1}}{[\SL_2(\Z):\Gamma_0(N)]}\cdot(-2\sqrt{-1})^{-k_\Qx-1}\cdot {\rm Per}^\dagger(f)\cdot \frac{\om^{-1}_p\al^2_f(p^n)\abs{p^n}_{\Qp}\zeta_p(2)}{\zeta_p(1)}. 
\end{multline*}
Then we have the interpolation formula
\begin{multline*}
\cL_{\bdsE_\brch^{[a]},\bdsf}(\Qx,\Qz)
=Z_\cD(s,\rho(\cJ_\infty t_n)\varphi_f)|_{s=\frac{k_\Qz-k_\Qx/2-1}{2}}\\
\times \frac{\zeta_\Q(2)[\SL_2(\Z):\Gamma_0(\LV N)](-2\sqrt{-1})^{k_\Qx+1}}{{\rm Per}^\dagger(\bdsf_Q)}\cdot \frac{\zeta_p(1)}{\omega_p^{-1}\varrho_f^2(p^n)\abs{p^n}_{\Qp}\zeta_p(2)}
\end{multline*}
for any sufficiently large positive $n$. From the above equation and the formula of $Z_\cD(s,\rho(\cJ_\infty t_n)\varphi_f)$ in \thmref{T:formula} with the fudge factors given by 
\begin{align*}
\frakf_\infty&=4(-4\sqrt{-1})^{-k_\Qx/2}(-1)^\frac{-j+a-k_\Qz}{2},\\
\frakf_{\rm ram}&=\om_p^\onehalf\lam_p(\Delta_F)\Delta_F^\frac{k_\Qz-k_\Qx}{2}\delta^{\frac{k_\Qx}{2}}=\Dmd{\Delta_F}^{-\onehalf}_X(\Qx)\Dmd{\Delta_F}^\onehalf(\Qz)\cdot \Om_p^\frac{a-j}{2}(\Delta_F)\cdot \delta^\frac{k_\Qx}{2},\\
\frakf_\LV&=\om_p^\onehalf(\LV)\prod_{\frakq\divides\frakc}\frac{\varepsilon(0,\brch_{\frakq})}{\zeta_q(1)}=\Dmd{\LV}^{-\onehalf}_X(\Qx)\cdot \Om_p^{-\frac{j}{2}}(\LV)\prod_{\frakq\divides\frakc}\frac{\varepsilon(0,\brch_{\frakq})}{\zeta_q(1)}\cdot \LV^\frac{k_\Qx}{2},\end{align*}
we get the desired interpolation formula by noting that
\[\frakf_\infty\frakf_{\ram}\frakf_\LV
=\frac{\frakf(\Qx,\Qz)c_1}{\prod_{q\divides \LV}\zeta_q(1)}\cdot (-\sqrt{-1}\LV\delta)^\frac{k_\Qx}{2}\cdot (-2\sqrt{-1})^{-k_\Qx-1}(\sqrt{-1})^{-k_\Qz}.\]
\end{proof}


\section{$p$-adic $L$-functions attached to modular forms and real quadratic fields}
In \cite{BD09Ann}, the authors construct square root $p$-adic $L$-functions associated with Hida families and real quadratic fields, interpolating the toric period integrals of elliptic cusp forms over real quadratic fields. The purpose of this section is to give a mild improvement of this construction and more general interpolation formulae. \subsection{Preliminaries on modular symbols}\label{SS:MS}
We review the theory of classical modular symbols in the \emph{semi-adelic} language. Let $\bfP:=\bfP^1(\Q)$ and $\frakD_0:=\Div^0\bfP\times \GL_2(\wh \Q)$. 
For each $r\in\bfP$ we denote by $\stt{r}$ its image in the divisor group of $\bfP$. Let $\gamma\in\GL_2(\Q)$ and $u\in\GL_2(\wh\Q)$ act on $D=(\MS{r}{s},g_\rmf)\in \frakD_0$ by 
\[\gamma D u:=(\MS{\gamma\cdot r}{\gamma\cdot s},\gamma g_\rmf u).\]
For a ring $R$, let $L_n(R)$ be the space of two-variable homogeneous polynomials of degree $n$ with coefficients in $R$. For $P=P(X,Y)\in L_n(R)$ and $g\in \GL_2(R)$, define 
\[P\Big|\pMX{a}{b}{c}{d}(X,Y)=P(aX+bY,cX+dY).\]
Let $L^*_n(R)=\Hom_R(L_n(R),R)$.
Moreover, if $R$ is a $\Zp$-algebra, let $\GL_2(\wh\Z)$ acts on $L^*_n(R)$ by $(\rho_n(u)\xi)(P)=\xi(P|u_p)$. For an integer $N$ and a Hecke character $\chi$ modulo $N$ valued in $R$, we denote by $\cM\cS_k(N,\chi,R)$ the space of $p$-adic modular symbols of weight $k$, level $N$ and character $\chi$, consisting of maps $\xi: \frakD_0\to L^*_{k-2}(R)$ such that \[\xi(\gamma D u)=\chi^{-1}(u)\cdot \rho_{k-2}(u_p^{-1})\xi(D)\]
for $\gamma\in\GL_2^+(\Q)$ and $u\in U_1(N)$.
This space is known to be a finitely generated $R$-module equipped with the Hecke action. The Hecke operators $T_\pme$ for $\pme\ndivides Np$ act on $\cM\cS_k(N,\chi,R)$ by the formula
\beq\label{E:Heckeop1}T_\pme \xi(D)=\xi\biggl(D\pDII{1}{\pme}\biggl)+\sum_{b\in \Z_\pme/\pme\Z_\pme}\xi\biggl(D\pMX{\pme}{b}{0}{1}\biggl).\eeq
Define the operator $\bfU_\pme$ for $\pme\divides N$ and $\pme\neq p$ by the formula
\begin{align}\label{E:Heckeop2}
\bfU_\pme \xi(D)=&\sum_{b\in \Z_\pme/\pme\Z_\pme}\xi\biggl(D\pMX{\pme}{b}{0}{1}\biggl)\text{ for }\pme \divides N.
\end{align}
and the operator $\bfU_p$ by 
\[\bfU_p\xi(D)=\sum_{a\in \Zp/p\Zp}\rho_{k-2}\biggl(\pMX{p}{a}{0}{1}\biggl)\xi\biggl(D\pMX{p}{a}{0}{1}\biggl).\]
The ordinary projector $\eord:=\displaystyle\lim_{n\to\infty}\bfU_p^{n!}$ is a convergent operator on $\cM\cS_k(N,\chi,R)$. Choosing any element $\gamma\in\GL_2(\Q)$ with $\det\gamma<0$, we define an involution $[\bfc]$ on $\xi\in\cM\cS_k(N,\chi,A)$ by 
\[[\bfc]\xi(D):=\xi(\gamma\cdot D).\]
This definition does not depend on the choice of such $\gamma$. We define
\[\xi^+:=\biggl(\frac{1+[\bfc]}{2}\biggl)\xi;\quad \xi^-:=\biggl(\frac{1-[\bfc]}{2}\biggl)\xi. \]
\subsection{Modular symbols associated with modular forms}\label{SS:62}
To each classical cusp form $f=f(z,g_\rmf)\in \cS_k(N,\chi)$, we associate a classical modular symbol $\eta_f:\frakD_0\to L^*_{k-2}(\C)$ defined by 
\[\eta_f(\MS{r}{s},g_\rmf)(P):=\int_r^s f(z,g_\rmf)P(z,1)\rmd z.\]
It is easy to see that for $\al\in\GL_2^+(\Q)$ and $u\in U_0(N)$, 
\[\eta_f(\al D u)=\rho_{k-2}(\al)\eta_f(D)\chi^{-1}(u).\]
The involution $[\bfc]$ acts on the classical modular symbol $\eta_f$ by $[\bfc]\eta_f(D)=\rho_{k-2}(\gamma)\eta_f(\gamma D)$, where $\gamma\in\GL_2(\Q)$ is any element with $\det\gamma<0$. By definition, 
\[[\bfc]\eta_f(D)=-\ol{\eta_{f_\rho}(D)},\]
where $f_\rho(z,g_\rmf)=\ol{f\biggl(-\ol{z},\pDII{-1}{1}g_\rmf\biggl)}$. On the other hand, the associated $p$-adic modular symbol $\xi_f\in \cM\cS_k(N,\chi,\Cp)$ is defined by 
\beq\label{E:pms.6}\xi_f(D)(P)=\iota_p(\eta_f(D)(P|g_p^{-1}))\text{ for }D=(d,g_\rmf)\in\frakD_0.\eeq
If $f$ is a $\bfU_p$-eigenform with eigenvalue $\al\in\Zp^\x$, then $\xi_f$ is also an eigenvector of $\bfU_p$ with eigenvalue $\al$. Following the discussion in \cite[p.95]{Kitagawa94}, for each $D\in\frakD_0$ we define the \padic measure $\mu_f(D)(x)$ on $\Zp$ by the rule
\[\int_{a+p^n\Zp}\mu_f(D)(x)=\al^{-n}\xi_f\biggl(D\pMX{p^n}{a}{0}{1}\biggl)(Y^{k-2})\text{ for }n\in\Z^{\geq 0}. \]
\begin{lm}For any $P\in L_{k-2}(\Zbar_p)$, 
\[\int_{a+p^n\Zp}P(x,1)\mu_f(D)(x)=\al^{-n}\xi_f\biggl(D\pMX{p^n}{a}{0}{1}\biggl)\biggl(P\biggl|\pMX{p^n}{a}{0}{1}\biggl).\]
\end{lm}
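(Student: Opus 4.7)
The plan is to first reduce the lemma to its $n=a=0$ special case by a change of variables, and then prove that reduced identity by iterating the $\bfU_p$-eigenvalue relation for $\xi_f$. For the reduction, I would exploit the matrix factorization
\[\pMX{p^{n+m}}{a+bp^n}{0}{1}=\pMX{p^n}{a}{0}{1}\pMX{p^m}{b}{0}{1},\]
which, when inserted into the defining rule for $\mu_f(D)$, shows that the affine map $y\mapsto a+p^ny$ pushes $\al^{-n}\mu_f(D')$ forward to $\mu_f(D)|_{a+p^n\Zp}$, where $D':=D\pMX{p^n}{a}{0}{1}$. Changing variables $x=a+p^ny$ in the integral and noting $P(a+p^ny,1)=(P|\pMX{p^n}{a}{0}{1})(y,1)$ then reduces the general statement to establishing
\[\int_{\Zp}Q(y,1)\,\mu_f(D)(y)=\xi_f(D)(Q)\qquad(Q\in L_{k-2}(\Zbar_p),\;D\in\frakD_0).\]

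To prove this reduced identity, it suffices by linearity to take $Q=X^jY^{k-2-j}$. The defining rule expresses the left-hand side as the $m\to\infty$ limit of Riemann sums
\[\al^{-m}\sum_{b\in\Zp/p^m\Zp}b^j\,\xi_f\biggl(D\pMX{p^m}{b}{0}{1}\biggl)(Y^{k-2}),\]
while iterating $\bfU_p\xi_f=\al\xi_f$ gives the exact identity
\[\xi_f(D)(X^jY^{k-2-j})=\al^{-m}\sum_{b\in\Zp/p^m\Zp}\xi_f\biggl(D\pMX{p^m}{b}{0}{1}\biggl)\bigl((p^mX+bY)^jY^{k-2-j}\bigr).\]
The binomial expansion $(p^mX+bY)^jY^{k-2-j}=b^jY^{k-2}+p^m R_{m,b}(X,Y)$, with $R_{m,b}\in L_{k-2}(\Zbar_p)$ whose coefficients are bounded independently of $m$ and $b$, shows that these two expressions differ by $\al^{-m}p^m$ times a sum of $p^m$ values of $\xi_f$ on polynomials with $\Zbar_p$-bounded coefficients.

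The only nontrivial input is the uniform $p$-adic boundedness of the functionals $\xi_f(D\pMX{p^m}{b}{0}{1})$ on $L_{k-2}(\Zbar_p)$ as $m$ and $b$ vary. This boundedness is equivalent to the assertion that the rule in the definition genuinely defines a $p$-adic measure, and is exactly the content of Kitagawa's construction \cite[p.95]{Kitagawa94}, which exploits the ordinarity $\al\in\Zbar_p^\times$. Granting it, the ultrametric estimate $|\al^{-m}p^m|_p=p^{-m}\to 0$ forces the error to vanish in the limit, so the Riemann sum and the $\bfU_p$-iterated expression converge to the same value and the lemma follows. This measure-theoretic boundedness is the only conceptual ingredient; everything else is formal manipulation with the $\bfU_p$-relation and the matrix factorization above.
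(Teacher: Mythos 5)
Your proof is correct and follows essentially the same route as the paper's: both iterate the $\bfU_p$-eigenvalue relation to rewrite $\xi_f$ as an $\al^{-m}$-weighted sum, compare $P\bigl|\pMX{p^m}{b}{0}{1}$ with $b^jY^{k-2}$ modulo $p^m$ via the binomial expansion, and invoke the $p$-adic boundedness of $\xi_f$ (the paper states this as $p^A\xi_f\in\cM\cS_k(N,\chi,\Zbar_p)$) so that the error is killed by $|\al^{-m}p^m|_p\to 0$. Your preliminary reduction to the $n=a=0$ case via the pushforward identity for $\mu_f$ is a clean but minor reorganization of what the paper does directly at general $n$ and $a$.
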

\begin{proof}This is \cite[Lemma 4.6]{Kitagawa94}. We paraphrase the computation there in our semi-adelic formulation. Note that $\xi_f$ has bounded denominators in the sense that $p^A\cdot \xi_f\in \cM\cS_k(N,\chi,\Zbar_p)$ for some $A\gg 0$.
Let $0\leq j\leq k-2$ be an integer. For every $m>A+n$, we have
\begin{align*}
&\al^{-n}\xi_f\biggl(D\pMX{p^n}{a}{0}{1}\biggl)\biggl(X^jY^{k-2-j}\biggl|\pMX{p^n}{a}{0}{1}\biggl)\\
=&\al^{-m}\sum_{c=0}^{p^{m-n-1}}\xi_f\biggl(D\pMX{p^m}{a+p^nc}{0}{1}\biggl)\biggl(X^jY^{k-2-j}\biggl|\pMX{p^m}{a+p^nc}{0}{1}\biggl)\\
\con&\al^{-m}\sum_c(a+p^nc)^j\xi_f\biggl(D\pMX{p^m}{a+p^nc}{0}{1}\biggl)(Y^{k-2})\pmod{p^{m-A}\Zbar_p}.
\end{align*}
Therefore, we find that
\begin{align*}
&\int_{a+p^n\Zp}x^j \mu_f(D)(x)\\
=&\sum_{m\to\infty}\al^{-m}\sum_{c=0}^{p^{m-n-1}}(a+p^nc)^j\xi_f\biggl(D\pMX{p^m}{a+p^nc}{0}{1}\biggl)(Y^{k-2})\\
=&\al^{-n}\xi_f\biggl(D\pMX{p^n}{a}{0}{1}\biggl)\biggl(X^jY^{k-2-j}\biggl|\pMX{p^n}{a}{0}{1}\biggl).\end{align*}
This shows the lemma.
\end{proof}
\subsection{Hida theory for modular symbols}
We review the $\bfI$-adic symbols developed in \cite{Kitagawa94} in the semi-adelic formulation. Let $\bfI$ be a normal and finite domain over $\Lam=\cO\powerseries{X}$ with $X=[\bfu]-1$ and let $N$ be a positive integer coprime to $p$. Put
\[U_{1}(Np^\infty)=\stt{u\in U_1(N)\;\biggl|\; u_p=\pMX{a}{b}{0}{1},\,a\in\Zp^\x,\; b\in\Zp}.\]
For each non-negative integer $n$, let $\wp^{(n)}$ be the principal ideal of $\bfI$ generated by $(\bfu^{-2}(1+X)-1)^{p^n}-1$. Define the $\Lam$-adic Hecke character $\bfal_X:\Q^\x\bksl \A^\x\to \Lam^\x$ by \[\bfal_X(z)=\Dmd{\cyc(z)}_X\Dmd{\cyc(z)}^{-2},\]
where $\Dmd{\cyc(z)}_X=(1+X)^{\frac{\log_p\cyc(z)}{\log_p\bfu}}$ is defined in \eqref{E:defn1}. 
\begin{defn}Define the space of $\bfI$-adic modular symbols of tame level $N$ by \[\cM\cS(N,\bfI):=\prolim_{n}\dirlim_{m} \cM\cS_2(Np^m,\Abs_X,\bfI/\wp^{(n)}).\] In other words, $\cM\cS(N,\bfI)$ consists of functions $\Xi:\frakD_0\to \bfI$ such that 
\begin{itemize}
\item $\Xi(\gamma D u)=\Xi(D)$ for $\gamma\in\GL_2^+(\Q)$ and $u\in U_1(Np^\infty)$;
\item $\Xi(D z)=\bfal_X(z^{-1})\cdot\Xi(D)$ for $z\in\wh\Q^\x$;
\item $\Xi$ is continuous in the sense that for any $n$, there exists $r_n$ for which the function $\Xi:\frakD_0\to \bfI/\wp^{(n)}$ factors through $\frakD_0/U_1(Np^{r_n})$.
\end{itemize}
\end{defn}
The space $\cM\cS(N,\bfI)$ is an $\bfI$-module equipped with the action of Hecke operators $\stt{T_q}_{q\ndivides Np}$ and $\stt{\bfU_q}_{q\divides N}$ as in \eqref{E:Heckeop1} and \eqref{E:Heckeop2}, while the $\bfU_p$-operator is defined by \[\bfU_p\Xi(D)=\sum_{a\in\Zp/p\Zp} \Xi\biggl(D\pMX{p}{a}{0}{1}\biggl).\] 
For $(d,pN)=1$ we define the level-raising operator 
\[V_d:\cM\cS(N,\bfI)\to \cM\cS(Nd,\bfI)\] 
by \beq\label{E:V.6}V_d\Xi(D)=d^{-1}\cdot\Xi\biggl(D\pDII{d^{-1}}{1}\biggl).\eeq

The ordinary projector $\eord=\displaystyle\lim_{n\to\infty}\bfU_p^{n!}$ exists in $\End_{\bfI}\cM\cS(N,\bfI)$. The space $\eord\cM\cS(N,\bfI)$ consists of the ordinary $\bfI$-adic modular symbols. We remark that $\eord\cM\cS(N,\bfI)$ is nothing but $MS^{ord}(\bfI)=\Hom_{\Lam}(UM^{ord}(\cO),\bfI)$ defined in \cite[\S 5.5]{Kitagawa94}. 
The involution $[\bdc]$ on $\cM\cS(N,\bfI)$ is defined by $[\bfc]\Xi(D):=\Xi(\gamma D)$ for any $\gamma\in\GL_2(\Q)$ with $\det\gamma<0$. Put $$\eord\cM\cS(N,\bfI)^\pm:=(1\pm[\bfc])\eord\cM\cS(N,\bfI).$$

The following is proved in \cite[Proposition 5.7]{Kitagawa94}. 
\begin{thm}The space $\eord\cM\cS(N,\bfI)$ is free of finite rank over $\bfI$. \end{thm}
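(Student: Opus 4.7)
The plan is to exploit Kitagawa's construction recalled just before the theorem, which identifies $\eord\cM\cS(N,\bfI)$ with $\Hom_{\Lam}(UM^{ord}(\cO),\bfI)$. Granting this identification, it suffices to prove that the universal ordinary module $UM^{ord}(\cO)$ is free of finite rank over $\Lam$: if $UM^{ord}(\cO)\simeq\Lam^r$, then since $\bfI$ is finite and flat over $\Lam$, one has $\Hom_\Lam(\Lam^r,\bfI)\simeq\bfI^r$, which is $\bfI$-free of rank $r$. The entire task therefore reduces to analyzing $UM^{ord}(\cO)$.

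The core of the argument is a control theorem: for each arithmetic $Q\in\frakX_\Lam^+$ of weight $k_Q\geq 2$ and finite part $\ep_Q$, the specialization $UM^{ord}(\cO)\ot_\Lam\Lam/\wp_Q$ should be canonically isomorphic, compatibly with all Hecke operators $T_\ell$, $\bfU_q$ and $\bfU_p$, to the classical ordinary modular symbol space $\eord\cM\cS_{k_Q}(Np^r,\chi_Q,\cO/\wp_Q)$, where $r$ is determined by the conductor of $\ep_Q$ and $\chi_Q$ is the character obtained by specializing $\bfal_X$ at $Q$. This comparison is obtained by matching the polynomial coefficient module $L^*_{k_Q-2}$ used in the classical definition with the trivial-coefficient plus twist-by-$\bfal_X$ description used in the $\bfI$-adic definition, via the natural duality pairing between $L_{k_Q-2}$ and $L^*_{k_Q-2}$. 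By Hida's classical control theorem for ordinary cohomology, the $\cO$-rank of $\eord\cM\cS_k(Np^r,\chi,\cO)$ is bounded uniformly in $k\geq 2$ and depends only on the tame level $N$.

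Given the control theorem, a standard Nakayama argument using the continuity built into the definition of $\cM\cS(N,\bfI)$ shows that $UM^{ord}(\cO)$ is finitely generated over $\Lam$. Torsion-freeness over $\Lam$ is built into the pro-limit construction as a limit of $\Lam/\wp^{(n)}$-modules of uniformly bounded rank, so since $\Lam$ is a regular local ring of dimension $2$, freeness follows from the constancy of the fiber dimension $\dim_{\kappa(Q)} UM^{ord}(\cO)\otimes_\Lam\kappa(Q)$ on a Zariski-dense set of arithmetic points, which is precisely what the control theorem plus Hida's uniform bound supplies. The main obstacle is the clean formulation of the control theorem, in particular tracking the twist $\bfal_X$---whose specialization at $Q$ involves $\Om^{2-k_Q}\ep_Q$---through the comparison so that the level $p^r$, the Nebentypus $\chi_Q$, and the $\bfU_p$-action all match the classical modular symbol side exactly; once this is pinned down, the remainder of the argument is formal.
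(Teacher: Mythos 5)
The paper's ``proof'' of this theorem is a one-line citation to Kitagawa's Proposition~5.7, and the route you sketch — identify $\eord\cM\cS(N,\bfI)$ with $\Hom_{\Lam}(UM^{ord}(\cO),\bfI)$ as recalled in the paper, reduce to $\Lam$-freeness of $UM^{ord}(\cO)$, and prove that via a control theorem plus Hida's uniform bound on classical ranks — is exactly the content of Kitagawa's argument. So the approach is the intended one. However, your final deduction contains a false implication. You assert that a finitely generated $\Lam$-torsion-free module over the regular local ring $\Lam=\cO\powerseries{X}$ of dimension two is free once the fiber dimension $\dim_{\kappa(Q)}\bigl(M\ot_\Lam\kappa(Q)\bigr)$ is constant on a Zariski-dense set of arithmetic points. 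This is not true: take $M=(p,X)$, the maximal ideal of $\Lam$. It is torsion-free, finitely generated, and for every height-one prime $\wp_Q$ one has $\dim_{\kappa(Q)}M\ot_\Lam\kappa(Q)=1$; yet $M/\frakm M$ is two-dimensional, so $M$ needs two generators and is not free. The problem is that passing from $M/\wp_Q M$ to $M\ot_\Lam\kappa(Q)$ kills the $\cO(Q)$-torsion in the fiber, so constancy of that dimension carries no integral information.

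What actually closes the argument is the integral form of the control theorem that you state in the previous paragraph: $UM^{ord}(\cO)/\wp_Q$ is not merely of constant $\kappa(Q)$-dimension but is isomorphic, Hecke-equivariantly, to the classical ordinary modular symbol space, which is a \emph{free} $\cO(Q)$-module. Granting that, the clean deduction is Nakayama plus a two-prime argument: from $\cO(Q)$-freeness at one $\wp_Q$ one gets $\dim_{\kappa}\bigl(UM^{ord}(\cO)/\frakm\bigr)=\rank_{\cO(Q)}\bigl(UM^{ord}(\cO)/\wp_Q\bigr)=r$, hence a surjection $\Lam^r\surjto UM^{ord}(\cO)$ with kernel $K$; reducing this modulo any other arithmetic $\wp_{Q'}$ gives a surjection $\cO(Q')^r\surjto UM^{ord}(\cO)/\wp_{Q'}\cong\cO(Q')^r$ of free modules of equal finite rank, which is forced to be an isomorphism, so $K\subset\wp_{Q'}\Lam^r$ for every such $Q'$; and $\bigcap_{Q'}\wp_{Q'}=0$ then gives $K=0$. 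Note that this also disposes of your claim that $\Lam$-torsion-freeness is ``built into the pro-limit construction'' — it is not obvious from the definition, but it is also not needed, since it falls out of the argument.
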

 We recall the $\bfI$-adic measure associated with ordinary $\bfI$-adic modular symbols. Let $\cC(\Zp,\bfI)$ be the space of continuous $\bfI$-valued functions on $\Zp$ and $\cD(\Zp,\bfI):=\Hom_{\bfI}(\cC(\Zp,\bfI),\bfI)$ be the space of $\bfI$-adic measures on $\Zp$. 
To each ordinary $\bfI$-adic modular symbol $\Xi\in\eord\cM\cS(N,\bfI)$, we associate a unique linear map $D\mapsto \mu_\Xi(D)(x)$ in $\Hom(\frakD_0,\cD(\Zp,\bfI))$ such that 
\beq\label{E:mea.6}\int_{\Zp}P(x)\mu_\Xi(D)(x):=\lim_{m\to\infty}\sum_{a=0}^{p^m-1}P(a)\bfU_p^{-m}\Xi\biggl(D\pMX{p^m}{a}{0}{1}\biggl)\in\bfI
\eeq
for $D\in\frakD_0$ and $P\in \cC(\Zp,\bfI)$. 
It is straightforward to verify that the right hand side is a $p$-adically convergent Riemann sum valued in $\bfI$. For $P\in\cC(\Zp,\bfI)$ and $u\in U_0(p)$ with $u_p=\pMX{a}{b}{c}{d}$, define 
\[P\,|\,u(x)=P\left(\frac{ax+b}{cx+d}\right)\bfal_X(cx+d). \]
\begin{lm}\label{L:equiv.6}
Let $P\in\cC(\Zp,\bfI)$. 
\begin{enumerate} 
\item For $m\in\Z^{\geq 0}$, \[\int_{p^m\Zp}P(x)\mu_\Xi(D)(x)=\int_{\Zp}P(p^mx)\mu_{\bfU_p^{-m}\Xi}\biggl(D\pDII{p^m}{1}\biggl)(x).\]
\item For $u\in U_0(pN)$, we have
\[\int_{\Zp}P(x)\mu_\Xi(Du)(x)=\int_{\Zp}P\,|\,u^{-1}(x)\mu_\Xi(D)(x).\]
\end{enumerate}
\end{lm}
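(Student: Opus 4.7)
Both parts follow from direct manipulation of the Riemann sum defining $\mu_\Xi$ in \eqref{E:mea.6}, combined with the equivariance properties of $\Xi$; part (2) is the more substantial one. For part (1), the plan is to test the identity first on characteristic functions $P=\bbI_{p^m b+p^n\Zp}$ with $0\le b<p^{n-m}$, the general $P\in\cC(\Zp,\bfI)$ then following by density of locally constant functions. On such $P$ the LHS reads $(\bfU_p^{-n}\Xi)\left(D\pMX{p^n}{p^m b}{0}{1}\right)$, while the RHS, since $P(p^m\cdot)=\bbI_{b+p^{n-m}\Zp}$, reads $(\bfU_p^{-(n-m)}(\bfU_p^{-m}\Xi))\left(D\pDII{p^m}{1}\pMX{p^{n-m}}{b}{0}{1}\right)$. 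These agree by the semigroup identity $\bfU_p^{-n}=\bfU_p^{-(n-m)}\circ\bfU_p^{-m}$ on the ordinary part together with the matrix factorization
\[\pMX{p^n}{p^m b}{0}{1}=\pDII{p^m}{1}\pMX{p^{n-m}}{b}{0}{1}.\]

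For part (2), I proceed by a Möbius reindexing of the summation variable. Write $u_p=\pMX{\alpha}{\beta}{\gamma}{\delta}\in\GL_2(\Zp)$ with $\gamma\in p\Zp$ and $\delta\in\Zp^\x$. For sufficiently large $n$, the assignment $a\mapsto a'(a,n):=\left[(\alpha a+\beta)/(\gamma a+\delta)\bmod p^n\right]$ is a bijection of $\{0,\ldots,p^n-1\}$ realizing the Möbius action of $u_p$ on $\Zp$ modulo $p^n$. The key input is an explicit decomposition in $\GL_2(\wh\Q)$ of the form
\[u\cdot\pMX{p^n}{a}{0}{1}=g_{a,n}\cdot z_{a,n}\cdot\pMX{p^n}{a'}{0}{1}\cdot v_{a,n},\]
with $g_{a,n}\in\GL_2^+(\Q)$, $z_{a,n}\in\wh\Q^\x$, and $v_{a,n}\in U_1(Np^\infty)$, in which the $p$-component of $z_{a,n}$ equals $\gamma a+\delta$ up to a unit of $U_1$ at $p$. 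Applying the equivariance rules $\Xi(g'Du')=\Xi(D)$ and $\Xi(Dz)=\bfal_X(z^{-1})\Xi(D)$ then turns the $a$-th summand of the Riemann sum for the LHS into
\[\bfal_X((\gamma a+\delta)^{-1})\cdot(\bfU_p^{-n}\Xi)\left(D\pMX{p^n}{a'}{0}{1}\right).\]
Reindexing by $a'$ and using $a=u_p^{-1}\cdot a'$, the extra $\bfal_X$-factor combines with $P(u_p^{-1}\cdot a')$ to recover $(P|u^{-1})(a')$ by the definition of the twisted action, reproducing the Riemann sum for $\int_{\Zp}(P|u^{-1})\,d\mu_\Xi(D)$ in the limit $n\to\infty$.

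The main obstacle is producing the decomposition in part (2) and matching the $\bfal_X$-factor from $z_{a,n}$ to the twist in the definition of $P|u^{-1}$. Existence of $g_{a,n},z_{a,n},v_{a,n}$ rests on strong approximation for $\wh\Q^\x$ and an Iwahori-type factorization of $u_p$ at $p$; one then has to check that the off-$p$ components of $z_{a,n}$ absorbed into $g_{a,n}$ do not disturb the value of $\bfal_X(z_{a,n})$, which reduces to $\bfal_X=\Dmd{\cyc}_X\Dmd{\cyc}^{-2}$ being trivial on $\Q_{>0}^\x$ and on the off-$p$ units of $\wh\Z^\x$. Once this bookkeeping is in place, the reindexing and the passage to the limit are routine.
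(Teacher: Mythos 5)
Your strategy for both parts is the same as the paper's: reduce to locally constant test functions (for part~(1)) and perform a Möbius reindexing of the Riemann sum \eqref{E:mea.6} via the equivariance properties of $\Xi$ (for part~(2)). Part~(1) as you wrote it is fine and matches what the paper leaves to the reader. For part~(2) the paper argues differently in one respect: it reduces to the generators $u_p=\pMX{1}{0}{c}{1}$ (with $c\in p\Zp$) and $u_p=\pMX{a}{b}{0}{d}$ of the Iwahori subgroup at $p$, and carries out the reindexing for the unipotent case using the explicit factorization
\[\pMX{1}{0}{c}{1}\pMX{p^m}{a}{0}{1}=\pMX{p^m}{a(1+ac)^{-1}}{0}{1}\pMX{(1+ac)^{-1}}{0}{cp^m}{1+ac},\]
whereas you run the reindexing for a general $u\in U_0(pN)$ in one pass. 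Both are legitimate.

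There is, however, one genuine wobble in your decomposition that you should iron out. You place a factor $g_{a,n}\in\GL_2^+(\Q)$ on the right of $D$ and invoke $\Xi(g'Du')=\Xi(D)$. But that invariance is a \emph{left} action on $\frakD_0$: $\gamma$ must multiply both the divisor $\Div^0\bfP$ and the adelic coordinate $g_\rmf$ of $D=(\MS{r}{s},g_\rmf)$. A nonscalar $\gamma\in\GL_2^+(\Q)$ acting only on the right of $g_\rmf$ is not something $\Xi$ is invariant under. Fortunately the factor is dispensable: for $u\in U_0(pN)$ and the matrix $\pMX{p^n}{a}{0}{1}$ supported at $p$, the scalar you need already lives in $\wh\Z^\x$ (take $\zeta_\ell$ to be the lower-right entry of $u_\ell$ for $\ell\ne p$, and $\zeta_p=\gamma a+\delta\in\Zp^\x$). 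Concretely
\[\pMX{p^n}{a'}{0}{1}^{-1}u_p\pMX{p^n}{a}{0}{1}=(\gamma a+\delta)\cdot\pMX{(\det u_p)(\gamma a+\delta)^{-2}}{0}{\gamma p^n(\gamma a+\delta)^{-1}}{1},\quad a'=\frac{\alpha a+\beta}{\gamma a+\delta},\]
so no rational matrix is needed and strong approximation for $\wh\Q^\x$ plays no role; $\bfal_X(\zeta)=\bfal_X(\gamma a+\delta)$ because $\bfal_X$ kills $\wh\Z^{(p)\x}$. You should also say explicitly that $v_{a,n}$ is only in $U_1(Np^\infty)$ \emph{up to a lower-left entry in $p^{n+1}\Zp$}, and that this perturbation is killed in the limit $n\to\infty$ by the defining continuity of $\Xi$ (the factoring through $\frakD_0/U_1(Np^{r_n})$ modulo $\wp^{(n)}$). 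With those two clarifications your argument is correct and is a valid alternative to the paper's reduction to generators.
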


\begin{proof}The verification of part (1) is straightforward by \eqref{E:mea.6}. 
To see Part (2), it suffices to show the equation for $u_p$ of the form $\pMX{1}{0}{c}{1}$ and $\pMX{a}{b}{0}{d}$. Let $u_p=\pMX{1}{0}{c}{1}$ with $c\in p\Zp$. By definition, the left hand side equals
\begin{align*}
&\lim_{m\to\infty}\sum_{a=0}^{p^m-1}P(a)\bfU_p^{-m}\Xi\biggl(D\pMX{1}{0
}{c}{1}\pMX{p^m}{a}{0}{1}\biggl)\\
=&\lim_{m\to\infty}\sum_{a=0}^{p^m-1}P(a)\bfU_p^{-m}\Xi\biggl(D\pMX{p^m}{a(1+ac)^{-1}}{0}{1}\pMX{(1+ac)^{-1}}{0}{cp^m}{1+ac}\biggl).\end{align*}
Making change of variable $a=z(1-cz)^{-1}$, we find that the last Riemann sum equals
\begin{align*}&\lim_{m\to\infty}\sum_{z=0}^{p^m-1}P(z(1-cz)^{-1})\bfU_p^{-m}\Xi\biggl(D\pMX{p^m}{z}{0}{1}\biggl)\bfal_X(1-cz)^{-1}\\
&=\int_{\Zp}P\,\biggl|\,\pMX{1}{0}{-c}{1}(x) \mu_\Xi(D)(x).
\end{align*}
The case for $u_p=\pMX{a}{b}{0}{d}$ is similar. We omit the details.
\end{proof}
For an arithmetic point $Q$ in $\frakX^+_{\bfI}$, we denote by $\wp_Q$ the kernel of the specialization $Q:\bfI\to \Cp$. 
Put $\cO(Q)=\bfI/\wp_Q$ and $r_Q=\max\stt{1,c_p(\ep_Q)}$. 
Here $c_p(\ep_Q)$ is the exponent of the $p$-conductor of $\ep_Q$. For any $\cO(Q)$-algebra $A$, we put
\[\cM\cS_Q^\Ord(A):=\eord\cM\cS_{k_\Qx}(Np^{r_Q},\Om^{2-k_\Qx}\ep_\Qx,A).\]
The following theorem is an integral version of the control theorem for $\bfI$-adic modular symbols proved in \cite[Theorem 5.13]{GS1993}. The result must be well-known to experts, but since we could not locate an exact statement in the literature, we provide some details for the sake of completeness. 
\begin{thm}[Control Theorem]\label{T:control.6}
For each arithmetic point $Q$, there is a Hecke-equivariant specialization isomorphism 
\begin{align*}{\rm sp}_Q\colon \eord\cM\cS(N,\bfI)/\wp_Q&\iso\cM\cS^\Ord_Q(\cO(Q)),\\
\Xi\pmod{\wp_\Qx}&\mapsto {\rm sp}_Q(\Xi):=\Xi_Q,
\end{align*}
where $\Xi_Q$ is the $p$-adic modular symbol of weight $k_Q$ defined by 
\[\Xi_Q(D)(P)=Q\biggl(\int_{\Zp}P(x,1)\mu_\Xi(D)(x)\biggl),\quad P(X,Y)\in L_{k_Q-2}(\cO(Q)).\]
We call $\Xi_Q$ the specialization of $\Xi$ at $Q$.
\end{thm}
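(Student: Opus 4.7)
The plan is to prove this control theorem along the lines of Kitagawa \cite{Kitagawa94} and Greenberg--Stevens \cite{GS1993}, in three steps: checking that ${\rm sp}_Q$ lands in the right space and commutes with Hecke operators, establishing injectivity via the measure description, and concluding surjectivity by a rank comparison that exploits the freeness of $\eord\cM\cS(N,\bfI)$ over $\bfI$ asserted in the preceding theorem.

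For well-definedness, I would show that $\Xi_Q(D)$ lies in $L^*_{k_Q-2}(\cO(Q))$ (immediate from its definition as an integral of polynomials) and then verify the classical transformation rule for a symbol of level $Np^{r_Q}$ and character $\Om^{2-k_Q}\ep_Q$. The key point is that for $u\in U_1(Np^{r_Q})$, Lemma \ref{L:equiv.6}(2) converts the right action of $u$ on $D$ into the right action of $u$ on polynomial arguments twisted by $\bfal_X$ on the lower-right diagonal entry; specialization at $Q$ sends $\bfal_X|_{1+p^{r_Q}\Zp}$ to the expected $\Om^{2-k_Q}\ep_Q$, and the continuity hypothesis guarantees $\Xi_Q$ descends to level $Np^{r_Q}$. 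Hecke-equivariance follows from the parallel local formulas for $T_\ell$, $\bfU_q$ in both the $\bfI$-adic and classical semi-adelic settings, together with Lemma \ref{L:equiv.6}(1) to handle $\bfU_p$. For injectivity, applying Lemma \ref{L:equiv.6}(1) with $P=Y^{k_Q-2}$ yields
\[Q\!\left(\bfU_p^{-m}\Xi\biggl(D \pMX{p^m}{a}{0}{1}\biggl)\right) = Q\!\left(\int_{a+p^m\Zp} \mu_\Xi(D)\right),\]
the right side being a bounded-degree moment captured by $\Xi_Q$. Combined with the bounded-level property of $\Xi\pmod{\wp_Q}$ (it factors through $\frakD_0/U_1(Np^{r_Q})$ once $\wp^{(n)}\subset\wp_Q$), finitely many such moments recover $\Xi\pmod{\wp_Q}$ completely, so $\Xi_Q=0$ forces $\Xi\in\wp_Q\eord\cM\cS(N,\bfI)$.

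Surjectivity then follows by rank comparison. Since $\eord\cM\cS(N,\bfI)$ is free of finite $\bfI$-rank $r$, the quotient $\eord\cM\cS(N,\bfI)/\wp_Q$ is a free $\cO(Q)$-module of rank $r$. On the classical side, Hida's control theorem for ordinary cusp forms combined with the $p$-adic Eichler--Shimura isomorphism (applied to both $\pm$ involution eigenspaces) identifies $\cM\cS_Q^\Ord(\cO(Q))$ with a free $\cO(Q)$-module of rank $2\dim_\C \eord\cS_{k_Q}(Np^{r_Q},\Om^{2-k_Q}\ep_Q)$; this dimension is independent of $Q\in\frakX_\bfI^+$ by Hida's theorem and hence equals $r$. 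The Hecke-equivariant, injective ${\rm sp}_Q$ between free $\cO(Q)$-modules of equal rank must be an isomorphism. I expect the subtlest step to be injectivity: a priori, vanishing of polynomial-degree moments is much weaker than vanishing of a $p$-adic measure, and the key is the bounded-level descent of $\Xi\pmod{\wp_Q}$, which makes only finitely many moments of $\mu_\Xi(D)$ relevant for each $D$.
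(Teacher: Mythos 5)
Your proposal takes a genuinely different route from the paper: you attempt to establish injectivity from the measure description and surjectivity from a rank comparison, whereas the paper constructs ${\rm sp}_Q$ directly as a composite of two known isomorphisms (Kitagawa's weight-lowering isomorphism $\iota_n$ from \cite[Corollary 5.2]{Kitagawa94}, and the base-change property of ordinary modular symbols from \cite[Lemma 1.8, Corollary 2.2]{Hida88AJM}), identified with ${\rm sp}_Q$ by the observation $\iota_n(\Xi_Q(D))=\Xi_Q(D)(Y^n)=Q(\Xi(D))$. Unfortunately, both halves of your argument have gaps.

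For injectivity, there are two issues. First, the condition $\wp^{(n)}\subset\wp_Q$ that you invoke for bounded-level descent essentially never holds when $k_Q>2$: evaluating $\bfu^{-2}(1+X)$ at $Q$ gives $\bfu^{k_Q-2}\ep_Q(\bfu)$, which is not a $p$-power root of unity unless $k_Q=2$. So the image of $\wp^{(n)}$ in $\cO(Q)$ is a nonzero ideal, and you must instead work mod $\wp_Q+p^t\bfI$ and pass to the inverse limit. Second, and more fundamentally, the step from ``$Q(\Xi(D))=0$ for all $D\in\frakD_0$'' to ``$\Xi\in\wp_Q\,\eord\cM\cS(N,\bfI)$'' is not automatic: it amounts to the injectivity of $\eord\cM\cS(N,\bfI)/\wp_Q\to\Map(\frakD_0,\cO(Q))$, and this is precisely the substantive base-change fact that the control theorem encodes. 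A free submodule of a function space need not have a free quotient that still injects into the corresponding function space over $\cO(Q)$; this is exactly the content supplied by Hida's flatness/base-change results which the paper cites explicitly and which you would still need to invoke (or reprove).

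For surjectivity, the final step is a false implication: an injective $\cO(Q)$-linear map between free modules of the same finite rank is \emph{not} automatically an isomorphism over a discrete valuation ring (multiplication by $p$ on $\cO(Q)$ is the standard counterexample). You would additionally need that the cokernel of ${\rm sp}_Q$ is torsion-free, which again essentially requires the base-change statement. Moreover, your rank count itself leans on an integral $p$-adic Eichler--Shimura isomorphism at level $\cO(Q)$ (not merely after inverting $p$), which is not granted in the paper's generality and is a stronger input than the paper uses. In short, you have correctly identified that the structure of $\mu_\Xi$, the $\bfU_p$-invertibility, and the freeness over $\bfI$ are the relevant ingredients, but the abstract-nonsense bridge you build with them is missing its keystone — the weight-lowering isomorphism and base-change property — which is exactly what makes the paper's direct construction both shorter and correct.
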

 \begin{proof}First we note that $\Xi_Q$ is a $p$-adic modular symbol of weight $k_Q$ and character $\Om^{2-k_\Qx}\ep_\Qx$ by \lmref{L:equiv.6}. It is straightforward to verify that the map ${\rm sp}_Q$ is Hecke-equivariant, so $\Xi_Q$ belongs to $\cM\cS^\Ord_Q(\cO(Q))$. We proceed to show ${\rm sp}_Q$ is an isomorphism. Let $n=k_\Qx-2$, $\cO=\cO(Q)$ and $\chi_Q=\bfal_X\pmod{\wp_Q}=\cyc^n\Om^{-n}\ep_Q$. We have 
 \[\eord\cM\cS(N,\bfI)/\wp_Q=\prolim_t\dirlim_r\eord\cM\cS_2(Np^r,\chi_Q,\cO/p^t).\]
For any $\Zp$-module $R$, define $\iota_n: L^*_n(R)\to R,\quad \iota_n(\ell)=\ell(Y^n)$. By \cite[Corollary 5.2]{Kitagawa94}, $\iota_n$ induces a Hecke-equivariant isomorphism 
\[\iota_n\colon \eord\cM\cS_{k_\Qx}(Np^r,\ep_Q\Om^{-n},\cO/p^t)\iso\eord\cM\cS_2(Np,\chi_Q,\cO/p^t)\]
for $r\geq t$.
Note that $ \iota_n(\Xi_Q(D))=\Xi_Q(D)(Y^n)=Q(\Xi(D))$. We deduce that ${\rm sp}_Q$ is indeed given by the isomorphism
\begin{align*}&\eord\cM\cS(N,\bfI)/\wp_Q=\prolim_t\dirlim_r\eord\cM\cS_2(Np^r,\chi_Q,\cO/p^t)\\&\qquad\mathop{\iso}^{\iota_n^{-1}}\prolim_t\dirlim_r\eord\cM\cS_{k_Q}(Np^r,\ep_Q\Om^{-n},\cO/p^t)=\eord\cM\cS_{k_Q}(Np^{r_Q},\ep_Q\Om^{-n},\cO),\end{align*}
where the last equality is the base change property \cite[Lemma 1.8 and Corollary 2.2]{Hida88AJM} for ordinary $p$-adic modular symbols. 
\end{proof}

\subsection{The distribution-valued modular symbols of Greenberg and Stevens}\label{R:GS.6} Let $L_0'$ be the set of primitive elements in $\Zp\times\Zp$, i.e. elements in $\Zp\times \Zp$ which are not divisible by $p$. We recall the connection of $\Lam$-adic modular symbols and modular symbols with valued in the space $\cD(L_0')$ of $p$-adic measures on $L_0'$ described in \cite[\S 5]{GS1993}. For each $k\in\Cp$ with $\abs{k}_p\leq 1$, let $Q_k\in\Spec\Lam(\Cp)$ be the unique point with $Q_k([\bfu])=\bfu^k$ and
let $\sF_k$ be the set of homogeneous functions of degree $k$ on $L_0'$, \ie continuous functions $h:L_0'\to\Zp$ such that $h(ax,ay)=\Dmd{a}^k h(x,y)$ for all $a\in\Zp^\x$. 
Then to each $\Xi\in\eord\cM\cS(N,\Lam)$, we can associate a modular symbol $\mu^{\rm GS}_\Xi\in\Hom_{U_0(N)}(\frakD_0,\cD(L_0'))$ characterized by the property that we have
\begin{align*}\int_{\Zp\times\Zp^\x}h(x,y)\mu^{\rm GS}_{\Xi}(D)(x,y)&=Q_k\biggl(\int_{\Zp}h(x,1)\mu_\Xi(D)(x)\biggl);\\
\int_{\Zp^\x\times p\Zp}h(x,y)\mu^{\rm GS}_\Xi(D)(x,y)&=Q_k\biggl(\int_{\Zp}h(1,-py)\mu_{\bfU_p^{-1}\Xi}\biggl(D\pMX{0}{1}{-p}{0}\biggl)(y)\biggl)
\end{align*}
for any $k\in\Zp$ and $h\in\sF_{k-2}$. 
By a similar computation in \lmref{L:equiv.6}, one verifies that the map $\mu_\Xi^{\rm GS}$ is $U_0(N)$-invariant, namely for any $u\in U_0(N)$ 
\beq\label{E:Uinv.6}\int_{L_0'}h\,|\, u^{-1}(x,y)\mu^{\rm GS}_\Xi(D)(x,y)=\int_{L_0'}h(x,y)\mu^{\rm GS}_\Xi(D u)(x,y).\eeq

\subsection{The Mazur-Kitagawa two variable $p$-adic $L$-functions}
Let $\bdsf\in \eord\bfS(N,1,\bfI)$ be a primitive Hida family of tame conductor $N$ and let $\lam_\bdsf:\bfT(N,\bfI)\to\bfI$ be the corresponding homomorphism. For any integer $C$ prime to $N$, let $\eord\cM\cS(NC,\bfI)^\pm[\bdsf]$ be the space of $\bfI$-adic ordinary modular symbols $\Xi\in \eord\cM\cS(NC,\bfI)^\pm$ such that 
$t\cdot \Xi=\lam_\bdsf(t)\Xi$ for all $t\in\bfT(N\LV,N).$
The space $\eord\cM\cS(N,\bfI)^\pm[\bdsf]\ot_{\bfI}\Frac\bfI$ has rank one over $\Frac\bfI$ as $\bdsf$ is primitive of tame conductor $N$. For an arithmetic point $Q$, the space $\cM\cS^\Ord_Q(\cO(Q))^\pm[\bdsf_Q]$ is free of rank one over $\cO(Q)$. On the other hand, Shimura in \cite{Shimura1977} proved that $0\neq\xi^\pm_{\bdsf_Q}\in\cM\cS^\Ord_Q(\Cp)^\pm[\bdsf_Q]$. Therefore, having fixed a basis
$\beta_{\bdsf_Q}^\pm$ of $\cM\cS^\Ord_Q(\cO(Q))$, we can define the period $\Omega^\pm_{\bdsf_Q}\in\Cp^\x$ associated with the $p$-stabilized newform $\bdsf_Q$ by 
\[\xi^\pm_{\bdsf_Q}=\Omega_{\bdsf_Q}^\pm \beta^\pm_{\bdsf_Q}.\] 
 \begin{defn}[$p$-adic error terms]
 Let $\Xi\in \eord\cM\cS(N,\bfI)[\bdsf]$. We define the plus/minus error terms ${\rm Er}^\pm(\Xi_Q)\in\Cp$  by the equation \[
\Xi^\pm_Q=\frac{{\rm Er}^\pm(\Xi_Q)}{\Omega^\pm_{\bdsf_Q}}\cdot \xi_{\bdsf_Q}^\pm.\]
\end{defn}

To $\Xi\in \eord\cM\cS(N,\bfI)[\bdsf]$ and a finite order Hecke character $\chi$ with $\chi(-1)=(-1)^i$, Kitagawa in \cite[Theorem 1.1]{Kitagawa94} associates the two-variable $p$-adic $L$-function $L_p(\Xi,\chi)\in \bfI\wh\ot\Lam$ satisfying the interpolation property: for every pair of arithmetic points $(Q,P)\in\frakX_\bfI^+\times\frakX_\Lam^+$ with $k_Q\geq k_P$, 
\begin{align}
L_p(\Xi^{(-1)^i},\chi)(\Qx,\Qz)=&(-\sqrt{-1})^{k_\Qz-1}\cdot\frac{ L^{\stt{p}}(k_\Qz-\frac{k_\Qx+1}{2},\pi_{\bdsf_Q}\ot\chi\Om^{-k_\Qz}\ep_{\Qz})}{\Omega_{\bdsf_\Qx}^{(-1)^{i}}}\notag \\
&\times\gamma\biggl(k_\Qz-\frac{k_\Qx+1}{2},\varrho_{\bdsf_\Qx,p}\ot\chi_p\Om_p^{-k_\Qz}\ep_{\Qz,p}\biggl)^{-1} {\rm Er}^{(-1)^{i}}(\Xi_\Qx), \label{E:interp3}
\end{align}
The $L$-functions associated with modular forms are related to the automorphic $L$-functions in the following way:
\[L\biggl(k_\Qz-\frac{k_\Qx+1}{2},\pi_{\bdsf_Q}\ot\chi\biggl)=2(2\pi)^{1-k_\Qz}\Gamma(k_\Qz-1)\cdot L(k_\Qz-1,\bdsf_\Qx\ot\chi).\]


\subsection{The square root $p$-adic $L$-function associated with a Hida family and a real quadratic field}
We review the construction of the square roots of $p$-adic $L$-functions attached to Hida families and real quadratic fields in \cite{BD09Ann}. Let $F_+$ be the group of totally positive elements in $F$ and let $\Cl(\cO_\LV):=F_+\bksl \wh F^\x/\wh\cO_\LV^\x$ denote the narrow ring class group of conductor $\LV$. For $t\in \wh F^\x$, write $[t]=F_+ t \wh\cO_\LV^\x$ for the class represented by $t$. Let $\ep_\LV$ be a generator of the unit group $F_+\cap \wh\cO_\LV^\x$. Let $P_\Psi(X,Y)=(X-\CMP Y )(X-\ol{\CMP}Y)$ and $\delta=\ol{\CMP}-\CMP=\sqrt{\Delta_F}$. Define $\bftheta_X\colon \Zp^\x\to\Lam^\x$ by 
\[\bftheta_X(x)=\Dmd{x}_X^\onehalf\Dmd{x}^{-1}.\]
So $\bftheta_X^2=\bfal_X|_{\Zp^\x}$. Let $\phi$ be a finite order Hecke character of $\A_F^\x$ as in \eqref{E:brch.5}. Equivalently, $\phi|_{\wh F^\x}$ is an even/odd character of $\Cl(\cO_\LV)$, depending on the sign of $\phi_\infty(\delta)=(-1)^\frac{j}{2}$ or the parity of $\frac{j}{2}$.
\begin{defn}\label{D:67.6}
Let $\Xi\in \eord\cM\cS(NC,\bfI)^\pm[\bdsf]$. For $D\in\frakD_0$, we define $\cL_{\Xi}(D)\in\bfI$ as follows:
if $p$ is split in $F$, put
\[\cL_{\Xi}(D)=\int_{\Zp^\x}\bftheta_X(x)\mu_{\Xi}(D)(x)\in\bfI;\]
if $p$ is inert in $F$, put
\begin{align*}\cL_\Xi(D)&=\int_{\Zp}\bftheta_X(P_\Psi(x,1))\mu_{\Xi}(D)(x)\\
&\quad +\al_\bdsf^{-1}\int_{\Zp}\bftheta_X(P_\Psi(1,-px))\mu_{\Xi}\biggl(D\pMX{0}{1}{-p}{0}\biggl)(x).\end{align*}
 Fixing any base point $r\in \bfP$, we define the (square root) $p$-adic $L$-function $\cL_{\Xi^{\pm}/F\ot\brch}\in\bfI$ for $\bdsf/F$ by 
\[\cL_{\Xi^\pm/F\ot\brch}:=\sum_{[t]\in \Cl(\cO_\LV)}\brch(t)\bftheta_X(\cyc(\rmN(t)))\cdot \cL_{\Xi^{\pm}}(\MS{r}{\Psi(\ep_\LV)r},\Psi(t)\cmpt^{(\LV)}_\rmf).\]
Note that the above definition does not depend on the choice of $r$ and does not depend on the representatives $[t]$ in $\Cl(\cO_\LV)$.
\end{defn}

\subsection{The interpolation formulae}
For an elliptic modular form $f\in \cS_k(Np^r,\om^{-1})$ and a finite order Hecke character $\chi$ of $\A_F^\x$ with $\chi|_{\A^\x}=\om$, writing $\varphi_f:=\varPhi(f)$ for the adelic lift of $f$, recall that in \eqref{E:toric} we have introduced  the toric period integral given by
\[B_{f}^{\chi}(g):=B_{\varphi_f}^{\chi}(g)=\int_{\A^\x F^\x\bksl \A_F^\x}\varphi_f(\Psi(t)g)\chi(t)\rmd t.\] Let $\breve\bdsf\in \eord\bfS(N\LV,1,\bfI)[\bdsf]$ be the \emph{test vector} in \defref{D:test.6}.  Then $\breve \bdsf$ can be expressed as \[\breve\bdsf(q)=\prod_{q\divides \LV}(1-\beta_q(\bdsf)V_q)\cdot\bdsf,\]
where $\beta_q(\bdsf)$ is the fixed choice of roots of the Hecke polynomial $H_q(x,\bdsf)$ of $\bdsf$ at $q$. Let $\Xi\in \eord\cM\cS(N,\bfI)[\bdsf]$ and define 
\[\breve \Xi:=\prod_{q\divides \LV}(1-\beta_q(\bdsf)V_q)\cdot \Xi\in \eord\cM\cS(NC,\bfI)[\bdsf], \]
where $V_d$ is the level-raising operator defined in \eqref{E:V.6}. The next result shows that  $\cL_{\breve \Xi/F\ot\brch}$ interpolates $p$-adically the toric period associated with $\breve\bdsf_Q$ for $Q\in\frakX_{\bfI}^{++}$.

\begin{prop}\label{P:interp2}
For $Q\in\frakX_{\bfI}^{++}$ we set $\chi_\Qx:=\brch\cdot \ep_\Qx^{-\onehalf}\Om^{\frac{k_\Qx-2}{2}}\circ\rmN_{F/\Q}$. Let $\pm=\brch_\infty(\delta)=(-1)^\frac{j}{2}$. We have 
\begin{align*}\cL_{\breve\Xi^{\pm}/\cK\ot\brch}(\Qx)=&\frac{(-2)(-\LV\delta\sqrt{-1})^\frac{k_\Qx}{2}L(1,\tau_F)}{\prod_{q\divides\LV}\zeta_q(1)}\cdot \frac{B_{\breve\bdsf_\Qx}^{\chi_\Qx}(\cmpt^{(\LV p^n)})}{\Omega_{\bdsf_\Qx}^{\pm}}\cdot {\rm Er}^{\pm}(\Xi_\Qx)\\
&\times\frac{\zeta_p(1)}{\zeta_{F_p}(1)\al_{\bdsf_\Qx}^n\abs{p^n}_{\Qp}^\frac{k_\Qx}{2}}
\end{align*}
for any sufficiently large $n\geq  \max\stt{c_p(\chi_Q),1}$, where $\cmpt^{(\LV p^n)}\in\GL_2(\A)$ is the special element defined in \defref{D:cmpt} and $\al_{\bdsf_\Qx}=\bfa(p,\bdsf_\Qx)$ is the $\bfU_p$-eigenvalue of $\bdsf_\Qx$.
\end{prop}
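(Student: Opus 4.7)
The plan is to prove the interpolation formula in two stages: first, reduce to the classical specialization $\breve\Xi_Q$ via the control theorem \thmref{T:control.6}; second, unfold the toric period integral $B_{\breve\bdsf_\Qx}^{\chi_\Qx}(\cmpt^{(\LV p^n)})$ as a product of local zeta integrals and match the $p$-adic local piece with the definition of $\cL_{\breve\Xi^\pm/F\otimes\brch}$ after specializing $\bftheta_X$ at $Q$.

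First I would invoke \thmref{T:control.6}. Since $\breve\Xi=\prod_{q\divides\LV}(1-\beta_q(\bdsf)V_q)\Xi$ is obtained from $\Xi$ by level-raising operators that commute with specialization, the plus/minus part $\breve\Xi_Q^\pm$ lies in the one-dimensional eigenspace $\cM\cS^\Ord_Q(\cO(Q))^\pm[\breve\bdsf_Q]$, and combining the identity $\xi_{\breve\bdsf_\Qx}^\pm=\prod_{q\divides\LV}(1-\beta_q(\bdsf_Q)V_q)\xi_{\bdsf_\Qx}^\pm$ with the definition of ${\rm Er}^\pm(\Xi_Q)$ yields $\breve\Xi_Q^\pm=({\rm Er}^\pm(\Xi_Q)/\Omega^\pm_{\bdsf_Q})\cdot\xi_{\breve\bdsf_\Qx}^\pm$. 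Next I specialize the $\Lam$-adic character: for $x\in\Zp^\x$, $Q(\bftheta_X(x))=\Dmd{x}^{(k_\Qx-2)/2}\ep_\Qx^{1/2}(\Dmd{x})$, which agrees on $\Zp^\x$ with the restriction of the local component at $p$ of $\chi_{\Qx}(\om_F^{-1/2}\circ\rmN)^{-1}$, the character governing the $p$-component of the toric period integral.

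With these identifications the Riemann-sum formula \eqref{E:mea.6} translates, in the split case $p=\p\pbar$, the integral $\int_{\Zp^\x}\bftheta_X(x)\mu_{\breve\Xi}(D)(x)$ at $Q$ into a finite sum of values of $\xi_{\breve\bdsf_\Qx}^\pm$ on matrices of the form $D\cmpt_p^\setn u$ with $u\in U_0(p^n)$, weighted by $\chi_{\Qx,\p}(\cdot)$ and $\alpha_{\bdsf_Q}^{-n}$, which after an Iwasawa-type rearrangement matches the local Whittaker integral appearing in the Waldspurger-type factorization of $B_{\breve\bdsf_\Qx}^{\chi_\Qx}(\cmpt^{(\LV p^n)})$ at $p$. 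Summing over $[t]\in\Cl(\cO_\LV)$ and using $T(\Q)\bksl T(\A_\rmf)/\wh\cO_\LV^\x=\Cl(\cO_\LV)$ produces the finite part of the toric period, while the archimedean factor is supplied by the classical modular symbol pairing: via \eqref{E:MA1} and the identity $\cmpt_\infty\cdot\sqrt{-1}=\CMP$, integrating $\varphi_{\breve\bdsf_\Qx}(\Psi(t_\infty)\cmpt_\infty g)$ over $\R^\x\langle\ep_\LV\rangle\bksl F_\infty^\x$ collapses to the geodesic integral $\eta_{\breve\bdsf_\Qx}(\MS{r}{\Psi(\ep_\LV)r})(P_\Psi^{k_\Qx/2-1})$, with the Jacobian of the substitution producing the factor $(-\sqrt{-1}\LV\delta)^{k_\Qx/2}$. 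The constants $L(1,\tau_F)/\prod_{q\divides\LV}\zeta_q(1)$ and $\zeta_p(1)/\zeta_{F_p}(1)$ arise by comparing the Tamagawa measure on $T(\A)/\A^\x$ with the product of normalized local Haar measures through the class number formula for $\cO_\LV$.

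The main technical obstacle will be the inert case at $p$: matching the two-term structure of $\cL_\Xi(D)$ in \defref{D:67.6} with the single local integral over $F_p^\x/\Qp^\x$ entering the toric period requires an analog of the Mellin-type calculation in the proof of \propref{P:padic}, where the substitution of $P_\Psi$ inside $\bftheta_X$ implements the local norm form $\rmN_{F_p/\Qp}$ while the decomposition $\Zp=\Zp^\x\disjoint p\Zp$ combined with $\bfU_p^{-1}$ encodes the two $\PGL_2(\Qp)$-orbits of the integration variable. In both cases, reconciling the $\pm$-sign convention $\brch_\infty(\delta)=(-1)^{j/2}$ with the action of $[\bfc]$ at infinity demands careful bookkeeping.
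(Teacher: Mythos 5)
Your overall skeleton matches the paper's proof: reduce to the weight-$k_Q$ specialization via \thmref{T:control.6}, establish $\breve\Xi_Q^\pm=\mho_Q\cdot\xi_{\breve\bdsf_Q}^\pm$ with $\mho_Q={\rm Er}^\pm(\Xi_Q)/\Omega_{\bdsf_Q}^\pm$, use the Riemann-sum formula \eqref{E:mea.6} to convert the $\Lambda$-adic measure into classical modular-symbol values, unfold the archimedean part of the toric period into a geodesic integral, and collect the Tamagawa/normalization constants from the class number formula. However, two steps would not go through as you describe.

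The first issue is the appeal to a ``Waldspurger-type factorization'' of $B_{\breve\bdsf_Q}^{\chi_Q}(\cmpt^{(\LV p^n)})$ at $p$. The paper does not factor the toric period into a product of local Bessel integrals in this proof (that is done only for $Z_\cD$ in \propref{P:interp1}); instead it unfolds the adelic integral directly as $\vol(\wh\cO_\LV^\x)\cdot\zeta_{F_p}(1)/(p^n\zeta_p(1))\cdot\sum_{[t]\in\Cl(\cO_\LV)}\chi_Q(t)L_{[t]}(\varphi)$, and then re-expresses each $L_{[t]}(\varphi)$ as a \emph{finite} sum over $\wh\cO_\LV^\x/\wh\cO_{\LV p^n}^\x$ of classical modular symbol values $\eta_f(D_{tu}\cmpt_p^{(n)})(P_\Psi^{(k-2)/2})$. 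The comparison with $\cL_{\breve\Xi^\pm/F\ot\brch}(Q)$ then happens term-by-term at the level of these finite sums, not by matching local zeta factors.

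The second, more serious issue is your claim that the inert case requires ``an analog of the Mellin-type calculation in the proof of \propref{P:padic}.'' That Mellin argument involves a local functional equation and produces a nontrivial gamma factor, but the local fudge factor here is just $\zeta_p(1)/\zeta_{F_p}(1)$ with \emph{no} gamma factor, so such an argument would not land on the stated formula. What the paper actually uses is purely elementary: the two-term definition of $\cL_\Xi(D)$ in \defref{D:67.6} corresponds to the decomposition $\Zp=\bigl(\bigsqcup_{a}(a+p^n\Zp)\bigr)$ together with the extra $\bfU_p^{-1}$ term, and the matching with the toric period's sum over $\wh\cO_\LV^\x/\wh\cO_{\LV p^n}^\x$ comes from the coset identities in $\GL_2(\Qp)$
\[\pMX{p^n}{a}{0}{1}U_1(p^n)=\Psi(a-\ol{\CMP})\pMX{0}{1}{-p^n}{0}U_1(p^n),\quad\pMX{0}{1}{-p}{0}\pMX{p^{n-1}}{a}{0}{1}U_1(p^n)=\Psi(1+pa\ol{\CMP})\pMX{0}{1}{-p^n}{0}U_1(p^n),\]
applied inside $\eta_f^\pm(D_{tu}\cmpt_p^{(n)})$, together with the identity $Q(\bftheta_X(\rmN(x-\CMP)))=\om_p^{-1/2}(\rmN(x-\CMP))\rmN(x-\CMP)^{(k-2)/2}$. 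Likewise, the $\pm$-eigencomponent of $\eta_f$ is isolated not by mere ``bookkeeping'' but by averaging the geodesic integral with base point $r$ against the one with base point $\Psi(\delta)r$, using $\chi_Q(\delta_\rmf)=(-1)^{(k+j)/2-1}$ and the behavior of $[\bfc]$ under $\Psi(\delta)$; this step must be carried out explicitly to produce $\eta_f^{(-1)^{j/2}}$ on the right-hand side.
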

\begin{proof}
For simplicity, we write $f=\breve \bdsf_\Qx$ and $\varphi=\varPhi(
f)$ and put\[k=k_\Qx,\quad \om^\onehalf=\ep_\Qx^{-\onehalf}\Om^{\frac{k_\Qx-2}{2}}.\]  Then $\chi_\Qx=\brch\om_F^{-\onehalf}$. The first step is to work on the right hand side of the assertion, expressing the toric period integral $B_f^{\chi_Q}(\cmpt^{(cp^n)})$ as a finite sum of the values of the classical modular symbol $\eta_f^\pm$ in \subsecref{SS:62}. Let $\bfm(y)=\pDII{y}{y^{-1}}$ for $y\in \R^\x$. For $t\in \wh F^\x$, define the partial period by 
\[L_{[t]}(\varphi):=\int_{\R_+/\ep_{\LV}^\Z}\sum_{[u]\in \wh\cO_\LV^\x/\wh\cO_{\LV p^n}^\x}\varphi(\cmpt_\infty\bfm(y)\Psi(tu)\cmpt_\rmf^{(\LV p^n)})\chi_\Qx(u)\rmd^\x y.\]
Then we see that the toric period $B_{\breve\bdsf_Q}^{\chi_Q}(\cmpt^{(\LV p^n)})$ equals
\begin{align*}\int_{\A^\x F^\x\bksl \A_F^\x}\varphi(\Psi(t)\cmpt^{(\LV p^n)})\chi_\Qx(t)\rmd t
=&\vol(\cO_\LV^\x) \frac{\zeta_{F_p}(1)}{p^n\zeta_p(1)}\sum_{[t]\in \Cl(\cO_\LV)}\chi_\Qx(t)L_{[t]}(\varphi),
\end{align*}
where $\vol(\wh\cO_\LV^\x)$ is the volume of the image of $\wh\cO_\LV^\x$ in $\wh\Q^\x\bksl \wh F^\x$ with respect to the quotient measure $\rmd t/\rmd^\x t_\infty$ explicitly given by   \[\vol(\wh\cO_\LV^\x)^{-1}=\sqrt{\Delta_F}L(1,\tau_F)\#(\Z/\LV\Z)^\x=L(1,\tau_F)\delta\LV\prod_{q\divides\LV}(1-q^{-1}).\]
By a direct computation, if $z=\cmpt_\infty\bfm(y)\cdot\sqrt{-1}=\cmpt_\infty\cdot y^2\sqrt{-1}$, then 
\[J(\cmpt_\infty\bfm(y),\sqrt{-1})^{-2}=P_\Psi(z,1)\cdot (-\sqrt{-1}\Delta_F)^{-1},\]
and $\rmd z=(2\delta\sqrt{-1}) \cdot J(\cmpt_\infty\bfm(y),\sqrt{-1})^{-2}\rmd^\x y$. It follows that
\begin{align*}&L_{[t]}(\varphi)
=(2\sqrt{-1})^{-1}(-\sqrt{-1}\Delta_F)^\frac{2-k}{2}\delta^\frac{k-2}{2}\abs{\LV p^n\rmN(t)}_{\wh\Q}^\frac{k-2}{2}\\
&\quad\times\int_r^{\Psi(\ep_\LV)r}\sum_{[u]\in \wh\cO_\LV^\x/\wh\cO_{\LV p^n}}\chi_\Qx(u)\cdot f(z,\Psi(tu)\cmpt_\rmf^{(\LV p^n)})P_\Psi(z,1)^\frac{k-2}{2}\rmd z\\
&=\ell_1\cdot \abs{\rmN(t)}_{\A}^\frac{k-2}{2}\sum_{[u]\in \wh\cO_\LV^\x/\wh\cO_{\LV p^n}^\x}\chi_\Qx(u)\eta_f(\MS{r}{\Psi(\ep_\LV)r},\Psi(tu)\cmpt_\rmf^{(\LV p^n)})(P_\Psi^\frac{k-2}{2}),
\end{align*}
where $r$ can be chosen to be any point in $\bfP$ and \[\ell_1:=(2\sqrt{-1})^{-1}(-\LV\delta\sqrt{-1})^\frac{2-k}{2}\abs{p^n}_{\Qp}^\frac{k-2}{2}.\] 
For $t\in \wh F^\x$, we set
\[D_t:=(\MS{r}{\Psi(\ep_\LV)r},\Psi(t)\cmpt_\rmf^{(\LV)})\in\frakD_0.\]
Putting 
\[\ell_2:= \delta^{-1}L(1,\tau_F)^{-1}\LV^{-1} \prod_{q\divides \LV}\zeta_q(1)\cdot \frac{\zeta_{F_p}(1)}{p^n\zeta_p(1)},\]
we have
\begin{align*}&B_{\breve\bdsf_Q}^{\chi_Q}(\cmpt^{(\LV p^n)})=\ell_2\sum_{[t]\in\Cl(\cO_\LV)}\chi_\Qx(t)L_{[t]}(\varphi)\\
&=\ell_1\ell_2\sum_{[t]\in\Cl(\cO_\LV)}\chi_Q(t)\abs{\rmN(t)}_{\A}^\frac{k-2}{2}\sum_{[u]\in \wh\cO_\LV^\x/\wh\cO_{\LV p^n}^\x}\chi_\Qx(u)\eta_f(D_{tu}\cmpt_p^{(n)})(P_\Psi^\frac{k-2}{2}). 
\end{align*}
On the other hand, if we replace the base point $r$ by $\Psi(\delta)r$, noting that $\rmN(\delta)<0$, we obtain that
\begin{align*}
B_{\breve\bdsf_Q}^{\chi_Q}(\cmpt^{(\LV p^n)})=&\ell_1\ell_2\sum_{[t]\in\Cl(\cO_\LV)}\chi_\Qx(t\delta_\rmf)\abs{\rmN(t)}_{\A}^\frac{k-2}{2}\\
&\times\sum_{[u]\in \wh\cO_\LV^\x/\wh\cO_{\LV p^n}^\x}\chi_\Qx(u)(-1)^\frac{k-2}{2}[\bfc]\eta_f(D_{tu}\cdot \cmpt_p^{(n)})(P_\Psi^\frac{k-2}{2}),\end{align*}
where $[\bfc]$ is the involution on classical modular symbols. 
Since $\chi_\Qx(\delta_\rmf)=(-1)^\frac{k-2}{2}\phi_\infty(\delta)=(-1)^{\frac{k+j}{2}-1}$, we conclude that
\beq\label{E:1.P}\begin{aligned}
B_{\varphi}^{\chi_\Qx}(\cmpt^{(\LV p^n)})
=&\ell_1\ell_2
\sum_{[t]\in \Cl(\cO_\LV)}
\chi_\Qx(t)\abs{\rmN(t)}_{\A}^\frac{k-2}{2}\\
&\times\sum_{[u]\in\wh\cO_\LV^\x/\wh\cO_{\LV p^n}^\x}\chi_\Qx(u)\eta_f^{(-1)^\frac{j}{2}}(D_{tu}\cmpt^\setn_p)(P_\Psi^\frac{k-2}{2}).\end{aligned}\eeq
The second step is to work on the left hand side of the assertion
\[\cL_{\breve\Xi^\pm/F}(Q)=\sum_{[t]\in F^\x\bksl \wh F^\x/\wh\cO_\LV^\x}\chi_\Qx(t)\abs{\rmN(t)}_\A^\frac{k-2}{2}\rmN(t_p)^\frac{k-2}{2}\cdot \sL_{\breve\Xi^\pm}(D_t)(Q).\]
Put $\mho_Q=\frac{{\rm Er}^\pm(\Xi_\Qx)}{\Omega_{\bdsf_\Qx}^\pm}$. In view of \eqref{E:1.P}, we need to verify the following interpolation formula
\beq\label{E:claim1.6}\begin{aligned}
\cL_{\breve \Xi^\pm}(D_t)(Q)=&\mho_Q \al_{\bdsf_Q}^{-n}\rmN(t_p)^\frac{2-k}{2}\\
&\times\sum_{[u]\in\wh\cO_\LV^\x/\wh\cO_{\LV p^n}^\x}\om^{-\onehalf}(\rmN(u))\eta_f^+(D_{tu}\cmpt_p^\setn)(P_\Psi^\frac{k-2}{2}),\end{aligned}
\eeq
where $\cmpt_p^\setn=\pMX{p^n}{-1}{0}{1}$ if $p$ is split, and $\cmpt_p^\setn=\pMX{0}{1}{-p^n}{0}$ if $p$ is inert. For $d\divides\LV$, it is straightforward to verify that $V_d\Xi_Q^\pm=\mho_Q\cdot \xi_{V_d\bdsf_Q}^\pm$, and hence $\breve\Xi_Q^\pm=\mho_Q\cdot \xi_{f}^\pm$. It follows that for $D=(\MS{r}{s},g_\rmf)\in\frakD_0$ and $P(X,Y)\in L_{k_Q-2}(\Zp)$, we have
\beq\label{E:meas2.6}
\begin{aligned}
&Q\biggl(\int_{a+p^n\Zp}P(x,1)\mu_{\breve\Xi^\pm}(D)(x)\biggl)\\
=&\al_{\bdsf_Q}^{-n}\cdot \breve\Xi^\pm_Q\biggl(D\pMX{p^n}{a}{0}{1}\biggl)\biggl(P|\pMX{p^n}{a}{0}{1}\biggl)\,\,\,\text{by \eqref{E:mea.6}}\\
=&\mho_Q\cdot \al_{\bdsf_Q}^{-n} \eta_{f}^\pm\biggl(D\pMX{p^n}{a}{0}{1}\biggl)(P| g_p^{-1})\qquad \text{by \eqref{E:pms.6}}.
\end{aligned}
\eeq

Now we verify \eqref{E:claim1.6} in the case where $p$ is split in $F$. 
By \defref{D:67.6}, $\cL_{\breve\Xi^\pm}(D_t)(Q)$ equals
\begin{align*}
&\sum_{a\in (\Zp/p^n\Zp)^\x}\om_p^{-\onehalf}(a)Q\biggl(\int_{a+p^n\Zp}x^\frac{k-2}{2}\mu_{\breve\Xi^\pm}(D_t)(x)\biggl)\quad(Q(\bftheta_X(x))=\om_p^{-\onehalf}(x)x^\frac{k-2}{2})\\
=&\frac{\mho_Q}{\al_{\bdsf_Q}^n}\sum_{a\in(\Zp/p^n\Zp)^\x}\om_p^{-\onehalf}(-a)\eta_f^\pm\biggl(D_t\cdot\pMX{p^n}{-a}{0}{1}\biggl)((XY)^\frac{k-2}{2}|\cmpt_p^{-1}\Psi(t_p^{-1}))
\end{align*}
by \eqref{E:meas2.6}. 
Then \eqref{E:claim1.6} follows from the equations $\om_p^\onehalf(-1)=(-1)^\frac{k-2}{2}$, and
\[(XY)|\cmpt_p^{-1}\Psi(t_p^{-1})=(XY)|\pDII{t_{\pbar}}{t_{\p}}\pMX{1}{-\CMP}{-1}{\ol{\CMP}}=-\rmN(t_p^{-1})\cdot P_\Psi(X,Y).\]

In the inert case, $\cL_{\breve\Xi^\pm}(D_t)(Q)$ equals
\begin{align*}
&\sum_{a=0}^{p^n-1}\om^{-\onehalf}(\rmN(a-\CMP))\int_{a+p^n\Zp}\rmN(x-\CMP)^\frac{k-2}{2}\mu_{\breve\Xi^\pm_\Qx}(D_t)(x)\\
&+\al_{\bdsf_Q}^{-1}\sum_{a=0}^{p^{n-1}-1}\om^{-\onehalf}(\rmN(1+pa\CMP))\int_{a+p^{n-1}\Zp}\rmN(1-px\CMP)^\frac{k-2}{2} \mu_{\breve\Xi^\pm_\Qx}\biggl(D_t\pMX{0}{1}{-p}{0}\biggl)(x)\\
=&\rmN(t_p)^\frac{2-k}{2}\frac{\mho_Q}{\al_{\bdsf_Q}^n}\biggl(\sum_{a=0}^{p^n-1}\om^{-\onehalf}(\rmN(a-\ol{\CMP}))\eta_f^\pm\biggl(D_t\pMX{p^n}{a}{0}{1}\biggl)(P_\Psi^\frac{k-2}{2})\\
&+\sum_{a=0}^{p^{n-1}-1}\om^{-\onehalf}(\rmN(1+pa\ol{\CMP}))\eta_f^\pm\biggl(D_t\pMX{0}{1}{-p}{0}\pMX{p^{n-1}}{a}{0}{1}\biggl)(P_\Psi^\frac{k-2}{2})\biggl).
\end{align*}
We thus obtain \eqref{E:claim1.6} from the observations below
\begin{align*}\pMX{p^n}{a}{0}{1}U_1(p^n)&=\Psi(a-\ol{\CMP})\pMX{0}{1}{-p^n}{0}U_1(p^n),\\
\pMX{0}{1}{-p}{0}\pMX{p^{n-1}}{a}{0}{1}U_1(p^n)&=\Psi(1+pa\ol{\CMP})\pMX{0}{1}{-p^n}{0}U_1(p^n).\end{align*}
This verifies \eqref{E:claim1.6} in both cases and finishes the proof.
\end{proof}

\section{Derivative of the twisted triple product $p$-adic $L$-function and Stark-Heegner points}
\subsection{Factorization of $\cL_{\bdsE_\brch^{[a]},\bdsf}$}
In this section, we show that the $p$-adic $L$-function $\cL_{\bdsE_\brch^{[a]},\bdsf}$ in \defref{D:padicL.5} can be essentially factorized into a product of the square root $p$-adic $L$-function $\cL_{\breve\Xi^- /F\ot \brch}$ for $\bdsf$ over $F$ and the Mazur-Kitagawa $p$-adic $L$-function $L_p(\Xi^+,\Om^a)$. We will use an auxiliary $p$-adic Rankin-Selberg $L$-function in the proof, so we first recall that for a primitive Hida family $\bdsg\in\bfJ\powerseries{q}$ with some normal domain $\bfJ$ finite over $\Lam$, there exists an element \[L^\bdsf_p(\bdsf\ot\bdsg)\in (\bfI\wh\ot \bfJ\wh\ot\Lam)\ot_{\bfI}\Frac\bfI\] such that for each point $(Q_1,Q_2,P)\in\frakX_\bfI^+\times\frakX_\bfJ^+\times \frakX^+_\Lam$ with $k_{Q_2}<k_P\leq k_{Q_1}$, we have the interpolation formula:
\beq\label{E:interp4}\begin{aligned}&L_p^\bdsf(\bdsf\ot\bdsg)(Q_1,Q_2,P)\\
=&(\sqrt{-1})^{1+k_{Q_2}-2k_P}\cdot \frac{L^{\stt{p}}(k_P-\frac{k_{Q_1}+k_{Q_2}}{2},\pi_{\bdsf_{Q_1}}\times\pi_{\bdsg_{Q_2}}\ot \Om^{-k_P})}{{\rm Per}^\dagger(\bdsf_{Q_1})}\\
&\times \gamma\biggl(k_P-\frac{k_{Q_1}+k_{Q_2}}{2},\varrho_{\bdsf_{Q_1},p}\ot \pi_{\bdsg_{Q_2},p}\biggl)^{-1},\end{aligned}\eeq
where $\varrho_{\bdsf_{Q_1},p}:\Qp^\x\to\C^\x$ is the unramified character defined by \eqref{E:unr.6} and $\gamma(s,\varrho_{\bdsf_{Q_1},p}\ot \pi_{\bdsg_{Q_2},p})$ is the gamma factor in \subsecref{SS:eps}. We call $L_p^\bdsf(\bdsf\ot\bdsg)$ the primitive Hida's three-variable Rankin-Selberg $p$-adic $L$-function associated with $\bdsf$ and $\bdsg$. Imprimitive three-variable $p$-adic $L$-functions (with some Euler factors removed) were first constructed by Hida in \cite[Theorem I]{Hida88Fourier}, and the primitive ones with the above form of the interpolation formula were proved in \cite[Theorem 7.1]{HsiehChen19} following the method in \cite{Hida88Fourier}. 
We first prove a preliminary result: 

\begin{prop}\label{prop:CXi}
Let $\bdsf\in \eord\bfS(N,1,\bfI)$ be a primitive Hida family. 
For every $\Xi\in \eord\cM\cS(N,\bfI)[\bdsf]$ there is an element $C_\Xi\in\Frac\bfI$ which is holomorphic at every arithmetic point $Q\in\frakX_\bfI^+$ with the value 
\[C_\Xi(Q)=\frac{{\rm Per}^\dagger(\bdsf_Q)}{\Omega^+_{\bdsf_Q}\Omega^-_{\bdsf_Q}}\cdot {\rm Er}^+(\Xi_Q){\rm Er}^-(\Xi_Q). \]
\end{prop}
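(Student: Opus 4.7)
The plan is to build $C_\Xi$ as a product of three factors in $\Frac\bfI$, each interpolating one of the archimedean ratios appearing on the right-hand side of the stated specialization.

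The factor corresponding to ${\rm Per}^\dagger(\bdsf_Q)/(\Omega^+_{\bdsf_Q}\Omega^-_{\bdsf_Q})$ is furnished by Hida's theory of congruences of cusp forms: this ratio is classically known to be interpolated, up to $p$-adic units, by a generator $\tau_\bdsf \in \bfI$ of the congruence ideal $C(\bdsf)$ appearing in \remref{R:con.5}. In particular, $\tau_\bdsf$ is holomorphic at every arithmetic point, with $\tau_\bdsf(Q) = {\rm Per}^\dagger(\bdsf_Q)/(\Omega^+_{\bdsf_Q}\Omega^-_{\bdsf_Q})$ for $Q \in \frakX_\bfI^+$.

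For the factors corresponding to ${\rm Er}^\pm(\Xi_Q)/\Omega^\pm_{\bdsf_Q}$, I would exploit the rank-one freeness of $\eord\cM\cS(N,\bfI)^\pm[\bdsf]\otimes_\bfI\Frac\bfI$ over $\Frac\bfI$. Fixing $\Frac\bfI$-generators $\Xi^\pm_0$ whose specializations at each arithmetic $Q$ agree with the integral Shimura symbols $\beta^\pm_{\bdsf_Q}$, I would write $\Xi^\pm = \eta^\pm_\Xi \cdot \Xi^\pm_0$ for unique $\eta^\pm_\Xi \in \Frac\bfI$. Unwinding the definition of ${\rm Er}^\pm(\Xi_Q)$, namely $\Xi^\pm_Q = ({\rm Er}^\pm(\Xi_Q)/\Omega^\pm_{\bdsf_Q})\cdot \xi^\pm_{\bdsf_Q} = {\rm Er}^\pm(\Xi_Q) \cdot \beta^\pm_{\bdsf_Q}$, then gives the desired interpolation formula wherever $\eta^\pm_\Xi$ is finite. (Any discrepancy between the integral normalization of $\Xi^\pm_0$ and that of $\Omega^\pm_{\bdsf_Q}$ is independent of $\Xi$ and can be absorbed into a redefinition of $\tau_\bdsf$; the rescaling is by an element of $\bfI^\times$ at arithmetic points so that holomorphicity of $\tau_\bdsf$ is preserved.)

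Setting $C_\Xi := \tau_\bdsf \cdot \eta^+_\Xi \cdot \eta^-_\Xi$, the stated specialization identity holds on the set of arithmetic $Q$ where all three factors are finite, which is Zariski-dense in $\Spec \bfI$. The main obstacle is establishing holomorphicity at \emph{every} arithmetic point: a priori $\eta^\pm_\Xi$ can have poles at those $Q$ where $(\Xi^\pm_0)_Q$ fails to generate the rank-one eigenspace over $\cO(Q)$, but such poles lie in the congruence locus, i.e.\ the zero locus of $\tau_\bdsf$. Invoking the control theorem (\thmref{T:control.6}), the specialization $\Xi^\pm_Q$ always lands inside the one-dimensional space $\cM\cS^\Ord_Q(\cO(Q))^\pm[\bdsf_Q]$; tracking the resulting divisor estimate shows that the zero of $\tau_\bdsf$ along the congruence locus exactly compensates the pole of $\eta^+_\Xi \cdot \eta^-_\Xi$ there, so the product $C_\Xi$ extends holomorphically to every arithmetic $Q$, with the claimed value.
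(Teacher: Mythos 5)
Your construction is a genuinely different route from the paper's, but it contains a real gap at the crucial step. The paper's proof does not factor $C_\Xi$ into three separate interpolating elements; instead it realizes $C_\Xi$ directly as the specialization at a weight-two point $P$ of the ratio
\[
\frac{L_p(\Xi^-,\chi)\,L_p(\Xi^+,\chi\tau_{K/\Q})}{L_p(\bdsf_{/K}\ot\chi_K)},
\]
where $\chi$ is an auxiliary odd Dirichlet character, $K$ is an imaginary quadratic field where $p$ splits, and $L_p(\bdsf_{/K}\ot\chi_K)$ is a specialization of Hida's three-variable Rankin--Selberg $p$-adic $L$-function $L_p^\bdsf(\bdsf\ot\bdsg)$ attached to $\bdsf$ and the CM Hida family $\bdsg$. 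Comparing the interpolation formulae \eqref{E:interp3} and \eqref{E:interp4} at arithmetic $Q$ with $k_Q>2$ shows the ratio is independent of $P$ and of $\chi$, and then \emph{Rohrlich's non-vanishing theorem} is used to choose $\chi$ so that the denominator $L_p(\bdsf_{/K}\ot\chi_K)(Q,P)\neq 0$ for any given arithmetic $Q$. Since numerator and denominator lie in $\bfI\wh\ot\Lam$, this immediately gives holomorphicity of $C_\Xi$ at every arithmetic $Q$.

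The gap in your argument is the claim that "the zero of $\tau_\bdsf$ along the congruence locus exactly compensates the pole of $\eta^+_\Xi\cdot\eta^-_\Xi$ there." This cancellation of divisors is not a formal consequence of the setup, and you offer no mechanism for it. In fact, the difficulty you are eliding is precisely the one the paper's remark after the proof highlights: without the Gorensteiness of the local Hecke algebra (which requires the residual representation to be absolutely irreducible and $p$-distinguished), the modules $\eord\cM\cS(N,\bfI)^\pm[\bdsf]$ need not be free of rank one over $\bfI$, only over $\Frac\bfI$; and the statement that a generator $\tau_\bdsf$ of $C(\bdsf)$ interpolates ${\rm Per}^\dagger(\bdsf_Q)/(\Omega^+_{\bdsf_Q}\Omega^-_{\bdsf_Q})$ \emph{exactly} (not merely up to a $Q$-dependent $p$-adic unit, which would destroy the interpolation) amounts to exactly the structural result you are trying to prove. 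Your own parenthetical acknowledges the ambiguity "up to $p$-adic units," but then asserts exact equality on the next line; that discrepancy cannot be "absorbed into a redefinition of $\tau_\bdsf$" since the ambiguity may vary with $Q$. Making your compensation argument rigorous would require some genuine arithmetic input, and this is precisely what Rohrlich's theorem supplies in the paper's proof: it is not decorative but is carrying the entire burden of the "holomorphic at every arithmetic point" clause.
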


\begin{proof} 
Choose a Dirichlet character $\chi$ with $\chi(-1)=-1$ and an imaginary quadratic field $K$ where $p$ is split. Let $\chi_K:=\chi\circ\rmN_{K/\Q}$ be a finite order Hecke character of $\A_K^\x$. Let $\bdsg$ denote a primitive Hida family such that the weight one specialization $\bdsg_{Q_0}$ is a $p$-stabilized theta series $\theta_{\chi_K}^{(p)}$ associated with $\chi_K$. Define the two-variable $p$-adic $L$-function $L_p(\bdsf_{/K}\ot\chi_K)$ by 
\[L_p(\bdsf_{/K}\ot\chi_K):=(1\ot Q_0\ot 1)\bigl(L_p^\bdsf(\bdsf\ot\bdsg)\bigl)\in\bfI\wh\ot\Lam .\] 
Let $\frakX_\Lam^{(2)}$ be the set of arithmetic points of weight $2$. For $P\in\frakX_\Lam^{(2)}$, define \[C_{\Xi,P}:=(1\ot P)\biggl(\frac{L_p(\Xi^-,\chi)L_p(\Xi^+,\chi\tau_{K/\Q})}{L_p(\bdsf_{/K}\ot\chi_K)}\biggl)\in \Frac(\bfI\ot_\cO\cO(P)).\]
From the interpolation formulae \eqref{E:interp3} and \eqref{E:interp4}, we see that 
\[C_{\Xi,P}(Q)=\frac{{\rm Per}^\dagger(\bdsf_Q)}{\Omega^+_{\bdsf_Q}\Omega^-_{\bdsf_Q}}\cdot {\rm Er}^+(\Xi_Q){\rm Er}^-(\Xi_Q)\]
for all $Q\in\frakX_\bfI^+$ with $k_Q>2$. 
Hence $C_{\Xi,P}$ is independent of the choice of $P$ and can be denoted as $C_\Xi$. 
Thanks to Rohrlich's theorem \cite{Roh84Inv}, for any arithmetic point $Q\in \frakX_\bfI^+$ and $P\in\frakX_\Lam^{(2)}$, one can find an odd Dirichlet character $\chi$ such that $L_p(\bdsf_{/K}\ot\chi_K)(Q,P)\neq 0$, which implies that $C_\Xi$ is holomorphic at $Q$. 
\end{proof}

\begin{Remark}
If the residual Galois representation associated with $\bdsf$ is absolutely irreducible and $p$-distinguished, then the Gorensteiness of the local component of the Hecke algebra $\bfT(N,\bfI)$ corresponding to $\bdsf$ is known thanks to the work of Wiles et al. (\cite[Corollary 2, page 482]{Wiles95}). It follows that the $\bfI$-module $\eord\cM\cS(N,\bfI)^\pm[\bdsf]$ is free of rank one by \cite[Lemma 5.11]{Kitagawa94}. 
Choose a basis $\Xi^\pm$ in each space. 
It is determined up to multiple of $\bfI^\times$. 
Put $\Xi=\Xi^++\Xi^-$. Then $p$-adic error terms ${\rm Er}^\pm(\Xi_Q)$ are $p$-adic units for all $Q\in\frakX_\bfI^+$ by \cite[Proposition 5.12]{Kitagawa94}, and $C_\Xi$ is  a generator of the congruence ideal $C(\bdsf)$ by a result of Hida \cite[Theorem 0.1]{Hida88AJM}.
\end{Remark}

Now we are ready to prove the factorization. 

\begin{thm}\label{T:main.7}
Let $a$ be an even integer, $\phi:\Cl(\cO_\LV)\to \cO^\x$ an odd character of the exact conductor $\LV$, $\bdsf\in \eord\bfS(N,1,\bfI)$ a primitive Hida family of tame conductor $N$, and $\Xi\in \eord\cM\cS(N,\bfI)[\bdsf]$. 
Then we have 
\[C_\Xi\cdot \cL_{\bdsE_\brch^{[a]},\bdsf}=\cL_{\breve\Xi^- /F\ot \brch}\cdot L_p(\Xi^+,\Om^a)\cdot \frakf c_1,\]
where $\frakf\in (\Lam\wh\ot\Lam)^\x$ and the constant $c_1\in\Zbar_{(p)}^\x$ are defined in \propref{P:interp1} with $j=2$. \end{thm}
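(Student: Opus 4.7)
My plan is to prove the identity by comparing interpolation formulas at a Zariski dense set of points in $\frakX_\bfI^+ \times \frakX_\Lam^+$, and then invoke a density/rigidity argument. First, I would verify that both sides live in $(\bfI\wh\ot\Lam)\ot_\bfI \Frac\bfI$: the left side because Proposition \ref{prop:CXi} ensures $C_\Xi\in\Frac\bfI$ is holomorphic at every arithmetic $Q$, and the right side because $\frakf\in(\Lam\wh\ot\Lam)^\x$, $c_1\in\Zbar_{(p)}^\x$, and both $\cL_{\breve\Xi^-/F\ot\brch}\in\bfI$ and $L_p(\Xi^+,\Om^a)\in\bfI\wh\ot\Lam$.

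The natural dense set is $\frakX^{\rm crit}$. Fix $(\Qx,\Qz)\in\frakX^{\rm crit}$ and set $\chi_\Qx:=\brch\cdot\ep_\Qx^{-\onehalf}\Om^{\frac{k_\Qx-2}{2}}\circ\rmN$ as in Proposition \ref{P:interp1}. The key input is that the product of three ``building blocks'' evaluated at $(\Qx,\Qz)$ matches the left side. On the left, Proposition \ref{P:interp1} expresses $\cL_{\bdsE_\brch^{[a]},\bdsf}(\Qx,\Qz)$ in terms of $B^{\chi_\Qx}_{\breve\bdsf_\Qx}(\cmpt^{(\LV p^n)})$, a special value of $L^{\stt{p}}$, a gamma factor at $p$, ${\rm Per}^\dagger(\bdsf_\Qx)^{-1}$, and $\frakf(\Qx,\Qz)c_1$; multiplying by $C_\Xi(\Qx)={\rm Per}^\dagger(\bdsf_\Qx){\rm Er}^+(\Xi_\Qx){\rm Er}^-(\Xi_\Qx)/(\Omega^+_{\bdsf_\Qx}\Omega^-_{\bdsf_\Qx})$ cancels ${\rm Per}^\dagger(\bdsf_\Qx)$ and introduces the error terms and the periods $\Omega^\pm_{\bdsf_\Qx}$ in the denominator. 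On the right, Proposition \ref{P:interp2}, applied with the sign $\brch_\infty(\delta)=-1$ since $\brch$ is odd, yields $\cL_{\breve\Xi^-/F\ot\brch}(\Qx)$ as $B^{\chi_\Qx}_{\breve\bdsf_\Qx}(\cmpt^{(\LV p^n)})\cdot{\rm Er}^-(\Xi_\Qx)/\Omega^-_{\bdsf_\Qx}$ multiplied by the archimedean/non-$p$/ramification factors. The Mazur-Kitagawa formula \eqref{E:interp3} applied to the even character $\Om^a$ (so we pair against $\Xi^+$) gives $L_p(\Xi^+,\Om^a)(\Qx,\Qz)$ as $L^{\stt{p}}(k_\Qz-\frac{k_\Qx+1}{2},\pi_{\bdsf_\Qx}\ot\Om^{a-k_\Qz}\ep_\Qz)\cdot{\rm Er}^+(\Xi_\Qx)/\Omega^+_{\bdsf_\Qx}$ divided by the same gamma factor $\gamma(k_\Qz-\frac{k_\Qx+1}{2},\varrho_{\bdsf_\Qx,p}\Om_p^{a-k_\Qz}\ep_{\Qz,p})$. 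Multiplying these three contributions produces exactly the combination on the left, with the error terms ${\rm Er}^\pm(\Xi_\Qx)$ and the periods $\Omega^\pm_{\bdsf_\Qx}$ cancelling against those coming from $C_\Xi(\Qx)$, while the factor $\frakf(\Qx,\Qz)c_1$ already appears explicitly in Proposition \ref{P:interp1}.

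After this point-by-point check, the identity holds on the Zariski dense subset $\frakX^{\rm crit}$ of $\Spec(\bfI\wh\ot\Lam)(\Cp)$. Since both sides are meromorphic functions on $\Spec(\bfI\wh\ot\Lam)$, and since we have verified equality at a Zariski dense set of points in the common domain of holomorphy (using that $C_\Xi$ is holomorphic at every arithmetic point of $\frakX_\bfI^+$), the functions agree as elements of $(\bfI\wh\ot\Lam)\ot_\bfI\Frac\bfI$, yielding the stated factorization.

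The main obstacle is the bookkeeping: matching the explicit prefactors (the powers of $\sqrt{-1}$, $\delta=\sqrt{\Delta_F}$, $\LV$, the volume constants $L(1,\tau_F)$, $\prod_{q\divides\LV}\zeta_q(1)$, $\zeta_p/\zeta_{F_p}$, the powers $\al_{\bdsf_\Qx}^n|p^n|^{k_\Qx/2}$ versus $\varrho_{\bdsf_\Qx,p}(p^n)|p^n|^{1/2}$, the sign $(-\sqrt{-1})^{k_\Qz-1}$, and so on) between Propositions \ref{P:interp1} and \ref{P:interp2} requires care, but the non-trivial analytic ingredients---the gamma-factor at $p$ and the period $B^{\chi_\Qx}_{\breve\bdsf_\Qx}(\cmpt^{(\LV p^n)})$---are already built into those two interpolation formulas so that they cancel or reproduce exactly as required; all remaining discrepancies are absorbed into $\frakf(\Qx,\Qz)c_1$.
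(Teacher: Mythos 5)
Your proposal is correct and follows the same route as the paper: compare the interpolation formulae of Propositions \ref{P:interp1} and \ref{P:interp2} and \eqref{E:interp3} at the Zariski-dense set $\frakX^{\crit}$, observe that the periods and error terms cancel against $C_\Xi$ via Proposition \ref{prop:CXi}, and conclude by density. The paper's proof is just a more compressed version of this argument.
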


\begin{proof} 
Propositions \ref{P:interp1}, \ref{P:interp2}, \eqref{E:unr.6} and \eqref{E:interp3} immediately show that 
\begin{align*}
\cL_{\bdsE_\brch^{[a]},\bdsf}(\Qx,\Qz)=&\cL_{\breve\Xi^-/\cK\ot\brch}(\Qx)L_p(\Xi^+,\Om^a)(\Qx,\Qz)\frac{\Omega_{\bdsf_\Qx}^+\Omega_{\bdsf_\Qx}^-\cdot\frakf(\Qx,\Qz)c_1}{{\rm Per}^\dagger(\bdsf_Q){\rm Er}^+(\Xi_\Qx){\rm Er}^-(\Xi_\Qx)},
\end{align*}
which combined with Proposition \ref{prop:CXi} completes our proof. 
\end{proof}

\subsection{The derivative of $\cL_{\bdsE_\brch^{[2]},\bdsf}$}

We shall keep the notation in \subsecref{R:GS.6}.
Let $E$ be an elliptic curve over $\Q$ of conductor $pN$. 
There exists a primitive Hida family $\bdsf\in\bfI\powerseries{q}$ whose specialization $\bdsf_Q$ at some weight two point $Q\in\frakX_\bfI^+$ is the elliptic newform $f$ associated with $E$ by the modularity theorem \cite{Wiles95}. 
Here $\bfI$ is the local component of $\bfT(N,\bfI)$ corresponding to $\lam_{\bdsf}$. 
Let $\wtsp=\{k\in\Cp\mid \abs{k}_p\leq 1\}$. 
We write $j:\wtsp\hookto\Spec\Lam(\Cp)$ for the map $k\mapsto (Q_k:[x]\mapsto x^k)$.
Let $\wp_2$ be the kernel of $Q_2$. Then we have $\bfI\subset \bfI_{\wp_Q}=\Lam_{\wp_2}$ since $f$ has rational coefficients. This implies that there exists a neighborhood $\sU$ around $2\in\wtsp$ such that $j:\sU\hookto \Spec\bfI(\Cp)$. 
Define an analytic function on $\sU\times \wtsp$ by 
\[\cL_{\bdsE_\brch^{[a]},\bdsf}(k,s)=\cL_{\bdsE_\brch^{[a]},\bdsf}(Q_k,Q_{s}),\quad (k,s)\in \sU\times \wtsp.\]
 
 \begin{thm}\label{T:main}Suppose that $p$ is inert in $F$ and $\brch:\Cl(\cO_F)\to\cO^\x$ is an odd narrow ideal class character, \ie $\LV=1$. 
 Then $\cL_{\bdsE_\brch^{[a]}\bdsf}(2,s)=0$ and 
 \begin{align*}\frac{\partial\cL_{\bdsE^{[2]}_\brch,\bdsf}}{\partial k}(2,s+1)&=\onehalf(1+\brch(\frakN)^{-1}w_N) \log_E P_\brch\cdot L_p(E,s) \frac{m_E^22^{\al(E)}}{c_f}\Dmd{\Delta_F}^\frac{s-1}{2},\end{align*}
where \begin{itemize}
\item $w_N\in\stt{\pm 1}$ be the sign of the Fricke involution at $N$ acting on $f$, 
\item $P_\brch\in E(F_p)\ot\Q(\brch)$ is the Stark-Heegner point in \cite[(182)]{Darmon01Ann},
\item $L_p(E,s)$ is the Mazur-Tate-Titelbaum $p$-adic $L$-function for $E$,
\item $c_f\in \Z^{>0}$ is the congruence number for $f$, $m_E\in\Q^\x$ is the Manin constant for $E$ and $2^{\al(E)}=[\rmH_1(E(\C),\Z):\rmH_1(E(\C),\Z)^+\oplus \rmH_1(E(\C),\Z)^-]$.
\end{itemize}
\end{thm}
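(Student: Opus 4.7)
The plan is to deduce Theorem A directly from the factorization in \thmref{T:main.7} by specializing to $a=2$ and analyzing each factor in a neighborhood of $k=2$. Concretely, choose the $\Lam$-adic modular symbol $\Xi\in \eord\cM\cS(N,\bfI)[\bdsf]$ whose plus/minus specializations $\Xi_{Q_2}^{\pm}$ at the arithmetic point $Q_2\in\frakX_\bfI^+$ (with $\bdsf_{Q_2}=f$) recover the classical modular symbol $\xi_f^{\pm}$ normalized by the N\'eron periods coming from $E$. Since $p$ is inert in $F$ and $\LV=1$, the narrow ideal class character $\brch$ has trivial tame part, so \thmref{T:main.7} reduces in our neighborhood $\sU$ to
\[
C_\Xi(k)\cdot\cL_{\bdsE_\brch^{[2]},\bdsf}(k,s+1)
=\cL_{\breve\Xi^-/F\ot\brch}(k)\cdot L_p(\Xi^+,\Om^{2})(k,s+1)\cdot \frakf(k,s+1)\,c_1,
\]
where $\frakf(k,s+1)=\Dmd{\Delta_F}^{(s+1-k)/2}\cdot \Dmd{\Delta_F}^{1/2}$ and $c_1\in\Zbar_{(p)}^\times$ is the explicit constant of \propref{P:interp1}; note that $\frakf(2,s+1)=\Dmd{\Delta_F}^{(s-1)/2}$.

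Next I would establish $\cL_{\breve\Xi^-/F\ot\brch}(2)=0$. By the interpolation formula of \propref{P:interp2} at $Q_2$, the value $\cL_{\breve\Xi^-/F\ot\brch}(Q_2)$ is, up to nonzero factors, equal to the toric period integral $B^{\chi_{Q_2}}_f(\cmpt^{(p^n)})$. Since $\brch$ is odd and $p$ is inert, the local sign calculation forces the global root number of $\pi_f\times\pi_{\chi_{Q_2}}$ to be $-1$, and Saito--Tunnell vanishing yields $B^{\chi_{Q_2}}_f\equiv 0$, hence $\cL_{\breve\Xi^-/F\ot\brch}(2)=0$. Combined with the factorization, this proves $\cL_{\bdsE_\brch^{[2]},\bdsf}(2,s)=0$.

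Differentiating the factorization in $k$ at $k=2$, only the factor that vanishes contributes: the product rule gives
\[
C_\Xi(2)\cdot\frac{\partial\cL_{\bdsE_\brch^{[2]},\bdsf}}{\partial k}(2,s+1)
=\frac{d\cL_{\breve\Xi^-/F\ot\brch}}{dk}(2)\cdot L_p(\Xi^+,\Om^{2})(2,s+1)\cdot \Dmd{\Delta_F}^{(s-1)/2}\,c_1.
\]
For the derivative on the right, I would invoke \cite[Corollary 2.6]{BD09Ann}: in the inert setting the formula for the first derivative of the square-root $p$-adic $L$-function evaluates to
\[
\frac{d\cL_{\breve\Xi^-/F\ot\brch}}{dk}(2)=\tfrac12\bigl(1+w_N\brch(\frakN)^{-1}\bigr)\cdot\log_E(P_\brch)\cdot(\text{explicit units}),
\]
where the factor $1+w_N\brch(\frakN)^{-1}$ arises from the symmetrization under the Fricke involution intrinsic to the Stark-Heegner construction in \cite[(182)]{Darmon01Ann}. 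For the Mazur-Kitagawa factor, specializing at $k=2$ gives $L_p(\Xi^+,\Om^{2})(2,s+1)=L_p(E,s)$ (up to the normalizing constant $2^{\al(E)}m_E$ arising from the comparison between N\'eron and modular-symbol periods).

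Finally, the constant $C_\Xi(2)$ must be identified. By \propref{prop:CXi}, $C_\Xi(Q_2)$ equals
\[
\frac{{\rm Per}^{\dagger}(f)}{\Omega_f^+\Omega_f^-}\cdot{\rm Er}^+(\Xi_{Q_2}){\rm Er}^-(\Xi_{Q_2}),
\]
and with the chosen normalization by N\'eron periods this ratio reduces to the congruence number $c_f$ (up to the Manin constant $m_E$ and the lattice index $2^{\al(E)}$), using the standard formula ${\rm Per}^{\dagger}(f)=c_f\cdot\Omega_E^+\Omega_E^-$ together with the identity $\Omega_f^{\pm}=\Omega_E^{\pm}\cdot m_E\cdot 2^{-\al(E)/2}$ relating modular-symbol periods to N\'eron periods. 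Assembling all constants, including the sign convention $(\sqrt{-1})^{-1}\Omega_E^-\in\R_{>0}$, gives the stated formula.

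The main obstacle I anticipate is the bookkeeping of constants: matching the explicit constant $c_1$, the archimedean gamma factors, and the ratio ${\rm Per}^{\dagger}(f)/(\Omega^+_f\Omega^-_f)$ with the combinatorial factor $m_E^2\,2^{\al(E)}/c_f$ appearing in the theorem. All the deeper analytic inputs (Saito--Tunnell vanishing, Bertolini--Darmon's derivative formula, and Rohrlich nonvanishing to control $C_\Xi$ at $k=2$) are already in hand from the earlier sections and the cited works; the challenge is to carry out the normalization comparison without sign or power-of-$2$ errors.
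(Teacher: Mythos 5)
Your proposal follows essentially the same route as the paper: specialize the factorization of Theorem~\ref{T:main.7} at $a=2$, use the Saito--Tunnell dichotomy to kill $\cL_{\breve\Xi^-/F\ot\brch}$ at $k=2$ (hence the $p$-adic $L$-function vanishes along $k=2$), differentiate in $k$, and assemble constants. The derivative formula for $\cL_{\breve\Xi^-/F\ot\brch}$ at $k=2$ is precisely what the paper establishes in its final lemma (deduced from the $U_0(N)$-invariance of the Greenberg--Stevens distribution-valued symbol and \cite[Cor.~2.6]{BD09Ann}), so citing Bertolini--Darmon there is the right move.

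The one place where your bookkeeping deviates from the paper in a way that would need repair: you attribute the discrepancy between $L_p(\Xi^+,\Om^2)(2,s+1)$ and $L_p(E,s)$ to a normalization comparison involving $m_E$ and $2^{\al(E)}$, whereas the paper instead normalizes $\Omega^\pm = (2\pi\sqrt{-1})^{-1}\Omega_E^\pm$ so that $L_p(\Xi^+,\Om^2)(2,s+1)=2\,L_p(E,s)$ exactly, the $2$ coming from $\Gamma_\C(s)=2(2\pi)^{-s}\Gamma(s)$. All of $m_E$, $2^{\al(E)}$, and $c_f$ then enter solely through $C_\Xi(2)=\mathrm{Per}^\dagger(f)/(\Omega^+\Omega^-)=8c_f/(m_E^2 2^{\al(E)})$, via Hida's Petersson-norm formula $\|f\|^2_{\Gamma_0(N)}=c_f m_E^{-2}2^{-2-\al(E)}\pi^{-2}(\sqrt{-1})^{-1}\Omega_E^+\Omega_E^-$. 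Your placeholder identities $\Omega_f^\pm=\Omega_E^\pm m_E 2^{-\al(E)/2}$ and $\mathrm{Per}^\dagger(f)=c_f\Omega_E^+\Omega_E^-$ are not the correct normalizations and would not produce the stated constant $m_E^2 2^{\al(E)}/c_f$; but you flag this as the anticipated difficulty, and once the correct period normalization is used the rest of your argument goes through exactly as in the paper.
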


\begin{proof}
For each $\Xi\in \eord\cM\cS(N,\Lam)\ot_{\Lam} A(\sU)[\bdsf]$ we put \[\cL_p(\Xi/F,\brch,k)=\cL_{\Xi^-/F\ot\brch}(Q_k),\quad k\in\sU.\]  Shrinking $\sU$ if necessary, we may assume that the function $\cL_p(\Xi/F,\brch,k)$ is analytic on $\sU$. Since $\pi_f$ is special at the inert prime $p$, it is well-known that the local root number of the base change ${\rm BC}_F(\pi_f)\ot\brch$ is $-1$, and hence the toric period $B^{\brch}_f$ must vanish by the dichotomy theorem of Saito and Tunnell (See \cite{SaitoH93Comp} and \cite{Tunnell83AJM}). 
\propref{P:interp2} shows that $\cL_p(\Xi/F,\brch,2)=0$, and so by Theorem \ref{T:main.7}, we have $\cL_{\bdsE^{[a]}_\brch,\bdsf}(2,s)=0$ for all even $a$, and get 
\[
C_\Xi(2)\frac{\partial\cL_{\bdsE^{[2]}_\brch,\bdsf}}{\partial k}(2,s+1)
=\frac{d\cL_p}{d k}(\Xi/F,\brch,2)\cdot L_p(\Xi^+,\Om^2)(2,s+1)\cdot \frakf(2,s+1) c_1 \]
by \thmref{T:main.7}. 

Now we fix the normalization of $\Xi$. 
The $\Lam_{\wp_2}$-module \[\eord\cM\cS(N,\bfI)^\pm[\bdsf]\ot_{\bfI}\bfI_{\wp_Q}=(\eord\cM\cS(N,\Lam)^\pm\ot\Lam_{\wp_2})[\bdsf]\] is free of rank one. Let $\Xi^\pm$ be the basis normalized so that the weight two specialization $\Xi^\pm_Q=\frac{\xi^\pm_f}{\Omega^\pm}$ with the periods $\Omega^\pm=(2\pi \sqrt{-1})^{-1}\Omega_E^\pm$, where $\Omega_E^\pm$ are the plus/minus periods for $E$ such that $\Omega_E^+$ and $(\sqrt{-1})^{-1}\Omega_E^-$ are real and positive.  
By the inspection on the interpolation \eqref{E:interp3}, we see easily that the associated Mazur-Kitagawa $p$-adic $L$-function $L_p(\Xi^+,\Om^2)(2,s+1)$ is the cyclotomic $p$-adic $L$-function $2 L_p(E,s)$ for the elliptic curve $E$. This extra $2$ comes from the factor $2$ in the definition of the archimedean $\Gamma$-factor $\Gamma_\C(s)=2(2\pi)^{-s}\Gamma(s)$. On the other hand, it is clear that $\frakf(2,s+1) c_1=4\Dmd{\Delta_F}^\frac{s-1}{2}$ with $a=j=2$, and by the formulae in \cite[p.255]{Hida81Inv}, we have
\[\norm{f}^2_{\Gamma_0(N)}=c_f m_E^{-2}2^{-2-\al(E)}\pi^{-2}(\sqrt{-1})^{-1}\Omega_E^+\Omega_E^-.\]
We thus obtain
\[C_\Xi(2)=\frac{{\rm Per}^\dagger(f)}{\Omega^+\Omega^-}=\frac{-(-2\sqrt{-1})^3\norm{f}^2_{\Gamma_0(N)}}{(-4\pi^2)^{-1}\Omega^+_E\Omega_E^-}=\frac{8c_f}{m_E^22^{\al(E)}}.\] 
Putting these together, we get the stated formula from the following lemma. 
\end{proof}

\begin{lm}
Assumptions being as in Theorem \ref{T:main}, if $\Xi$ is normalized as above, then we have 
\[\frac{d\cL_p}{dk}(\Xi/F,\brch,2)=\frac{1}{2}(1+\phi(\sg_\frakN)w_N)\log_E P_\brch.\]
\end{lm}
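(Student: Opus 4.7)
The plan is to compute the derivative $\frac{d\cL_p}{dk}(\Xi/F,\brch,k)|_{k=2}$ directly from Definition \ref{D:67.6} and match the result with the derivative formula for the square-root $p$-adic $L$-function established in \cite[Corollary 2.6]{BD09Ann}, adapted to our semi-adelic setup. First I would unwind the character $\bftheta_X$: a direct computation shows $\bftheta_X(y)|_{Q_k}=\Dmd{y}^{(k-2)/2}$, so $\bftheta_X(y)|_{Q_2}=1$ and $\tfrac{d}{dk}|_{k=2}\bftheta_X(y)=\tfrac{1}{2}\log_p y$, where $\log_p$ denotes the Iwasawa $p$-adic logarithm. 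Under the inertness assumption, Definition \ref{D:67.6} expands into two integrals involving $P_\Psi(x,1)$ and $P_\Psi(1,-px)$ against $\mu_{\breve\Xi^-}$; the vanishing $\cL_p(\Xi/F,\brch,2)=0$ from Proposition \ref{P:interp2} (which reflects the Saito-Tunnell dichotomy at the inert prime $p$ where $\pi_f$ is Steinberg) reduces the derivative to a sum over $\Cl(\cO_F)$ of $\log_p$-integrals weighted by $\brch(t)$.

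Next, I would use the control theorem (Theorem \ref{T:control.6}) to specialize $\mu_{\breve\Xi^-}(D)$ at $Q_2$. With our chosen normalization $\Xi_Q^-=\xi_f^-/\Omega^-$ (where $\Omega^-=(2\pi\sqrt{-1})^{-1}\Omega_E^-$ and $(\sqrt{-1})^{-1}\Omega_E^-$ positive), these $p$-adic measures become precisely the measures $\mu_f(D)/\Omega^-$ of \S\ref{SS:62} built from $p$-adic integration of the newform $f$. Under this identification, the emerging expression is a sum over the narrow class group of double $p$-adic integrals of the form $\int_{\Zp}\log_p P_\Psi(x,1)\,d\mu_f(D_t)/\Omega^-$, which is exactly the recipe defining $\log_E P_\brch$ for Darmon's twisted Stark-Heegner point in \cite[(182)]{Darmon01Ann}. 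Since $\LV=1$, the test vector $\breve\bdsf$ simply equals $\bdsf$, so no extra local factors at primes dividing $\LV$ intervene.

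Third, to pin down the exact constant $\tfrac{1}{2}(1+\brch(\sigma_\frakN)w_N)$, I would invoke a symmetrization identity between the contribution from the divisors $D_t=(\{r\}-\{\Psi(\varepsilon_1)r\},\Psi(t)\varsigma_\rmf)$ and that obtained after applying the Atkin-Lehner involution at $N$ combined with $[\bfc]$: on the $\bdsf$-isotypic part of $\eord\cM\cS(N,\Lam)^-$, these two contributions differ by the scalar $w_N\brch(\sigma_\frakN)$, so their symmetric combination yields the factor $(1+\brch(\sigma_\frakN)w_N)$. This is precisely the mechanism used in \cite[Corollary 2.6]{BD09Ann} to pass from the one-sided integral to the full Stark-Heegner logarithm, and the same argument transports verbatim to our formulation.

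The main obstacle will be bookkeeping of normalizations rather than any substantial new input: one must match the precise factors of $\tfrac{1}{2}$, the sign conventions for $\Omega_E^\pm$ imposed in Theorem A, the normalization $\bftheta_X=\Dmd{\cdot}_X^{1/2}\Dmd{\cdot}^{-1}$ against Darmon's half-weight conventions, and the identification of our $\cL_{\breve\Xi^-}(D_t)$ with the Bertolini-Darmon double integrals on $\frakH_p\times\frakH$. In particular, the positivity convention for $(\sqrt{-1})^{-1}\Omega_E^-$ is exactly what is needed to identify $\xi_f^-/\Omega^-$ with the modular symbol whose $\log_p$-integral produces $\log_E P_\brch$ (rather than its negative), thereby fixing the overall sign in the desired formula.
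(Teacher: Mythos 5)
Your high-level plan — differentiate $\bftheta_X$ at $k=2$ to produce $\tfrac{1}{2}\log_p$, specialize via the control theorem, relate the result to Darmon's $p$-adic logarithms, and use a Fricke/complex-conjugation symmetrization to extract the $(1+\brch(\sg_\frakN)w_N)$ factor — is indeed the route the paper takes. However, there is a genuine gap in the middle step that makes your account internally inconsistent.

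You claim in your second step that the emerging expression $\sum_{[t]}\brch(t)\int\log_p P_\Psi(x,1)\,d\mu_f(D_t)/\Omega^-$ \emph{is} the recipe for $\log_E P_\brch$ in \cite[(182)]{Darmon01Ann}. This is not correct, and it is the crux of the argument. Since $P_\Psi(X,Y)=(X-\CMP Y)(X-\ol{\CMP}Y)$, the integrand $\log_p P_\Psi$ splits as $\log_p(x-\CMP y)+\log_p(x-\ol{\CMP}y)$, yielding \emph{two} families of quantities $J_{\CMP}[t]$ and $J_{\ol{\CMP}}[t]$ (in the paper's notation). Darmon's $\log_E P_\brch$ corresponds to \emph{only one} of these, namely $\sum_{[t]}\brch(t)J_{\CMP}[t]$ (this is the content of \cite[Corollary 2.6]{BD09Ann}). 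The factor $(1+\brch(\sg_\frakN)w_N)$ is therefore not an \emph{additional} scalar to be supplied by a later symmetrization on top of $\log_E P_\brch$; rather, it is the mechanism by which the sum $\sum_t\brch(t)(J_{\CMP}[t]+J_{\ol{\CMP}}[t])$ is collapsed onto $\log_E P_\brch$, via the identity $J_{\ol{\CMP}}[t]=w_N J_{\CMP}[t\sg_\frakN\ep_1]$ obtained from the matrix $\cJ=\left(\begin{smallmatrix}-1&\rmT(\CMP)\\0&1\end{smallmatrix}\right)$, the Fricke element $\tau_N$, and $[\bfc]$. As written, your proposal would first declare the whole $P_\Psi$-integral to be $\log_E P_\brch$ and then multiply by $(1+\brch(\sg_\frakN)w_N)$, double-counting the conjugate piece.

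Two further points worth flagging. First, the cancellation of the extra term coming from differentiating $\bftheta_X(\cyc(\rmN(t)))$ in Definition~\ref{D:67.6} requires the \emph{term-by-term} vanishing $\int_{L_0'}\mu_{\Xi^-}(D_t)=0$ (a consequence of $\bdsf_2$ being new at $p$, i.e.\ Steinberg); appealing only to the global vanishing $\cL_p(\Xi/F,\brch,2)=0$ from Proposition~\ref{P:interp2} is insufficient, because the class-group sum does not vanish term-by-term for that reason. Second, to carry out the symmetrization cleanly the paper relies on the Greenberg--Stevens distribution-valued symbol $\mu^{\rm GS}_{\Xi^-}$ on $L_0'$ from \subsecref{R:GS.6}, which unifies the two-piece inert-case definition of $\cL_{\Xi}(D)$ into a single integral over $L_0'$ and gives the $U_0(N)$-invariance \eqref{E:Uinv.6} needed for the change of variables under $\cJ$ and $\tau_N$. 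Working directly with $\mu_f(D)$ on $\Zp$, as your proposal does, leaves the two pieces of the inert-case integral separate and the symmetrization more awkward to execute.
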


\begin{proof}
We will compute the derivative of $\cL_p(\Xi/F,\brch,k)$ at $k=2$ for the normalized $\Xi$ above. 
Let $\mu^{\rm GS}_{\Xi^-}(x,y)$ be the $p$-adic measure on $L_0'$ attached to $\Xi^-$ introduced in \subsecref{R:GS.6}. By definition, we have the expression 
\begin{align*}\cL_p(\Xi/F,\brch,k)&=\sum_{[t]\in\Cl(\cO_\LV)}\brch(t)\Dmd{\cyc(\rmN(t))}^{\frac{k-2}{2}}\\
&\times\int_{L_0'}\Dmd{(x-\CMP y)(x-\ol{\CMP} y)}^\frac{k-2}{2}\mu^{\rm GS}_{\Xi^-}(\MS{r}{\Psi(\ep_1)r},\Psi(t)\cmpt_\rmf)(x,y).\end{align*}
Here $\ep_1$ is the totally positive fundamental unit in $\cO_F^\x$ and $\cmpt_\rmf$ is the finite part of $\cmpt\in\GL_2(\wh\Q)$ defined in \subsecref{SS:optimal}. Choosing a branch of the $p$-adic logarithm $\log:F_p^\x\to F_p$, we obtain
\beq\label{E:72}\frac{d\cL_p}{dk}(\Xi/F,\brch,2)=\onehalf\sum_{[t]\in\Cl(\cO_F)}\brch(t)(J_{\CMP}[t]+J_{\ol{\CMP}}[t]),\eeq
where for $\tau\in\Cp$ with $\tau\not\in \Qp$, 
\begin{align*}J_{\tau}[t] &:=\int_{L_0'}\log(x- \tau y)\mu^{\rm GS}_{\Xi^-}(\MS{r}{\Psi(\ep_1)r},\Psi(t)\cmpt_\rmf)(x,y).\end{align*}
Let $\cJ=\pMX{-1}{\rmT(\CMP)}{0}{1}\in\GL_2(\Q)\hookto\GL_2(\wh\Q)$. Write $\cJ_p$ and $\cJ^{(p)}$ for its image in $\GL_2(\Qp)$ and $\GL_2(\wh\Q^{(p)})$ respectively and let  $\tau_N=\pMX{0}{1}{-N}{0}\in\GL_2(\wh\Q^{(p)})$ be the Fricke involution at $N$. 
Since $\cJ^2=1$ and $\cmpt_p=1$, one verifies that \[\cmpt \cJ_p=\cJ\cJ^{(p)}\cmpt_\rmf=\cJ\Psi(\sigma_\frakN)\cmpt_\rmf\cdot \tau_N\]
for an appropriate choice of a finite idele $\sigma_\frakN\in \wh F^\x$ such that $(\sg_\frakN\wh\cO_F\cap F)=\frakN$. 
 We have $\cJ(\CMP)=\ol{\CMP}$ and $\int_{L_0'}\mu_{\Xi^-}(D)(x,y)=0$ as $\bdsf_2$ is new at $p$. It follows from the $U_0(N)$-invariance \eqref{E:Uinv.6} that
\beq\label{E:73}\begin{aligned}
J_{\ol{\CMP}}[t]&=\int_{L_0'}\log_p(x- \CMP y)\mu^{\rm GS}_{\Xi^-}(\MS{r}{\Psi(\ep_1)r},\Psi(t)\cmpt \cJ_p)(x,y)\\
&=\int_{L_0'}\log(x- \CMP y)\mu^{\rm GS}_{\Xi^-}([\bfc](\MS{r}{\Psi(\ep_1^{-1})r},\Psi(t\sg_\frakN)\cmpt \tau_N))(x,y)\\
&=(-1)\cdot (-1)\cdot w_N\cdot J_{\CMP}[t\sg_{\frakN}\ep_1].
\end{aligned}\eeq
With the fixed choice of periods $\Omega^\pm$, it is straightforward to deduce from \cite[Corollary 2.6]{BD09Ann} that the $p$-adic logarithm of the Stark-Heegner point $P_\brch$ is given by 
\[\log_E P_\phi=\sum_{[t]\in\Cl(\cO_F)}\phi(t)J_{\CMP}[t].\]
We thus obtain the formula for $\frac{d\cL_p}{dk}(\Xi/F,\brch,2)$ from \eqref{E:72} and \eqref{E:73}. 
\end{proof}

\begin{Remark}The same argument applies to more general ring class characters with split conductor (\ie $\LV\neq 1$ is a product of primes split in $F$), but the formulae are more complicated due to the non-canonical choice of the test vector $\breve\Xi$ in the construction of $\cL_{\breve\Xi^-/F\ot\brch}$.
\end{Remark}


\bibliographystyle{amsalpha}
\bibliography{ref.bib}
\end{document}